\numberwithin{equation}{section}
\newtheorem{thm}{Theorem}[section]
\newtheorem{mydef}[thm]{Definition}
\newtheorem{prop}[thm]{Proposition}
\newtheorem{corol}[thm]{Corollary}
\newtheorem{lemma}[thm]{Lemma}
\newtheorem{eg}[thm]{Example}
\newtheorem{remark}[thm]{Remark}
\newtheorem*{claim}{Claim}
\newtheorem{conjecture}[thm]{Conjecture}
\newcommand{\Z}{\mathbb{Z}}
\newcommand{\Q}{\mathbb{Q}}
\newcommand{\mubar}{\overline{\mu}}
\DeclareMathOperator{\lcm}{lcm}
\DeclareMathOperator{\im}{im}
\DeclareMathOperator{\Hom}{Hom}
\DeclareMathOperator{\inter}{int}
\DeclareMathOperator{\tor}{tor}
\DeclareMathOperator{\coker}{coker}
\newcommand{\spin}{\ifmmode{\rm Spin}\else{${\rm spin}$\ }\fi}
\newcommand{\spinc}{\ifmmode{{\rm Spin}^c}\else{${\rm spin}^c$}\fi}
\newcommand{\spincs}{\mathfrak s}
\newcommand{\norm}[1]{\lVert #1 \rVert^2}
\newcommand{\wt}[1]{\widetilde{#1}}
\begin{document}

\title{Smoothly embedding Seifert fibered spaces in $S^4$}

\author{Ahmad Issa}
\address{Department of Mathematics \\
         The University of Texas at Austin \\
         Austin, TX, 78712, USA}
\email{aissa@math.utexas.edu}

\author{Duncan McCoy}
\address{Department of Mathematics \\
         The University of Texas at Austin \\
         Austin, TX, 78712, USA}
\email{d.mccoy@math.utexas.edu}

\begin{abstract} 
  Using an obstruction based on Donaldson's theorem, we derive strong restrictions on when a Seifert fibered space $Y = F(e; \frac{p_1}{q_1}, \ldots, \frac{p_k}{q_k})$ over an orientable base surface $F$ can smoothly embed in $S^4$. 
This allows us to classify precisely when $Y$ smoothly embeds provided $e > k/2$, where $e$ is the normalized central weight and $k$ is the number of singular fibers. Based on these results and an analysis of the Neumann-Siebenmann invariant $\overline{\mu}$, we make some conjectures concerning Seifert fibered spaces which embed in $S^4$. Finally, we also provide some applications to doubly slice Montesinos links, including a classification of the smoothly doubly slice odd pretzel knots up to mutation.
\end{abstract}

\maketitle

\section{Introduction}
It is known that every closed $3$-manifold smoothly embeds in $S^5$ \cite{MR0184246, MR0175139,MR0133136}. However, the question of which closed $3$-manifolds embed in $S^4$ is far more subtle. Not every 3-manifold embeds in $S^4$ and, in fact, the existence of embeddings often depends on whether one is working in the smooth or topological category. 
The question of which closed orientable 3-manifolds embed in $S^4$ appears as Problem~3.20 on Kirby's list. Over the years many different techniques and obstructions have been developed to address the question. For example, Hantzsche \cite{MR1545714} proved that if $Y$ embeds in $S^4$ then the torsion part of $H_1(Y)$ must split as a direct double, that is, $\tor H_1(Y) \cong G \oplus G$ for some abelian group $G$. There have also been applications of topological obstructions based on linking forms \cite{MR2538589}, Casson-Gordon signatures \cite{MR710099} and the $G$-index theorem \cite{MR1620508}, as well as smooth obstructions based on Rokhlin's theorem, the Neumann-Siebenman invariant, Furuta's $10/8$ theorem, Donaldson's theorem and the Ozsv\'{a}th-Szab\'{o} $d$-invariants, see e.g. \cite{08102346} and \cite{MR3271270}. For a nice introduction to the subject of embedding $3$-manifolds in $S^4$ see \cite{08102346}.

In this paper we study the question of which Seifert fibered spaces over an orientable base surface smoothly embed in $S^4$. 
We use $Y = F(e; \frac{p_1}{q_1}, \ldots, \frac{p_k}{q_k})$ to denote the Seifert fibered space over orientable base surface $F$ which is obtained by surgery as in Figure~\ref{fig:sfs_as_surgery}. After possibly changing orientation, $Y$ may be assumed to be in \emph{standard form}, where $\frac{p_i}{q_i} > 1$ for all $i$ and with non-negative generalized Euler invariant $\varepsilon(Y) :=  e - \sum_{i=1}^k \frac{q_i}{p_i} \ge 0$.\footnote{With these conventions the Poincar{\'e} homology sphere oriented to bound the positive definite $E_8$ plumbing is $S^2(2; 2, \frac{3}{2}, \frac{5}{4})$.} 

By using an obstruction based on Donaldson's theorem \cite{MR910015}, we show that if $Y$ embeds smoothly in $S^4$ then $e\leq \frac{k+1}{2}$ and classify precisely which embed when $e = \frac{k+1}{2}$.

\begin{restatable}{thm}{sphereembthmhalfbound}
  \label{thm:classification_e_ge_(k+1)/2} Let $Y=F(e; \frac{p_1}{q_1}, \ldots, \frac{p_k}{q_k})$ be a Seifert fibered space over orientable base surface $F$ with $\varepsilon(Y) > 0$ and $\frac{p_i}{q_i} > 1$ for all $i$. If $Y$ embeds smoothly in $S^4$, then $e\leq \frac{k+1}{2}$. Moreover, if $e = \frac{k+1}{2}$ then $Y$ smoothly embeds in $S^4$ if and only if $Y$ takes the form
  $$Y = F \left( e; \frac{a}{a-1}, \left\lbrace a, \frac{a}{a-1}\right\rbrace^{\times (e-1)}\right) = F \left(\frac{k+1}{2}; \frac{a}{a-1}, a, \frac{a}{a-1}, a, \ldots, \frac{a}{a-1}\right)$$
  where $e\geq 1$ and $a\geq 2$ is an integer.
  \end{restatable}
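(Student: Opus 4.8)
The plan is to treat the two directions separately, basing the obstruction on Donaldson's theorem and the construction on an explicit embedding. For the forward direction I would set up the standard Donaldson obstruction: in standard form $Y$ bounds its canonical star-shaped plumbing $X_\Gamma$, with a central vertex of weight $e$ and, for each $i$, a linear arm read off from the continued fraction $\frac{p_i}{q_i} = [b_1^{(i)}, \dots, b_{\ell_i}^{(i)}]^-$ (all $b_j^{(i)} \ge 2$), and since $\varepsilon(Y) > 0$ this plumbing is positive definite. If $Y$ embeds smoothly, write $S^4 = W_1 \cup_Y W_2$ and set $Z = X_\Gamma \cup_Y W_2$. When $F = S^2$ one has $b_2(W_i) = 0$, so $Z$ is a closed positive-definite $4$-manifold with $b_2(Z) = n := 1 + \sum_i \ell_i$ (when $F$ has positive genus the argument needs extra care with the free part of $H_1(Y)$, but the conclusion is the same). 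Donaldson's theorem forces the intersection form of $Z$ to be diagonal, so the plumbing lattice $\Lambda_\Gamma$ embeds as a finite-index sublattice of the standard definite lattice $\Z^n$.

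The crux is the resulting lattice problem. Write $v_0$ for the image of the central vertex and $u_{i,1}, \dots, u_{i,\ell_i}$ for the image of the $i$-th arm; then $\langle v_0, v_0 \rangle = e$, the arms span pairwise orthogonal subspaces, $\langle v_0, u_{i,1} \rangle = 1$, and $\langle v_0, u_{i,j} \rangle = 0$ for $j \ge 2$. Orthogonality alone only recovers $\varepsilon(Y) \ge 0$ — the orthogonal projection of $v_0$ onto the $i$-th arm subspace has square-norm $\frac{q_i}{p_i}$, and these projections are mutually orthogonal — so the new content must come from integrality of the vectors. I would invoke the structure theory for embeddings of linear lattices into $\Z^N$ in the style of Lisca: after normalizing, each arm becomes an explicit configuration of signed basis vectors, which constrains how much of the support of $v_0$ a given arm can carry; a careful accounting of this then yields $e \le \frac{k+1}{2}$. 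When $e = \frac{k+1}{2}$ every inequality in the accounting is saturated, which rigidifies each arm to be of type $[2, \dots, 2]$ (contributing $\frac{a}{a-1}$) or $[a]$ (contributing $a$) for a single integer $a \ge 2$, with exactly $e$ of the former and $e - 1$ of the latter — precisely the stated list. I expect this combinatorial classification to be the main obstacle: the inequality should be comparatively short, whereas the rigidity in the equality case requires a genuinely case-heavy analysis of how distinct arms share the support of the central vector.

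For the reverse direction I would exhibit the embedding directly. The cleanest route realizes $Y$ as the double branched cover $\Sigma_2(S^3, \mathfrak m)$ of a Montesinos link $\mathfrak m$ with these tangles and central parameter $\frac{k+1}{2}$, and shows $\mathfrak m$ is doubly slice, i.e.\ $\mathfrak m = S^3 \cap S^2$ for an unknotted $2$-sphere $S^2 \subset S^4$; the double cover of $S^4$ branched over $S^2$ is then $S^4$, split along $Y$ into two halves $\Sigma_2(D^4_\pm, \Delta_\pm)$, which gives the embedding. The mechanism is that $\frac{a}{a-1}$ and $a = \frac{a}{1}$ are mirror rational tangles (since $(a-1)\cdot 1 \equiv -1 \pmod a$), so the $e - 1$ pairs $\{a, \frac{a}{a-1}\}$ annihilate one another, while the leftover $\frac{a}{a-1}$-tangle is absorbed by exactly one unit of the central weight — which is why the central weight must equal $\frac{k+1}{2}$. (Equivalently, one can write down a handlebody $W$ with $\partial W = Y$ that manifestly embeds in $S^4$, guided by the block structure of the embedding $\Lambda_\Gamma \hookrightarrow \Z^n$ found above.) Combining this with the previous paragraph proves the theorem.
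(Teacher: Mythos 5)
The construction direction is fine and close to what the paper does for $F=S^2$: these spaces are double branched covers of doubly slice Montesinos links (the paper's Proposition~\ref{prop:s4_embeddings}), though the paper's quicker route to the embedding is the expansion lemma (Lemma~\ref{lem:add_fibers}), realising $Y$ inside $S^3\times[0,1]$ starting from $S^3=S^2(1;\frac{a}{a-1})$; for positive-genus base you would additionally need the Crisp--Hillman genus-raising move (Proposition~\ref{prop:genus_bump}), since a Montesinos link only gives you the $S^2$-base case.

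The obstruction direction has a genuine gap: you only use one side of the decomposition, forming $Z=X_\Gamma\cup_Y W_2$ and extracting a single lattice embedding $\Lambda_\Gamma\hookrightarrow \Z^n$. No amount of Lisca-style rigidity applied to one embedding can prove the equality-case classification. What a single embedding yields (and even this requires Hantzsche's condition that $\tor H_1(Y)\cong G\oplus G$, which you never invoke but which is needed to force the central vertex to hit exactly $e$ coordinates with coefficient $\pm1$ and to force $\varepsilon(Y)=\frac{1}{\lcm(p_1,\dots,p_k)}$) is a partition of the $k$ arms into $e$ classes: one singleton $\{j\}$ with $\frac{p_j}{q_j}=\frac{a}{a-1}$, $a=\lcm(p_1,\dots,p_k)$, and $e-1$ complementary pairs $\{\frac{p}{q},\frac{p}{p-q}\}$ --- but with $\frac{p}{q}$ unconstrained and allowed to vary from pair to pair. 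For instance $F(3;\frac{30}{29},30,\frac{30}{29},\frac{5}{2},\frac{5}{3})$ has $e=\frac{k+1}{2}$, satisfies Hantzsche, and admits such a partition, yet is not on your list; your "saturated inequalities" mechanism has nothing to exclude it. The paper excludes such examples by using \emph{both} pieces $W_1,W_2$: the isomorphism $H^2(W_1)\oplus H^2(W_2)\to H^2(Y)$ makes the block matrix $(A_1\,|\,A_2)$ of the two lattice embeddings surjective over $\Z$ (Theorem~\ref{thm:embsurjectivity}), which forbids any nonempty proper union of classes of the first partition from coinciding with a union of classes of the second (Lemma~\ref{thm:compclasscond}). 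It is precisely the forced offset between the two partitions, e.g.\ $\{\{1\},\{2,3\},\dots\}$ versus $\{\{1,2\},\{3,4\},\dots,\{k\}\}$, that chains the complementary pairs to the singleton and propagates the single value $a$ through every arm. Without this two-sided input the rigidity claim in your second paragraph does not follow.
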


This upper bound is one example of the difference between smooth and topological embeddings. The optimal upper bound for topological embeddings is $e\leq k-1$ (see Proposition~\ref{prop:top_bound}).

Classifying which Seifert fibered spaces embed smoothly in $S^4$ becomes increasingly difficult as $e$ decreases relative to $k$. For $e=\frac{k}{2}$, we are able to obtain a partial classification. 

\begin{restatable}{thm}{sphereembthm}
  \label{thm:classification_e_ge_k2} Let $Y=F(\frac{k}{2}; \frac{p_1}{q_1}, \ldots, \frac{p_k}{q_k})$ be a Seifert fibered space over orientable base surface $F$ with $\frac{p_i}{q_i} > 1$ for all $i$, $k$ even and $\varepsilon(Y) > 0$. If $Y$ smoothly embeds in $S^4$ then there exist positive integers $p,q,r,s$ with $\frac{p}{q}, \frac{r}{s} > 1$, $(p,q) = (r,s) = 1$ and $\frac{s}{r} + \frac{q}{p} = 1 - \frac{1}{pr}$ such that the following holds.
  \begin{enumerate}
  \item\label{classif:case2}
    $$Y = F\left(\frac{k}{2}; \frac{p}{q}, \frac{r}{s}, \left\lbrace\frac{p}{p-q}, \frac{p}{q}\right\rbrace^{\ge 0}, \left\lbrace\frac{r}{r-s}, \frac{r}{s}\right\rbrace^{\ge 0}\right),\mbox{ or}$$
  \item\label{classif:case3}
    $$Y = F\left(\frac{k}{2}; \frac{p}{q}, \frac{r}{s}, \left\lbrace pr, \frac{pr}{pr-1}\right\rbrace^{\ge 1}, \left\lbrace\frac{p}{q}, \frac{p}{p-q}\right\rbrace^{\ge 0}, \left\lbrace \frac{r}{s}, \frac{r}{r-s} \right\rbrace^{\ge 0}\right).$$
  \end{enumerate}
  Moreover, in case \eqref{classif:case2} $Y$ embeds smoothly in $S^4$. Here the notation $\{\frac{a}{b}, \frac{a}{a-b}\}^{\ge m}$ means that there are at least $m$ pairs of fractions of this form.
\end{restatable}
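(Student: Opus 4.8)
The plan is to derive the list of possibilities from a Donaldson-type diagonalisation obstruction applied to the canonical plumbing bounded by $Y$, and then to prove the converse for case~\eqref{classif:case2} by an explicit construction. This parallels the proof of Theorem~\ref{thm:classification_e_ge_(k+1)/2}; the new feature is that $e=k/2$ is the critical value, so the obstruction no longer pins down a single family but only the dichotomy above.

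For the obstruction, recall that in standard form $Y$ bounds the positive-definite star-shaped plumbing $X_\Gamma$, whose intersection lattice $\Lambda_\Gamma$ has the vertices of $\Gamma$ as a basis: a central vertex $v_0$ of square $e$, joined to the first vertex of each of $k$ linear arms, where the $i$-th arm is the chain of vertices of squares $a^{(i)}_1,\dots,a^{(i)}_{\ell_i}\ge 2$ coming from the continued fraction expansion of $\tfrac{p_i}{q_i}$, and $n:=\operatorname{rk}\Lambda_\Gamma=1+\sum_i\ell_i$. First I would reduce to the case $F=S^2$ (the genus of $F$ contributes only free homology, which is dealt with by a half-lives-half-dies argument). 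Then, if $Y$ embeds in $S^4$, each complementary region is a rational homology ball, so capping $X_\Gamma$ off along $Y$ with one of them yields a smooth, closed, positive-definite $4$-manifold $Z$ with $b_2(Z)=n$. Donaldson's theorem gives $Q_Z\cong\langle 1\rangle^{\oplus n}$, and a Mayer--Vietoris computation shows $H_2(X_\Gamma;\Z)/\mathrm{tor}$ embeds isometrically as a full-rank sublattice, so we obtain a finite-index embedding of lattices $\Lambda_\Gamma\hookrightarrow\Z^n$.

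The combinatorial heart is to classify such embeddings. Fixing an orthonormal basis $\hat e_1,\dots,\hat e_n$ of $\Z^n$, the key point is that $v_0$ has square $k/2$ yet pairs to $1$ with each of the $k$ mutually orthogonal vectors $v^{(1)}_1,\dots,v^{(k)}_1$; a short-vector count forces, in the main case, $v_0$ to be a sum of $k/2$ distinct signed basis vectors, each of which lies in the support of exactly two of the $v^{(i)}_1$ -- this is what makes the arms come in pairs. Analysing, arm by arm, which coordinate vectors are reused by consecutive vertices (the continued-fraction/``pebbling'' combinatorics of embeddings of linear lattices into $\Z^N$, in the style of Lisca and Owens--Strle), one finds that each pair of arms is either mutually dual -- of the form $\tfrac{p}{p-q}$ and $\tfrac{p}{q}$ -- or together builds one of the two exceptional configurations: a base pair $\tfrac pq,\tfrac rs$ with $ps+qr=pr-1$, equivalently $\tfrac sr+\tfrac qp=1-\tfrac1{pr}$, possibly with a chain of $\{pr,\tfrac{pr}{pr-1}\}$ pairs grafted on. Propagating this through all the pairs gives exactly the forms \eqref{classif:case2} and \eqref{classif:case3}. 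The main obstacle is precisely the bookkeeping at the central vertex: once $k/2\ge 4$ the vector $v_0$ may have a coordinate of absolute value $\ge 2$ and a coordinate need not be shared by exactly two arms, so these degenerate attachment patterns have to be enumerated and either ruled out or shown to collapse into the stated families; this case analysis is the reason the classification is only partial.

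Finally, for the converse in case~\eqref{classif:case2} I would build the embeddings directly. The ``core'' $F(1;\tfrac pq,\tfrac rs)$ with $ps+qr=pr-1$ has $|H_1|=1$, hence is $S^3$ and trivially embeds. Inserting a dual pair $\{\tfrac{p}{p-q},\tfrac pq\}$ and raising the central weight by $1$ leaves $\varepsilon$ unchanged, and is realised by a modification supported in a ball -- a dual pair of exceptional fibres together with the extra $+1$ is a Seifert piece that embeds in $B^4$ rel boundary, by the same mechanism underlying the doubly slice Montesinos links treated later -- so it may be grafted onto any existing embedding. Iterating over the $\{\tfrac{p}{p-q},\tfrac pq\}$ and $\{\tfrac{r}{r-s},\tfrac rs\}$ pairs completes the construction. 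I expect this half to be the easier one; case~\eqref{classif:case3} is left open because no analogous local move is available for inserting a $\{pr,\tfrac{pr}{pr-1}\}$ pair.
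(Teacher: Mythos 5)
Your construction of the embeddings in case~\eqref{classif:case2} is essentially the paper's: the ``grafting'' of a dual pair $\{\frac{p}{p-q},\frac{p}{q}\}$ is the expansion move of Definition~\ref{def:expansion}, realised inside $Y\times[0,1]$ by Lemma~\ref{lem:add_fibers} (note the inserted pair must duplicate a fibre type already present, which it does here), with the higher-genus case handled by the Crisp--Hillman genus-bumping move rather than by ``reducing to $S^2$'' --- there is no a priori reason an embedding of $Y_g$ yields one of $Y_0$, so that reduction should be replaced by working with $X_{\Gamma,g}$ directly as in Proposition~\ref{thm:def_gluing_thm}.

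The obstruction half, however, has a genuine gap: you cap off $X_\Gamma$ with only \emph{one} of the two complementary regions and extract a single lattice embedding $\Lambda_\Gamma\hookrightarrow\Z^n$. A single embedding yields (via Theorem~\ref{thm:emb_ineq_eq_case} and the direct-double condition) a single partition of $\{1,\dots,k\}$ into $k/2$ classes, all but one complementary. That tells you the fibres group into dual pairs $\{\frac{p_i}{q_i},\frac{p_i}{p_i-q_i}\}$ (plus one exceptional class), but it puts \emph{no relation between different pairs}: a space such as $F(\frac{k}{2};\frac{p_1}{q_1},\frac{p_1}{p_1-q_1},\frac{p_3}{q_3},\frac{p_3}{p_3-q_3},\dots)$ with unrelated $p_1,p_3,\dots$ passes this test, yet the theorem asserts that every fibre is built from just two fractions $\frac{p}{q}$ and $\frac{r}{s}$. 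The missing ingredient is that an embedding in $S^4$ produces \emph{two} lattice embeddings, one from each side $U_1,U_2$ of the splitting, and --- this is Donald's refinement, Theorem~\ref{thm:embsurjectivity} --- the augmented matrix $(A_1\mid A_2)$ is surjective over $\Z$. This forces the two induced partitions $P_1,P_2$ to share no common union of proper subsets of classes (Lemma~\ref{thm:compclasscond}), i.e.\ the ``partitionable'' condition. It is precisely the interleaving of $P_1$ and $P_2$ that produces repeated fibres: a complementary pair $\{a,b\}\in P_1$ and $\{b,c\}\in P_2$ with $a,b,c$ distinct forces $\frac{p_a}{q_a}=\frac{p_c}{q_c}=\frac{p_b}{p_b-q_b}$, exhibiting $Y$ as an expansion of a smaller partitionable space (Lemma~\ref{lemma:2on5expansion}), and the induction terminates at $e\le 2$ where the base cases $F(1;\frac{p}{q},\frac{r}{s})$ and $F(2;\frac{p}{q},\frac{r}{s},pr,\frac{pr}{pr-1})$ are identified. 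Without the second embedding and the surjectivity of $(A_1\mid A_2)$, no amount of Lisca-style bookkeeping at the central vertex of a single embedding will recover the statement. (A smaller inaccuracy: the classification is partial not because of degenerate coordinate patterns at the central vertex, but because the size-three complementary class giving case~\eqref{classif:case3} can be neither realised nor excluded by these methods.)
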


Both Theorem~\ref{thm:classification_e_ge_(k+1)/2} and Theorem~\ref{thm:classification_e_ge_k2} are derived from a more general result, Theorem~\ref{thm:partitions} below, stating that a Seifert fibered space which smoothly embeds in $S^4$ must satisfy a strong condition which we call \emph{partitionable}.

\begin{mydef}\label{def:partitionable} Let $Y = F(e; \frac{p_1}{q_1}, \ldots, \frac{p_k}{q_k})$ be a Seifert fibered space over orientable base surface $F$ with $\varepsilon(Y) > 0$ and $\frac{p_i}{q_i} > 1$ for all $i$. We say that $Y$ is \emph{partitionable} if $\tor H_1(Y) \cong G \oplus G$ for some finite abelian group $G$, and there exist partitions $P_1$ and $P_2$ of $\{1,\ldots,k\}$, each into precisely $e$ classes, such that the following hold. For each partition $P \in \{P_1, P_2\}$: 
  \begin{enumerate}[(a)]
  \item\label{cond:noncomp} There exists a unique class $C \in P$ such that $\mathlarger{\sum_{j \in C} \frac{q_j}{p_j} = 1 - \frac{1}{\lcm(p_1,\ldots,p_k)}}$.
    \item\label{cond:ineq} For each other class $C' \in P$, $\mathlarger{\sum_{j \in C'} \frac{q_j}{p_j} = 1}$. 
    \item\label{cond:comp} No non-empty union of a proper subset of classes in $P_1$ is equal to a union of classes in $P_2$.
  \end{enumerate}
The classes satisfying condition \eqref{cond:ineq} are said to be \emph{complementary}.
\end{mydef}
With this definition in place, the general obstruction we derive from Donaldson's theorem can be stated as the following.
\begin{restatable}{thm}{thmcomplementarypartitions}
  \label{thm:partitions} Let $Y = F(e; \frac{p_1}{q_1}, \ldots, \frac{p_k}{q_k})$ with $F$ an orientable surface, $\frac{p_i}{q_i} > 1$ for all $i$, and $\varepsilon(Y) > 0$. If $Y$ smoothly embeds in $S^4$ then $Y$ is partitionable.
\end{restatable}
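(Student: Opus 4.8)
The plan is to combine the separating-hypersurface homology of $S^4 = X_1 \cup_Y X_2$ with Donaldson's diagonalization theorem applied to the canonical positive-definite plumbing bounded by $Y$. Suppose $Y$ embeds smoothly in $S^4$, so $S^4 = X_1 \cup_Y X_2$ with $\partial X_1 = Y = -\partial X_2$. I treat the planar case $F = S^2$ first. A Mayer--Vietoris computation shows that $X_1$ and $X_2$ are rational homology $4$-balls, that each map $H_1(Y) \to H_1(X_i)$ is surjective, and that their kernels are \emph{complementary} metabolizers of the linking form of $Y$; in particular $\tor H_1(Y) \cong G \oplus G$ with $G = H_1(X_1) \cong H_1(X_2)$ (Hantzsche \cite{MR1545714}), which is the first requirement of Definition~\ref{def:partitionable}. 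Since $Y$ is in standard form with $\varepsilon(Y) > 0$, it bounds its canonical star-shaped plumbing $P$: a positive-definite $4$-manifold whose intersection lattice $L$ has a central vertex $v_0$ of square $e$ and, for each $i$, an arm whose vertices carry the terms of the negative continued fraction expansion of $\frac{p_i}{q_i}$. Gluing $P$ to $X_2$, and separately to $\overline{X_1}$, produces two closed positive-definite $4$-manifolds $Z_1 = P \cup_Y X_2$ and $Z_2 = P \cup_Y \overline{X_1}$, each with $b_2 = n := \operatorname{rk} L$; by Donaldson's theorem their forms are diagonal, giving two embeddings $\iota_1, \iota_2 \colon L \hookrightarrow \Z^n$ of index $|G|$, whose cokernels are respectively $\ker(H_1(Y) \to H_1(X_2))$ and $\ker(H_1(Y) \to H_1(X_1))$.

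The core of the argument is a combinatorial analysis of embeddings of the star-shaped lattice $L$ into $\Z^n$, in the spirit of Lisca's study of linear and star-shaped lattices. Fixing an orthonormal basis $f_1, \dots, f_n$ of $\Z^n$ adapted to such an embedding $\iota$, I would first show that $\iota(v_0)$ has every coordinate in $\{0, \pm 1\}$, hence is supported on exactly $e$ of the $f_\ell$. Since the vertex of each arm adjacent to $v_0$ shares precisely one basis vector with $\iota(v_0)$, each of the $k$ arms gets assigned to one of these $e$ coordinates, producing a partition into $e$ classes. One then analyzes how the arms lying in a single class, together with the single coordinate of $v_0$ that they occupy, can be fit compatibly into that coordinate; this forces the continued-fraction data to balance, so that each class $C'$ but one satisfies $\sum_{j \in C'} \frac{q_j}{p_j} = 1$, while the exceptional class $C$ absorbs the discrepancy and satisfies $\sum_{j \in C} \frac{q_j}{p_j} = 1 - \frac{1}{\lcm(p_1, \dots, p_k)}$ — the exact size of the defect being forced by $\det L = |G|^2$ and the index $[\Z^n : \iota(L)] = |G|$ (summing over all classes, this incidentally pins down $\varepsilon(Y) = \frac{1}{\lcm(p_1, \dots, p_k)}$). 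Applying this to $\iota_1$ and $\iota_2$ yields the two partitions $P_1, P_2$ and establishes conditions \eqref{cond:noncomp} and \eqref{cond:ineq}.

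For condition \eqref{cond:comp}, the point is that $\iota_1$ and $\iota_2$ have cokernels equal to the two complementary metabolizers of the linking form: a coincidence between a union of a proper subset of the classes of $P_1$ and a union of classes of $P_2$ would, via Mayer--Vietoris in $S^4 = X_1 \cup_Y X_2$, force a nontrivial overlap of these two metabolizers, which is impossible. Finally, when $\operatorname{genus}(F) = g > 0$ one has $b_1(Y) = 2g$, the pieces $X_i$ are no longer rational homology balls, and the closed manifolds $Z_j$ acquire extra homology of even rank on which the intersection form has vanishing signature; this must be dealt with — either by splitting off this summand before invoking Donaldson's theorem, or by reducing to the planar case — and requires a separate argument. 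I expect essentially all of the difficulty to be concentrated in the second paragraph: the structural classification of embeddings of star-shaped plumbing lattices into diagonal lattices, and in particular extracting from it the precise balance equations and the exact defect $\frac{1}{\lcm(p_1, \dots, p_k)}$, is the technical core of the proof; verifying condition \eqref{cond:comp} is the next most delicate step, and the higher-genus reduction, though routine in spirit, also needs care.
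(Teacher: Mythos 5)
Your overall architecture matches the paper's: split $S^4 = U_1 \cup_Y U_2$, glue the canonical positive-definite plumbing $X$ to each piece, apply Donaldson's theorem to get two lattice embeddings, read the partition off the coordinates of the central vertex, and use the complementarity of the two sides for condition~\eqref{cond:comp}. Two remarks on your second and fourth paragraphs. The quantitative tool that makes the combinatorial analysis work is Theorem~\ref{thm:emb_ineq_eq_case} (\cite[Theorem~6]{latticeineq}): $\sum_i q_i/p_i \le 1$ for disjoint linear chains whose leading vertices all pair nontrivially with a fixed unit vector, plus its equality case. Combined with Lemma~\ref{lemma:homology_is_double2} -- which extracts $\varepsilon(Y)=1/\lcm(p_1,\dots,p_k)$ from the direct-double condition by a $p$-primary analysis, rather than from $\det L$ and the index -- this is what forces the central vertex to be $e_1+\cdots+e_e$, pins the number of classes at exactly $e$, and localizes the defect to a single class; your ordering (``first show $\iota(v_0)$ has coordinates in $\{0,\pm1\}$'') is backwards, since that is a consequence of the direct-double condition rather than an a priori fact, but the plan is recognizably the same. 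The higher-genus case also needs no separate reduction: the $4$-manifold $X_{\Gamma,g}$ of Figure~\ref{fig:definite_mfd_bounding_sfs} still has intersection form $Q_\Gamma$, and the signature bookkeeping in Proposition~\ref{thm:def_gluing_thm} shows $X_{\Gamma,g}\cup U_i$ is closed, positive definite, with $b_2=|\Gamma|$; no extra summand of vanishing signature appears.

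The genuine gap is condition~\eqref{cond:comp}. You assert that a coincidence between a nonempty proper union of classes of $P_1$ and a union of classes of $P_2$ would ``via Mayer--Vietoris force a nontrivial overlap of the two metabolizers,'' but you give no mechanism for producing an element of that overlap, and this mechanism is precisely the hard step (Lemma~\ref{thm:compclasscond}). What Mayer--Vietoris actually gives is $\im A_1/\im Q_\Gamma \oplus \im A_2/\im Q_\Gamma = \coker Q_\Gamma$, equivalently that the augmented matrix $(A_1\mid A_2)$ is surjective over $\Z$. Given a common union $H$ of complementary classes, the paper forms the star-shaped sub-plumbing $R$ on the legs indexed by $H$ with central weight $\ell=\sum_{j\in H}q_j/p_j$; its boundary is a Seifert fibered space with generalized Euler invariant zero, so $Q_R$ is degenerate, and a primitive vector $x$ in its radical satisfies $xB_i=0$ for both restricted embeddings. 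A determinant computation along each leg then shows the coordinate $x_1$ of $x$ at the central vertex is divisible by every $p_j$ with $j\in H$, so padding $x$ with zeros yields a functional annihilating both $\im A_1$ and $\im A_2$ modulo $p_j>1$, contradicting surjectivity. Without this construction -- the degenerate sub-lattice coming from the $\varepsilon=0$ sub-plumbing and the divisibility of $x_1$ -- your argument for \eqref{cond:comp} does not get off the ground.
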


In this paper we focus on Seifert fibered spaces over orientable base surface and with non-zero generalized Euler invariant, i.e. $\varepsilon \neq 0$. We point out that there are already relatively strong results known when $\varepsilon=0$ or the base surface is non-orientable.
For orientable base surface and $\varepsilon=0$, Donald showed that $Y$ can smoothly embed in $S^4$ only if it can be written in the form $Y=F(0;\frac{p_1}{q_1}, -\frac{p_1}{q_1}, \dots, \frac{p_k}{q_k}, -\frac{p_k}{q_k})$ \cite[Theorem~1.3]{MR3271270}, see also \cite{MR2538589}. Donald also obtained similar results when the base surface is non-orientable \cite[Theorem~1.2]{MR3271270} and further results in the non-orientable case can be found in \cite{MR1620508}.

In the course of applying Theorem~\ref{thm:partitions} it becomes natural to define an operation on Seifert fibered spaces, which we call {\em expansion}.

\begin{mydef}\label{def:expansion} 
Let $Y=F(e;\frac{p_1}{q_1}, \dots, \frac{p_k}{q_k})$ be a Seifert fibered space with $k\geq 1$. The Seifert fibered space $Y'$ is obtained from $Y$ by {\em expansion} if it takes the form
  \[Y'=F\left(e;\frac{p_1}{q_1}, \dots, \frac{p_k}{q_k},-\frac{p_j}{q_j},\frac{p_j}{q_j}\right) = F\left(e+1;\frac{p_1}{q_1}, \dots, \frac{p_k}{q_k},\frac{p_j}{p_j - q_j},\frac{p_j}{q_j}\right),\]
 for some $j$ in the range $1\leq j\leq k$.
\end{mydef}
With this definition, notice that the spaces in Theorem~\ref{thm:classification_e_ge_(k+1)/2} and Theorem~\ref{thm:classification_e_ge_k2} are precisely those obtained by a sequence of expansions from spaces of the form $F(1;\frac{a}{a-1})$, $F(1;\frac{p}{q},\frac{r}{s})$, or $F(2; \frac{p}{q},\frac{r}{s}, pr, \frac{pr}{pr-1})$. In fact, we prove these results by showing that whenever $e$ is large relative to $k$, any space which is partitionable is obtained by expansion from some other space which is also partitionable.

In the opposite direction, the notion of expansion also proves to be useful for constructing embeddings into $S^4$.
\begin{restatable}{lemma}{lemaddfibers}
  \label{lem:add_fibers}
  If $Y'$ is obtained by expansion from $Y$, then $Y'$ smoothly embeds in $Y\times [0,1]$. In particular, if $Y$ embeds smoothly in $S^4$, then so does $Y'$.
\end{restatable}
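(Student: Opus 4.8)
The plan is to realize the expansion move explicitly as a cobordism built from a single round handle attachment (equivalently, an annulus surgery). Recall that $Y'$ differs from $Y$ by the insertion of a pair of singular fibers with Seifert invariants $\frac{p_j}{p_j-q_j}$ and $\frac{p_j}{q_j}$, together with an increase of the central weight by one; in surgery terms this is the same as adding a fiber with invariant $-\frac{p_j}{q_j}$ and one with invariant $\frac{p_j}{q_j}$. The key geometric observation is that a fiber with invariant $\frac{p_j}{q_j}$ followed immediately by one with invariant $-\frac{p_j}{q_j}$ (living on parallel fibers of the same regular fiber neighborhood) can be created from nothing: a neighborhood of a regular fiber is $S^1 \times D^2$, and replacing it by the union of two singular-fiber neighborhoods with canceling invariants is supported in $(S^1 \times D^2) \times [0,1]$.

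The steps I would carry out are as follows. First, fix a regular fiber $\phi \subset Y$ and a tubular neighborhood $\nu(\phi) \cong S^1 \times D^2$ compatible with the Seifert structure, so that $Y = (Y \setminus \nu(\phi)) \cup (S^1 \times D^2)$. Second, build a cobordism $W$ from $Y$ to $Y'$ by taking $W = Y \times [0,1]$ and modifying it in $\nu(\phi) \times [0,1]$: attach a round $1$-handle (i.e. $S^1 \times D^1 \times D^1$) along $S^1 \times (\partial D^1) \times D^1$ sitting inside $\nu(\phi) \times \{1\}$, chosen so that the effect on the outgoing boundary is to carve out $\nu(\phi)$ and glue back in the two solid tori carrying the singular fibers with invariants $\frac{p_j}{q_j}$ and $-\frac{p_j}{q_j}$. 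Because the round handle is attached along an $S^1$ times a trivially framed arc, this modification is supported in $\nu(\phi) \times [0,1] \cong (S^1\times D^2)\times[0,1]$, and one checks on the level of surgery diagrams that the resulting outgoing boundary is exactly the surgery description of $Y'$ in Definition~\ref{def:expansion}. Third, observe that this $W$ embeds in $Y \times [0,1]$: outside $\nu(\phi)\times[0,1]$ it is literally the product, and inside the round handle together with the trace of the modification fits into $(S^1 \times D^2)\times[0,1]$ since attaching a round $1$-handle to $S^1 \times D^2$ and then capping gives back a manifold that re-embeds in $(S^1\times D^2)\times[0,1]$ (the round handle can be isotoped to lie in a collar). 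Hence $Y'= \partial_+ W \hookrightarrow Y \times [0,1]$, smoothly. Finally, the ``in particular'' clause is immediate: if $Y \hookrightarrow S^4$ then $Y$ has a bicollar $Y \times [-1,1] \hookrightarrow S^4$, and composing with $Y' \hookrightarrow Y \times [0,1]$ gives $Y' \hookrightarrow S^4$.

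Alternatively, and perhaps more cleanly, one can phrase the middle step purely in terms of surgery on a link in $Y$: the manifold $Y'$ is obtained from $Y$ by a surgery on a knot $K \subset \nu(\phi)$ (a suitable cable of the fiber) together with a push-off, and such a surgery on a nullhomologous-in-a-ball knot is always realized by an embedded cobordism into $Y \times [0,1]$. Either way, the main obstacle is the bookkeeping in the second step: one must verify that the framings and the slopes of the two new singular fibers coming out of the round handle attachment match the invariants $\frac{p_j}{p_j - q_j}$ and $\frac{p_j}{q_j}$ (equivalently $-\frac{p_j}{q_j}$ and $\frac{p_j}{q_j}$) appearing in Definition~\ref{def:expansion}, and that the central weight shifts by exactly one. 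This is a finite computation with continued fractions / the standard slam-dunk and Rolfsen-twist moves on the surgery diagram of Figure~\ref{fig:sfs_as_surgery}, but it needs to be done carefully to confirm the signs and the normalization. Everything else — the embedding of the cobordism and the passage to $S^4$ — is soft.
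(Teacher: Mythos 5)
Your guiding intuition --- that the cancelling pair of fibres $\pm\frac{p_j}{q_j}$ can be created ``from nothing'' inside $\nu(\phi)\times[0,1]$ for a regular fibre $\phi$ --- is correct, and your second, surgery-theoretic phrasing is very close to the actual argument. But the concrete mechanism in your main construction does not work. A $4$-dimensional round $1$-handle is $S^1\times D^1\times D^2$ (what you wrote, $S^1\times D^1\times D^1$, is $3$-dimensional), attached along two solid tori $S^1\times S^0\times D^2$, and its effect on the outgoing boundary is to excise neighbourhoods of two parallel fibres and then glue the two resulting torus boundary components \emph{to each other} through $S^1\times D^1\times\partial D^2\cong T^2\times I$. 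That is not the expansion move: expansion requires excising $\nu(\phi)$ and gluing in the Seifert piece $D^2\bigl(-\frac{p_j}{q_j},\frac{p_j}{q_j}\bigr)$, i.e.\ performing two \emph{rational Dehn fillings} with slopes $\mp\frac{p_j}{q_j}$ on the two new boundary tori, not identifying those tori with one another. The round-handle operation either adds a handle to the base orbifold or creates a new essential torus; it never produces the two exceptional fibres of multiplicity $p_j$, so the check you defer to ``bookkeeping'' would in fact fail. Your fallback principle is also false as stated: $-1$-surgery on a trefoil contained in a ball yields the Poincar\'e sphere, which does not smoothly embed in $S^3\times[0,1]\subset S^4$, so being a surgery on a null-homologous-in-a-ball link does not by itself give an embedded cobordism in $Y\times[0,1]$. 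The cancelling relation between the two coefficients is the whole point, and it is exactly the step left unverified.

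The correct implementation needs no handle calculus. Take $N_1\subset Y$ a fibred solid torus neighbourhood of the exceptional fibre of type $\frac{p_j}{q_j}$. Then $\partial\bigl(N_1\times[\tfrac14,\tfrac34]\bigr)$ is the double of $N_1$, hence $S^1\times S^2$, and it inherits the Seifert structure $S^2\bigl(0;-\frac{p_j}{q_j},\frac{p_j}{q_j}\bigr)$, with the two exceptional fibres lying in $N_1\times\{\tfrac14\}$ and $N_1\times\{\tfrac34\}$. Deleting a regular-fibre neighbourhood $N_2$ from $Y\times\{0\}$ and from this boundary, and joining the two pieces by the vertical tube $\partial N_2\times[0,\tfrac14]$, gives an embedded copy of the fibre sum of $Y$ with $S^2\bigl(0;-\frac{p_j}{q_j},\frac{p_j}{q_j}\bigr)$, which is precisely $Y'$ as in Definition~\ref{def:expansion}. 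This explicit surface-in-the-product argument is what your round-handle step was meant to supply; as written, that step is a genuine gap.
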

This easily shows that the Seifert fibered spaces in Theorem~\ref{thm:classification_e_ge_(k+1)/2} and Theorem~\ref{thm:classification_e_ge_k2}\eqref{classif:case2} smoothly embed in $S^4$. Since Seifert fibered spaces of the form $S^2(1;\frac{a}{a-1})$ and $S^2(1;\frac{p}{q}, \frac{r}{s})$, where $\frac{s}{r} + \frac{q}{p} = 1 - \frac{1}{pr}$ and $a > 1$ is an integer, are homeomorphic to $S^3$, they embed in $S^4$. When the base surface is $S^2$ the spaces we wish to embed are precisely those obtained by expansion from these descriptions of $S^3$, so their embeddings can be constructed via Lemma~\ref{lem:add_fibers}. The higher genus base surface case follows from this case by a result of Crisp-Hillman \cite[Lemma 3.2]{MR1620508}, see Proposition~\ref{prop:genus_bump}.

The family of Seifert fibered spaces in Theorem~\ref{thm:classification_e_ge_k2}\eqref{classif:case3} which we are unable to completely resolve arises when $Y$ is partitionable with a partition containing a complementary class of size three. When the base surface is $F = S^2$, we have further tools at our disposal, namely the Neumann-Siebenmann invariant $\mubar$. An analysis of this invariant gives further restrictions.

\begin{prop}\label{prop:k2_even_cond} In Theorem~ \ref{thm:classification_e_ge_k2}\eqref{classif:case3} with $F = S^2$, the space $Y$ can smoothly embed in $S^4$ only if $p$ and $r$ are both odd.
\end{prop}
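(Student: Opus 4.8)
The plan is to use the Neumann--Siebenmann invariant $\mubar$ as an obstruction, exploiting the fact that $\mubar(Y) = 0$ is a necessary condition for a rational homology sphere $Y$ with $H_1(Y)$ of odd order (or, more generally, with the appropriate spin structure behaviour) to smoothly embed in $S^4$. First I would recall the general principle: if $Y$ bounds a smooth rational homology ball, then $\mubar(Y,\spincs) \equiv 0 \pmod{16}$ for the relevant spin structure, and if $Y$ smoothly embeds in $S^4$ then $Y$ bounds rational homology balls on both sides, so in particular $\mubar$ must vanish (modulo the usual indeterminacy). Since in case \eqref{classif:case3} the torsion subgroup $\tor H_1(Y) \cong G \oplus G$, we need to understand the parity of $|G|$, which is governed by the parities of the $p_i$; this is where the hypothesis that $p$ and $r$ be odd enters.

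Next I would compute $\mubar(Y)$ explicitly for a Seifert fibered space $Y = S^2(e; \frac{p_1}{q_1}, \ldots, \frac{p_k}{q_k})$ in terms of the Seifert invariants, using the standard formula expressing $\mubar$ via the Neumann--Siebenmann plumbing: take the canonical negative-definite (or the relevant) plumbing graph $\Gamma$ with intersection form $Q_\Gamma$, find the Wu class (characteristic covector) $w$ with all coordinates in $\{0,1\}$ representing the spin structure, and set $\mubar(Y) = \sigma(Q_\Gamma) - w^T Q_\Gamma^{-1} w$. For the family in \eqref{classif:case3}, the repeated blocks $\{pr, \frac{pr}{pr-1}\}$ and the complementary pairs $\{\frac{p}{q}, \frac{p}{p-q}\}$, $\{\frac{r}{s}, \frac{r}{r-s}\}$ have a rigid structure, so these contributions should telescope. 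I would argue that the expansion operation (Definition~\ref{def:expansion}) leaves $\mubar$ unchanged — this is plausible since adding a pair $-\frac{p_j}{q_j}, \frac{p_j}{q_j}$ corresponds to a plumbing move that does not alter the $d$-invariants or $\mubar$ — so it suffices to compute $\mubar$ for the base case $F(2; \frac{p}{q}, \frac{r}{s}, pr, \frac{pr}{pr-1})$, reducing the problem to a single small computation in terms of $p, q, r, s$ subject to $\frac{s}{r} + \frac{q}{p} = 1 - \frac{1}{pr}$.

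With $\mubar$ of the base case in hand, I would then check its parity/residue: the claim is that $\mubar(Y) \not\equiv 0$ unless both $p$ and $r$ are odd. The relation $\frac{s}{r} + \frac{q}{p} = 1 - \frac{1}{pr}$ rewrites as $ps + qr + 1 = pr$, i.e. $ps + qr \equiv pr - 1$; reading this modulo $2$ and using $\gcd(p,q) = \gcd(r,s) = 1$ should pin down the parities: if, say, $p$ is even then $q$ is odd and $qr \equiv pr - 1 \equiv -1 \equiv 1$, forcing $r$ odd and $qr$ odd, consistent — so the congruence alone does not immediately forbid $p$ even, and the real obstruction must come from $\mubar$ itself rather than from $H_1$. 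Concretely, I expect that when $p$ is even the order $|\tor H_1(Y)|$ still has the right square form, but $\mubar(Y)$ picks up a nonzero contribution (an odd multiple of $8$, say) from the spin structure on the even-weighted fiber, obstructing the embedding.

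The main obstacle will be the explicit $\mubar$ computation: organizing the Wu class on the (possibly large) plumbing graph and evaluating $w^T Q^{-1} w$ cleanly enough to extract its residue modulo $16$ purely in terms of $p, q, r, s$. This requires either a careful induction showing expansion preserves $\mubar$ (which I would set up by relating the plumbing graphs of $Y$ and $Y'$ via blow-ups/handle slides and invariance of $\mubar$ under such moves, together with the behaviour of the Wu class) or a direct continued-fraction style evaluation using that the linear chains $\frac{p}{q}$ and $\frac{p}{p-q}$ are "reverses" of each other. I would also need to handle the indeterminacy in $\mubar$ when $|H_1(Y)|$ is even versus odd — the statement restricts to $F = S^2$ precisely so that $H_1(Y) = \tor H_1(Y) \cong G \oplus G$ is finite, and the cleanest version of the $\mubar$-obstruction (vanishing for embeddable $Y$) applies when this group has odd order, which is exactly the conclusion we are trying to force; so the logic is that if $Y$ embeds then $\mubar$ must vanish, but vanishing of $\mubar$ computed from the plumbing can only happen when $p, r$ are odd, giving the contrapositive.
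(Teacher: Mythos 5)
Your plan to obstruct via $\mubar$ is in the right circle of ideas, but it has genuine gaps at the decisive steps, and the paper ends up using $\mubar$ in a different, less computational way. The most serious issue is that when $p$ or $r$ is even the space $Y$ has several exceptional fibers of even multiplicity, so $\dim H^1(Y;\Z_2)\geq 2$ and there is no single number ``$\mubar(Y)$'' to show is nonzero: there are many spin structures, and the obstruction must be a counting statement. By Lemma~\ref{lem:spin_splitting}, an embedding forces $|\spin(Y)|=d^2$ with exactly $d$ spin structures extending over each side $U_i$; since $H^1(U_1;\Z_2)\oplus H^1(U_2;\Z_2)\to H^1(Y;\Z_2)$ is an isomorphism, these two families are cosets of complementary subgroups meeting in a single point, so at least $2d-1$ spin structures must have $\mubar=0$. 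Your proposal never sets up this count -- the phrase ``the relevant spin structure'' hides exactly the case ($p$ or $r$ even) the proposition is about, and your closing admission that the clean vanishing statement applies only when $H_1$ has odd order, ``which is exactly the conclusion we are trying to force,'' is the gap rather than a feature. Second, the central inequality is never established: you ``expect'' a nonzero contribution to $\mubar$ when $p$ is even, but nothing in the proposal computes the $\mubar$-values of the characteristic subsets (Lemma~\ref{lem:char_set_construction} enumerates them) or shows that fewer than $2d-1$ of them vanish; that computation is the entire content of the proof. Third, ``expansion preserves $\mubar$'' does not literally parse, since expanding along a fiber of even multiplicity changes $H^1(Y;\Z_2)$ and hence the set of spin structures, and $Y'\subset Y\times[0,1]$ is not a rational homology cobordism, so the known spin rational homology cobordism invariance of $\mubar$ does not apply; you would have to track the whole multiset of $\mubar$-values through an expansion by hand.

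For contrast, the paper never evaluates $\mubar(Y)$. It combines the vanishing of $\mubar$ on extendable spin structures with the Donaldson lattice embedding: Proposition~\ref{prop:mubar_embed_conditions} shows that any vertex belonging to a characteristic subset whose spin structure extends over $U_i$ has image with all coordinates in $\{0,\pm1\}$, and that the leading vertices of \emph{both} even-multiplicity arms in a complementary class must be such vertices. Bounding the number of coordinates such a vertex can occupy yields inequality \eqref{eq:even_fiber_bound}. Case \eqref{classif:case3} arises exactly when a partition contains the complementary class $\{\frac{p}{q},\frac{r}{s},pr\}$; if $pr$ is even then, since $\frac{q}{p}+\frac{s}{r}+\frac{1}{pr}=1$ forces $\gcd(p,r)=1$, exactly one of $p,r$ is even and the class has precisely two even-multiplicity fibers, so \eqref{eq:even_fiber_bound} gives $pr\leq 1+(p-1)+(r-1)=p+r-1$, impossible for $p,r>1$. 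If you want to push your direct computation through, you would at minimum need to enumerate the characteristic subsets, compute all their $\mubar$-values, and rule out the existence of $2d-1$ vanishing values arranged as two crossing cosets; this is tractable for the minimal member $S^2(2;\frac{p}{q},\frac{r}{s},pr,\frac{pr}{pr-1})$ but becomes unwieldy for a general member of the family, which is precisely the difficulty the paper's hybrid argument avoids.
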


We conjecture that for $Y$ to smoothly embed, not only must $Y$ be partitionable as in Theorem~\ref{thm:partitions}, but that each complementary class in the partitions must have size two. This would rule out the spaces in Theorem~\ref{thm:classification_e_ge_k2}\eqref{classif:case3} from embedding. It would also imply that if $e > 1$ and $Y$ smoothly embeds in $S^4$, then $Y$ is necessarily an expansion of a partitionable space (see Lemma~\ref{lemma:2on5expansion}\eqref{enum:comppairexp}). This suggests the following conjecture.

\begin{conjecture}\label{conj:expansion} A Seifert fibered space $Y$ over $S^2$ with $\varepsilon(Y) > 0$ smoothly embeds in $S^4$ if and only if it is obtained by a (possibly empty) sequence of expansions from some $Y'$ of the form $Y' = S^2(1; \frac{p_1}{q_1}, \ldots, \frac{p_l}{q_l})$ with $\frac{p_i}{q_i}>1$ for all $i$ which also smoothly embeds in $S^4$.
\end{conjecture}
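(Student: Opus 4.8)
The ``if'' direction is already settled: if $Y$ is obtained by a sequence of expansions from some embeddable $Y'=S^2(1;\tfrac{p_1}{q_1},\dots,\tfrac{p_l}{q_l})$, then iterating Lemma~\ref{lem:add_fibers} shows that $Y$ embeds smoothly in $S^4$. So the entire content is the ``only if'' direction, and the plan is to prove it by induction on the normalized central weight $e(Y)$, stripping off one expansion at a time until one reaches weight $1$.

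Suppose then that $Y=S^2(e;\tfrac{p_1}{q_1},\dots,\tfrac{p_k}{q_k})$ embeds smoothly in $S^4$ with $e\ge 2$. By Theorem~\ref{thm:partitions}, $Y$ is partitionable, so it comes equipped with two partitions $P_1,P_2$ of $\{1,\dots,k\}$ into $e$ classes satisfying conditions~\eqref{cond:noncomp}, \eqref{cond:ineq} and \eqref{cond:comp} of Definition~\ref{def:partitionable}. The first ingredient one needs is the refinement flagged just before the conjecture, namely that \emph{every complementary class of $P_1$ and of $P_2$ has size exactly two}, hence of the form $\{\tfrac{a}{a-b},\tfrac{a}{b}\}$. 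Granting this, one argues as in the proofs of Theorem~\ref{thm:classification_e_ge_(k+1)/2} and Theorem~\ref{thm:classification_e_ge_k2} to exhibit $Y$ as an expansion of a Seifert fibered space $Y_0$ with central weight $e-1$ and $\varepsilon(Y_0)=\varepsilon(Y)>0$; this is the assertion, recorded in the text preceding the conjecture, that the size-two refinement forces an embeddable $Y$ with $e>1$ to be an expansion. It then remains to show: (i) that $Y_0$ itself embeds smoothly in $S^4$, a converse to Lemma~\ref{lem:add_fibers} for this particular de-expansion; and (ii), by the inductive hypothesis applied to $Y_0$, that $Y_0$, and hence $Y$, is an iterated expansion of an embeddable $S^2(1;\dots)$. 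The base case $e(Y)=1$ is immediate, since $Y$ is then already of the required form.

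The hard part is twofold. The deeper obstacle is the size-two refinement itself: the Donaldson-theoretic input behind Theorem~\ref{thm:partitions} produces the partitions but gives no bound on the sizes of the complementary classes, so excluding a complementary class of size $\ge 3$ appears to require genuinely new input, such as a finer analysis of which lattices embed into a diagonal one, Heegaard Floer $d$-invariants, or a strengthening of the Neumann--Siebenmann argument of Proposition~\ref{prop:k2_even_cond}, which as it stands settles only the smallest offending family, and only over $S^2$, and only partially. The second obstacle is step~(i), reversing Lemma~\ref{lem:add_fibers}: the natural approach is to start from the given embedding $Y\hookrightarrow S^4$, locate the distinguished complementary pair $\{\tfrac{a}{a-b},\tfrac{a}{b}\}$ inside the embedded $3$-manifold, and undo the expansion by an ambient compression (a swallow-follow move along a regular fiber); the delicate point is realizing that pair of fibers geometrically inside the embedded copy of $Y$ rather than merely in its Seifert invariants, and then carrying out the compression ambiently in $S^4$. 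These two gaps are exactly what keep the statement a conjecture rather than a theorem.
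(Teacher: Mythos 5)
The statement you are addressing is Conjecture~\ref{conj:expansion}; the paper offers no proof of it, so there is nothing to compare your argument against except the surrounding discussion. Your treatment of the ``if'' direction is correct and is exactly the paper's: iterate Lemma~\ref{lem:add_fibers} (together with the observation that a space of the form $S^2(1;\ldots)$ is its own base case). Your outline of the ``only if'' direction also tracks the authors' own heuristic precisely: assuming every complementary class in the partitions of Theorem~\ref{thm:partitions} has size two, one gets $m_1+m_2=2e-2\ge e$ for $e\ge 2$, so Lemma~\ref{lemma:2on5expansion}\eqref{enum:comppairexp} exhibits $Y$ as an expansion of a partitionable space of central weight $e-1$, and one would like to induct down to $e=1$.

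However, what you have written is not a proof, and you say as much: both steps you flag are genuine open problems, not fillable gaps. First, the size-two restriction on complementary classes is itself only conjectured in the paper; the Donaldson-theoretic input (Theorem~\ref{thm:emb_ineq_eq_case} via Lemma~\ref{lem:embedding_structure}) places no upper bound on class size, and the only partial progress is Proposition~\ref{prop:k2_even_cond} and Proposition~\ref{prop:even_conditions}, which handle special cases using $\overline{\mu}$. Second, and independently, even granting the size-two refinement, Lemma~\ref{lemma:2on5expansion} only produces a \emph{partitionable} $Y_0$, and partitionable does not imply embeddable; since the conjecture demands that the terminal $Y'$ with $e=1$ actually embed, your induction cannot close without a converse to Lemma~\ref{lem:add_fibers} (that an embedding of $Y$ descends to one of $Y_0$), for which no technique is available in the paper. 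Note also that this second gap is unavoidable even at the last step of your induction, not merely an artifact of the inductive framing. So the correct assessment is: your proposal accurately reproduces the authors' intended strategy and correctly isolates the two missing ingredients, but it establishes only the ``if'' direction; the ``only if'' direction remains open.
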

Notice that the ``if'' direction of this conjecture is provided by Lemma~\ref{lem:add_fibers}. Since expansion preserves the generalized Euler invariant, the space $Y'$ in this conjecture necessarily satisfies $\varepsilon(Y')=\varepsilon(Y)>0$.

As well as the behaviour in the case $e\geq \frac{k}{2}$ discussed above, we have further evidence for the ``only if'' direction. We find that expansions naturally arise from the partitionable condition. For example, when $e\geq \frac{2k+3}{5}$ a partitionable space is obtained by expansion from some smaller partitionable space (see Lemma~\ref{lemma:2on5expansion}). We also consider the case of $Y$ with all exceptional fibers of even multiplicity. For such spaces the $\mubar$ invariant is particularly effective and shows that if $Y$ smoothly embeds in $S^4$, then in the induced partitions there can only be one complementary class of size larger than two and this class has size three (see Proposition~\ref{prop:even_conditions}). It may be possible that further analysis can rule the existence a complementary class of size three.

If true, Conjecture~\ref{conj:expansion} would reduce the problem of which $Y$ (over base surface $S^2$) smoothly embed in $S^4$ to the case when $e = 1$, which we now briefly discuss. When $k = 3$ and $Y$ is an integer homology sphere several infinite families of examples, as well as some sporadic examples, are known to bound Mazur manifolds and thus to embed in $S^4$ \cite{MR544597, MR634760, MR607896, MR774711}. Donald showed that the rational homology sphere $S^2(1; 4, 4, \frac{12}{5})$ smoothly embeds in $S^4$ \cite[Example~2.14]{MR3271270}. However a conjectural picture of which Seifert fibered spaces embed in the $k=3$ case remains unclear. It is an interesting open question whether there exist any examples which embed with $k \ge 4$. There appears to be some evidence towards a negative answer to this question. In \cite[Conjecture 20]{MR2400877}, Koll\'{a}r conjectures that every Seifert fibered integer homology sphere with $k \ge 4$ does not smoothly bound an integer homology ball, and thus does not smoothly embed in $S^4$. These considerations along with the upper bound from Theorem~\ref{thm:classification_e_ge_(k+1)/2} lead us to a further conjecture, which in particular, implies a negative answer to the aforementioned question. 
\begin{conjecture}\label{conj:3_lines}
If $Y=S^2(e;\frac{p_1}{q_1}, \dots, \frac{p_k}{q_k})$ smoothly embeds in $S^4$, where $\frac{p_i}{q_i}>1$ for all $i$ and $\varepsilon(Y)>0$, then $e\in \{\frac{k+1}{2},\frac{k}{2}, \frac{k-1}{2}\}$.
\end{conjecture}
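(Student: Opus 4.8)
We describe a strategy that would establish Conjecture~\ref{conj:3_lines}. The bound $e \le \frac{k+1}{2}$ is Theorem~\ref{thm:classification_e_ge_(k+1)/2}, so what remains is the lower bound $e \ge \frac{k-1}{2}$, i.e. $k \le 2e+1$. Assume $Y = S^2(e; \frac{p_1}{q_1}, \ldots, \frac{p_k}{q_k})$ is in standard form and smoothly embeds in $S^4$. By Theorem~\ref{thm:partitions} it is partitionable; fix one of the two partitions $P$ and write $C$ for its unique non-complementary class (so $\sum_{j \in C} \frac{q_j}{p_j} = 1 - \frac{1}{L}$, where $L = \lcm(p_1,\ldots,p_k)$) and $C'_1, \ldots, C'_{e-1}$ for its complementary classes (so $\sum_{j \in C'_i} \frac{q_j}{p_j} = 1$). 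Since $0 < \frac{q_j}{p_j} < 1$ for every $j$, each complementary class has $|C'_i| \ge 2$ while $|C| \ge 1$, so $k = |C| + \sum_i |C'_i| \ge 2e-1$, recovering the bound of Theorem~\ref{thm:classification_e_ge_(k+1)/2}. Since $(|C|-1) + \sum_{i=1}^{e-1}(|C'_i|-2) = k - 2e + 1$, the conjecture is \emph{equivalent} to
\[
  (|C| - 1) + \sum_{i=1}^{e-1} \bigl( |C'_i| - 2 \bigr) \;\le\; 2 .
\]

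The plan is to prove the two statements: \textbf{(A)} every complementary class has size exactly $2$, and \textbf{(B)} the non-complementary class has size at most $3$. Given (A) we get $k = 2(e-1) + |C|$, whereupon the conjecture follows from (B). Statement (A) is exactly the conjecture made just before Conjecture~\ref{conj:expansion}, and I would attack it by sharpening the Donaldson-theorem input behind Theorem~\ref{thm:partitions}. An embedding $Y \hookrightarrow S^4$ splits $S^4$ as $W_1 \cup_Y W_2$ with each $W_i$ a rational homology ball bounded by $\pm Y$ (using that $\varepsilon(Y)>0$ makes $Y$ a rational homology sphere); capping the canonical linear plumbing $X$ with $\partial X = \pm Y$ by the appropriate $W_i$ produces a closed definite $4$-manifold, so Donaldson's theorem yields an isometric embedding of the plumbing lattice into a diagonal lattice $\Z^n$. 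The partition $P$ records which basis vectors of $\Z^n$ the linear arms of $X$ meet, and a complementary class of size $\ge 3$ forces three of these arms into a configuration sharing basis vectors; one expects that playing the \emph{two} embeddings (coming from $W_1$ and $W_2$) off against each other through condition~\eqref{cond:comp} yields a contradiction. At present this is only carried out when all multiplicities $p_i$ are even, via the Neumann--Siebenmann invariant (Proposition~\ref{prop:even_conditions}), and even there one must still permit a single complementary class of size three -- so (A) in full generality is the genuine obstacle.

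For (B), once (A) is in hand the non-complementary class satisfies $\sum_{j \in C} \frac{q_j}{p_j} = 1 - \frac{1}{L}$, and we must rule out $|C| \ge 4$. This cannot follow from arithmetic alone, since Sylvester-type sums $\tfrac12 + \tfrac13 + \tfrac17 + \cdots$ approach $1$ using arbitrarily many terms, so topology is needed: I would combine $\mubar$-computations in the style of Propositions~\ref{prop:k2_even_cond} and \ref{prop:even_conditions}, applied to the class $C$ carrying the arithmetic surplus $\tfrac1L = \varepsilon(Y)$, with $d$-invariant or Furuta-type inequalities for the two fillings $W_1, W_2$; the resulting parity and congruence constraints should be incompatible with $|C| \ge 4$. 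The crux of the whole argument is step (A): eliminating size-three complementary classes for arbitrary (not merely even) multiplicities, for which the Donaldson obstruction as presently used is not sharp enough, and some new input -- a finer analysis of the two simultaneous lattice embeddings, or a Heegaard Floer or instanton refinement of the obstruction -- appears to be necessary.
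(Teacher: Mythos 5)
The statement you are asked to prove is labelled a \emph{conjecture} in the paper, and the paper does not prove it: the authors establish only the special case in which every $p_i$ is even, as a corollary of Theorem~\ref{thm:lower_bound} ($\dim H^1(Y;\Z_2)\leq 2e$) together with Lemma~\ref{lem:order_mod2_cohom}. Your submission is likewise not a proof but a research plan, and you say so yourself. The parts of it that are actually argued are correct: the upper bound $e\leq\frac{k+1}{2}$ is Theorem~\ref{thm:classification_e_ge_(k+1)/2}; each complementary class has size at least $2$ because $\frac{q_j}{p_j}<1$ for all $j$; and the identity $(|C|-1)+\sum_{i=1}^{e-1}(|C'_i|-2)=k-2e+1$ correctly reduces the conjecture to your statements (A) (all complementary classes have size $2$) and (B) (the non-complementary class has size at most $3$). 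This reduction is consistent with the authors' own discussion preceding Conjecture~\ref{conj:expansion}, where (A) is itself stated as a conjecture.

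The genuine gap is that neither (A) nor (B) is established, and no new argument is supplied for either. For (A), the Donaldson-theorem input as encoded in Theorem~\ref{thm:partitions} genuinely permits complementary classes of size three -- indeed Theorem~\ref{thm:classification_e_ge_k2}\eqref{classif:case3} exhibits an unresolved family arising exactly from a size-three complementary class, so ``playing the two embeddings off against each other through condition~\eqref{cond:comp}'' cannot succeed without some finer invariant of the lattice embeddings than the induced partitions. For (B), the appeal to $\mubar$, $d$-invariants, or Furuta-type inequalities is entirely programmatic: no computation is carried out, and the paper's own $\mubar$ analysis (Proposition~\ref{prop:even_conditions}) controls only the count of even multiplicities within classes, which says nothing about $|C|$ when the $p_i$ in $C$ are odd. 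One small inaccuracy worth flagging: Proposition~\ref{prop:even_conditions} requires only that some $p_j$ be even, not that all of them be; it is the derivation of the bound $e\geq\frac{k-1}{2}$ from it that needs all $p_i$ even (so that $N=k$ in Lemma~\ref{lem:order_mod2_cohom}). As it stands, your text is an accurate map of what is known and what is open, but it does not prove the statement.
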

We prove this conjecture in the special case where every exceptional fiber has even multiplicity. More generally, using the Neumann-Siebenmann invariant we prove a lower bound on $e$, which complements the upper bound given in Theorem~\ref{thm:classification_e_ge_(k+1)/2}.
\begin{restatable}{thm}{thmlowerbound}\label{thm:lower_bound}
Let $Y=S^2(e; \frac{p_1}{q_1}, \dots,\frac{p_k}{q_k})$ be a Seifert fibered space with $\varepsilon(Y)>0$ and $\frac{p_i}{q_i}>1$ for all $i$. Then $Y$ can embed smoothly in $S^4$ only if $\dim H^1(Y;\Z_2)\leq 2e$.
\end{restatable}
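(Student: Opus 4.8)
The plan is to use the Neumann-Siebenmann invariant $\mubar$ as a $\Z_2$-valued obstruction, exploiting the fact that for a $3$-manifold $Y$ bounding a $4$-manifold $X$ that is part of a closed spin-type situation coming from an embedding $Y \hookrightarrow S^4$, the invariant $\mubar(Y,\spinct)$ must vanish for the $\spinc$ structure extending over both sides. Write $S^4 = X_1 \cup_Y X_2$. Since $Y$ is a Seifert fibered space over $S^2$, it bounds the canonical plumbing $4$-manifold $P$ on the star-shaped plumbing graph, and $\mubar$ is computed from this plumbing via Neumann's formula $\mubar(Y,\spinct) = \sigma(P) - w \cdot w$, where $w$ is the Wu class (the characteristic vector reducing to the mod-$2$ class determined by $\spinct$). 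The key input is that $H^1(Y;\Z_2)$ controls the number of spin structures on $Y$: there are exactly $|H^1(Y;\Z_2)| = 2^{\dim H^1(Y;\Z_2)}$ of them, and each is the restriction of some $\spinc$ structure.

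First I would set up the Donaldson-theorem framework exactly as in the proof of Theorem~\ref{thm:partitions}: an embedding $Y \hookrightarrow S^4$ splits $S^4$ into $X_1, X_2$ with $\partial X_i = \pm Y$, and gluing each $X_i$ to the plumbing $P$ (or to $-P$) along $Y$ produces closed negative (or positive) definite $4$-manifolds whose intersection forms embed, by Donaldson's theorem, as standard diagonal lattices $\Z^n$. The crucial refinement for this theorem is to track $\spinc$/spin structures through this construction. Each spin structure $\spincs$ on $Y$ extends to a $\spinc$ structure on $X_1 \cup_Y (-P)$ and on $X_2 \cup_Y P$, and the rho/$\mubar$ invariant computed from either side must agree and, because $Y$ sits inside $S^4$ bounding on both sides, must equal $0$ when computed against the embedded standard lattice (the Wu class of the diagonal form $\Z^n$ is $(1,\ldots,1)$, and the relevant correction term vanishes). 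So every spin structure on $Y$ forces a vanishing condition.

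Second, I would count: the plumbing $P$ has some fixed signature and $b_2$, and as $\spincs$ ranges over all $2^{\dim H^1(Y;\Z_2)}$ spin structures, the quantities $w \cdot w$ (equivalently the images of characteristic covectors in the glued lattice) take on a controlled set of values. The vanishing $\mubar = 0$ for all of them, combined with the Donaldson embedding constraint that the characteristic vectors of the diagonal lattice $\Z^n$ have norm-squared $\equiv n \pmod 8$ and are pairwise distinguished, forces an inequality relating the number of independent spin structures to $b_2^+$ or $b_2^-$ of the relevant piece — and this piece has $b_2 = 2e$ after accounting for the central vertex and the $k$ legs of the star-shaped graph (the genus-zero orientable base contributes exactly the $e$-dimensional "room" in each of the two complementary partitions from Definition~\ref{def:partitionable}). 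Tracking the arithmetic carefully yields $\dim H^1(Y;\Z_2) \le 2e$.

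The main obstacle I anticipate is the bookkeeping that links $\dim H^1(Y;\Z_2)$ to the combinatorics of the plumbing graph and to the ``width'' $2e$: one must show that the span of the characteristic covectors arising from distinct spin structures, once pushed into the standard diagonal lattice via the Donaldson embedding, cannot be too large unless $n$ (hence $e$) is correspondingly large. Concretely, the number of spin structures on $Y$ equals $2^{\dim H^1(Y;\Z_2)}$, while the spin structures on $P$ (and the way they restrict to $Y$) are governed by $\Char(P)/2\Char(P)$, whose relevant quotient has rank tied to the number of legs; reconciling this with the fact that an embedding provides only $e$ ``complementary classes'' worth of diagonal basis vectors on each side is where the factor of $2$ and the bound $2e$ emerge. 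I would expect to borrow heavily from the vanishing properties of $\mubar$ under embeddings (in the style of the arguments behind Proposition~\ref{prop:k2_even_cond} and Proposition~\ref{prop:even_conditions}) and from the structure of the partitions furnished by Theorem~\ref{thm:partitions}.
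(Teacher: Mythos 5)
Your proposal is in the right territory---the paper's proof also runs through the $\mubar$ invariant and the Donaldson-theorem lattice embeddings---but as written it contains one incorrect claim and omits the two steps that actually produce the bound. You assert that ``every spin structure on $Y$ forces a vanishing condition.'' This is false: $\mubar(Y,\spincs)=0$ is only guaranteed when $\spincs$ extends as a \emph{spin} structure over a spin rational homology ball, and not every spin structure on $Y$ extends over $U_1$ (or $U_2$). The essential input (Lemma~\ref{lem:spin_splitting}) is that $|\spin(Y)|=d^2$ and exactly $d=2^{(N-1)/2}$ spin structures extend over each side, where $N$ is the number of even-multiplicity exceptional fibers. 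It is this \emph{lower bound} on how many spin structures must extend over $U_1$, played against an \emph{upper bound} coming from the lattice embedding, that creates the tension; a version in which all spin structures give vanishing conditions yields no inequality at all.

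Second, the mechanism you offer for the appearance of $2e$---that some piece of the decomposition has $b_2=2e$---is not correct and does not lead anywhere (the plumbing has $b_2=|\Gamma|$ and the $U_i$ are rational homology balls). In the paper the $2e$ is combinatorial: the central vertex maps to $e_1+\dots+e_e$, each class of the partition from Theorem~\ref{thm:partitions} owns one coordinate $e_i$, every characteristic subset corresponding to an extending spin structure contains exactly one leading vertex per class (Proposition~\ref{prop:mubar_embed_conditions}\eqref{it:unique_ei}), and at most two distinct leading vertices per class can occur across all such subsets (Proposition~\ref{prop:mubar_embed_conditions}\eqref{it:common_ei}). Hence at most $2^m$ spin structures extend over $U_1$, where $m\le e$ counts the classes containing at least two even-multiplicity arms; comparing with $2^{(N-1)/2}$ forces $N\le 2e+1$. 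You also omit the final translation $\dim H^1(Y;\Z_2)=N-1$ for $N\ge 1$ (Lemma~\ref{lem:order_mod2_cohom}), which converts $N\le 2e+1$ into the stated bound, together with the easy $N=0$ case where the direct-double condition forces $H^1(Y;\Z_2)=0$. Without the spin-structure count, the two-vertices-per-coordinate restriction, and the identification of $\dim H^1(Y;\Z_2)$ with $N-1$, the argument does not close.
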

If $p_i$ is even for precisely $N \ge 1$ different values of $i\in\{1,\ldots,k\}$, then $\dim H^1(Y;\Z_2) = N-1$ (see Lemma~\ref{lem:order_mod2_cohom}). So when $p_i$ is even for all $i$, Theorem~\ref{thm:lower_bound} provides the lower bound $e\geq \frac{k-1}{2}$ as stipulated by Conjecture~\ref{conj:3_lines}.

It is worth noting that the obstructions considered in this paper use only the fact that $S^4$ is an integer homology sphere. So all of our results could be be restated in terms of Seifert fibered spaces embedding in integer homology $4$-spheres. It is an interesting open question whether there is a 3-manifold which does not embed in $S^4$, but does embed in some other integer homology sphere.

In another direction, we also show that the Seifert fibered spaces over $S^2$ in Theorem~\ref{thm:classification_e_ge_(k+1)/2} and Theorem~\ref{thm:classification_e_ge_k2}\eqref{classif:case2} are double branched covers of doubly slice Montesinos links. A link in $S^3$ is (smoothly) doubly slice if it arises as the cross-section of an unknotted smoothly embedded $2$-sphere in $S^4$. It is an easy consequence of this definition that the double branched cover of a doubly slice link smoothly embeds in $S^4$. 
Note, however, that not every embedding of Seifert fibered spaces can arise in this manner. The integer homology sphere $S^2(1; 3, \frac{5}{2}, \frac{34}{9})$ bounds a Mazur manifold and therefore smoothly embeds in $S^4$ \cite{MR774711}. However, it is the double branched cover of precisely one Montesinos knot, and this knot is not doubly slice (in fact, it is not even slice as it fails the Fox-Milnor condition).

As a consequence of Theorem~\ref{thm:classification_e_ge_(k+1)/2} and these constructions of doubly slice links, we obtain a classification of the smoothly doubly slice odd pretzel knots up to mutation. An odd pretzel knot is one of the form $P(c_1, \ldots, c_n)$, where the $c_i$ are odd integers, see Figure~\ref{fig:pretzel}.

\begin{restatable}{thm}{thmoddpretzels}\label{thm:oddpretzels}
If $K$ is an odd pretzel knot, then the following are equivalent:
\begin{enumerate}[(i)]
\item\label{it:pretzel_embed} $\Sigma(K)$ embeds smoothly in $S^4$,
\item\label{it:pretzel_mutant} $K$ is a mutant of a smoothly doubly slice odd pretzel knot,
\item\label{it:pretzel_list} and $K$ is a mutant of $P(a,-a, \dots, a)$ for some odd $a$ with $|a|\geq 3$.
\end{enumerate}
\end{restatable}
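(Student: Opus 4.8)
The plan is to establish the cycle of implications \eqref{it:pretzel_list}$\Rightarrow$\eqref{it:pretzel_mutant}$\Rightarrow$\eqref{it:pretzel_embed}$\Rightarrow$\eqref{it:pretzel_list}, using throughout that $\Sigma(P(c_1,\ldots,c_n))$ is the Seifert fibered space over $S^2$ with unnormalised invariants $(0;\tfrac1{c_1},\ldots,\tfrac1{c_n})$, that the double branched cover is a mutation invariant, and that permuting the parameters of a pretzel link (and cancelling an adjacent $(+1,-1)$ pair of bands) can be realised by mutations. For \eqref{it:pretzel_list}$\Rightarrow$\eqref{it:pretzel_mutant}, I would normalise the invariants of $\Sigma(P(a,-a,\ldots,a))$ and choose the orientation with $\varepsilon>0$ to get
\[
\Sigma\bigl(P(a,-a,\ldots,a)\bigr)\;=\;F\!\left(\tfrac{n+1}{2};\ \tfrac{a}{a-1},\ \Bigl\{a,\tfrac{a}{a-1}\Bigr\}^{\times\frac{n-1}{2}}\right),
\]
which is exactly the family of Theorem~\ref{thm:classification_e_ge_(k+1)/2} with $k=n$, $e=\tfrac{k+1}{2}$ and $\varepsilon=\tfrac1a$. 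Combining this with the construction of doubly slice Montesinos links realising the Seifert fibered spaces of Theorem~\ref{thm:classification_e_ge_(k+1)/2}, and checking that the doubly slice link so produced is a mutant of $P(a,-a,\ldots,a)$ (two Montesinos links with the same double branched cover differ only by permutations of their tangles and by moving integer parts, hence are mutants), shows that $P(a,-a,\ldots,a)$ is a mutant of a smoothly doubly slice odd pretzel knot, and hence so is every mutant $K$ of it. The implication \eqref{it:pretzel_mutant}$\Rightarrow$\eqref{it:pretzel_embed} is immediate: if $K$ is a mutant of a smoothly doubly slice knot $J$ then $\Sigma(K)=\Sigma(J)$, which smoothly embeds in $S^4$ because the double branched cover of a doubly slice link does.

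For \eqref{it:pretzel_embed}$\Rightarrow$\eqref{it:pretzel_list}, let $K=P(c_1,\ldots,c_n)$ be an odd pretzel knot with $\Sigma(K)$ smoothly embedded in $S^4$; after passing to the mirror if necessary (again an odd pretzel knot, and a mutant of $P(a,-a,\ldots,a)$ precisely when $K$ is) we may put $\Sigma(K)=F(e;\tfrac{p_1}{q_1},\ldots,\tfrac{p_k}{q_k})$ in standard form with $\varepsilon>0$. One first cancels $(+1,-1)$ pairs of bands to reduce to the case that some $|c_i|\ge3$, the remaining (degenerate) cases being routine. The structural facts to exploit are: the multiplicities $p_i$ are exactly the $|c_i|\ge3$, hence odd; each Seifert invariant satisfies $\tfrac{p_i}{q_i}\in\{\,p_i,\ \tfrac{p_i}{p_i-1}\,\}$, since $q_i\equiv\pm1\pmod{p_i}$; $\varepsilon(\Sigma(K))=\bigl|\sum_{i=1}^n 1/c_i\bigr|$; and $|\tor H_1(\Sigma(K))|=\prod_i p_i/\lcm(p_1,\ldots,p_k)$, which by Theorem~\ref{thm:partitions} must be a perfect square. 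By Theorem~\ref{thm:partitions}, $\Sigma(K)$ is partitionable; a short computation from the partition axioms then gives $\varepsilon(\Sigma(K))=1/\lcm(p_1,\ldots,p_k)$ and, since each complementary class has size $\ge2$ with equality exactly for a complementary pair $\{p,\tfrac p{p-1}\}$, the bound $2e-k\le1$. The case $2e-k=0$ is excluded: Theorem~\ref{thm:classification_e_ge_k2} would put $\Sigma(K)$ into one of two explicit families, and — using that all multiplicities are odd, via the relation $\tfrac sr+\tfrac qp=1-\tfrac1{pr}$ — neither can have all of its Seifert invariants of the form $\{m,\tfrac m{m-1}\}$. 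What remains is to rule out $2e-k\le-1$, i.e.\ to conclude $e=\tfrac{k+1}{2}$.

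Once $e=\tfrac{k+1}{2}$ is known, Theorem~\ref{thm:classification_e_ge_(k+1)/2} applies (its hypotheses hold and $\Sigma(K)$ embeds) and gives $\Sigma(K)=F\!\left(e;\tfrac a{a-1},\{a,\tfrac a{a-1}\}^{\times(e-1)}\right)$ for an integer $a\ge2$, necessarily odd (since the multiplicities are odd) and $\ge3$. Every exceptional multiplicity equals $a$, so exactly $k=2e-1$ of the $c_i$ satisfy $|c_i|=a$ and the rest equal $\pm1$; writing $A=\#\{c_i=a\}$, $D=\#\{c_i=-a\}$, $B=\#\{c_i=1\}$, $C=\#\{c_i=-1\}$, matching the counts of fibers with Seifert invariant $a$ and with invariant $\tfrac a{a-1}$ against the family forces $\{A,D\}=\{e-1,e\}$, and then $\varepsilon(\Sigma(K))=\bigl|\tfrac{A-D}{a}+(B-C)\bigr|=\tfrac1a$ forces $B=C$ (as $a\ge3$). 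Cancelling the $B$ pairs $(1,-1)$ by mutation and reordering the surviving copies of $a$ and $-a$ by mutation then exhibits $K$ as a mutant of $P(a,-a,\ldots,a)$, which is \eqref{it:pretzel_list}.

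I expect the main obstacle to be exactly the step left open above: showing that an odd pretzel knot with partitionable double branched cover must satisfy $2e-k=1$. Unlike the even-multiplicity setting, the Neumann--Siebenmann invariant $\mubar$ is of no use here because $\dim H^1(\Sigma(K);\Z_2)=0$ when every multiplicity is odd, so the inequality has to be wrung out of the interaction of partitionability with the oddness of all the $p_i$, the perfect-square constraint on $|\tor H_1(\Sigma(K))|$, and the independence condition \eqref{cond:comp} on the two partitions. The most promising route seems to be an expansion argument: show that a partitionable odd-pretzel Seifert fibered space possessing a complementary pair is obtained by expansion (Definition~\ref{def:expansion}) from a smaller partitionable one, iterate down to the base $F(1;\tfrac a{a-1})=S^3$, and track how the parity of $n$ bounds the sizes of the special class and of any complementary class of size $\ge3$ along the way.
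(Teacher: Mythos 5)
Your implications \eqref{it:pretzel_list}$\Rightarrow$\eqref{it:pretzel_mutant}$\Rightarrow$\eqref{it:pretzel_embed} and the endgame once $e=\tfrac{k+1}{2}$ is known match the paper's argument. But the proof has a genuine gap exactly where you flag it: you never establish $e=\tfrac{k+1}{2}$, and the reason you give for abandoning the tool that closes it is wrong. You write that $\mubar$ ``is of no use here because $\dim H^1(\Sigma(K);\Z_2)=0$ when every multiplicity is odd.'' Having a \emph{unique} spin structure does not make $\mubar$ vacuous --- quite the opposite. By Lemma~\ref{lem:spin_splitting}, an embedding of the rational homology sphere $\Sigma(K)$ in $S^4$ exhibits it as the boundary of a spin rational homology ball, and since $|\spin(\Sigma(K))|=1$ the unique spin structure necessarily extends; hence $\mubar(\Sigma(K))=0$ is a nontrivial constraint. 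This single equation is what the paper uses, and it is precisely the input that the partitionability/perfect-square/expansion considerations you list cannot replace (your proposed expansion induction is not carried out, only conjectured).

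Concretely, the paper's route is: after flyping the $\pm1$ bands into one twist region and reflecting so that $\varepsilon>0$, write
\[
\Sigma(K)=S^2\Bigl(m+e;\;a_1,\dots,a_n,\tfrac{b_1}{b_1-1},\dots,\tfrac{b_m}{b_m-1}\Bigr),
\]
with all $a_i,b_j>1$. Since $\tfrac{b_j-1}{b_j}>\tfrac12$, each class of a partition from Theorem~\ref{thm:partitions} contains at most one fiber of type $\tfrac{b_j}{b_j-1}$, so $m+e\geq m$, i.e.\ $e\geq 0$. The unique characteristic subset of the positive definite plumbing consists of the central vertex together with $\tfrac{b_j-1}{2}$ weight-two vertices on each $\tfrac{b_j}{b_j-1}$-arm, and a direct count gives $\mubar(\Sigma(K))=n-m+1-e$, whence $e=n-m+1$. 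Feeding this into the upper bound $m+e\leq \tfrac{n+m+1}{2}$ of Theorem~\ref{thm:classification_e_ge_(k+1)/2} yields $0\leq e\leq \tfrac{e}{2}$, so $e=0$, $n=m-1$, and the normalized central weight is $m=\tfrac{(n+m)+1}{2}$; the equality case of Theorem~\ref{thm:classification_e_ge_(k+1)/2} then finishes the classification. Note that this also makes your separate exclusion of the case $2e-k=0$ via Theorem~\ref{thm:classification_e_ge_k2} unnecessary. As written, your argument proves only the two easy implications plus a conditional statement, so it does not yet constitute a proof of the theorem.
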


In the special case where the odd pretzel knot has $3$ or $4$ strands, Theorem~\ref{thm:oddpretzels} follows from earlier work of Donald \cite[Theorem 1.5]{MR3271270}.

We also note one further easy application of our results to doubly slice Montesinos links. Although we were unable to find it stated in the literature, it seems possible that the following result was already known in the alternating case.

\begin{restatable}{prop}{qaltMontesinos}\label{prop:qa_doubly_slice}
A  quasi-alternating Montesinos link is never topologically doubly slice. 
\end{restatable}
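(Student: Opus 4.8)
The plan is to combine the double-branched-cover description of Montesinos links with the signature/Casson–Gordon obstruction to doubly slice links, using the fact that quasi-alternating links have especially well-behaved double branched covers.

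Recall that if $L$ is a Montesinos link, then $\Sigma(L)$ is a Seifert fibered space over $S^2$, and if $L$ is quasi-alternating then $\Sigma(L)$ is an $L$-space bounding a negative-definite (or positive-definite, after orientation) plumbing; in particular $\tor H_1(\Sigma(L))$ is nontrivial precisely when $L$ is not a knot with determinant $1$, and the linking form of $\Sigma(L)$ is (up to sign) that of the Goeritz form of an alternating diagram. The key classical fact I would invoke is that if $L$ is topologically doubly slice, then $\Sigma(L)$ embeds topologically in $S^4$, hence by Hantzsche's theorem $\tor H_1(\Sigma(L)) \cong G \oplus G$, and moreover the linking form of $\Sigma(L)$ must be \emph{hyperbolic}, i.e. metabolic with a metabolizer that is a direct summand (this is the refinement of Hantzsche's theorem for the torsion linking form when the 3-manifold bounds rational homology balls on both sides — see e.g. the linking-form obstructions of \cite{MR2538589}). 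So the strategy reduces to showing that for a quasi-alternating Montesinos link $L$, the linking form of $\Sigma(L)$ is never hyperbolic unless $L$ is already obstructed in some other elementary way.

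The concrete steps I would carry out: (1) Reduce to the standard form $\Sigma(L) = S^2(e; \frac{p_1}{q_1},\dots,\frac{p_k}{q_k})$ with $\frac{p_i}{q_i}>1$; note that quasi-alternating forces $\Sigma(L)$ to be an $L$-space, which by the work on Seifert $L$-spaces means (after orienting so it bounds a negative-definite plumbing) precisely that $\varepsilon(\Sigma(L)) > 0$ — so Theorem~\ref{thm:partitions} applies to the \emph{smooth} embedding question, but here I only want a topological statement, so instead I work directly with the linking form. (2) Compute the linking form from the plumbing graph: for a star-shaped negative-definite plumbing the linking form is presented by (minus) the inverse of the intersection matrix, and one shows its discriminant/determinant equals $\pm\det(L) = \pm |H_1(\Sigma(L))|$, which for an \emph{odd} determinant link is automatically a non-square obstruction in many cases, while for knots ($\det L$ odd) one argues via the $p$-primary decomposition. (3) For the $p$-primary part at each odd prime $p$, use the fact that a hyperbolic linking form must have even-dimensional $p$-torsion with a specific Witt class; the signature-type invariant $\sigma_\omega$ of the Goeritz/Seifert form of an alternating link (computed from the continued fractions $\frac{p_i}{q_i}$) is definite in sign because the link is alternating, and a definite nonzero form cannot be metabolic. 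This last point — that alternating (hence quasi-alternating, via the quasi-alternating $\Rightarrow$ thin $\Rightarrow$ $\sigma$ determines everything philosophy, or more robustly via the fact that $\Sigma(L)$ bounds a definite plumbing so any metabolizer would produce a definite form on a half-dimensional subspace of a nondegenerate form, a contradiction) — is the crux.

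**The main obstacle** will be Step (3): making precise the claim that quasi-alternating forces the relevant Witt-theoretic invariant of the linking form (or equivalently, that $\Sigma(L)$ bounds a definite 4-manifold \emph{and} has nontrivial $d$-invariants / Neumann–Siebenmann-type data incompatible with bounding a rational homology ball on both sides) to be an obstruction to hyperbolicity. The cleanest route is probably: quasi-alternating $\Rightarrow$ $\Sigma(L)$ is an $L$-space bounding a negative-definite plumbing $X$ with $b_2(X) = $ (number of vertices) $> 0$; if $L$ were topologically doubly slice then $\Sigma(L)$ bounds topological rational homology balls $W_1, W_2$ on both sides, so $X \cup_{\Sigma(L)} (-W_i)$ is a closed negative-definite topological 4-manifold, and by Donaldson's theorem (which holds for closed topological 4-manifolds with vanishing triple-cup obstruction, or one passes to the smooth setting since quasi-alternating Montesinos $\Sigma(L)$ actually bounds a \emph{smooth} definite plumbing) its intersection form is $-I$; this, combined with the requirement that the inclusion $H_2(X) \hookrightarrow H_2(X \cup -W_i)$ be a primitive sublattice whose orthogonal complement carries the image of $H_1(\Sigma(L))$ as a direct double, can be pushed to a contradiction unless $\det(L)$ is a perfect square \emph{and} the embedding of lattices exists — and then one eliminates the remaining square-determinant cases by a direct argument using that the plumbing lattice is not a direct summand of $-I^n$ (quasi-alternating Montesinos plumbings are "not standard" in the sense that the linking form is nontrivial). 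I will need to be careful about the smooth-versus-topological issue; since the statement is about topological double slicing, I'll use the linking-form formulation \cite{MR2538589} to stay in the topological category and avoid appealing to Donaldson's theorem, invoking instead that the linking pairing of a definite plumbing is never hyperbolic — which follows because a hyperbolic form has trivial Gauss sum of the appropriate normalization whereas the plumbing linking form's Gauss sum has phase controlled by $b_2(X) \bmod 8 \ne 0$.
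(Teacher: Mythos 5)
There is a genuine gap, and it lies exactly where you predicted: the ``crux'' of your Step (3) is false. You claim that the linking pairing of the boundary of a definite plumbing is never hyperbolic (equivalently, that bounding a definite plumbing with $b_2>0$ obstructs bounding rational homology balls on both sides). This is contradicted by the main results of this very paper: the Seifert fibered spaces $S^2(0;-a,a,\dots,-a)=S^2(\frac{k+1}{2};\frac{a}{a-1},a,\dots,\frac{a}{a-1})$ of Theorem~\ref{thm:classification_e_ge_(k+1)/2} and Proposition~\ref{prop:s4_embeddings2} have $\varepsilon>0$, bound positive definite star-shaped plumbings, are $L$-spaces, and nevertheless embed smoothly in $S^4$ (indeed they are double branched covers of doubly slice Montesinos links). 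So any argument whose only inputs are ``$\Sigma(L)$ is an $L$-space bounding a definite plumbing'' proves too much and cannot work; your Gauss-sum/Witt-class heuristic at the end would, if valid, also rule out these genuinely doubly slice examples. Relatedly, your Step (1) mischaracterizes the quasi-alternating condition: $\varepsilon>0$ (after orienting) holds for \emph{every} Seifert fibered rational homology sphere in standard form, not just the quasi-alternating ones, so it carries no obstruction by itself.

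The missing ingredient is the actual classification of double branched covers of quasi-alternating Montesinos links (\cite{issa2017quasialternating}): up to reflection, $\Sigma(K)=S^2(e;\frac{p_1}{q_1},\dots,\frac{p_k}{q_k})$ in standard form with either $e\geq k$, or $e=k-1$ and $\frac{q_{k-1}}{p_{k-1}}+\frac{q_k}{p_k}<1$. This quantitative constraint on $e$ relative to $k$ is what separates quasi-alternating Montesinos links from the doubly slice ones above (which have $e=\frac{k+1}{2}$). Once you have it, no linking-form or Witt-theoretic machinery is needed: the obvious partition into singletons (resp.\ singletons plus the pair $\{k-1,k\}$) has $e$ or more classes each satisfying $\sum_{j\in C}\frac{q_j}{p_j}<1$ strictly, and Lemma~\ref{lemma:homology_is_double2} shows this is incompatible with $\tor H_1(\Sigma(K))$ splitting as a direct double. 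Hantzsche's theorem (Corollary~\ref{cor:direct_double}) then already rules out a topologically locally flat embedding of $\Sigma(K)$ in $S^4$, hence topological double sliceness of $K$. Your instinct to invoke Hantzsche and the $p$-primary decomposition was the right one; the route through hyperbolicity of the linking form and definiteness of the plumbing is a dead end.
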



\subsection*{Structure}
The first three sections of this paper are primarily background material. Section~\ref{sec:background} discusses background material on Seifert fibered spaces and the plumbings they bound.
Section~\ref{sec:alg_top} recounts some homological consequences of embedding 3-manifolds into $S^4$. Section~\ref{sec:SF_homology} is devoted to calculating various homological properties of Seifert fibered spaces.
The analysis of the obstruction based on Donaldson's theorem is given in Section~\ref{sec:obstruction}, where we prove Theorem~\ref{thm:partitions}.
In Section~\ref{sec:application}, we study partitionable spaces and show that under various circumstances partitionable spaces can be obtained by expansion from smaller partitionable spaces. This allows us to prove the obstruction part of Theorem~\ref{thm:classification_e_ge_(k+1)/2} and Theorem~\ref{thm:classification_e_ge_k2}. The proofs of Theorem~\ref{thm:classification_e_ge_(k+1)/2} and Theorem~\ref{thm:classification_e_ge_k2} are completed in Section~\ref{sec:embeddings} by providing embeddings of the necessary spaces. The proof of Lemma~\ref{lem:add_fibers} is contained in this section, as well as some observations about the $\varepsilon=0$ case. In Section~\ref{sec:mubar} our attention turns to the $\mubar$ invariant, allowing us to prove Theorem~\ref{thm:lower_bound}, as well as give various restrictions in the presence of exceptional fibers of even multiplicity. Finally, Section~\ref{sec:dbly_slice} contains the results relating to doubly slice links.
 
\subsection*{Conventions and notation} Throughout this paper $F$ will always denote an orientable surface. We will sometimes use $\Z_n$ to denote the cyclic group $\Z_n = \Z/n\Z$. Unless explicitly stated otherwise all homology and cohomology groups are assumed to have integer coefficients.

\subsection*{Acknowledgements}
The first author would like to thank Cameron Gordon for his guidance, encouragement and support, Kyle Larson for many discussions on the general question of embedding $3$-manifolds in $4$-manifolds and Ryan Budney for his interest in this work and some interesting conversations. The second author would like to thank Brendan Owens for some helpful discussions.

\section{Seifert fibered spaces and plumbings}\label{sec:background}
In this section we briefly recall some standard facts on Seifert fibered spaces and definite manifolds which they bound, as well as establish notation and conventions. See \cite{MR518415} for a more in depth treatment on Seifert fibered spaces and plumbings.

Given a rational number $r > 1$, there is a unique (negative) continued fraction expansion
$$r = [a_1, \ldots, a_{n}]^- := a_1 - \cfrac{1}{a_2 - \cfrac{1}{\begin{aligned}\ddots \,\,\, & \\[-3ex] & a_{n-1} - \cfrac{1}{a_{n}} \end{aligned}}},$$
where $n \ge 1$ and $a_i \ge 2$ are integers for all $i \in \{1, \ldots, n\}$. We associate to $r$ the weighted linear graph (or linear chain) given in Figure~\ref{fig:linearchain}. We call the vertex with weight labelled by $a_i$ the $i$th vertex of the linear chain associated to $r$, so that the vertex labelled with weight $a_1$ is the first, or starting vertex of the linear chain.

\begin{figure}[h]
  \begin{overpic}[width=150pt]{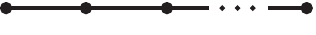}
    \put (0, 0) {$a_1$}
    \put (24, 0) {$a_2$}
    \put (50, 0) {$a_3$}
    \put (95, 0) {$a_n$}
  \end{overpic}
  \caption{Weighted linear chain representing $r = [a_1,\ldots,a_n]^-$.}
  \label{fig:linearchain}
\end{figure}

We denote by $Y_g = F(e; \frac{p_1}{q_1}, \ldots, \frac{p_k}{q_k})$ the Seifert fibered space over the closed orientable genus $g$ surface $F$ given in Figure~\ref{fig:sfs_as_surgery}, where $e\in\Z$, and $\frac{p_i}{q_i} \in \Q$ is non-zero for all $i\in\{1,\ldots,k\}$. When $g = 0$, this is the usual surgery presentation for a Seifert fibered space over $S^2$. In general, each of the $g$ pairs of $0$-framed components increases the genus of the base space by one, see \cite[Appendix]{MR1620508}. 

\begin{figure}[!ht]
  \begin{overpic}[height=160pt]{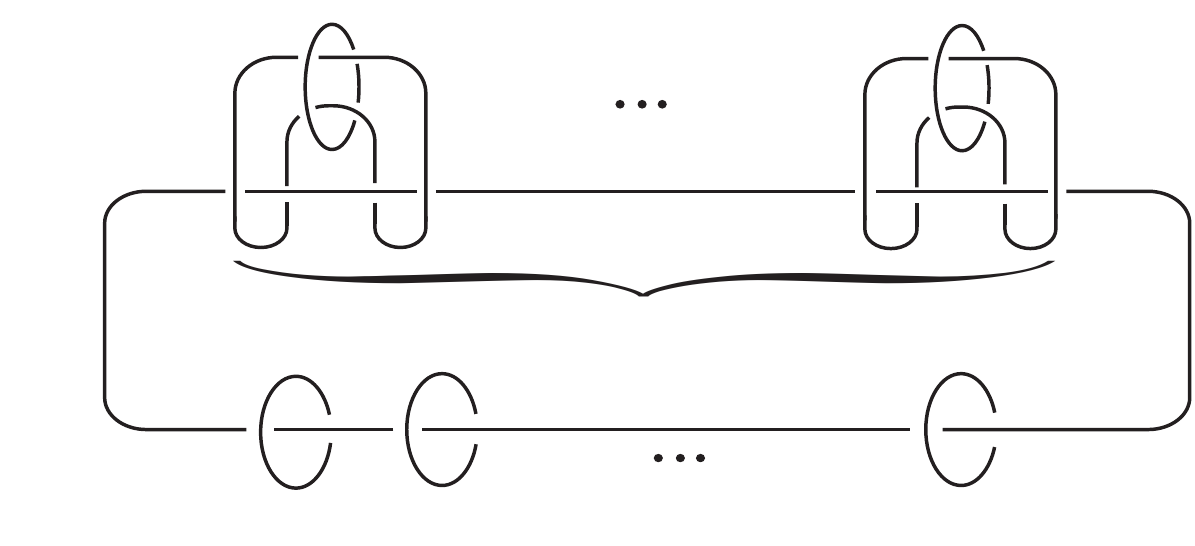}
    \put (5, 20) {$e$}
    \put (23, 1) {$\frac{p_1}{q_1}$}
    \put (35, 1) {$\frac{p_2}{q_2}$}
    \put (79, 1) {$\frac{p_k}{q_k}$}
    \put (48, 17) {$g$ copies}
    \put (17, 35) {$0$}
    \put (30, 42) {$0$}
    \put (70, 35) {$0$}
    \put (83, 42) {$0$}
  \end{overpic}
  \caption{Surgery presentation of the Seifert fibered space $F(e; \frac{p_1}{q_1}, \ldots, \frac{p_k}{q_k})$, where $F$ is an orientable genus $g$ surface.}
  \label{fig:sfs_as_surgery}
\end{figure}
The generalised Euler invariant of $Y_g$ is given by $\varepsilon(Y) = e - \sum_{i=1}^k \frac{q_i}{p_i}$. Every Seifert fibered space $Y_g$ is (possibly orientation reversing) homeomorphic to one in standard form, i.e. such that $\varepsilon(Y_g) \ge 0$ and $\frac{p_i}{q_i} > 1$ for all $i \in \{1,\ldots,k\}$. When in standard form, we call $e$ the normalized central weight of $Y_g$.

We henceforth assume that $Y_g$ is in standard form. Then $Y_g$ bounds a positive semi-definite $4$-manifold which we now describe. We first describe the case $Y_0$ where the base surface is $S^2$. If $\varepsilon(Y_0) \neq 0$ then $Y_0$ is a rational homology sphere with $|H_1(Y_0)| = |(p_1\cdots p_k) \varepsilon(Y_0)|$, and if $\varepsilon(Y_0) = 0$ then $Y_0$ is a rational homology $S^1\times S^2$.

For each $i\in\{1,\ldots, k\}$, we have the unique continued fraction expansion $\frac{p_i}{q_i} = [a_1^i, \ldots, a_{h_i}^i]^-$ where $h_i \ge 1$ and $a_j^i \ge 2$ are integers for all $j \in \{1,\ldots,h_i\}$. We associate to $Y_0 = S^2(e; \frac{p_1}{q_1}, \ldots, \frac{p_k}{q_k})$ the weighted star-shaped graph in Figure~\ref{fig:plumbing}. The $i$th leg (sometimes also called the $i$th arm) of the star-shaped graph is the weighted linear subgraph for $p_i/q_i$ generated by the vertices labelled with weights $a_1^i, \ldots, a_{h_i}^i$. The degree $k$ vertex labelled with weight $e$ is called the central vertex.

\begin{figure}[h]
  \begin{overpic}[height=150pt]{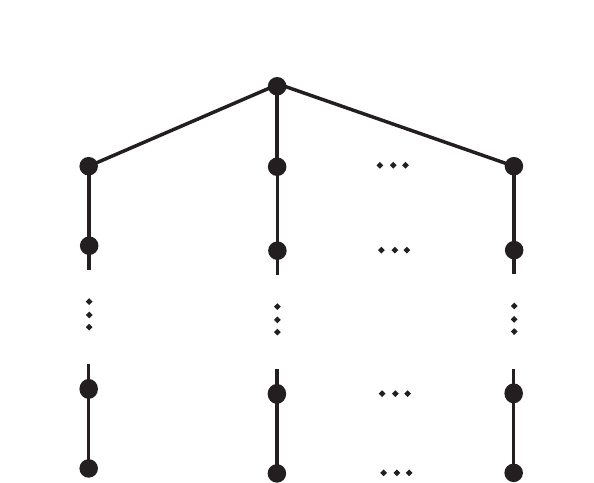}
    \put (45, 73) {\Large $e$}
    \put (4, 52) {\large $a_1^1$}
    \put (4, 38) {\large $a_2^1$}
    \put (2, 2) {\large $a_{h_1}^1$}
    
    \put (36, 52) {\large $a_1^2$}
    \put (36, 38) {\large $a_2^2$}
    \put (34, 2) {\large $a_{h_2}^2$}
    
    \put (90, 52) {\large $a_1^k$}
    \put (90, 38) {\large $a_2^k$}
    \put (90, 2) {\large $a_{h_p}^k$}    


  \end{overpic}
  \caption{The weighted star-shaped plumbing graph $\Gamma$.}
  \label{fig:plumbing}
\end{figure}

Let $\Gamma$ be either the weighted star-shaped graph for $Y_0$, or a disjoint union of weighted linear graphs. There is an oriented smooth $4$-manifold $X_\Gamma$ given by plumbing $D^2$-bundles over $S^2$ according to the weighted graph $\Gamma$. We denote by $|\Gamma|$ the number of vertices in $\Gamma$. Let $m = |\Gamma|$ and denote the vertices of $\Gamma$ by $v_1, v_2, \ldots, v_m$. The zero-sections of the $D^2$-bundles over $S^2$ corresponding to each of $v_1, \ldots, v_m$ in the plumbing together form a natural spherical basis for $H_2(X_\Gamma)$. With respect to this basis, which we call the vertex basis, the intersection form of $X_\Gamma$ is given by the weighted adjacency matrix $Q_\Gamma$ with entries $Q_{ij}$, $1 \le i,j \le m$ given by

$$Q_{ij} = \begin{cases} 
      \text{w}(v_i), & \mbox{if }i = j \\
      -1, & \mbox{if }v_i\mbox{ and }v_j\mbox{ are connected by an edge} \\
      0, & \mbox{otherwise} 
   \end{cases},
$$
where $\text{w}(v_i)$ is the weight of vertex $v_i$. Denoting by $Q_X$ the intersection form of $X$, we call $(H_2(X), Q_X) \cong (\Z^m, Q_\Gamma)$ the intersection lattice of $X_{\Gamma}$ (or of $\Gamma$). We denote the intersection pairing of two elements $x,y\in\Z^m$ by $x\cdot y = x^T\, Q_\Gamma\, y$ and the norm $x \cdot x$ by $\norm{x}$. Now assume that $\Gamma$ is the star-shaped plumbing for $Y_0$. If $\varepsilon(Y) > 0$ then $X_\Gamma$ is a positive definite $4$-manifold and $\Gamma$ is the standard positive definite plumbing graph for $Y_0$. If $\varepsilon(Y_0) = 0$, then $X_\Gamma$ is a positive semi-definite manifold. 

\begin{figure}[!ht]
  \begin{overpic}[height=200pt]{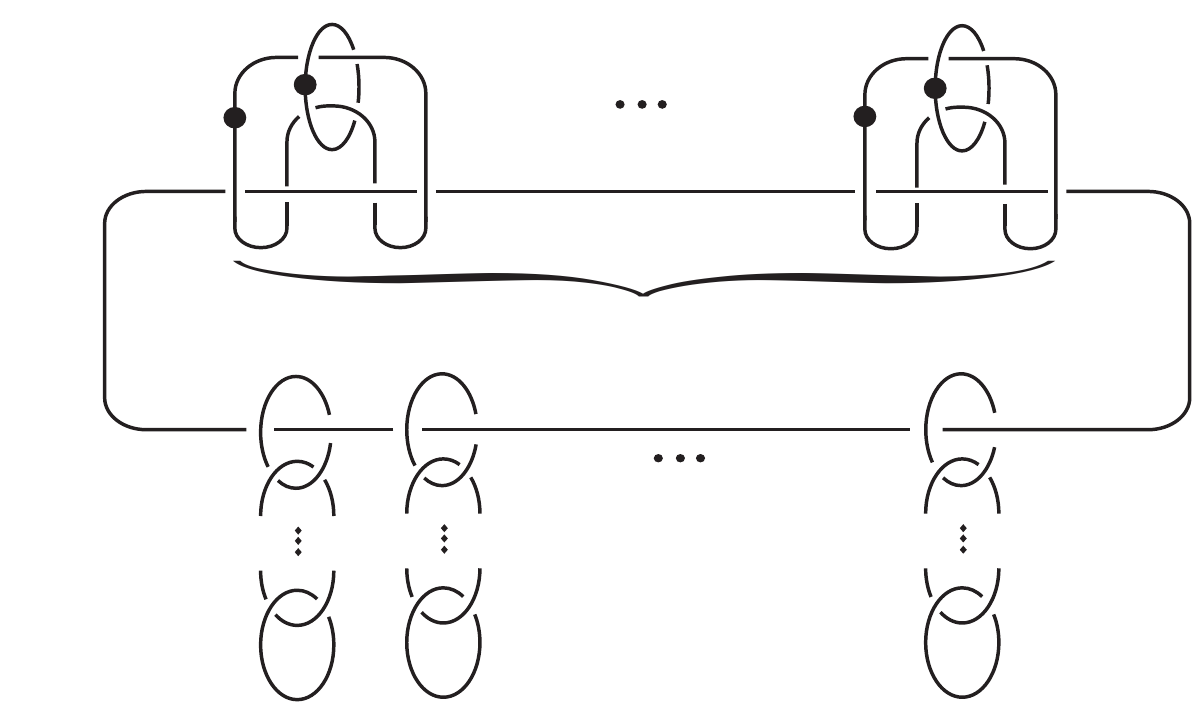}
    \put (5, 32) {$e$}
    \put (48, 30) {$g$ copies}
    \put (23, 29) {$a^1_1$}
    \put (18, 16) {$a^1_2$}
    \put (17, 4) {$a^1_{h_1}$}
    \put (35.5, 29) {$a^2_1$}
    \put (41, 16) {$a^2_2$}
    \put (41, 4) {$a^2_{h_2}$}
    \put (79, 29) {$a^k_1$}
    \put (84.5, 16) {$a^k_2$}
    \put (84.5, 4) {$a^k_{h_k}$}
  \end{overpic}
  \caption{Kirby diagram for the positive semi-definite $4$-manifold $X_{\Gamma,g}$ with boundary the Seifert fibered space $Y_g = F(e; \frac{p_1}{q_1}, \ldots, \frac{p_k}{q_k})$ over the orientable genus $g$ surface $F$. Recall that $\frac{p_i}{q_i} = [a_1^i, \ldots, a_{h_i}^i]^-$ for all $i\in\{1,\ldots,k\}$. The intersection form of $X_{\Gamma,g}$ is isomorphic to $(\Z^m, Q_\Gamma)$ where $\Gamma$ is the weighted star-shaped graph in Figure~\ref{fig:plumbing}.}
  \label{fig:definite_mfd_bounding_sfs}
\end{figure}

Generalising the case above for $Y_g$ over an orientable genus $g$ surface, we have that $Y_g$ is the boundary of the $4$-manifold $X_{\Gamma,g}$ in Figure~\ref{fig:definite_mfd_bounding_sfs}. Since the $2$-handles do not homologically link any $1$-handles, the intersection form of $X_{\Gamma,g}$ is independent of $g$. In particular, $(H_2(X_{\Gamma,g}), Q_{X_{\Gamma,g}}) \cong (\Z^m, Q_\Gamma)$ where $\Gamma$ is the weighted star-shaped graph in Figure~\ref{fig:plumbing}.

Let $\iota : (\Z^m, Q_\Gamma) \rightarrow (\Z^r, \mbox{Id})$, $r > 0$, be a map of lattices, i.e. a $\Z$-linear map preserving pairings, where $(\Z^r, \mbox{Id})$ is the standard positive diagonal lattice. We denote the orthonormal coordinate vectors of $(\Z^r, \mbox{Id})$ by $e_1,\ldots,e_r$. We call $\iota$ a lattice embedding if it is injective. We adopt the following standard abuse of notation. First, for each $i\in\{1,\ldots,m\}$, we identify the vertex $v_i$ with the corresponding $i$th basis element of $(\Z^m, Q_\Gamma)$. Moreover, we shall identify an element $v \in (\Z^m, Q_\Gamma)$ with its image $\iota(v) \in (\Z^r, \mbox{Id})$.

\section{Homological consequences of embedding in $S^4$}\label{sec:alg_top}
In this section we recall some homological results concerning gluing two $4$-manifolds with boundary and embedding $3$-manifolds in $S^4$.

In \cite[Proposition 7]{latticeineq} the following condition for two 4-manifolds to glue to give a closed definite manifold was proven.
\begin{prop}\label{thm:def_gluing_thm}
Let $U_1$ and $U_2$ be 4-manifolds with $\partial U_1 = -\partial U_2 =Y$. Then the closed 4-manifold $X=U_1 \cup_Y U_2$ is positive definite if and only if
\begin{enumerate}[(a)]
\item\label{enum:injcond} the inclusion-induced map $(i_1)_* \oplus (i_2)_* \colon H_1(Y; \Q) \rightarrow H_1(U_1; \Q)\oplus H_1(U_2; \Q)$ is injective and
\item\label{enum:sigeq} for $i=1,2$, $U_i$ has the maximal possible signature, that is, \[\sigma(U_i) = b_2(U_i) + b_1(U_i) - b_3(U_i) - b_2(Y).\]
\end{enumerate}
\end{prop}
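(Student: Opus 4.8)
The plan is to combine Novikov additivity of the signature with a Mayer--Vietoris computation of $b_2(X)$, all over $\Q$, together with a general upper bound on the signature of a $4$-manifold with boundary that makes condition (b) transparent. Since $X = U_1 \cup_Y U_2$ is glued along the entire, connected boundary $Y$, Novikov additivity gives $\sigma(X) = \sigma(U_1) + \sigma(U_2)$. As $X$ is closed, its intersection form is unimodular, hence positive definite if and only if $\sigma(X) = b_2(X)$ (equivalently $b_2^-(X) = 0$). So the whole problem reduces to deciding when $\sigma(U_1) + \sigma(U_2) = b_2(X)$ and matching this with conditions (a) and (b).

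The key preliminary is the inequality $\sigma(W) \le b_2(W) + b_1(W) - b_3(W) - b_2(\partial W)$, valid for any compact oriented $4$-manifold $W$ with connected boundary, with equality exactly when $W$ is positive semi-definite. Indeed $\sigma(W) = b_2^+(W) - b_2^-(W) \le b_2^+(W) = b_2(W) - b_2^0(W) - b_2^-(W)$, where $b_2^0(W)$ is the nullity of the intersection form, so it suffices to prove $b_2^0(W) = b_3(W) + b_2(\partial W) - b_1(W)$. For this one uses the standard identification of the radical of the intersection form with the image of $H_2(\partial W;\Q) \to H_2(W;\Q)$, evaluates its dimension from the long exact sequence of $(W,\partial W)$, and applies Lefschetz duality over $\Q$ (so $H_3(W,\partial W)\cong H^1(W)$ and $H_1(W,\partial W)\cong H^3(W)$, with $H_3(W)\to H_3(W,\partial W)$ identified with $H^1(W,\partial W)\to H^1(W)$, the latter dual to $H_1(W)\to H_1(W,\partial W)$, which is onto because $\partial W$ is connected). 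A short diagram chase then gives the formula.

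Next I would compute $b_2(X)$ by Mayer--Vietoris. Since $\chi(Y) = 0$, we have $\chi(X) = \chi(U_1) + \chi(U_2)$; expanding Euler characteristics in Betti numbers (using $b_0(U_i)=1$, $b_4(U_i)=0$, and for closed $X$ that $b_0(X)=b_4(X)=1$ and $b_3(X)=b_1(X)$) gives $b_2(X) = 2b_1(X) - \sum_i b_1(U_i) + \sum_i b_2(U_i) - \sum_i b_3(U_i)$. From the Mayer--Vietoris sequence the map $H_0(Y;\Q) \to H_0(U_1;\Q)\oplus H_0(U_2;\Q)$ is injective, so the connecting map $H_1(X)\to H_0(Y)$ vanishes and $b_1(X) = b_1(U_1) + b_1(U_2) - \operatorname{rank}\alpha_1$, where $\alpha_1 \colon H_1(Y;\Q) \to H_1(U_1;\Q)\oplus H_1(U_2;\Q)$ is the Mayer--Vietoris map. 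Substituting and using $b_1(Y) = b_2(Y)$ yields
$$b_2(X) = \sum_{i=1}^2\bigl(b_2(U_i) + b_1(U_i) - b_3(U_i) - b_2(Y)\bigr) + 2\bigl(b_1(Y) - \operatorname{rank}\alpha_1\bigr).$$

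Finally I would assemble the pieces. The term $b_1(Y) - \operatorname{rank}\alpha_1 = \dim\ker\alpha_1 \ge 0$ vanishes precisely when $\alpha_1$ is injective, i.e.\ when (a) holds; and by the preliminary inequality each summand satisfies $b_2(U_i) + b_1(U_i) - b_3(U_i) - b_2(Y) \ge \sigma(U_i)$, with equality precisely when $\sigma(U_i)$ is maximal, i.e.\ when (b) holds for $U_i$. Hence
$$\sigma(X) = \sigma(U_1) + \sigma(U_2) \le \sum_{i=1}^2\bigl(b_2(U_i) + b_1(U_i) - b_3(U_i) - b_2(Y)\bigr) \le b_2(X),$$
and $X$ is positive definite if and only if $\sigma(X) = b_2(X)$, which forces both inequalities to be equalities, that is, if and only if (a) and (b) both hold. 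The main obstacle is the nullity identity $b_2^0(W) = b_3(W) + b_2(\partial W) - b_1(W)$: one must correctly recognise the radical of the intersection form and carefully track which homology groups are connected when invoking Lefschetz duality in the long exact sequence of the pair; everything else is formal manipulation of Novikov additivity and Euler characteristics. When the $U_i$ are disconnected the argument runs componentwise, with any closed component of $U_i$ forced to be positive definite, which is subsumed by condition (b) on that component.
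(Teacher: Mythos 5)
The paper does not prove this proposition at all: it is quoted verbatim from \cite[Proposition 7]{latticeineq}, so there is no internal argument to compare yours against. Judged on its own, your proof is correct and complete. The structure --- Novikov additivity $\sigma(X)=\sigma(U_1)+\sigma(U_2)$, the bound $\sigma(W)\le b_2(W)+b_1(W)-b_3(W)-b_2(\partial W)$ with equality iff $W$ is positive semi-definite, and the Mayer--Vietoris/Euler characteristic identity
\[
b_2(X)=\sum_{i=1}^2\bigl(b_2(U_i)+b_1(U_i)-b_3(U_i)-b_2(Y)\bigr)+2\dim\ker\alpha_1
\]
--- assembles into a two-step inequality whose equality case is exactly the conjunction of (a) and (b), which is precisely what the statement asserts. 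Your computation of the nullity, $b_2^0(W)=b_3(W)+b_2(\partial W)-b_1(W)$, is the only delicate point and you handle it correctly: the radical is $\operatorname{im}(H_2(\partial W;\Q)\to H_2(W;\Q))$, and the injectivity of $H_3(W;\Q)\to H_3(W,\partial W;\Q)$ follows from connectedness of $\partial W$ via Lefschetz duality as you say. Two minor remarks: Novikov additivity is doing genuine work and deserves an explicit citation rather than a bare invocation; and your closing aside about disconnected $U_i$ is the right instinct but would need the observation that a closed component of $U_i$ contributes $b_2(\partial W)=0$ to the signature bound on that component, so condition (b) there literally says the component is positive semi-definite --- consistent with, not merely ``subsumed by'', the definiteness of $X$.
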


We now give some well-known consequences of a 3-manifold embedding into $S^4$.

\begin{prop}\label{prop:S4_splitting}
 Let $Y$ be a closed orientable $3$-manifold topologically locally flatly embedded in $S^4$. Then $S^4$ can be decomposed as $S^4 = U_1 \cup_Y -U_2$, where $\partial U_1 = \partial U_2 = -Y$, such that
\begin{enumerate}
\item\label{item:h2splits} the restriction map $H^2(U_1;\Z) \oplus H^2(U_2;\Z)\rightarrow H^2(Y;\Z)$ is an isomorphism,
\item\label{item:h3=0} $H^3(U_1;\Z)\cong H^3(U_2;\Z)\cong 0$,
\item\label{item:alexduality} $\tor H^2(U_1;\Z)\cong \tor H^2(U_2;\Z)$, and
\item\label{item:sig=0} $\sigma(U_i) = b_2(U_i) + b_1(U_i) - b_3(U_i) - b_2(Y)=0$.
\end{enumerate} 
\end{prop}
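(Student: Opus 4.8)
The plan is to use Alexander duality together with a Mayer--Vietoris argument. Suppose $Y$ is topologically locally flatly embedded in $S^4$. By standard arguments (e.g.\ using a bicollar, which exists since the embedding is locally flat), $Y$ separates $S^4$ into two compact pieces; write $S^4 = U_1 \cup_Y U_2'$ where $U_1, U_2'$ are the closures of the two components of $S^4 \setminus Y$. Setting $U_2 = -U_2'$ arranges the orientation conventions so that $\partial U_1 = \partial U_2 = -Y$ (or whatever sign bookkeeping the later sections want); the content of the proposition is insensitive to this choice. First I would record the obvious input: $H_*(S^4)$ and $H^*(S^4)$ are that of the $4$-sphere, and $Y$ has the (co)homology of a closed orientable $3$-manifold, so in particular $H^3(Y) \cong H_0(Y) \cong \Z$, $H^2(Y) \cong H_1(Y)$, $b_3(Y) = b_0(Y)=1$, $b_2(Y) = b_1(Y)$.

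Next I would run the Mayer--Vietoris sequence in cohomology for $S^4 = U_1 \cup U_2$ (thickening slightly so the pieces are open, or using the bicollar),
\[
\cdots \to H^n(S^4) \to H^n(U_1) \oplus H^n(U_2) \to H^n(Y) \to H^{n+1}(S^4) \to \cdots
\]
In the range $n = 2, 3$ the groups $H^n(S^4)$ and $H^{n+1}(S^4)$ vanish, which immediately gives that the restriction map $H^2(U_1) \oplus H^2(U_2) \to H^2(Y)$ is an isomorphism, proving \eqref{item:h2splits}, and likewise $H^3(U_1) \oplus H^3(U_2) \xrightarrow{\ \sim\ } H^3(Y) \cong \Z$. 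To pin down \eqref{item:h3=0}, i.e.\ that each $H^3(U_i) = 0$ individually, I would invoke Alexander duality: for a compact, locally flatly embedded $4$-manifold $U_i \subset S^4$ with boundary $Y$, one has $\widetilde H^j(U_i) \cong \widetilde H_{3-j}(S^4 \setminus U_i)$, and $S^4 \setminus U_i$ deformation retracts onto the interior of the other piece, which is a connected open $4$-manifold, hence has $H_{-1} = 0$ in the relevant degree; tracking this through gives $H^3(U_i) \cong \widetilde H_0(\text{int }U_{i'}) = 0$ since $U_{i'}$ is connected. (Equivalently, $H^3(U_i) \cong H_1(U_i, Y)$ by Lefschetz duality, and one shows this vanishes.) Part \eqref{item:alexduality} then follows: $\tor H^2(U_1) \cong \tor H_2(U_1, Y) \cong \tor H_2(U_2)$ by Lefschetz duality and excision, and torsion in $H_2$ equals torsion in $H^2$ by universal coefficients (using $H^3(U_i)=0$ from part \eqref{item:h3=0}, so there is no Ext contribution shifting the torsion), or more directly one uses the half-lives-half-dies statement that $\tor H^2(U_1) \cong \tor H^2(U_2)$ since each maps isomorphically onto a ``halving'' of $\tor H^2(Y)$; I would choose whichever bookkeeping is cleanest.

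Finally, for the signature statement \eqref{item:sig=0}: since $S^4$ has vanishing signature and $\sigma$ is additive under gluing along boundaries (Novikov additivity), $\sigma(U_1) + \sigma(U_2) = 0$. To see each vanishes, I would compute $b_2(U_i)$ via Mayer--Vietoris on rational homology together with $b_1, b_3$: the displayed quantity $b_2(U_i) + b_1(U_i) - b_3(U_i) - b_2(Y)$ is exactly the maximal signature appearing in Proposition~\ref{thm:def_gluing_thm}\eqref{enum:sigeq}, and the Mayer--Vietoris computation over $\Q$ forces $b_2(U_1) + b_2(U_2) = b_2(Y) + b_1(Y) - b_1(U_1) - b_1(U_2) + b_3(U_1) + b_3(U_2)$ after using $b_j(S^4) = 0$ for $j = 1,2,3$ and $H^3(U_i) = 0$; rearranged, this says the two ``defect'' quantities sum to zero, and since by Poincar\'e--Lefschetz duality each defect quantity equals $b_2(U_i) - $ (rank of the image of $H_2(U_i) \to H_2(U_i, Y)$) which is $\ge |\sigma(U_i)|$ and in fact the signature is squeezed to zero. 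The main obstacle I anticipate is the careful orientation and sign bookkeeping in \eqref{item:alexduality} and getting the signature defect argument to land exactly on $0$ rather than just ``$\sigma(U_i)$ is maximal'' — this requires combining Novikov additivity with the rational Mayer--Vietoris count, and one has to be attentive to the contribution of $b_1(U_i)$ and $b_3(U_i)$, which need not vanish in general. Everything else is a formal consequence of Mayer--Vietoris, Alexander/Lefschetz duality, and universal coefficients.
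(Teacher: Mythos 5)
Your treatment of parts \eqref{item:h2splits}--\eqref{item:alexduality} is essentially the paper's: the Mayer--Vietoris sequence with $H^n(S^4)=0$ for $n=1,2,3$ gives \eqref{item:h2splits}, and Alexander duality plus universal coefficients gives \eqref{item:alexduality}. (For \eqref{item:h3=0} the paper is even cheaper: in the segment $0\to H^3(U_1)\oplus H^3(U_2)\to H^3(Y)\to H^4(S^4)\to 0$ the last map is a surjection $\Z\to\Z$, hence an isomorphism, so the summand vanishes; your Alexander-duality detour is correct but not needed.) For \eqref{item:sig=0} you take a genuinely different route. The paper applies Proposition~\ref{thm:def_gluing_thm} twice: $S^4=U_1\cup_Y -U_2$ and its orientation reversal are both (vacuously) positive definite, so condition \eqref{enum:sigeq} forces $\sigma(U_i)$ and $\sigma(-U_i)=-\sigma(U_i)$ to both equal the orientation-independent quantity $b_2(U_i)+b_1(U_i)-b_3(U_i)-b_2(Y)$, whence both are zero. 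Your route --- Novikov additivity, the rational Mayer--Vietoris count showing the two defect quantities $D_i=b_2(U_i)+b_1(U_i)-b_3(U_i)-b_2(Y)$ sum to zero, and the bound $D_i\ge|\sigma(U_i)|\ge 0$ --- also works and is more self-contained, since it uses only the ``maximal signature'' inequality rather than the full ``only if'' direction of Proposition~\ref{thm:def_gluing_thm}. One formula in your sketch is garbled, though: $D_i$ equals the rank of the image of $H_2(U_i;\Q)\to H_2(U_i,Y;\Q)$, i.e.\ the rank of the nondegenerate part of the intersection form; it is not $b_2(U_i)$ minus that rank, which is the dimension of the radical and is not bounded below by $|\sigma(U_i)|$. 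With that corrected, $D_i\ge|\sigma(U_i)|\ge 0$ together with $D_1+D_2=0$ does force $D_i=\sigma(U_i)=0$, so the argument closes.
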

\begin{proof} 
Since $S^4$ has trivial first homology, any embedded connected 3-manifold must separate $S^4$ into two submanifolds which we call $U_1$ and $U_2$. Consider the Mayer-Vietoris sequence for $S^4 = U_1 \cup_Y -U_2$. This contains within it the exact sequence,
\[
0\rightarrow H^2(U_1;\Z) \oplus H^2(U_2;\Z)\rightarrow H^2(Y;\Z)\rightarrow 0,
\]
which proves the restriction map in \eqref{item:h2splits} is an isomorphism. It also contains the exact sequence,
\[
0\rightarrow H^3(U_1;\Z) \oplus H^3(U_2;\Z) \rightarrow H^3(Y;\Z) \rightarrow H^4(S^4;\Z) \rightarrow 0.
\]
Since the map $H^3(Y) \rightarrow H^4(S^4)$ is surjective from $\Z$ to $\Z$ it is an isomorphism, implying \eqref{item:h3=0}.

As $U_1, U_2$ are subsets of $S^4$, Alexander duality shows that $H_1(U_1;\Z)\cong H^2(U_2;\Z)$. However by the universal coefficient theorems $H_1(U_1;\Z)$ and $H^2(U_1;\Z)$ have isomorphic torsion subgroups. We have $\sigma(U_i)=0$ since both $U_i$ and $-U_i$ can be glued to form a positive-definite manifold, but the required value for $\sigma(U_i)$ given by Proposition~\ref{thm:def_gluing_thm} is invariant under change of orientations. 
\end{proof}

The following corollary, first due to Hantzsche \cite{MR1545714}, follows immediately from \eqref{item:h2splits} and \eqref{item:alexduality} of Proposition~\ref{prop:S4_splitting}.

\begin{corol}\label{cor:direct_double} If a $3$-manifold $Y$ embeds topologically locally flatly in $S^4$, then $\tor H_1(Y;\Z)$ splits as a direct double, that is, $\tor H_1(Y;\Z) \cong G \oplus G$ for some finite abelian group $G$.
\end{corol}
In Section~\ref{sec:mubar}, the following well-known variant of Proposition~\ref{prop:S4_splitting} will also be useful.
\begin{lemma}\label{lem:spin_splitting}
Let $Y$ be a rational homology sphere smoothly embedded in $S^4$ which decomposes $S^4$ as $S^4=U_1 \cup_Y -U_2$ with $U_1$ and $U_2$ as in Proposition~\ref{prop:S4_splitting}. Then
\begin{enumerate}
\item $|\spin(Y)|=d^2$ for some integer $d\geq 1$, and
\item for $i=1,2$, the manifold $U_i$ is a spin rational homology ball with $|\spin(U_i)|=d$ and the restriction map $\spin(U_i)\rightarrow \spin(Y)$ is injective.
\end{enumerate}
\end{lemma}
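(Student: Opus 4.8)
The plan is to read $\spin(M)$ as the set of spin structures on $M$, so that whenever $M$ is spin the set $\spin(M)$ is a non-empty torsor over $H^1(M;\Z_2)$ and hence $|\spin(M)|=|H^1(M;\Z_2)|$. Every orientable $3$-manifold is spin, so this applies to $Y$, and the whole lemma will follow by feeding the Mayer--Vietoris decomposition $S^4=U_1\cup_Y -U_2$ supplied by Proposition~\ref{prop:S4_splitting} through elementary algebraic topology; no new geometric input is needed.

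First I would record the two structural facts about the pieces $U_i$. They are spin: $U_i$ is a codimension-$0$ compact submanifold of $S^4$, so $TU_i$ is the restriction of the stably trivial bundle $TS^4$, whence $w_2(U_i)=0$ and $U_i$ admits a spin structure, so $\spin(U_i)$ is non-empty and is a torsor over $H^1(U_i;\Z_2)$. They are rational homology balls: combining $H_*(Y;\Q)=H_*(S^3;\Q)$ with the Mayer--Vietoris sequence in $\Q$-coefficients gives $H_1(U_i;\Q)=0$; together with part~\eqref{item:h3=0} of Proposition~\ref{prop:S4_splitting} (so $b_3(U_i)=0$ and $H_2(U_i;\Z)$ is torsion-free) and the relation $\sigma(U_i)=b_2(U_i)+b_1(U_i)-b_3(U_i)-b_2(Y)=0$ of part~\eqref{item:sig=0}, with $b_2(Y)=0$, this forces $b_1(U_i)=b_2(U_i)=0$, hence $H_2(U_i;\Z)=0$; and $H_3(U_i;\Z)\cong H^1(U_i,\partial U_i;\Z)=0$ by Lefschetz duality and the long exact sequence of the pair. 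So each $U_i$ is a spin rational homology ball; in particular $H_1(U_i;\Z)$ is finite.

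The counting step is where the powers of $d$ appear. Rerunning the Mayer--Vietoris sequence of $S^4=U_1\cup_Y -U_2$ with $\Z_2$ coefficients, exactly as in the proof of Proposition~\ref{prop:S4_splitting} but now using $H^1(S^4;\Z_2)=H^2(S^4;\Z_2)=0$, produces a short exact sequence
\[0\longrightarrow H^1(U_1;\Z_2)\oplus H^1(U_2;\Z_2)\xrightarrow{\ \operatorname{res}\ } H^1(Y;\Z_2)\longrightarrow 0.\]
In particular each restriction map $H^1(U_i;\Z_2)\to H^1(Y;\Z_2)$ is injective, and since restriction of spin structures $\spin(U_i)\to\spin(Y)$ is a map of torsors lying over this injective homomorphism of $\Z_2$-vector spaces, it too is injective. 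To see the two summands have equal order I would use part~\eqref{item:alexduality} of Proposition~\ref{prop:S4_splitting}: as $H_1(U_i;\Z)$ is finite, the universal coefficient theorem gives $\tor H^2(U_i;\Z)\cong H_1(U_i;\Z)$, so $H_1(U_1;\Z)\cong H_1(U_2;\Z)$ and therefore $H^1(U_1;\Z_2)\cong H^1(U_2;\Z_2)$. Writing $d:=|H^1(U_i;\Z_2)|=|\spin(U_i)|\geq 1$, the short exact sequence above then gives $|\spin(Y)|=|H^1(Y;\Z_2)|=|H^1(U_1;\Z_2)|\cdot|H^1(U_2;\Z_2)|=d^2$, which together with the injectivity statement establishes both parts of the lemma.

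I do not expect a genuinely hard step; the lemma is soft, which is why it is standard. The points most likely to trip one up are purely bookkeeping: that $|\spin(M)|=|H^1(M;\Z_2)|$ for spin $M$, that a torsor map sitting over an injective group homomorphism is itself injective, and the fact that the equality $|\spin(U_1)|=|\spin(U_2)|$ requires the torsion comparison in Proposition~\ref{prop:S4_splitting}\eqref{item:alexduality} rather than Mayer--Vietoris alone. Beyond that, one uses only that $S^4$ is an integer homology $4$-sphere.
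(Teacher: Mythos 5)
Your proposal is correct and follows essentially the same route as the paper's proof: spinness of the $U_i$ from being submanifolds of $S^4$, the $\Z_2$-coefficient Mayer--Vietoris isomorphism $H^1(U_1;\Z_2)\oplus H^1(U_2;\Z_2)\cong H^1(Y;\Z_2)$, Proposition~\ref{prop:S4_splitting}\eqref{item:alexduality} plus universal coefficients to equate $|H^1(U_1;\Z_2)|$ and $|H^1(U_2;\Z_2)|$, and the torsor structure of $\spin(\cdot)$ over $H^1(\cdot;\Z_2)$ for the count and the injectivity. The only cosmetic difference is your more elaborate verification that the $U_i$ are rational homology balls, which the paper dispatches directly from the rational Mayer--Vietoris sequence.
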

\begin{proof}
First notice that $U_1$ and $U_2$ are spin since they are submanifolds of $S^4$. As $Y$ is a rational homology sphere, it follows immediately from the relevant Mayer-Vietoris sequence that $U_1$ and $U_2$ are rational homology balls. 

By Proposition~\ref{prop:S4_splitting}\eqref{item:alexduality} we see that $H^2(U_1;\Z)\cong H^2(U_2;\Z)$ and hence that $H_1(U_1;\Z)\cong H_1(U_2;\Z)$. Applying the universal coefficient theorem shows that $H^1(U_1;\Z_2)\cong H^1(U_2;\Z_2)$. The Mayer-Vietoris sequence with $\Z_2$ coefficients shows that the restriction maps yield an isomorphism
\[
H^1(U_1;\Z_2) \oplus H^1(U_2;\Z_2) \rightarrow H^1(Y;\Z_2).
\]
Since spin structures on a spin manifold $M$ form a torsor over the group $H^1(M;\Z_2)$, this shows that $|\spin(Y)|=d^2$ where $d=|\spin(U_1)|=|\spin(U_2)|$. The restriction map $\spin(U_i)\rightarrow \spin(Y)$ is injective since the restriction map $H^1(U_i;\Z_2) \rightarrow H^1(Y;\Z_2)$ is injective.
\end{proof}

\section{Homology of Seifert fibered spaces}\label{sec:SF_homology}
In this section we prove several useful statements about the homology of Seifert fibered spaces over orientable base surfaces.
\begin{lemma}\label{lemma:sfs_homology}
The Seifert fibered space $Y=F(e; \frac{p_1}{q_1}, \dots, \frac{p_k}{q_k})$, where $F$ is an orientable genus $g$ surface, has homology
\[H_1(Y;\Z)\cong \Z^{2g} \oplus \bigoplus_{i=1}^{k} \frac{\Z}{D_i\Z},\]
where for $i\in\{1,\ldots,k\}$, $D_i=d_{i+1}/d_i$ where
\[
d_j=
\begin{cases}
1 &\text{if $j=1,2$}\\
\gcd\{ p_{\sigma(1)}p_{\sigma(2)}\cdots p_{\sigma(j-2)} \mid \sigma\in S_{k} \} &\text{if $3\leq j\leq k$}\\
(p_{1} \cdots p_k)\varepsilon(Y)&\text{if $j=k+1$.}
\end{cases}
\]
\end{lemma}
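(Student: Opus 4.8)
The plan is to compute $H_1(Y;\Z)$ from a presentation matrix coming directly from the surgery diagram in Figure~\ref{fig:sfs_as_surgery}, and then identify the invariant factors. Abelianizing the link-group presentation, the $2g$ zero-framed components contributing to the genus each split off a $\Z$ summand (their $2$-handles homologically link nothing), giving the $\Z^{2g}$ factor; it therefore suffices to treat the $g=0$ case. For $Y = S^2(e;\frac{p_1}{q_1},\dots,\frac{p_k}{q_k})$, presenting $H_1$ via the surgery matrix and performing row/column operations to clear the continued-fraction legs, one arrives at the standard $(k+1)\times(k+1)$ presentation matrix
\[
M=\begin{pmatrix}
p_1 & 0 & \cdots & 0 & q_1\\
0 & p_2 & \cdots & 0 & q_2\\
\vdots & & \ddots & & \vdots\\
0 & 0 & \cdots & p_k & q_k\\
-q_1 & -q_2 & \cdots & -q_k & e
\end{pmatrix}
\]
(or the equivalent form obtained by expressing $e-\sum q_i/p_i$). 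So $H_1(Y;\Z)\cong \coker M = \Z^{k+1}/M\Z^{k+1}$, and the task is purely one of Smith normal form.

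The key computation is to identify the $\gcd$ of the $j\times j$ minors of $M$ for each $j$, since the invariant factors are the successive ratios of these. I would argue as follows. The determinant of $M$ is $\pm(p_1\cdots p_k)\bigl(e-\sum q_i/p_i\bigr) = \pm(p_1\cdots p_k)\varepsilon(Y)$, giving the product of all invariant factors $d_2\cdots d_{k+1}/(d_1\cdots d_k)$, consistent with $d_{k+1}=(p_1\cdots p_k)\varepsilon(Y)$. For the $j\times j$ minors with $j\le k$: any $j\times j$ submatrix either omits the last row and column—then it is (up to sign) a product of a subset of the $p_i$ of size $j$, and using $\gcd(p_i,q_i)=1$ one checks minors involving the last row or column don't introduce smaller common factors—or the $\gcd$ over all such minors reduces to $\gcd\{p_{\sigma(1)}\cdots p_{\sigma(j)}\}$. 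Re-indexing (the $\gcd$ of $(j-1)$-fold, resp. $j$-fold products of the $p_i$ appears at the right spot), one gets exactly $\Delta_j := \gcd$ of all $j\times j$ minors equal to $d_j$ as defined in the statement for $j\le k$, namely $\Delta_1=\Delta_2=1$ and $\Delta_j=\gcd\{p_{\sigma(1)}\cdots p_{\sigma(j-2)}\}$ adjusted for the shift, with $\Delta_{k+1}=|\det M|$. Then the $i$th invariant factor is $\Delta_{i+1}/\Delta_i = D_i$, and $\coker M \cong \bigoplus_{i=1}^k \Z/D_i\Z$ as claimed.

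The main obstacle, and the part needing genuine care rather than bookkeeping, is verifying that the $\gcd$ of the $j\times j$ minors is governed purely by products of the $p_i$ and is not lowered by the minors that involve the bottom row/last column (which carry the $q_i$ and $e$). Here one uses $\gcd(p_i,q_i)=1$ crucially: expanding such a minor along its $q$-entries produces terms each divisible by all but one or two of the relevant $p_i$, and coprimality forces the overall $\gcd$ not to drop below the pure-$p$ product $\gcd$. I would also double-check the indexing conventions (why the $\gcd$ of $(j-2)$-fold products lands against $d_j$, the role of $d_1=d_2=1$ reflecting that $H_1$ has at most $k$ torsion generators) and confirm the statement is insensitive to the continued-fraction blow-ups used to pass between the surgery description and the plumbing $X_\Gamma$—the latter follows since blow-ups don't change $\partial$ and the linear legs contribute nothing to $H_1$ of the boundary beyond what $p_i/q_i$ already encodes.
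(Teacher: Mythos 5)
Your overall strategy --- read a presentation matrix off the surgery diagram, split off $\Z^{2g}$ from the genus handles, and compute the invariant factors of the remaining block via the gcds of its $j\times j$ minors (determinantal divisors) --- is exactly the paper's. However, two things go wrong. First, the matrix you write down is not a presentation matrix for $H_1(Y)$: the relation coming from the central $e$-framed curve is $e x_0 + \sum_i x_i = 0$, since each $\frac{p_i}{q_i}$-framed meridian links it once, so the last row should read $(1,1,\dots,1,e)$ rather than $(-q_1,\dots,-q_k,e)$. With your matrix, already for $k=1$ one gets $\det M = ep_1 + q_1^2$, whereas $|H_1(S^2(e;\frac{p_1}{q_1}))| = |ep_1 - q_1| = |p_1\varepsilon(Y)|$; for instance $S^2(1;\frac{5}{2})$ has $H_1\cong\Z/3\Z$ but your $M$ presents $\Z/9\Z$. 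The determinant you then assert for $M$ is the determinant of the correct matrix, not of the one you wrote.

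Second, and more substantively, the identification of $\Delta_j$, the gcd of the $j\times j$ minors, is where all the content of the lemma lives, and your account of it is internally inconsistent: you first claim that the minors meeting the distinguished row and column ``don't introduce smaller common factors'' than the pure products $p_{\sigma(1)}\cdots p_{\sigma(j)}$ of size $j$, and then conclude $\Delta_j = \gcd\{p_{\sigma(1)}\cdots p_{\sigma(j-2)}\}$ after an unspecified ``re-indexing.'' These cannot both hold, and it is the first claim that is false: the minors using one row through the $1$'s (or $e$) and one column through the $q_i$'s are precisely what drop the gcd by two factors of $p$, and without them you would get the wrong answer (note $\Delta_1=\Delta_2=1$, not $\gcd(p_1,\dots,p_k)$). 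The argument the paper gives, and that you would need, has two halves: (i) exhibit $j\times j$ minors equal to $\pm p_{\sigma(1)}\cdots p_{\sigma(j-1)}$ and to $\pm p_{\sigma(1)}\cdots p_{\sigma(j-2)}q_{\sigma(j-1)}$, whose gcd is $p_{\sigma(1)}\cdots p_{\sigma(j-2)}$ because $\gcd(p_i,q_i)=1$, so $\Delta_j$ divides $\gcd\{p_{\sigma(1)}\cdots p_{\sigma(j-2)}\mid\sigma\in S_k\}$; and (ii) observe that any nonzero generalized-diagonal product in a $j\times j$ submatrix uses at most one entry from the distinguished row and at most one from the distinguished column, hence at least $j-2$ distinct $p_i$'s, so every $j\times j$ minor is divisible by that gcd. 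You correctly flagged $\gcd(p_i,q_i)=1$ as the crucial input, but the step as written would not produce the stated $d_j$.
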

\begin{proof}
  From the surgery description of $Y$ in Figure~\ref{fig:sfs_as_surgery}, we see that $H_1(Y)$ has a presentation matrix given by a diagonal block matrix with $g$ blocks of the form $\left(\begin{smallmatrix} 0 & 0 \\ 0 & 0\end{smallmatrix}\right)$, and a block of the form
\[
A :=
\begin{pmatrix}
e      & 1   & \dots & 1  \\
q_1    & p_1 &       & 0  \\
\vdots &     & \ddots&    \\
q_k    & 0   &       & p_k
\end{pmatrix}.
\]
This shows that $H_1(Y;\Z)\cong \Z^{2g} \oplus \coker A$. For each $i\in\{1,\ldots,k+1\}$, let $d_i$ be the $i$th determinantal divisor of $A$, that is, the greatest common divisor of all $i \times i$ minors of $A$. Then it is a standard algebraic fact that $\coker A \cong \bigoplus_{i=1}^{k} \frac{\Z}{D_i\Z}$, where $D_i=d_{i+1}/d_i$ for all $1\leq i\leq k$. 
We will compute $d_1,\ldots, d_k$ for our particular $A$. Since $A$ contains an entry equal to one, we have $d_1=1$. Since $A$ has a $2\times 2$ minor with determinant one, we have $d_2=1$. 

Let $i \in \{3,4,\ldots,k\}$. The $i \times i$ submatrices of $A$
\[
\begin{pmatrix}
1      & 1 & \dots &1 & 1  \\
p_1    & 0  &    &0 & 0  \\
0      & p_2 &   & \vdots & \vdots \\
\vdots & \vdots  & \ddots & 0 & 0 \\
0      & 0   &   & p_{i-1}  & 0
\end{pmatrix}\mbox{ and } \begin{pmatrix}
e      & 1 & \dots &1 & 1  \\
q_1    & p_1  &  0  &0 & 0  \\
q_2      & 0 & \ddots  & \vdots & \vdots \\
\vdots & \vdots  &   & p_{i-2} & 0 \\
q_{i-1}      & 0   & \cdots  & 0  & 0
\end{pmatrix}
\]
show that (up to signs) $p_1 p_2 \cdots p_{i-1}$ and $p_1 p_2 \cdots p_{i-2} q_{i-1}$ appear as $i \times i$ minors of $A$, and so $d_i$ divides their greatest common divisor, which is $p_1 p_2 \cdots p_{i-2}$. Similarly, one can get that $d_i$ divides $p_{\sigma(1)} \cdots p_{\sigma(i-2)}$ for any permutation $\sigma \in S_k$. However, notice that in any $i \times i$ submatrix $A'$ of $A$, a non-zero product of $i$ entries of $A'$, one from each column and row, must necessarily be a multiple of a product of $i-2$ of $p_1,\ldots,p_k$. Hence, $\det(A')$ is a multiple of $\gcd\{ p_{\sigma(1)}p_{\sigma(2)}\dots p_{\sigma(j-2)} \mid \sigma\in S_{k} \}$. Thus, $d_i = \gcd\{ p_{\sigma(1)}p_{\sigma(2)}\dots p_{\sigma(j-2)} \mid \sigma\in S_{k} \}$.

The final statement in the lemma follows by observing that $d_{k+1} = p_1 \cdots p_k \varepsilon(Y)$, and so $D_k = d_{k+1}/d_k$ is non-zero for $\varepsilon(Y) \neq 0$.
\end{proof}

For a positive prime $p$ we use $V_p(\alpha)$ to denote the $p$-adic valuation of $\alpha$.\footnote{That is $V_p(\alpha)=n$ if $\alpha$ can be written in the form $\alpha=p^n\frac{a}{b}$ with $a,b\in \Z$ both coprime to $p$.} Recall that any finitely generated abelian group can be decomposed as a direct sum
\[
G\cong \Z^m\oplus \bigoplus_{p\, \text{prime}} G_p,
\]
where $G_p$ is the $p$-primary part of $G$. For a cyclic group $\Z/n\Z$ the $p$-primary part is cyclic of order $p^{V_p(n)}$. It will be useful to consider such a decomposition for the homology of Seifert fibered spaces.

\begin{lemma}\label{lemma:p_adic_decomp}
Let $Y=F(e; \frac{p_1}{q_1}, \dots, \frac{p_k}{q_k})$ with $F$ an orientable surface and $\varepsilon(Y)\neq 0$. For a prime $p$, let $v_1\leq \dots \leq v_k$ denote the $p$-adic valuations $V_p(p_1), \dots, V_p(p_k)$ ordered so as to be increasing. Then the $p$-primary part of $H_1(Y;\Z)$ is isomorphic to
\[
\frac{\Z}{p^{v}\Z} \oplus \bigoplus_{i=1}^{k-2} \frac{\Z}{p^{v_i}\Z},
\]
where $v=v_{k}+v_{k-1}+V_p(\varepsilon(Y))$. Moreover we have $v\geq v_{k-1}$ and if $v_k>v_{k-1}$, then $v=v_{k-1}$.
\end{lemma}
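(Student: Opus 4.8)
The plan is to read everything off from Lemma~\ref{lemma:sfs_homology} together with the elementary fact that, for a prime $p$, the $p$-adic valuation of a greatest common divisor equals the minimum of the individual $p$-adic valuations. Write $w_i = V_p(p_i)$, so that $(v_1,\dots,v_k)$ is the nondecreasing rearrangement of $(w_1,\dots,w_k)$. The first task is to compute $V_p(d_j)$ for every $j$. For $3\le j\le k$ the divisor $d_j$ is the gcd of all products of $j-2$ of the $p_i$, and the valuation of such a product is the sum of the corresponding $w_i$, which is smallest when one picks the $j-2$ indices of smallest valuation; hence $V_p(d_j)=v_1+\cdots+v_{j-2}$. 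Combining this with $V_p(d_1)=V_p(d_2)=0$ and $V_p(d_{k+1})=V_p\big((p_1\cdots p_k)\,\varepsilon(Y)\big)=v_1+\cdots+v_k+V_p(\varepsilon(Y))$, and forming the successive differences $V_p(D_i)=V_p(d_{i+1})-V_p(d_i)$, gives $V_p(D_1)=0$, $V_p(D_i)=v_{i-1}$ for $2\le i\le k-1$, and $V_p(D_k)=v_{k-1}+v_k+V_p(\varepsilon(Y))=:v$. Since the $p$-primary part of $H_1(Y;\Z)\cong\Z^{2g}\oplus\bigoplus_{i=1}^k\Z/D_i\Z$ is $\bigoplus_{i=1}^k\Z/p^{V_p(D_i)}\Z$, dropping the trivial $i=1$ term and rewriting $\bigoplus_{i=2}^{k-1}\Z/p^{v_{i-1}}\Z$ as $\bigoplus_{i=1}^{k-2}\Z/p^{v_i}\Z$ gives exactly the claimed decomposition.

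For the inequality $v\ge v_{k-1}$, I would return to the explicit formula $d_{k+1}=e\,p_1\cdots p_k-\sum_{i=1}^k q_i\prod_{j\ne i}p_j$ obtained in the proof of Lemma~\ref{lemma:sfs_homology} (this is $\det A$). Each summand is divisible by some product of $k-1$ of the $p_j$, hence has $p$-adic valuation at least $v_1+\cdots+v_{k-1}$; therefore $V_p(d_{k+1})\ge v_1+\cdots+v_{k-1}$, and subtracting $V_p(d_k)=v_1+\cdots+v_{k-2}$ yields $v=V_p(D_k)\ge v_{k-1}$.

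Now suppose $v_k>v_{k-1}$. After relabelling the pairs $(p_i,q_i)$ we may assume $w_k=v_k$, which is then strictly larger than every other $w_j$, so $\{w_j:j\ne k\}$ agrees with $\{v_1,\dots,v_{k-1}\}$ as multisets. In the expression for $d_{k+1}$ the single term $q_k\prod_{j\ne k}p_j$ has valuation exactly $V_p(q_k)+\sum_{j\ne k}w_j=v_1+\cdots+v_{k-1}$, using that $p\mid p_k$ together with $\gcd(p_k,q_k)=1$ force $V_p(q_k)=0$. Every other term has strictly larger valuation: $e\,p_1\cdots p_k$ has valuation at least $v_1+\cdots+v_k>v_1+\cdots+v_{k-1}$, while for $i\ne k$ the product $\prod_{j\ne i}p_j$ still contains $p_k$ as a factor, so $V_p\big(q_i\prod_{j\ne i}p_j\big)\ge V_p(p_1\cdots p_k)-w_i\ge v_1+\cdots+v_{k-2}+v_k>v_1+\cdots+v_{k-1}$. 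Hence the minimal valuation among the terms of $d_{k+1}$ is attained by a unique term, so $V_p(d_{k+1})=v_1+\cdots+v_{k-1}$ exactly and $v=V_p(D_k)=v_{k-1}$, as claimed.

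The argument is largely bookkeeping. The one delicate point is the final step: upgrading $v\ge v_{k-1}$ to $v=v_{k-1}$ requires the minimal $p$-adic valuation among the $k+1$ terms of $d_{k+1}$ to be realized by exactly one of them, and this is precisely where the coprimality $\gcd(p_i,q_i)=1$ enters. Beyond that one only needs to be careful about the index shift between the $D_i$ (indexed $1,\dots,k$) and the $v_i$, and, if one wants the statement to literally cover $k\le 2$, to adopt the convention that $v_i=0$ for $i\le 0$.
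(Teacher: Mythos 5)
Your proof is correct and follows essentially the same route as the paper: compute $V_p(d_j)$ via the gcd-equals-min fact, take successive differences, and then analyze the valuation of the last divisor by identifying the unique term of minimal valuation when $v_k>v_{k-1}$. The only cosmetic difference is that you work with the integer determinant $d_{k+1}=(p_1\cdots p_k)\varepsilon(Y)$ directly, whereas the paper phrases the same term-by-term valuation count for $\varepsilon(Y)$ written over the common denominator $\lcm(p_1,\dots,p_k)$; the content is identical.
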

\begin{proof}
We can write
\[H_1(Y;\Z)= \Z^{2g} \oplus \bigoplus_{i=1}^{k} \frac{\Z}{D_i\Z},\]
with the $D_i=d_{i+1}/d_i$ as defined in Lemma~\ref{lemma:sfs_homology}. By definition the $d_j$ are such that
\[
V_p(d_j)=
\begin{cases}
0 &\text{if $j=1$ or $2$,}\\
v_1+\dots +v_{j-2} &\text{if $3\leq j\leq k$,}\\
v_1 + \dots + v_k + V_p(\varepsilon(Y)) &\text{if $j=k+1$}.
\end{cases}
\] 
Therefore we have that
\[
V_p(D_j)=
\begin{cases}
0 &\text{if $j=1$,}\\
v_{j-1} &\text{if $1<j<k$,}\\
v_{k-1} + v_k + V_p(\varepsilon(Y)) &\text{if $j=k$.}
\end{cases}
\]
The statements about the $p$-primary part is immediate from these $p$-adic valuation computations. Notice that $\varepsilon(Y)\neq 0$ can be expressed as a fraction with denominator $\lcm(p_1,\dots, p_k)$. Since $V_p(\lcm(p_1,\dots, p_k))=v_k$, this shows that 
$V_p(\varepsilon(Y))\geq -v_k$, which shows that $v=V_p(D_k)\geq v_{k-1}$. Finally suppose that $v_k>v_{k-1}$. In this case when we write each summand of $\varepsilon(Y)=e-\sum \frac{q_i}{p_i}$ as a fraction over the common denominator $\lcm(p_1,\dots, p_k)$, the numerators will all be divisible by $p$ with the exception of the numerator of corresponding to the unique summand $\frac{q_j}{p_j}$ where $V_p(p_j)=v_k$, which will not be divisible by $p$. Thus when we write $\varepsilon(Y)$ as a fraction over $\lcm(p_1,\dots,p_k)$, the numerator will not be divisible by $p$ and hence $V_p(\varepsilon(Y))=-V_p(\lcm(p_1,\dots, p_k))=-v_k$. So $v=v_{k-1}$ as required, in this case.
\end{proof}

We use this to determine the effect of expansion on homology. Although we only deal with the $\varepsilon(Y)\neq 0$ case, it is not hard to see that  a similar result holds when $\varepsilon(Y)=0$.

\begin{lemma}\label{lem:expansion_homo}
Let $Y=F(e;\frac{p_1}{q_1},\dots, \frac{p_k}{q_k})$ be a Seifert fibered space over orientable base surface $F$ with $\varepsilon(Y)\neq 0$. If $Y'=F(e+1;\frac{p_1}{q_1},\dots, \frac{p_k}{q_k},\frac{p_k}{p_k-q_k},\frac{p_k}{q_k})$ is obtained from $Y$ by expansion, then
\[
H_1(Y';\Z) \cong H_1(Y;\Z) \oplus \frac{\Z}{p_k \Z} \oplus \frac{\Z}{p_k\Z}.
\]
In particular $\tor H_1(Y;\Z)$ is a direct double if and only if $\tor H_1(Y';\Z)$ is a direct double. 
\end{lemma}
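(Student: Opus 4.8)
The plan is to reduce to a prime-by-prime computation and invoke Lemma~\ref{lemma:p_adic_decomp}. The first step is the one-line observation that expansion preserves the generalized Euler invariant: since $\frac{p_k-q_k}{p_k}+\frac{q_k}{p_k}=1$, we have $\varepsilon(Y')=(e+1)-\sum_{i=1}^k\frac{q_i}{p_i}-1=\varepsilon(Y)\neq 0$, so Lemma~\ref{lemma:p_adic_decomp} applies to $Y'$ as well and $V_p(\varepsilon(Y'))=V_p(\varepsilon(Y))$ for every prime $p$. As $H_1(Y;\Z)$ and $H_1(Y';\Z)$ have the same free rank $2g$, and as the $p$-primary part of $\Z/p_k\Z\oplus\Z/p_k\Z$ is $\Z/p^{u}\Z\oplus\Z/p^{u}\Z$ with $u:=V_p(p_k)$, it is enough to show, for each prime $p$, that the $p$-primary part of $H_1(Y';\Z)$ is isomorphic to the $p$-primary part of $H_1(Y;\Z)$ with two copies of $\Z/p^{u}\Z$ adjoined.

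Fix $p$, and let $v_1\le\cdots\le v_k$ be the valuations $V_p(p_1),\dots,V_p(p_k)$ in increasing order, so that $u$ occurs among them and in particular $u\le v_k$. The exceptional multiplicities of $Y'$ are $p_1,\dots,p_k,p_k,p_k$, so its sorted list of valuations $w_1\le\cdots\le w_{k+2}$ is obtained from $v_1\le\cdots\le v_k$ by inserting two further copies of $u$. By Lemma~\ref{lemma:p_adic_decomp} the $p$-primary part of $H_1(Y;\Z)$ is $\Z/p^{v}\Z\oplus\bigoplus_{i=1}^{k-2}\Z/p^{v_i}\Z$ with $v=v_{k-1}+v_k+V_p(\varepsilon(Y))$, while that of $H_1(Y';\Z)$ is $\Z/p^{v'}\Z\oplus\bigoplus_{i=1}^{k}\Z/p^{w_i}\Z$ with $v'=w_{k+1}+w_{k+2}+V_p(\varepsilon(Y))$. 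So, discarding zero exponents, the task reduces to the multiset identity
\[
\{v'\}\cup\{w_1,\dots,w_k\}=\{v\}\cup\{v_1,\dots,v_{k-2}\}\cup\{u,u\}.
\]

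I would split into two cases. If $u<v_k$, then $u\le v_{k-1}$, and inserting two copies of $u$ does not disturb the two largest valuations: $w_{k+2}=v_k$, $w_{k+1}=v_{k-1}$, $\{w_1,\dots,w_k\}=\{v_1,\dots,v_{k-2}\}\cup\{u,u\}$, and $v'=v_{k-1}+v_k+V_p(\varepsilon(Y))=v$, so the identity is immediate. If $u=v_k$, the two new copies of $u$ become the two largest entries, so $w_{k+1}=w_{k+2}=v_k$, $\{w_1,\dots,w_k\}=\{v_1,\dots,v_k\}$, and $v'=2v_k+V_p(\varepsilon(Y))$. Cancelling the common part $\{v_1,\dots,v_{k-2}\}$ and one copy of $v_k$ from the two sides, the identity reduces to the equality of the two-element multisets $\{v_{k-1},\,2v_k+V_p(\varepsilon(Y))\}$ and $\{v_{k-1}+v_k+V_p(\varepsilon(Y)),\,v_k\}$, whose entries have the same sum; hence it holds as soon as $v_{k-1}=v_k$ or $V_p(\varepsilon(Y))=-v_k$. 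The former is one subcase, and if $v_k>v_{k-1}$ the ``moreover'' clause of Lemma~\ref{lemma:p_adic_decomp} gives $v=v_{k-1}$, which rearranges to exactly $V_p(\varepsilon(Y))=-v_k$. I expect this to be the crux of the argument: noticing that the only configuration in which naive additivity could fail is precisely the one pinned down by the hypothesis-free part of Lemma~\ref{lemma:p_adic_decomp}.

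This yields the displayed isomorphism. The final ``in particular'' is then formal: a finite abelian group is a direct double if and only if each $\Z/p^a\Z$ occurs with even multiplicity in its primary decomposition, and adjoining $\Z/p_k\Z\oplus\Z/p_k\Z$ adds two copies of each of the (finitely many) primary cyclic summands of $\Z/p_k\Z$, hence changes every such multiplicity by an even number; so $\tor H_1(Y;\Z)$ is a direct double if and only if $\tor H_1(Y';\Z)$ is.
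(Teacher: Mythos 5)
Your proposal is correct and follows essentially the same route as the paper: reduce to $p$-primary parts via Lemma~\ref{lemma:p_adic_decomp}, note that expansion preserves $\varepsilon$, and observe that the only delicate case is when $V_p(p_k)$ is the strict maximum of the valuations, where the ``moreover'' clause of Lemma~\ref{lemma:p_adic_decomp} (equivalently $V_p(\varepsilon(Y))=-v_k$) saves the day. The only differences are cosmetic — your case split ($u<v_k$ versus $u=v_k$) and the multiset/equal-sums phrasing repackage the paper's identical computation.
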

\begin{proof}
Since expansion preserves the generalized Euler invariant, we have $\varepsilon(Y)=\varepsilon(Y')$.
For a fixed prime $p$, let $v_1\leq \dots \leq v_k$ denote the $p$-adic valuations of $p_1,\dots, p_k$ ordered to be increasing. By Lemma~\ref{lemma:p_adic_decomp} the $p$-primary part of $H_1(Y;\Z)$ is  
\[
\frac{\Z}{p^{v_1}\Z}\oplus \dots \oplus \frac{\Z}{p^{v_{k-2}}\Z} \oplus \frac{\Z}{p^{v}\Z},
\]
where $v=v_{k}+v_{k-1}+ V_p(\varepsilon(Y))$.
Now let $w_1\leq \dots \leq w_{k+2}$ be the $p$-adic valuations of $p_1,\dots, p_k,p_k,p_k$ in increasing order. Notice that this sequence is obtained from the $v_i$ by inserting two extra copies of $V_p(p_k)$ at the appropriate point. First suppose that $V_p(p_k)=v_j$ for some $j\leq k-1$. Calculating the $p$-primary part of $H_1(Y';\Z)$ using Lemma~\ref{lemma:p_adic_decomp} we obtain
\[
\frac{\Z}{p^{v_1}\Z}\oplus \dots \oplus \frac{\Z}{p^{v_{k-2}}\Z} \oplus \frac{\Z}{p^{v}\Z}\oplus \frac{\Z}{p^{v_j}\Z}\oplus \frac{\Z}{p^{v_j}\Z},
\]
since $w_{k+2}=v_k$, $w_{k+1}=v_{k-1}$ and $\varepsilon(Y)=\varepsilon(Y')$. Thus consider the case that $v_k=V_p(p_k)>v_{k-1}$. In this case, we showed in Lemma~\ref{lemma:p_adic_decomp} that $v=v_{k-1}$ and $V_p(\varepsilon(Y))=-v_k$. Thus calculating the $p$-primary part of $H_1(Y';\Z)$ yields
\[
\frac{\Z}{p^{v_1}\Z}\oplus \dots \oplus \frac{\Z}{p^{v_{k-1}}\Z} \oplus \frac{\Z}{p^{v_k}\Z}\oplus \frac{\Z}{p^{v_k}\Z},
\]
since $v=v_{k-1}=w_{k-1}$, $w_k=v_k$ and $w_{k+2}+w_{k+1}+V_p(\varepsilon(Y'))=v_k$.
In either case, the $p$-primary part of $H_1(Y';\Z)$ is obtained from the $p$-primary part of $H_1(Y;\Z)$ by adding a $\frac{\Z}{p^{V_{p}(p_k)}\Z}\oplus \frac{\Z}{p^{V_{p}(p_k)}\Z}$ summand. Since this is true for all primes we see that
\[
H_1(Y';\Z) \cong H_1(Y;\Z) \oplus \frac{\Z}{p_k \Z} \oplus \frac{\Z}{p_k\Z},
\]
as required.
\end{proof}
The following is a key ingredient in the proof of Theorem~\ref{thm:partitions}.
\begin{lemma}\label{lemma:homology_is_double2}
  Let $Y=F(e; \frac{p_1}{q_1}, \dots, \frac{p_k}{q_k})$ be a Seifert fibered space over orientable base surface $F$ with $\varepsilon(Y) > 0$ and $p_i/q_i>1$ for all $i$. Suppose that $\tor H_1(Y) \cong G \oplus G$ for some finite abelian group $G$. If $\mathcal{P} = \{C_1, \ldots, C_n\}$ is a partition of $\{1,\ldots,k\}$ into $n \le e$ classes such that
    \begin{equation}\label{eq:posineq_double}
      \sum_{j \in C_i} \frac{q_j}{p_j} \le 1
    \end{equation}
    for all $i\in\{1,\dots,n\}$, then $n = e$ and there is precisely one value $i \in \{1,\dots,n\}$ for which the inequality in \eqref{eq:posineq_double} is strict and this satisfies
    \[
    1-\sum_{j \in C_i} \frac{q_i}{p_i}=\frac{1}{\lcm (p_1,\dots, p_k)}.
  \]
  Moreover, if $k$ is even then $\gcd(p_1,\dots, p_k) = 1$.
\end{lemma}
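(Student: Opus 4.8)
The plan is to reduce the entire statement to the single identity $\varepsilon(Y) = 1/\lcm(p_1,\dots,p_k)$, which I claim follows from the hypothesis $\tor H_1(Y) \cong G\oplus G$ alone, and then to read off everything about the partition by an elementary counting argument. First I would set $L = \lcm(p_1,\dots,p_k)$, $s_i = \sum_{j\in C_i} q_j/p_j$, and $\delta_i = 1-s_i \ge 0$ for the deficiencies. Since $\mathcal P$ partitions $\{1,\dots,k\}$, summing gives $\sum_{i=1}^n s_i = \sum_{j=1}^k q_j/p_j = e-\varepsilon(Y)$, and hence $\sum_{i=1}^n \delta_i = \varepsilon(Y)-(e-n)$. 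Each $\delta_i$ is a non-negative rational whose denominator divides $\lcm_{j\in C_i}p_j$, and so divides $L$; thus every $\delta_i$ lies in $\tfrac1L\Z_{\ge 0}$. Note also that $L\ge 2$, since each $p_i/q_i>1$ with $p_i,q_i$ coprime positive integers forces $p_i\ge 2$.

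The key step is to prove $\varepsilon(Y)=1/L$ using Lemma~\ref{lemma:p_adic_decomp}. Fix a prime $p$ and let $v_1\le\dots\le v_k$ be the sorted $p$-adic valuations of $p_1,\dots,p_k$. By that lemma the $p$-primary part of $H_1(Y)$ is $\Z/p^v\Z \oplus \bigoplus_{i=1}^{k-2}\Z/p^{v_i}\Z$ with $v = v_{k-1}+v_k+V_p(\varepsilon(Y)) \ge v_{k-1}$. Since $\tor H_1(Y)\cong G\oplus G$, this $p$-primary part is isomorphic to $G_p\oplus G_p$, so in its primary decomposition every exponent value occurs with even multiplicity; in particular its largest exponent occurs with even multiplicity. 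Now $v\ge v_{k-1}\ge v_i$ for all $i\le k-2$, so if $v > v_{k-1}$ then $v$ strictly exceeds all of $v_1,\dots,v_{k-2}$ and occurs with multiplicity exactly one, which is impossible for a double. Hence $v=v_{k-1}$, forcing $V_p(\varepsilon(Y)) = -v_k = -V_p(L)$. As this holds for every prime $p$ and $\varepsilon(Y)>0$, the positive integer $\varepsilon(Y)L$ is divisible by no prime, so $\varepsilon(Y)=1/L$.

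With this in hand the three main conclusions are immediate. From $\sum_i \delta_i = 1/L - (e-n)$ and $\delta_i\ge 0$ we get $e-n \le 1/L < 1$, and since $e-n$ is a non-negative integer this gives $n=e$; then $\sum_i \delta_i = 1/L$. As each $\delta_i\in\tfrac1L\Z_{\ge 0}$ and they sum to $1/L$, exactly one deficiency equals $1/L$ and the rest vanish; equivalently, precisely one class $C_i$ has the inequality \eqref{eq:posineq_double} strict, with $1-\sum_{j\in C_i} q_j/p_j = 1/L$, while all other classes have sum equal to $1$. For the final sentence, suppose $k$ is even and some prime $p$ divides $\gcd(p_1,\dots,p_k)$; then all $v_i\ge 1$, and by Lemma~\ref{lemma:p_adic_decomp} also $v\ge v_{k-1}\ge 1$, so the $p$-primary part above has all $k-1$ of its cyclic factors nontrivial. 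A double has an even number of nontrivial cyclic factors, contradicting that $k-1$ is odd; hence $\gcd(p_1,\dots,p_k)=1$.

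The hard part will be the case of the key step where the two largest valuations coincide ($v_{k-1}=v_k$), since there Lemma~\ref{lemma:p_adic_decomp} does not on its own determine $v$. The point I must get right is that the double hypothesis still forces $v=v_{k-1}$: any strictly larger value of $v$ would be a unique maximal exponent of the $p$-primary part and therefore could not pair off. I expect no other genuine obstacle, and in particular the partition inequalities are needed only in the final counting step, not in establishing $\varepsilon(Y)=1/L$.
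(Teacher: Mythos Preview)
Your proof is correct and follows essentially the same approach as the paper: both use Lemma~\ref{lemma:p_adic_decomp} and the direct-double hypothesis to force $v=v_{k-1}$ (hence $V_p(\varepsilon(Y))=-v_k$ for all primes), deduce $\varepsilon(Y)=1/\lcm(p_1,\dots,p_k)$, and then read off the conclusions about the partition and the $\gcd$ by the same elementary counting. Your phrasing of the key step (the maximal exponent in a direct double must occur with even multiplicity) is slightly more explicit than the paper's, but the argument is the same.
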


\begin{proof}
Since $\tor H_1(Y;\Z)$ is a direct double, then for each prime $p$ the $p$-primary part of $\tor H_1(Y;\Z)$ must also be a direct double. Let $v_1\leq\dots\leq v_k$ be the $p$-adic valuations of the $p_i$ in increasing order. By Lemma~\ref{lemma:p_adic_decomp} the relevant $p$-primary part is isomorphic to
\begin{equation}\label{eq:p_prim_double}
\frac{\Z}{p^{v_1}\Z}\oplus \dots \oplus \frac{\Z}{p^{v_{k-2}}\Z} \oplus \frac{\Z}{p^{v}\Z},
\end{equation}
where $v=v_k+v_{k-1}+ V_p(\varepsilon(Y))\geq v_{k-1}$. Since the $v_i$ are increasing, this can be a direct double only if $v=v_{k-1}=v_{k-2}$. This implies that 
\[V_p(\varepsilon(Y))=-v_k=-V_p(\lcm(p_1,\dots, p_k)).\]
Notice also that we must have an even number of non-trivial summands in \eqref{eq:p_prim_double}. Thus if $k$ is even, we necessarily have $v_1=V_p(\gcd(p_1,\dots, p_k))=0$. Since our choice of prime was arbitrary, it follows that
\[
\varepsilon(Y)=\frac{1}{\lcm(p_1,\dots, p_k)}
\]
and, if $k$ is even, that
\[\gcd(p_1,\dots, p_k)=1.\]
Now suppose that we have a partition $\mathcal{P}$ as in the statement of the lemma. We may split $\varepsilon(Y)$ up as follows:

\[
\varepsilon(Y)=e-n + \sum_{k=1}^{n} \left(1-\sum_{i\in C_k} \frac{q_i}{p_i}\right)=\frac{1}{\lcm(p_1,\dots, p_k)},
\]
where $1-\sum_{i\in C_k} \frac{q_i}{p_i}\geq 0$ for all $k$. Thus we see immediately that $e=n$. However notice that if $1-\sum_{i\in C_k} \frac{q_i}{p_i}> 0$, then
\[
1-\sum_{i\in C_k} \frac{q_i}{p_i}\geq \frac{1}{\lcm(p_1, \dots, p_k)}.
\]
Consequently we must have $1-\sum_{i\in C_k} \frac{q_i}{p_i}=0$ for all but one $k$ for which we have the required equality.
\end{proof}
The following will be useful in Section~\ref{sec:mubar}.
\begin{lemma}\label{lem:order_mod2_cohom}
Let $Y=S^2(e; \frac{p_1}{q_1}, \dots, \frac{p_k}{q_k})$ be a Seifert fibered space with $\varepsilon(Y)\neq 0$ and $N$ exceptional fibers of even multiplicity. If $N\geq 1$, then
\[\dim H^1(Y;\Z_2)= N-1.\]
If $N=0$, then
\[\dim H^1(Y;\Z_2)\leq 1.\]
\end{lemma}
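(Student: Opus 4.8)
The plan is to compute $\dim H^1(Y;\Z_2)$ directly from the mod 2 reduction of the homology presentation matrix established in Lemma~\ref{lemma:sfs_homology}, or equivalently from the $p$-primary decomposition in Lemma~\ref{lemma:p_adic_decomp} specialized to $p=2$. Since $Y$ is over $S^2$ we have $g=0$, so by Lemma~\ref{lemma:sfs_homology} the group $H_1(Y;\Z)$ is the cokernel of the matrix $A$, which is a finite abelian group (as $\varepsilon(Y)\neq 0$). By the universal coefficient theorem, $\dim H^1(Y;\Z_2)=\dim H_1(Y;\Z_2)=\dim(H_1(Y;\Z)\otimes\Z_2)$, which equals the number of cyclic summands in $H_1(Y;\Z)$ of even order, i.e.\ the number of indices $i$ with $V_2(D_i)\geq 1$.

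First I would apply Lemma~\ref{lemma:p_adic_decomp} with $p=2$: letting $v_1\leq\cdots\leq v_k$ be the $2$-adic valuations $V_2(p_1),\ldots,V_2(p_k)$ in increasing order, the $2$-primary part of $H_1(Y;\Z)$ is
\[
\frac{\Z}{2^{v}\Z}\oplus\bigoplus_{i=1}^{k-2}\frac{\Z}{2^{v_i}\Z},
\qquad v=v_k+v_{k-1}+V_2(\varepsilon(Y)),
\]
and $v\geq v_{k-1}$. The number of even multiplicities is $N=\#\{i: v_i\geq 1\}$, and $\dim H^1(Y;\Z_2)$ is the number of summands above with exponent $\geq 1$, namely $\#\{i\le k-2: v_i\geq 1\}$ plus $1$ if $v\geq 1$, and $0$ otherwise.

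Now I would split into cases according to $N$. If $N\geq 1$: in the generic sub-case $v_k\geq v_{k-1}\geq 1$ we have $v\geq v_{k-1}\geq 1$, so $v$ contributes $1$; among $v_1,\ldots,v_{k-2}$ exactly $N-2$ are positive (we've removed the two largest, both positive), giving total $(N-2)+1=N-1$. In the sub-case $v_k\geq 1>v_{k-1}=0$ (so $N=1$), Lemma~\ref{lemma:p_adic_decomp} gives $v=v_{k-1}=0$, and none of $v_1,\ldots,v_{k-2}$ is positive, giving total $0=N-1$. The remaining sub-case $N=1$ with the unique even $p_i$ not attaining the maximum cannot occur since $N=1$ forces that $p_i$ to be the unique index with positive valuation; and if $N\ge 2$ then $v_{k-1}\ge 1$, which is the generic sub-case. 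This exhausts $N\geq 1$ and yields $\dim H^1(Y;\Z_2)=N-1$. If $N=0$: then $v_1=\cdots=v_k=0$, all the summands $\Z/2^{v_i}\Z$ are trivial, and $v=V_2(\varepsilon(Y))$, so $\dim H^1(Y;\Z_2)=1$ if $V_2(\varepsilon(Y))\geq 1$ and $0$ otherwise; in all cases $\dim H^1(Y;\Z_2)\leq 1$.

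The only subtlety — the "main obstacle," though it is minor — is the careful bookkeeping of which valuations survive into the truncated sum $\bigoplus_{i=1}^{k-2}$ and whether the anomalous term $v$ jumps up or collapses to $v_{k-1}$; this is entirely controlled by the two dichotomies in Lemma~\ref{lemma:p_adic_decomp} ($v\geq v_{k-1}$ always, and $v=v_{k-1}$ when $v_k>v_{k-1}$), so once those cases are organized correctly the count is immediate. I should also note that $H^1(Y;\Z_2)$ has no free part since $H_1(Y;\Z)$ is torsion here, so the torsion count is the whole answer.
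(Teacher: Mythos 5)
Your proposal is correct and follows essentially the same route as the paper: reduce to counting nontrivial summands in the $2$-primary part of $H_1(Y;\Z)$ via the universal coefficient theorem, apply Lemma~\ref{lemma:p_adic_decomp} with $p=2$, and split into the cases $N\geq 2$, $N=1$ (using that $v_k>v_{k-1}$ forces $v=v_{k-1}=0$), and $N=0$. The bookkeeping you flag as the main subtlety is handled identically in the paper.
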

\begin{proof}
Since $\Z_2$ is a field $\dim H_1(Y;\Z_2)=\dim H^1(Y;\Z_2)$. Thus we will compute $\dim H_1(Y;\Z_2)$. By the universal coefficient theorem, $\dim H_1(Y;\Z_2)$ is equal to the number of summands in the 2-primary part of $H_1(Y;\Z)$. Let $0\leq v_1\leq \dots \leq v_k$ be the 2-adic valuations of the $p_i$ ordered to be increasing. By Lemma~\ref{lemma:p_adic_decomp} this 2-primary part can be written as
\[
\frac{\Z}{2^{v_1}\Z}\oplus \dots \oplus \frac{\Z}{2^{v_{k-2}}\Z} \oplus \frac{\Z}{2^{v}\Z},
\]
where $v\geq v_{k-1}$.
By assumption precisely $N$ of $v_1,\ldots,v_k$ are non-zero. So if $N\geq 2$, then $N-1$ values of $v_1,\dots, v_{k-1}$ are non-zero, giving the desired number of summands. If $N\leq 1$, then only $v$ can be non-zero, so $\dim H^1(Y;\Z_2)\leq 1$ in this case. However, if $N=1$, then $v_k>v_{k-1}=0$, so Lemma~\ref{lemma:p_adic_decomp} also shows that $v=v_{k-1}=0$ in this case, as required.
\end{proof}

We end the section with one easy topological application of Proposition~\ref{lemma:homology_is_double2}, which is the topological analogue of the upper bound in Theorem~\ref{thm:classification_e_ge_(k+1)/2}.
\begin{prop}\label{prop:topbound}
  \label{prop:top_bound} Let $Y=F(e; \frac{p_1}{q_1}, \ldots, \frac{p_k}{q_k})$ with $\frac{p_i}{q_i} > 1$ for all $i$ and $\varepsilon(Y)> 0$. If $Y$ embeds topologically in $S^4$, then $e\leq k-1$. 
  \end{prop}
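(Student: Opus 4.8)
The plan is to combine the homological obstruction coming from Hantzsche's theorem (Corollary~\ref{cor:direct_double}) with the $p$-adic description of $H_1(Y;\Z)$ from Lemma~\ref{lemma:p_adic_decomp}. Suppose $Y$ embeds topologically locally flatly in $S^4$. Then $\tor H_1(Y;\Z)$ must be a direct double. As in the proof of Lemma~\ref{lemma:homology_is_double2}, for each prime $p$ the $p$-primary part of $\tor H_1(Y;\Z)$, which by Lemma~\ref{lemma:p_adic_decomp} is isomorphic to $\frac{\Z}{p^{v_1}\Z}\oplus\cdots\oplus\frac{\Z}{p^{v_{k-2}}\Z}\oplus\frac{\Z}{p^{v}\Z}$ with $v=v_k+v_{k-1}+V_p(\varepsilon(Y))\geq v_{k-1}$, must itself be a direct double. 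As argued there, this forces $v=v_{k-1}=v_{k-2}$ and hence $V_p(\varepsilon(Y))=-v_k=-V_p(\lcm(p_1,\dots,p_k))$. Since this holds for every prime, we conclude $\varepsilon(Y)=\tfrac{1}{\lcm(p_1,\dots,p_k)}$.

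Next I would extract the inequality from this. Write $\varepsilon(Y)=e-\sum_{i=1}^k\frac{q_i}{p_i}$. Since $\frac{p_i}{q_i}>1$ for each $i$, we have $0<\frac{q_i}{p_i}<1$, so $\sum_{i=1}^k\frac{q_i}{p_i}<k$, giving $\varepsilon(Y)>e-k$, i.e.\ $e-k<\tfrac{1}{\lcm(p_1,\dots,p_k)}\leq 1$, which already yields $e\leq k$. To improve this to $e\leq k-1$ I would argue that the case $e=k$ cannot occur: if $e=k$ then $\sum_{i=1}^k\frac{q_i}{p_i}=k-\varepsilon(Y)=k-\tfrac{1}{\lcm(p_1,\dots,p_k)}$, so $\sum_{i=1}^k\bigl(1-\frac{q_i}{p_i}\bigr)=\tfrac{1}{\lcm(p_1,\dots,p_k)}$, where each summand $1-\frac{q_i}{p_i}$ is a positive rational number which, when written over the common denominator $\lcm(p_1,\dots,p_k)$, has integer numerator at least $1$, hence $1-\frac{q_i}{p_i}\geq\tfrac{1}{\lcm(p_1,\dots,p_k)}$. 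Summing the $k\geq 1$ such terms forces $k=1$ and $1-\frac{q_1}{p_1}=\tfrac{1}{p_1}$, i.e.\ $q_1=p_1-1$; but then $Y=F(1;\frac{p_1}{p_1-1})$, whose base surface $S^2$ case is $S^3$ and whose higher genus case has $e=1=k$, which is the excluded boundary behaviour — more simply, one checks $e=k=1$ is consistent, so I instead rule out $e=k$ by a cleaner route.

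A cleaner finish: since $\varepsilon(Y)=\tfrac{1}{\lcm(p_1,\dots,p_k)}$ and each $1-\frac{q_i}{p_i}\ge \tfrac{1}{\lcm(p_1,\dots,p_k)}$ as above, we get
\[
\frac{1}{\lcm(p_1,\dots,p_k)}=\varepsilon(Y)=e-k+\sum_{i=1}^k\Bigl(1-\frac{q_i}{p_i}\Bigr)\ \geq\ e-k+\frac{k}{\lcm(p_1,\dots,p_k)},
\]
so $(k-1)\cdot\tfrac{1}{\lcm(p_1,\dots,p_k)}\geq e-k+\tfrac{k}{\lcm(p_1,\dots,p_k)}$ rearranges to $e\leq k - \tfrac{1}{\lcm(p_1,\dots,p_k)} < k$, and since $e$ is an integer, $e\leq k-1$.

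The only genuine subtlety I anticipate is making sure the direct-double argument is applied correctly when $k\le 2$, where the displayed $p$-primary decomposition degenerates; for $k=1$ and $k=2$ the claim $e\le k-1$ should be checked directly (e.g.\ $k=1$ forces $e\le 0$, contradicting $\varepsilon(Y)>0$ unless... — in fact one should verify the statement is vacuous or handled by the hypotheses), but these small cases are routine. Otherwise the proof is a direct combination of Corollary~\ref{cor:direct_double}, Lemma~\ref{lemma:p_adic_decomp}, and the elementary rational-number estimate above, with no hard step.
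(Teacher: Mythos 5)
Your proof is correct and is essentially the paper's argument: the paper simply applies Lemma~\ref{lemma:homology_is_double2} to the singleton partition $\{\{1\},\dots,\{k\}\}$, and that lemma's proof consists precisely of the two steps you carry out by hand (the direct-double condition forces $\varepsilon(Y)=\frac{1}{\lcm(p_1,\dots,p_k)}$ via Lemma~\ref{lemma:p_adic_decomp}, and each positive term $1-\frac{q_i}{p_i}$ is at least $\frac{1}{\lcm(p_1,\dots,p_k)}$). Your worry about $k=1$ is legitimate but is shared by the paper, whose proof also tacitly assumes $k>1$ (indeed $S^2(1;\frac{a}{a-1})\cong S^3$ embeds with $e=k=1$); for $k\ge 2$ your final estimate, correctly rearranged as $e\le k-\frac{k-1}{\lcm(p_1,\dots,p_k)}<k$, closes the argument.
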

\begin{proof}
If $Y$ embeds into $S^4$, then Proposition~\ref{prop:S4_splitting} shows that $\tor H^2(Y)\cong \tor H_1(Y)$ is a direct double. This implies that $e\leq k-1$. For if $e \geq k$, the partition $\{\{1\},\{2\},\ldots,\{k\}\}$ would violate the conditions of Lemma~\ref{lemma:homology_is_double2} since there would be $k > 1$ classes for which the inequality \eqref{eq:posineq_double} of Lemma~\ref{lemma:homology_is_double2} is strict.
\end{proof}
\begin{remark}
The bound in Proposition~\ref{prop:top_bound} is sharp. It follows from the work of Freedman that every integer homology sphere embeds topologically locally flatly in $S^4$ \cite{MR679066}. For a given $k\geq 3$, there exist Seifert fibered integer homology spheres for any value of $e$ in the range $1\leq e \leq k-1$.
\end{remark}

\section{Obstruction to smoothly embedding Seifert fibered spaces}\label{sec:obstruction}

In this section we analyse an obstruction to smoothly embedding a Seifert fibered space $Y$ over an orientable base surface in $S^4$ coming from Donaldson's theorem, culminating in a proof of Theorem~\ref{thm:partitions}.

For the duration of this section we will use the following notation. Let
\[Y=F \left(e; \frac{p_1}{q_1}, \dots, \frac{p_k}{q_k}\right)\]
be a Seifert fibered space over an orientable base surface of genus $g$ with $\varepsilon(Y) > 0$ and $\frac{p_i}{q_i}>1$ for all $i$. As in Figure~\ref{fig:definite_mfd_bounding_sfs}, there is a positive-definite $X$ with boundary $Y$ and intersection form $(\Z^n, Q_{\Gamma})$, where $\Gamma$ is a weighted star-shaped graph as in Figure~\ref{fig:plumbing} and $n = |\Gamma|$ is the number of vertices in $\Gamma$.

Before embarking on the proof, we summarise the idea behind the obstruction based on Donaldson's theorem as follows. A smooth embedding of $Y$ into $S^4$ splits $S^4$ into two $4$-manifolds $U_1$ and $U_2$ with boundary $-Y$. The smooth manifold $W_i = X \cup U_i$ is positive definite, so Donaldson's theorem implies that it has standard diagonal intersection form. The inclusion map $X \hookrightarrow{} W_i$ induces maps of intersection lattices $\iota_i : (H_2(X), Q_X) \rightarrow (H_2(W_i), \mbox{Id})$, which we can write as the transpose of an integer matrix $A_i$. Following Greene-Jabuka \cite{MR2808326}, Donald proved that the image of the restriction map $H^2(W_i) \rightarrow H^2(Y)$ is isomorphic to $\frac{\im A_i}{\im Q_X}$ \cite[Theorem 3.6]{MR3271270}. Combining this with the fact that the restriction-induced map $H^2(U_1) \oplus H^2(U_2) \rightarrow H^2(Y)$ is an isomorphism, he showed that $\frac{\im A_1}{\im Q_X} \oplus \frac{\im A_2}{\im Q_X} = \coker Q_X$. This condition implies that the augmented matrix $(A_1 | A_2)$ is surjective over the integers, see Theorem~\ref{thm:embsurjectivity}.

Using the fact that $H_1(Y)$ must split as a direct double, we are able to prove some structural results about the form any lattice embedding $(H_2(X), Q_X) \rightarrow (\Z^{b_2(X)}, \mbox{Id})$ must take. An important ingredient in this proof is the lattice inequality from \cite[Theorem 6]{latticeineq}, stated below. It is this result which makes an analysis of the obstruction based on Donaldson's theorem feasible.
\begin{thm}[Theorem 6 of \cite{latticeineq}] \label{thm:emb_ineq_eq_case}
Let $\iota : (\Z^{|\Gamma'|}, Q_{\Gamma'}) \rightarrow (\Z^m, \mbox{Id})$ be a lattice embedding, where $m > 0$ and $\Gamma'$ is a disjoint union of weighted linear chains representing fractions $\frac{p_1}{q_1}, \ldots, \frac{p_n}{q_n} \in \Q_{>1}$. Suppose that there is a unit vector $w \in (\Z^m, \mbox{Id})$ which pairs non-trivially with (the image of) the starting vertex of each linear chain. Then
  \[\sum_{i=1}^n \frac{q_i}{p_i} \leq 1.\]
 Moreover, if we have equality then $w$ has pairing $\pm 1$ with the starting vertex of each linear chain.
\end{thm}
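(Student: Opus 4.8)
The plan is to induct on the number $n$ of linear chains, following the standard "bouquet of spheres / discrepancy" technique for analyzing lattice embeddings of linear chains, and to track carefully the contribution of the distinguished unit vector $w$. Throughout, write each linear chain $L_i$ representing $\frac{p_i}{q_i} = [a_1^i,\ldots,a_{h_i}^i]^-$ with vertex basis $v_1^i,\ldots,v_{h_i}^i$, so that $\norm{v_j^i} = a_j^i \ge 2$ and consecutive vertices pair to $-1$. The key local fact I would use repeatedly is: since $v_j^i \cdot v_j^i = a_j^i \ge 2$ and the $v_j^i$ have integer coordinates in $(\Z^m,\mathrm{Id})$, each $v_j^i$ is a sum of at least two (signed) orthonormal basis vectors, and the pairing conditions force a chain of shared basis vectors running along the linear chain. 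The first step is to set up this combinatorial structure precisely and, in particular, to identify for the starting vertex $v_1^1$ of the first chain a basis vector $e_\alpha$ appearing in it; the hypothesis gives us moreover that $w$ itself shares a basis vector with $v_1^i$ for every $i$.

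**Next** I would run the inductive reduction. The heart of the argument (as in \cite{latticeineq}, Theorem 6) is to locate a single orthonormal basis vector $e_\alpha$ that is "used up" by the first chain in a controlled way — concretely, one shows $v_1^1$ can be taken to have the form $e_\alpha + (\text{stuff orthogonal to the other chains})$, or more precisely that after accounting for $e_\alpha$ one obtains an embedding of the remaining chains (with one chain shortened or removed) into a smaller diagonal lattice, to which the inductive hypothesis applies. The bookkeeping produces the telescoping inequality $\sum_{i=1}^n \frac{q_i}{p_i} \le 1$; this part I would simply cite from Theorem~\ref{thm:emb_ineq_eq_case}'s proof rather than reproduce. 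What I actually need to add is the equality analysis. So suppose $\sum_{i=1}^n \frac{q_i}{p_i} = 1$. Then every inequality used along the induction must be an equality, which pins down the embedding rigidly: at each stage no basis vector is "wasted," meaning the total number $m$ of basis vectors equals exactly $\sum_i h_i - n + 1$ (one fewer than the number of vertices, reflecting that $Q_{\Gamma'}$ has the right determinant), and each basis vector $e_\beta$ is shared by at most two of the vertices $v_j^i$ across all chains. In this rigid situation, the special vector $w$ — being a unit vector pairing nontrivially with every starting vertex $v_1^i$ — can involve at most ... I would argue that $w$ must be, up to sign, one of the orthonormal basis vectors $\pm e_\beta$, or more generally that $|w \cdot v_1^i| = 1$ for each $i$ by a counting argument: the total "capacity" $\sum \norm{v_j^i} = \sum_i \sum_j a_j^i$ equals (by the continued fraction / trace identity) $2m + (\text{number of edges}) = 2m + \sum_i(h_i-1)$, and in the equality case there is no slack, so no coordinate of any $v_j^i$ can have absolute value $\ge 2$ and no basis vector can be shared three times; hence $w \cdot v_1^i \in \{-1,0,1\}$, and it is nonzero by hypothesis.

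**The main obstacle** I anticipate is precisely this last counting step in the equality case: making rigorous the claim that saturation of the inequality forbids any multiplicity-$\ge 3$ sharing of basis vectors and any coordinate of absolute value $\ge 2$, so that $w$ is forced to pair by $\pm 1$ with each starting vertex. The cleanest route is probably to isolate, inside the inductive proof of Theorem~\ref{thm:emb_ineq_eq_case}, the exact inequality "$\frac{q_i}{p_i} \le (\text{local contribution})$" contributed by chain $i$ and show that equality there forces $v_1^i$ to meet $w$ in a single basis vector with coefficient $\pm 1$ and forces the rest of chain $i$ to be embedded as a "short" chain (each vertex using the minimum number of basis vectors, namely $a_j^i$, with no overlaps beyond consecutive ones). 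I would handle the base case $n=1$ directly — there $\frac{q_1}{p_1} = 1$ is impossible for $\frac{p_1}{q_1} > 1$, so the equality case with $n=1$ is vacuous unless one allows the degenerate chain, and I would note this — and then, for $n \ge 2$, peel off the chain that shares a basis vector with $w$ first, so that the inductive hypothesis delivers $|w' \cdot v_1^i| = 1$ for the remaining $i$ automatically and only the peeled-off chain needs separate verification, which is the $n=1$-type local computation already done. Once $w$ is pinned down, the statement "$w$ has pairing $\pm 1$ with the starting vertex of each linear chain" follows immediately, completing the equality case.
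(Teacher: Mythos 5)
First, a structural point: the paper does not prove this statement at all --- it is imported verbatim as Theorem~6 of \cite{latticeineq} and used as a black box (it is the external input that makes the Donaldson-theorem obstruction tractable in Section~5). So there is no in-paper proof to compare against, and a blind attempt has to supply the whole argument. Your proposal does not do that: for the inequality $\sum_i q_i/p_i\le 1$ you say you would ``simply cite'' the proof of the very theorem being proved, which is circular, and the part you do try to supply --- the equality analysis --- is premised on the internal structure of that uncited induction (``every inequality used along the induction must be an equality''), so nothing is actually established. Your general instinct (induct on the chains; extract rigidity from saturation) is in the right spirit, but as written this is an outline with the load-bearing steps missing.

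Second, and more seriously, the concrete counting claims underpinning your equality analysis are false. Take three chains each consisting of the single vertex $[3]^-$, so $p_i/q_i=3$ and $\sum_i q_i/p_i=1$. The vectors $v^1=e_1+e_2+e_3$, $v^2=e_1-e_2+e_4$, $v^3=e_1-e_3-e_4$ define a lattice embedding into $(\Z^4,\mathrm{Id})$, and $w=e_1$ pairs with every starting vertex. Here $m=4$, not $\sum_i h_i-n+1=1$; the proposed identity $\sum_{i,j}a^i_j=2m+\#\{\text{edges}\}$ reads $9=8$; and $e_1$ is shared by three distinct vertices, contradicting ``no basis vector can be shared three times.'' So saturation of the inequality does not yield the rigidity you invoke, and the deduction of $|w\cdot v_1^i|=1$ from ``no slack'' collapses. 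Note also that since $w$ has norm one in $(\Z^m,\mathrm{Id})$ it is automatically $\pm e_j$ for some $j$, so the observation that $w$ ``must be, up to sign, one of the orthonormal basis vectors'' is vacuous; the entire content of the ``moreover'' clause is that the $e_j$-coefficient of each starting vertex is $\pm 1$, which your counting does not control. The actual proof in \cite{latticeineq} is a genuinely delicate induction and cannot be reconstructed from this sketch.
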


The following theorem is the key obstruction to smoothly embedding Seifert fibered spaces in $S^4$ derived from Donaldson's theorem. It is a slight variation of \cite[Corollary 3.9]{MR3271270}.
\begin{thm}\label{thm:embsurjectivity}
If $Y$ embeds smoothly into $S^4$, then there exist lattice embeddings 
\[\iota_i : (\Z^{n},Q_{\Gamma})\rightarrow (\Z^{n},\mbox{Id})\]
for $i=1,2$, such that the augmented matrix $(A_1 | A_2)$ is surjective, where $A_i$ is the transpose of the integer matrix representing $\iota_i$ for $i=1,2$.
\end{thm}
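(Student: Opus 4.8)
The plan is to carry out the argument sketched in the paragraph preceding the statement, following Greene--Jabuka \cite{MR2808326} and Donald \cite[Corollary~3.9]{MR3271270}, inserting the minor homological bookkeeping needed to allow the base surface $F$ to have positive genus. By Proposition~\ref{prop:S4_splitting}, a smooth embedding $Y\hookrightarrow S^4$ gives a decomposition $S^4=U_1\cup_Y(-U_2)$ with $\partial U_1=\partial U_2=-Y$, with the restriction $H^2(U_1)\oplus H^2(U_2)\to H^2(Y)$ an isomorphism, $H^3(U_i)=0$, and $\sigma(U_i)=0=b_2(U_i)+b_1(U_i)-b_3(U_i)-b_2(Y)$. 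Gluing the positive-definite plumbing $X$ of Figure~\ref{fig:definite_mfd_bounding_sfs} (so $\partial X=Y$ and $(H_2(X),Q_X)\cong(\Z^n,Q_\Gamma)$) to $U_i$ along $Y$ produces closed oriented smooth $4$-manifolds $W_i=X\cup_Y U_i$, for $i=1,2$.

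I would next verify that $W_i$ is positive definite via Proposition~\ref{thm:def_gluing_thm}, applied with $X$ and $U_i$ in the two roles. The maximal-signature condition holds for $U_i$ by the previous paragraph, and for $X$ it reduces to $b_1(X)-b_3(X)=b_2(Y)$; this holds since the handle description of $X$ in Figure~\ref{fig:definite_mfd_bounding_sfs} gives $b_1(X)=2g$ and $b_3(X)=0$ (the $2$-handles contribute nothing to $b_1$, as noted in the text), while $b_2(Y)=b_1(Y)=2g$ by Lemma~\ref{lemma:sfs_homology} and Poincar\'e duality. The injectivity condition holds because $Q_\Gamma$ is nonsingular over $\Q$: the long exact sequence of $(X,Y)$ with $\Q$-coefficients together with Poincar\'e--Lefschetz duality then forces $H_1(Y;\Q)\to H_1(X;\Q)$ to be injective. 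Hence each $W_i$ is a closed positive-definite $4$-manifold with $b_2(W_i)=\sigma(W_i)=\sigma(X)=n$ by Novikov additivity, so Donaldson's theorem \cite{MR910015} gives $(H_2(W_i)/\tor H_2(W_i),\,\cdot\,)\cong(\Z^n,\mathrm{Id})$. The inclusion $X\hookrightarrow W_i$ therefore induces a lattice embedding $\iota_i\colon(\Z^n,Q_\Gamma)\to(\Z^n,\mathrm{Id})$ (injective since $Q_\Gamma$ is nonsingular); writing $M_i$ for its matrix and $A_i=M_i^{T}$, we get $A_iM_i=M_i^{T}M_i=Q_\Gamma$, hence $\im Q_\Gamma\subseteq\im A_i\subseteq\Z^n$ and $(\det A_i)^2=\det Q_\Gamma$.

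It remains to deduce surjectivity of $(A_1\mid A_2)$. By \cite[Theorem~3.6]{MR3271270} the image of the restriction $H^2(W_i)\to H^2(Y)$ is carried isomorphically onto the finite subgroup $\im A_i/\im Q_\Gamma$ of $\coker Q_\Gamma\cong\tor H^2(Y)$. Because the inclusions $Y\hookrightarrow U_i\hookrightarrow W_i$ make this restriction factor through $H^2(U_i)\to H^2(Y)$, the subgroup $\im A_i/\im Q_\Gamma$ lies inside $L_i:=\im(H^2(U_i)\to H^2(Y))$; since $L_1\cap L_2=0$ and $L_1+L_2=H^2(Y)$ by Proposition~\ref{prop:S4_splitting}, the subgroups $\im A_1/\im Q_\Gamma$ and $\im A_2/\im Q_\Gamma$ intersect trivially in $\coker Q_\Gamma$. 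On the other hand $|\im A_i/\im Q_\Gamma|=|\coker Q_\Gamma|/[\Z^n:\im A_i]=\det Q_\Gamma/|\det A_i|=\sqrt{\det Q_\Gamma}$, so the two orders multiply to $\det Q_\Gamma=|\coker Q_\Gamma|$. Hence $\im A_1/\im Q_\Gamma\oplus\im A_2/\im Q_\Gamma=\coker Q_\Gamma$, and since $\im A_1+\im A_2\supseteq\im Q_\Gamma$ this forces $\im A_1+\im A_2=\Z^n$, i.e. $(A_1\mid A_2)$ is surjective.

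The step I expect to be the main obstacle is the identification in the last paragraph of $\im(H^2(W_i)\to H^2(Y))$ with $\im A_i/\im Q_\Gamma$, which is \cite[Theorem~3.6]{MR3271270} (after \cite{MR2808326}): although this is a formal consequence of the long exact sequences of the pairs $(W_i,X)$ and $(U_i,Y)$ together with Poincar\'e--Lefschetz duality, carrying it out requires keeping careful track of the torsion subgroups involved and of the orientation conventions, so that $X$, $U_i$ and $W_i$ are all positive definite simultaneously; one must also check that the extra free summand $\Z^{2g}$ of $H^2(Y)$ coming from the genus does not interfere with these torsion-level computations. Everything else, including the remaining positive-genus bookkeeping, is routine.
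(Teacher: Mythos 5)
Your proposal is correct and follows essentially the same route as the paper: decompose $S^4=U_1\cup_Y(-U_2)$, glue on the positive-definite plumbing, apply Proposition~\ref{thm:def_gluing_thm} and Donaldson's theorem, identify $\im\bigl(H^2(W_i)\to H^2(Y)\bigr)$ with $\im A_i/\im Q_\Gamma$, and combine the order count $|\im A_i/\im Q_\Gamma|=|\det A_i|=\sqrt{|\coker Q_\Gamma|}$ with Proposition~\ref{prop:S4_splitting}\eqref{item:h2splits} to get the internal direct sum and hence surjectivity. The only divergence is that you cite Donald's Theorem~3.6 for the key identification, whereas the paper reproves it via the commutative diagram of the pairs $(W_i,U_i)$ and $(X,Y)$ with excision (precisely to handle the positive-genus bookkeeping you correctly flag as the point needing care); your remaining steps, including the trivial-intersection argument via $L_1\cap L_2=0$, are a harmless reshuffling of the paper's.
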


\begin{proof}
Unless explicitly stated otherwise, all homology and cohomology groups in this proof are taken with coefficients in $\Z$. If $Y$ embeds smoothly into $S^4$, then Proposition~\ref{prop:S4_splitting} shows that it splits into two $4$-manifolds $U_1$ and $U_2$, with $\partial U_1\cong \partial U_2 \cong -Y$. Let $W_i$ be the closed manifold $W_i=X \cup_Y U_i$. We claim that Proposition~\ref{thm:def_gluing_thm} implies this is positive definite with $b_2(W_i)=b_2(X)$. To see this, note that in Proposition~\ref{thm:def_gluing_thm} injectivity condition \eqref{enum:injcond} is satisfied since the map $H_1(Y;\Q) \rightarrow H_1(X;\Q)$ is injective, and the signature condition \eqref{enum:sigeq} follows from $b_1(X) + b_2(X) - b_3(X) - b_2(Y) = 2g + n - 0 - 2g = n = \sigma(X)$, together with Proposition~\ref{prop:S4_splitting}\eqref{item:sig=0}. Thus, Donaldson's diagonalization theorem implies that the intersection form of $W_i$ is diagonalizable. Hence, the inclusion $H_2(X) \rightarrow H_2(W_i)$ induces an embedding of lattices $\iota_i:(\Z^{b_2},Q_{\Gamma})\rightarrow (\Z^{b_2},I)$, for $i\in\{1,2\}$. 

Now fix $i\in\{1,2\}$. By considering the long exact sequences of pairs and the inclusion $(X, Y) \xhookrightarrow{} (W_i, U_i)$, we have the following commutative diagram with exact rows:
\[\xymatrix{
  H^2(W_i,U_i) \ar[d]^{\cong i_1} \ar[r]^\alpha &H^2(W_i)\ar[d]^{i_2}\ar[r]^\beta & H^2(U_i) \ar[d]^{i_3} \ar[r] & H^3(W_i, U_i) \ar[d]^{\cong i_4}\\
  H^2(X,Y) \ar[r]^\gamma   &H^2(X)\ar[r]^\delta & H^2(Y) \ar[r]^\epsilon & H^3(X,Y).}\]
It follows by excision that the maps $i_1$ and $i_4$ are isomorphisms. 

%
Recall that we have a basis for $H_2(X)$ for which the intersection form of $X$ is given by the matrix $Q_{\Gamma}$. By the universal coefficient theorems, $\tor H^2(X) \cong \tor H_1(X) = 0$, so we may choose the dual basis for $H^2(X) \cong \Hom(H_2(X),\Z)$. We choose the Poincar{\'e} dual basis for $H^2(X, Y)$. With respect to these bases the map $\gamma$ is represented by $Q_{\Gamma}$. By Donaldson's theorem, we can choose a basis for $H^2(W_i)/\tor \cong \Hom(H_2(W_i),\Z)$ for which $Q_W = \mbox{Id}$. The map $H^2(W_i)/\tor \rightarrow H^2(X)$ is dual to the inclusion induced map $H_2(X) \rightarrow H_2(W)/\tor$, and is therefore given by $A_i$ with respect to these choices of bases.

Now notice that $H^3(X,Y)\cong H_1(X)$ is torsion free. Thus $\tor H^2(Y)\subseteq \ker \epsilon$. However since $H^3(X)=0$, the map $\epsilon$ is surjective. Since $H^2(Y)$ and $H^3(X,Y)$ have the same rank we see that $\im \delta = \tor H^2(Y)=\ker \epsilon$. This allows us to identify $\tor H^2(Y)$ with $\coker \gamma$ via $\delta$.
Since $i_1$ is an isomorphism, we see that $\im\gamma \subset \im i_2$. In turn this shows that $\delta$ induces an injective map $\frac{ \im i_2}{\im \gamma} \rightarrow \tor H^2(Y)$. We have that $\im(i_3 \circ \beta) = \im(\delta \circ i_2) \subset \tor H^2(Y)$, which we may identify with $\frac{ \im i_2}{\im \gamma}$ via $\delta$. Since $H^2(X)$ is torsion free, $i_2$ maps finite order elements of $H^2(W)$ to $0$. Thus, in coordinates with respect to the bases given earlier $\frac{ \im i_2}{\im \gamma}$ is given by $\frac{\im A_i}{\im Q_{\Gamma}}$.

We claim that $\im (i_3 \circ \beta) = \im (i_3)$. It suffices to check these finite groups have the same order. By Proposition~\ref{prop:S4_splitting}\eqref{item:h2splits}, the order of $\im (i_3)$ is the square root of $|\tor H_1(Y)| = |\coker Q_{\Gamma}|$, where in this last equality uses the fact that $Q_{\Gamma}$ presents $\tor H_1(Y)$. Using the fact that $Q_{\Gamma} = A_i A_i^T$ we see that this is also the order of $\im (i_3 \circ \beta) \cong \frac{\im A_i}{\im Q_{\Gamma}}$, proving the claim. Thus, we can identify $\tor H^2(Y)$ with $\Z^{b_2}/\im Q_{\Gamma}$, and under this identification the image of $\tor H^2(U_i)\rightarrow \tor H^2(Y)$ is $\im A_i/\im Q_{\Gamma}$.

By Proposition~\ref{prop:S4_splitting}\eqref{item:alexduality} the map $\tor H^2(U_1)\oplus \tor H^2(U_1) \rightarrow \tor H^2(Y)$ is an isomorphism. Thus  
\begin{equation}\label{eq:sumcoker}
\frac{\Z^{b_2}}{\im Q_X}\cong \frac{\im A_1}{\im Q_X} \oplus \frac{\im A_2}{\im Q_X},
\end{equation} 
where the direct sum is an internal direct sum as subspaces of $\coker Q_X$.
 
It suffices to show that \eqref{eq:sumcoker} implies $\im (A_1 \mid A_2) = \Z^{b_2(X)}$. Let $x \in \Z^{b_2(X)}$ and let $q : \Z^{b_2(X)} \rightarrow \coker Q_X$ be the quotient map. By Equation \eqref{eq:sumcoker}, $q(a_1) + q(a_2) = q(x)$ for some $a_1 \in \im(A_1)$ and $a_2 \in \im(A_2)$. Thus, $a_1 + a_2 = x + k$ for some $k \in \im(Q_X)$. Since $Q_X = A_1 A_1^T$, we have $\im Q_X \subset \im (A_1)$. Therefore $(a_1 - k) + a_2 = x$ shows that $x \in \im (A_1 \mid A_2)$, as required.
\end{proof}

With $Y$, $X$ and $\Gamma$ as defined at the beginning of this section, we have the following lemma which in particular shows that from an embedding of lattices we can define a partition.

\begin{lemma}\label{lem:embedding_structure}
 Let $\iota : (\Z^{|\Gamma|}, Q_\Gamma) \rightarrow (\Z^{N}, \mbox{Id})$, where $N > 0$ be a lattice embedding. Let $\{e_1,\dots, e_N\}$ be an orthonormal basis for $(\Z^{N}, \mbox{Id})$. If $\tor H_1(Y;\Z) = G \oplus G$ for some abelian group $G$, then upto an automorphism of $\Z^N$ we may assume the following. The image of the central vertex is $e_1+\dots + e_e$. For each $i\in \{1,\dots, e\}$ let $C_i$ be the subset of $\{1,\dots, k\}$ consisting of $j$ such that the first vertex of the linear chain $p_j/q_j$ pairs non-trivially with $e_i$. Then
 \begin{enumerate}
 \item $\{C_1,\ldots,C_e\}$ is a partition of $\{1,\dots, k\}$ such that
 \[
 \sum_{j\in C_i} \frac{q_i}{p_i}=1
 \]
 for $i=1,\dots, e-1$ and
 \[
 \sum_{j\in C_e} \frac{q_i}{p_i}=1-\frac{1}{\lcm(p_1,\dots, p_k)}
 \]
 \item and for $i\in \{1, \dots ,e \}$ the vertices with which $e_i$ pairs non-trivially are precisely the leading vertices of the arms in $C_i$ and the central vertex. 
 \end{enumerate}
\end{lemma}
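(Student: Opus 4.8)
The plan is to argue entirely at the level of lattices, combining the restriction on $\varepsilon(Y)$ forced by the direct-double hypothesis with the linear-chain inequality of Theorem~\ref{thm:emb_ineq_eq_case}, applied one coordinate of $(\Z^N,\mathrm{Id})$ at a time. Write $P=\lcm(p_1,\dots,p_k)$, let $c$ denote the central vertex of $\Gamma$ and $v_1^j,\dots,v_{h_j}^j$ the vertices of the $j$th leg, and (as in the statement) identify each vertex with its image under $\iota$. Recall that $c$ is joined to each $v_1^j$ by a single edge and to no other vertex, so $c\cdot v_1^j=-1$ while $c\cdot v_r^j=0$ for $r\ge 2$.

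\emph{Step 1: locating the central vertex.} As in the proof of Lemma~\ref{lemma:homology_is_double2}, the hypothesis $\tor H_1(Y)\cong G\oplus G$ forces $\varepsilon(Y)=1/P$, i.e.\ $\sum_{j=1}^k q_j/p_j=e-1/P$. Since $c\cdot c=e$, the vector $c$ is supported on at most $e$ of the $e_\ell$. For each $e_\ell$ in the support of $c$, apply Theorem~\ref{thm:emb_ineq_eq_case} to the disjoint union of the legs whose leading vertex pairs non-trivially with $e_\ell$, with $e_\ell$ as the distinguished unit vector; this gives $\sum_{j:\,v_1^j\cdot e_\ell\ne 0}q_j/p_j\le 1$. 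Because $c\cdot v_1^j=-1\ne 0$, the leading vertex of every leg pairs non-trivially with some $e_\ell$ in the support of $c$, so summing these inequalities over the support of $c$ yields $\sum_{j=1}^k q_j/p_j\le|\mathrm{supp}(c)|\le e$. As $\sum_{j=1}^k q_j/p_j=e-1/P>e-1$, we conclude $|\mathrm{supp}(c)|=e$, so $c$ has exactly $e$ nonzero coordinates, each $\pm 1$; after a signed permutation of the $e_\ell$ we may take $c=e_1+\dots+e_e$.

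\emph{Step 2: the partition and the sums.} Set $C_i=\{j:v_1^j\cdot e_i\ne 0\}$ for $1\le i\le e$; Step~1 shows $\bigcup_{i=1}^e C_i=\{1,\dots,k\}$. Let $m_j=\#\{i\le e:j\in C_i\}\ge 1$. Then $e-1/P=\sum_j q_j/p_j\le\sum_j m_jq_j/p_j=\sum_{i=1}^e\sum_{j\in C_i}q_j/p_j\le e$, and each $1-\sum_{j\in C_i}q_j/p_j$ is a nonnegative rational with denominator dividing $P$, hence equal to $0$ or at least $1/P$. With $S_1=\sum_j(m_j-1)q_j/p_j\ge 0$ and $S_2=\sum_{i=1}^e\bigl(1-\sum_{j\in C_i}q_j/p_j\bigr)\ge 0$ one has $S_1+S_2=1/P$. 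I would rule out $S_2=0$ as follows: if every $\sum_{j\in C_i}q_j/p_j=1$, the equality clause of Theorem~\ref{thm:emb_ineq_eq_case} gives $v_1^j\cdot e_i=\pm 1$ for all $j\in C_i$; since also $S_1=1/P$ and every nonzero $(m_j-1)q_j/p_j$ is $\ge q_j/p_j\ge 1/P$, exactly one leg $j_0$ has $m_{j_0}=2$ and all others $m_j=1$; but then $v_1^{j_0}$ has precisely two nonzero coordinates among $e_1,\dots,e_e$, each $\pm 1$, which cannot sum to $c\cdot v_1^{j_0}=-1$. Hence $S_2\ge 1/P$, forcing $S_1=0$ and $S_2=1/P$: every $m_j=1$, so $\{C_1,\dots,C_e\}$ is a partition, and exactly one class (relabel it $C_e$) has $\sum_{j\in C_e}q_j/p_j=1-1/P$ while the other $e-1$ have sum $1$. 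This is part~(1).

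\emph{Step 3: the precise pairings, and the main obstacle.} Since each $m_j=1$, the vertex $v_1^j$ is orthogonal to every $e_\ell$ with $\ell\le e$, $\ell\ne i(j)$ (where $j\in C_{i(j)}$), so $c\cdot v_1^j=-1$ forces $v_1^j\cdot e_{i(j)}=-1$; with $c\cdot e_i=1$ this already shows $e_i$ pairs non-trivially with $c$ and with exactly the leading vertices of the arms in $C_i$, and trivially with all other leading vertices. What remains for part~(2) is that $e_i$ is orthogonal to every non-leading vertex, and I expect this to be the real point. The legs span pairwise orthogonal sublattices of $(\Z^N,\mathrm{Id})$, so $e_i$ is the sum of its orthogonal projections onto these sublattices together with a (possibly nonzero) remainder; the content of the proof of Theorem~\ref{thm:emb_ineq_eq_case} is that the projection of $e_i$ onto the $j$th leg has square-norm at least $q_j/p_j$ whenever $v_1^j\cdot e_i\ne 0$, the extremal configuration being that $v\mapsto e_i\cdot v$ is, on that leg, the functional dual to the leading vertex. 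Equality in $\sum_{j\in C_i}q_j/p_j\le 1$ then forces this extremal configuration on every leg of $C_i$ and a vanishing remainder, giving $e_i\cdot v_r^j=0$ for all $r\ge 2$ and $j\in C_i$; together with $v_1^j\cdot e_i=0$ for $j\notin C_i$ this handles $e_i$ against the legs of the other full classes, and a slightly more delicate version of the same analysis handles the one non-extremal class $C_e$, completing part~(2). The crux is thus to extract from Theorem~\ref{thm:emb_ineq_eq_case} and \cite{latticeineq} not only the inequality and its equality clause for leading vertices but the full rigidity of the tight case --- in particular sufficient control of the non-extremal class $C_e$ --- which is the one genuinely technical ingredient beyond the bookkeeping above.
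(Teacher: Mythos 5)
Your Steps 1 and 2 are correct and establish the normalization of the central vertex and part (1) by a route slightly different from the paper's: where the paper first refines the (possibly overlapping) sets $C_i$ to a genuine partition $C_i'$ and invokes Lemma~\ref{lemma:homology_is_double2} on that partition, you track the multiplicities $m_j$ directly and rule out $S_2=0$ via the parity obstruction $\pm1\pm1\neq-1$ against $c\cdot v_1^{j_0}=-1$. That bookkeeping is sound and, if anything, a little more self-contained than the paper's version.

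The gap is exactly where you flag it: part (2), namely that $e_i$ is orthogonal to every non-leading vertex. Your proposed route needs a ``full rigidity'' statement for the equality case of Theorem~\ref{thm:emb_ineq_eq_case} --- that tightness forces the functional $v\mapsto e_i\cdot v$ to vanish on all non-leading vertices of each leg and forces a vanishing remainder --- which is strictly stronger than what that theorem asserts (its equality clause only controls pairings with leading vertices). Your projection heuristic does not obviously deliver it either: the minimum of $u^{T}Q_{p/q}^{-1}u$ over real $u$ with $u_1=\pm1$ is $1/a_1\le q/p$, so even the lower bound $\ge q/p$ for integral $u$ is an integrality phenomenon and not a projection-norm estimate; and the class $C_e$ carries slack $1/\lcm(p_1,\dots,p_k)$, so full rigidity in the tight case would still not close that class. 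The paper avoids all of this with one further application of Theorem~\ref{thm:emb_ineq_eq_case} \emph{as stated}: if $v^j_s$ is any vertex with $j\notin C_l$ and $v^j_s\cdot e_l\neq0$, adjoin $v^j_s$ as a one-vertex linear chain of weight $a^j_s\ge2$ (orthogonal to all legs of $C_l$ since $j\notin C_l$) to the legs of $C_l$ and apply the theorem with $w=e_l$ to get $\tfrac{1}{a^j_s}+\sum_{i\in C_l}\tfrac{q_i}{p_i}\le1$. This is impossible for $l<e$, and for $l=e$ forces $a^j_s\ge\lcm(p_1,\dots,p_k)\ge p_j\ge a^j_s$, hence $s=1$, $p_j=a^j_1=\lcm(p_1,\dots,p_k)$, and $v^j_1\cdot e_e=\pm1$ by the equality clause. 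Consequently a non-leading vertex $v^j_s$ ($s>1$) can pair non-trivially only with the single $e_l$ indexed by its own class, and since $\nu\cdot v^j_s=0$ that pairing vanishes as well; the same claim also settles the residual case of a leading vertex of a class $C_l$, $l<e$, meeting $e_e$, via the parity of $\nu\cdot v^j_1$. You should replace your Step 3 with this argument, or else actually prove the rigidity statement you are invoking from \cite{latticeineq}, since it is not available here as a black box.
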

\begin{proof}
Let $p_i/q_i=[a_1^i,\dots, a_{l_i}^i]^-$, where $a_j^i\geq 2$. Let $v_j^i$ denote the image of the $j$th vertex in the linear chain corresponding to $p_i/q_i$. So $\norm{v_j^i}=a_j^i$. Let $\nu$ be the image of the central vertex.
By applying an automorphism of $\Z^N$ we may assume that $\nu$ takes the form $\nu=\alpha_1 e_1 + \dots + \alpha_n e_n$ with $\alpha_i>0$ and $n\leq e$. Let $C_1,\dots, C_n$ be the sets defined by
\[
C_i=\{j\in\{1,\ldots,k\}\,|\, e_i\cdot v_1^j \neq 0\}
\]
as in the statement of the lemma. Since $\nu \cdot v_1^j=-1$, each $j$ in the range $1\leq j\leq k$ is contained in at least one $C_i$. {\em A priori} the $C_i$ may not be a partition, since they may not be pairwise disjoint and some $C_i$'s may be empty. However by discarding repetitions, we can obtain $C_i'$ such that $C_i'\subseteq C_i$ and the non-empty $C_i'$ form a genuine partition of $\{1, \dots, k\}$.

By Theorem~\ref{thm:emb_ineq_eq_case} we can conclude that for each $i$ we have 
\begin{equation*}
\sum_{j\in C_i'} \frac{q_j}{p_j}\leq \sum_{j\in C_i} \frac{q_j}{p_j} \leq 1.
\end{equation*}
Thus by Lemma~\ref{lemma:homology_is_double2} the partition consisting of the $C_i'$ has precisely $e$ non-empty classes. It follows that $\nu$ must take the form $\nu=e_1+ \dots + e_e$ as required. Furthermore Lemma~\ref{lemma:homology_is_double2} also implies that after permuting the $e_i$ if necessary, we can assume that
\begin{equation}\label{eq:C_i'equal}
\sum_{j\in C_i'} \frac{q_j}{p_j}= \sum_{j\in C_i}\frac{q_j}{p_j}=1
\end{equation}
for $i=1,\dots, e-1$ and
\begin{equation}\label{eq:C_e_equal}
1-\frac{1}{\lcm(p_1,\dots, p_k)}=\sum_{j\in C_e'} \frac{q_j}{p_j}\leq \sum_{j\in C_e}\frac{q_j}{p_j}\leq 1.
\end{equation}
This shows that $C_i=C_i'$ for $i=1,\dots, e-1$. To show that the $C_i$ form a partition, it remains to verify that $C_e=C_e'$. We will use the following claim to complete the proof.

\begin{claim}
Let $v^j_s$ be a vertex such that $j\not\in C_l'$ but $v^j_s \cdot e_l\neq 0$ for some $l$ in the range $1\leq l \leq e$. Then $s=1$, $l=e$ and $v^j_s \cdot e_e = \pm 1$.
\end{claim}
\begin{proof}
Since $j\not\in C_l'$ the vector $v^j_s$ is orthogonal to all vertices in the linear chains corresponding to elements of $C_l'$. As  we can consider a single vertex as a linear chain in its own right, Theorem~\ref{thm:emb_ineq_eq_case} applies to show that
\begin{equation*}
\frac{1}{\norm{v^j_s}}+ \sum_{i \in C_l'} \frac{q_i}{p_i}\leq 1.
\end{equation*}
By \eqref{eq:C_i'equal} and \eqref{eq:C_e_equal} we see that this is only possible if $l=e$ and $\norm{v^j_s}\geq \lcm(p_1,\dots, p_k)$.
However since $\norm{v^j_s}=a^j_s$ appears in the continued fraction expansion for $p_j/q_j$, we see that $\norm{v^j_s}\leq p_j$ with equality only if $p_j/q_j= \norm{v^j_s}$ is an integer, in which case $s=1$. As $\lcm(p_1,\dots, p_k)\geq p_j$, this implies that $s=1$ and $\norm{v^j_s}=\lcm(p_1,\dots, p_k)$. However, by \eqref{eq:C_e_equal} we have that $\frac{1}{\norm{v^j_s}}+ \sum_{i \in C_l'} \frac{q_i}{p_i}=1$. Thus we can apply the equality case of Theorem~\ref{thm:emb_ineq_eq_case} to conclude that $v^j_s \cdot e_e = \pm 1$. 
\end{proof}

We will now check that $C_e'=C_e$. If not, then there would be a vertex $v^j_1$ for some $j\not\in C_e'$ such that $v^j_1 \cdot e_e\neq 0$. By the claim, such a vertex satisfies $v^j_1 \cdot e_e=\pm 1$. However we have $j\in C_l$ for some unique $1\leq l<e$. By the equality case of Theorem~\ref{thm:emb_ineq_eq_case}, this implies that $v_1^j \cdot e_l =\pm 1$. Thus $\nu \cdot v_1^j = v_1^j \cdot e_l + v_1^j \cdot e_e$ must be even, contradicting $v_1^j \cdot \nu=-1$ . Thus we can conclude that $C_e'=C_e$ completing the proof that $C_1, \dots, C_e$ are a partition.

Finally, we check that the non-leading vertices cannot pair non-trivially with $e_l$ for any $l\in \{1,\dots, e\}$. Since the non-leading vertices have trivial pairing with the central vertex $\nu$, it suffices to check that a non-leading vertex can pair non-trivially with $e_l$ for at most one $l\in \{1,\dots, e\}$. However, this follows easily from the above claim, which shows that for $s>1$ a vertex $v_s^j$ can pair non-trivially with $e_l$ only if $j\in C_l'=C_l$. This completes the proof.
\end{proof}

For the following lemma let $Y$, $X$ and $\Gamma$ as defined at the beginning of this section, and recall that a class $C \subset \{1,\ldots,k\}$ is called complementary if $\sum_{i \in C} \frac{q_i}{p_i} = 1$.
\begin{lemma}\label{thm:compclasscond}
  Suppose that $\tor H_1(Y) \cong G \oplus G$ for some abelian group $G$. For $i=1,2$, let $\iota_i : (\Z^n, Q_{\Gamma}) \rightarrow (\Z^n, \mbox{Id})$, where $n = |\Gamma|$, be a map of lattices, and represent $\iota_i$ as an integer matrix by the transpose of $A_i$. Let $A = (A_1 | A_2)$, and suppose that the column space of $A$ is all of $\Z^n$. For $i\in\{1,2\}$, let $P_i$ be the partition of $\{1,\ldots,k\}$ induced by $\iota_i$ as in Lemma~\ref{lem:embedding_structure}. Then, no non-empty union of complementary classes of $P_1$ is a union of complementary classes of $P_2$.
\end{lemma}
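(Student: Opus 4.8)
The plan is to phrase everything in terms of the lattice $L=(\Z^{n},Q_{\Gamma})=H_2(X)$ with its vertex basis, and its dual $L^{*}=\Hom(L,\Z)=H^2(X)$, which we identify with $\Z^{n}$ via the dual vertex basis so that the natural inclusion $L\hookrightarrow L^{*}$ is multiplication by $Q_{\Gamma}$ and $L^{*}/L=\coker Q_{\Gamma}\cong\tor H_1(Y)\cong G\oplus G$. In these coordinates the $l$-th column of $A_i$ is the functional $\lambda^{(i)}_l\in L^{*}$ given by $\lambda^{(i)}_l(w)=\iota_i(w)\cdot e^{(i)}_l$, where $e^{(i)}_1,\dots,e^{(i)}_n$ is the orthonormal basis of the target of $\iota_i$ normalised as in Lemma~\ref{lem:embedding_structure}; thus $\im A_i=\langle\lambda^{(i)}_1,\dots,\lambda^{(i)}_n\rangle$ and the column-space hypothesis reads $\im A_1+\im A_2=L^{*}$. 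First I would record two consequences of $Q_{\Gamma}=A_iA_i^{T}$ (which holds because $\iota_i$ preserves pairings): that $\im A_i\supseteq\im Q_{\Gamma}=L$, and that, since $A_i$ is square, $[\im A_i:L]=|\det A_i|=\sqrt{|\coker Q_{\Gamma}|}=|G|$. Combined with $\im A_1+\im A_2=L^{*}$ and $[L^{*}:L]=|G|^{2}$, a counting argument then forces the internal direct sum $\im A_1/L\oplus\im A_2/L=L^{*}/L$; equivalently, $\im A_1\cap\im A_2=L$ inside $L^{*}$.

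With this in hand I would argue by contradiction: suppose some non-empty union $D$ of complementary classes of $P_1$ is also a union of complementary classes of $P_2$, say $D=\bigcup_{i\in S_m}C^{m}_i$ for $m=1,2$, where each $S_m$ indexes only complementary classes of $P_m$. Set $\psi_m:=\sum_{i\in S_m}\lambda^{(m)}_i\in\im A_m$, and write $[j\in D]$ for the indicator ($1$ if $j\in D$, else $0$). Evaluating $\psi_m$ on the vertex basis using Lemma~\ref{lem:embedding_structure}: each $e^{(m)}_i$ pairs trivially with every non-leading vertex and with the leading vertex of every arm not in $C^{m}_i$, and pairs with value $-1$ with the leading vertex of each arm in $C^{m}_i$ (the sign pinned down by $\nu\cdot v^j_1=-1$ together with $\iota_m(\nu)=e^{(m)}_1+\dots+e^{(m)}_e$). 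This gives that $\psi_m$ takes the value $|S_m|$ on the central vertex, $-1$ on the leading vertex of each arm in $D$, and $0$ on all remaining vertices. The crucial point is $|S_1|=|S_2|$: a complementary class contributes exactly $1$ to $\sum_j\tfrac{q_j}{p_j}$, so $|S_1|=\sum_{j\in D}\tfrac{q_j}{p_j}=|S_2|$. Hence $\psi_1$ and $\psi_2$ agree on every basis vector, so $\psi:=\psi_1=\psi_2$ lies in $\im A_1\cap\im A_2=L$.

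Finally I would extract the contradiction by a direct computation inside the star-shaped lattice. Write $\psi=v^{\flat}$ with $v=c_0\nu+\sum_{j,s}c^{j}_s v^{j}_s\in L$. On each arm $j$ the relations $v\cdot v^{j}_s=\psi(v^j_s)=0$ for $s\ge 2$ and $v\cdot v^{j}_1=\psi(v^j_1)=-[j\in D]$ say exactly $T_j\mathbf{c}^{j}=(c_0-[j\in D])\,e_1$, where $T_j$ is the tridiagonal Gram matrix of the arm; using the standard continued-fraction identities $\det T_j=p_j$ and $(1,1)$-cofactor of $T_j$ equal to $q_j$, Cramer's rule yields $c^{j}_1=(c_0-[j\in D])\tfrac{q_j}{p_j}$. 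Substituting into $v\cdot\nu=e c_0-\sum_j c^{j}_1=\psi(\nu)=|S_1|$ and simplifying with $\sum_{j\in D}\tfrac{q_j}{p_j}=|S_1|$ collapses everything to $c_0\,\varepsilon(Y)=0$, hence $c_0=0$ since $\varepsilon(Y)>0$; but then for any $j\in D$ (and $D\neq\emptyset$ since $S_1\neq\emptyset$) we get $c^{j}_1=-\tfrac{q_j}{p_j}\notin\Z$ because $0<q_j<p_j$, contradicting $v\in L$. This contradiction proves the lemma.

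I expect the main obstacle to be the bookkeeping in the middle step: one must use the precise pairing description of Lemma~\ref{lem:embedding_structure} carefully enough to see that summing the columns of $A_m$ indexed by $S_m$ produces a functional depending only on the subset $D\subseteq\{1,\dots,k\}$ and the single integer $|S_m|$, and then notice that complementarity makes $|S_1|=|S_2|$ so that the two functionals coincide \emph{on the nose}. Once $\psi$ is known to lie in $L$, the concluding integrality computation along the arms is routine given the continued-fraction facts already in use in the paper.
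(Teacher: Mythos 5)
Your proof is correct, and it takes a genuinely different (though dual) route to the paper's. The paper contradicts surjectivity of $(A_1\mid A_2)$ head-on: it forms the degenerate sub-plumbing $R$ on the common union $H$ with central weight $\ell$, takes a left null vector $x$ of $Q_R$, shows $xB_1=xB_2=0$ and that $x_1$ is divisible by each multiplicity $p_j$ with $j\in H$, and concludes $\bar{x}A\equiv 0 \bmod p$ while $\bar{x}v=1$ for some $v$ --- a row-wise, mod-$p$ congruence obstruction. You instead first convert the surjectivity hypothesis, via the index count $[\Z^n:\im A_i]=|\det A_i|=|G|$ and $[\Z^n:\im Q_\Gamma]=|G|^2$, into the lattice statement $\im A_1\cap\im A_2=\im Q_\Gamma$ (which is precisely the internal direct sum \eqref{eq:sumcoker} that the paper extracted surjectivity from, so in effect you undo that reduction), and then exhibit the explicit functional $\psi=\sum_{i\in S_m}\lambda^{(m)}_i$ lying in both column lattices but not in $\im Q_\Gamma$, the failure detected by Cramer's rule on the arms. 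The two arguments lean on the same inputs --- Lemma~\ref{lem:embedding_structure}, the identities $\det T_j=p_j$ and $(1,1)$-minor $=q_j$, and ultimately the degeneracy of the complementary sub-plumbing --- but yours is column-wise rather than row-wise, and it makes visible exactly which class of $\coker Q_\Gamma\cong\tor H^2(Y)$ is hit from both sides, reconnecting the algebra directly to the topological fact that $\tor H^2(U_1)\oplus\tor H^2(U_2)\to\tor H^2(Y)$ is an isomorphism; the paper's version has the advantage of producing a concrete prime $p$ witnessing non-surjectivity. Two small points worth polishing if you write this up: the equality you want is $[\Z^n:\im A_i]=|\det A_i|$ (your displayed chain conflates this with $[\im A_i:\im Q_\Gamma]$, though both equal $|G|$ so nothing breaks), and you should note explicitly that the basis normalization of Lemma~\ref{lem:embedding_structure} only changes $A_i$ by right multiplication by a signed permutation matrix, which leaves $\im A_i$ unchanged.
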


\begin{proof} 
We are assuming that both $\iota_1$ and $\iota_2$ satisfy the conclusions of Lemma~\ref{lem:embedding_structure}. For $i\in\{1,2\}$, let $C_1^i,\ldots, C_{\ell_i}^i$ be a non-empty collection of complementary classes in $P_i$. Suppose for sake of contradiction that $\cup_{i=1}^{\ell_1} C_i^1 = \cup_{i=1}^{\ell_2} C_i^2$ and denote their common union by $H \subset \{1,\ldots,k\}$. Since $\sum_{i \in C} \frac{q_i}{p_i} = 1$ for a complementary class $C$, we have $\ell_1 = \sum_{i\in H} \frac{q_i}{p_i} = \ell_2$ and we denote their common value by $\ell$. Our goal will be to find a non-zero row vector $\overline{x}$ with coprime integer entries and an integer $p>1$ such that $\overline{x} A_i \equiv 0 \bmod p$ for both $i=1$ and $i=2$. Given such a vector we will use that $\overline{x} A \equiv 0 \bmod p$ to show that $A$ is not surjective over $\Z$.

 Let $R$ be the weighted star-shaped graph with central weight $\ell$ and legs given by the legs of $\Gamma$ indexed by elements of $H$. 
For $i=1,2$, there is a map of lattices $q_i : (\Z^{|R|}, Q_R) \rightarrow (\Z^n, \mbox{Id})$ which is the restriction of $\iota_i$ on the non-central vertices of $R$ and maps the central vertex of $R$ to $e_1 + \cdots + e_{\ell}$. That $q_i$ is a map of lattices follows from the structure imposed by Lemma~\ref{lem:embedding_structure}. The classes $C^i_1, \dots, C^i_\ell$ are precisely the ones whose leading vertices pair non-trivially with exactly one of the unit vectors $e_1,\dots, e_\ell$ and this non-trivial pairing is necessarily $-1$ in all cases. Furthermore no non-leading vertex pairs non-trivially with any of $e_1,\dots, e_\ell$.

  Recall that the image of the vertices of $\Gamma$ under $\iota_i$ are given by the rows of $A_i$. By ordering the vertices, we may assume that the first row of $A_i$ corresponds to the central vertex $\nu$, and the next $|R| - 1$ rows correspond to the non-central vertices of $\Gamma$ that appear in $R$. Let $B_i$ be the transpose of the integer matrix representing $q_i$. With the above choice of vertex ordering, $B_i$ is obtained by taking the first $|R|$ rows of $A_i$, and replacing the first row by the vector $(\underbrace{1, 1, \ldots, 1}_{\text{$\ell$ ones}}, 0, \ldots 0)$.

 For both $i=1,2$, we have $B_i B_i^T = Q$, where $Q = Q_R$ is the matrix representing the intersection lattice $(\Z^{|R|}, Q_R)$ with respect to the vertex basis. Since the classes $C_1^i,\ldots,C_\ell^i$ are complementary, the boundary of the plumbing with weighted graph $R$ is a Seifert fibered space $Y'$ with $\varepsilon(Y') = 0$. Thus $\det{Q} = 0$, implying that there exists a non-zero row vector $x=(x_1,\dots, x_{|R|}) \in \Z^{|R|}$ such that $x Q = 0$. Hence, $(x B_i) (x B_i)^T = x Q x^T = 0$, implying $x B_i = 0 \in \Z^n$. Thus we have obtained $x$ such that $xB_1=xB_2=0\in \Z^n$. By dividing out by any common factors we may assume that $\gcd(x_1, \dots, x_{|R|})=1$.
\begin{claim}
The entry $x_1$ is divisible by an integer $p>1$.
\end{claim}
With this claim, the proof concludes as follows. Consider the vector $\bar{x}=(x_1,\dots, x_{|R|},0,\dots, 0)\in \Z^n$. Since $B_i$ is obtained from $A_i$ by taking the first $|R|$ rows and modifying the first row, we see that every entry of $\bar{x} A_i$ is a multiple of $x_1$. This shows that $\bar{x} A = \bar{x}\cdot (A_1 \mid A_2) \equiv 0 \pmod p$, where $p$ is the integer from the claim.

Since $\gcd(x_1,\dots, x_{|R|})=1$, we can write $1$ as an integer combination of the $x_i$. This implies there is a column vector $v\in \Z^n$ such that $\bar{x} v=1$. If $A$ were surjective, then there would be a vector $u$ such that $v=Au$. This would show $0\equiv\bar{x} A u =\bar{x} v= 1 \pmod{p}$,
which is a contradiction. We complete the proof by proving the claim.

\begin{proof}[Proof of Claim] Consider a leg in $R$ with corresponding fraction $p/q$. We will show that $p$ divides $x_1$. Suppose that the continued fraction expansion of $p/q$ is $\frac{p}{q} = [a_1, \ldots, a_{\rho-1}]^-$, where $a_j \ge 2$ are integers for all $j$. By ordering the vertices we may assume that the first $\rho$ rows of $B_i$ correspond to the central vertex followed by the vertices of our chosen leg in $R$. Thus, the top-left $\rho\times \rho$ submatrix of $Q = B_i B_i^T$ is
      $$\begin{pmatrix}
      \ell & -1  & 0 & 0  &\cdots& 0 & 0  \\
      -1   & a_1 & -1&  0 &\cdots& 0 & 0 \\
      0    &-1  &a_2& -1 &\cdots& 0 & 0 \\
      \vdots & &   &    &  \ddots & & \vdots    \\
      0    &0   & 0 & 0 & \cdots &a_{\rho-2} & -1 \\
      0    &0   & 0 & 0 & \cdots & -1 & a_{\rho-1} \\
      \end{pmatrix}.
      $$
Let $Q' = \left(\begin{array}{cccc}
        -1 & 0 & \cdots \\\hline
          & Q_{p/q} &
        \end{array}\right)
        $
be the matrix obtained from the above matrix by removing the first column, where $Q_{p/q}$ is the intersection matrix of the linear chain representing $p/q$. We have $$(x_1, \ldots, x_{\rho+1})\cdot Q' = 0,$$ since $xQ = 0$ and the corresponding columns $2,\ldots,\rho$ of $Q$ are supported in the first $\rho$ rows. This implies that $(x_2, \ldots, x_{\rho})\cdot Q_{p/q} = (x_1, 0, \ldots, 0)$. Thus, we can change the last row of $Q_{p/q}$ to $(x_1, 0, \ldots, 0)$, by first multiplying the last row of $Q_{p/q}$ by $x_{\rho}$, then for each $j\in\{1,\ldots,\rho-1\}$ adding $x_j$ multiples of the $j$th row to the last row. The determinant of this new matrix is $x_{\rho-1} \cdot \det(Q_{p/q}) = x_{\rho-1} \cdot p$. However, by expanding the determinant along the final row we see that
      $$\begin{vmatrix}
      a_1 & -1&  0 & 0 &\cdots& 0 & 0 \\
      -1  &a_1& -1 & 0 &\cdots& 0 & 0 \\
      0   &-1 &a_2 &-1 &\cdots& 0 & 0 \\
      &   &    & & \ddots    \\
      0   & 0 & 0 & 0 & \cdots &a_{\rho-2} & -1 \\
      x_1 & 0 & 0 & 0 & \cdots & 0 & 0 \\
      \end{vmatrix} = x_1.
      $$
  Thus $x_1 = x_{\rho-1} p$ is a multiple of $p$, proving the claim.
\end{proof}

\end{proof}

This allows us to prove our main obstruction to embedding Seifert fibered spaces in $S^4$.

\thmcomplementarypartitions*
\begin{proof} Suppose that $Y$ smoothly embeds in $S^4$. By Corollary~\ref{cor:direct_double}, $\tor H_1(Y)$ splits as a direct double. Theorem~\ref{thm:embsurjectivity} implies that there are lattice embeddings $\iota_i : (\Z^{|\Gamma|}, Q_{\Gamma}) \rightarrow (\Z^{|\Gamma|}, \mbox{Id})$, where $i\in\{1,2\}$ and $\Gamma$ is the weighted star-shaped graph describing the intersection lattice of the standard positive-definite $4$-manifold bounding $Y$. Moreover, $(A_1|A_2)$ is surjective, where $A_i$ is the transpose of the integer matrix representing $\iota_i$ for $i\in\{1,2\}$. As in Lemma~\ref{lem:embedding_structure}, for $i\in\{1,2\}$, there is a partition $P_i$ induced by $\iota_i$ satisfying properties \eqref{cond:noncomp} and \eqref{cond:ineq} of Definition~\ref{def:partitionable}. Lemma~\ref{thm:compclasscond} shows that no non-empty union of any subset of complementary classes of $P_1$ is a union of any subset of complementary classes of $P_2$.

For $i=1,2$, let $\mathcal{C}_i$ be a non-empty proper subset of $P_i$. For sake of contradiction suppose that $\cup_{C \in \mathcal{C}_1} C = \cup_{C \in \mathcal{C}_2} C$, and let $H \subset \{1,\ldots,k\}$ be their common union. Properties \eqref{cond:noncomp} and \eqref{cond:ineq} imply that for $i\in\{1,2\}$, $\mathcal{C}_i$ contains a non-complementary class if and only if $\sum_{j\in H} \frac{q_i}{p_i}$ is not an integer. Thus, $\mathcal{C}_1$ and $\mathcal{C}_2$ either both contain a non-complementary class or both do not. Thus either $P_1$ and $P_2$, or $P_1\backslash \mathcal{C}_1$ and $P_2\backslash \mathcal{C}_2$ contain only complementary classes. This shows that property~\eqref{cond:comp} of Definition~\ref{def:partitionable} holds.
\end{proof}
\section{Applications of Theorem~\ref{thm:partitions}}\label{sec:application}
Now we consider which spaces can pass the obstruction given by Theorem~\ref{thm:partitions} when $e\geq \frac{k}{2}$. We will prove the obstruction halves of Theorem~\ref{thm:classification_e_ge_(k+1)/2} and Theorem~\ref{thm:classification_e_ge_k2}, leaving the construction of the embeddings into $S^4$ to Section~\ref{sec:embeddings}. 

\sphereembthmhalfbound*
\begin{proof}
  Let $P_1$ and $P_2$ of $\{1,\ldots,k\}$ be the partitions from Theorem \ref{thm:partitions}, each into $e$ classes. For each partition, there are $e$ classes and at most one class of size one, since a size one class must be non-complementary. Thus, $k \ge 1 + 2(e-1)$, and so $e \le \frac{k+1}{2}$. Now assume that $e = \frac{k+1}{2}$, in particular $k$ is odd. For each partition all but one class has size $2$, and the remaining class has size $1$. Using that no non-empty proper subset of classes in $P_1$ is a union of classes in $P_2$, we without loss of generality assume that $P_1 = \{\{1\},\{2,3\},\{4,5\},\ldots,\{k-1,k\}\}$ and $P_2 = \{\{1,2\}, \{3,4\}, \ldots, \{k-2,k-1\}, \{k\}\}$. By Lemma~\ref{lemma:homology_is_double2}, $1 - \frac{q_1}{p_1} = \frac{1}{\lcm(p_1,\ldots,p_k)}$, and thus $\frac{p_1}{q_1} = \frac{a}{a - 1}$ where $a = \lcm(p_1,\ldots,p_k)$. For a complementary classes $\{i,j\}$ we have $\frac{q_i}{p_i} + \frac{q_j}{p_j} = 1$. Applying this to the complementary classes in $P_1$ and $P_2$ allows us to write the remaining fractions in terms of $a$, which shows that $M$ is of the required form.

Finally, the fact that the Seifert fibered spaces of this form smoothly embed in $S^4$ follows from Proposition \ref{prop:s4_embeddings2} proved in Section~\ref{sec:embeddings}.
\end{proof}

We now analyse the $e = \frac{k}{2}$ case. The reader may find it helpful to recall definitions of \emph{expansion} (Definition~\ref{def:expansion}) and \emph{partitionable} (Definition~\ref{def:partitionable}) stated in the introduction. We first prove the following lemma.

\begin{lemma}\label{lemma:2on5expansion} Let $Y = F(e; \frac{p_1}{q_1}, \ldots, \frac{p_k}{q_k})$ be a Seifert fibered space over orientable base surface $F$ with $k \ge 3$, $\frac{p_i}{q_i} > 1$ for all $i$, and $\varepsilon(Y) > 0$. Suppose that $Y$ is partitionable with partitions $P_1$ and $P_2$ such that either
  \begin{enumerate}[(i)]
  \item\label{enum:comppairexp} $m_1 + m_2 \ge e$ where $m_i$ is the number of complementary pairs in $P_i$ for $i\in\{1,2\}$, or
  \item\label{enum:singleton} both $P_1$ and $P_2$ contain a class of size one, or
  \item\label{enum:2on5ineq} $e \ge \frac{2k+3}{5}$.
   \end{enumerate}
  Then $Y$ is an expansion of a partitionable Seifert fibered space $Y'$.
\end{lemma}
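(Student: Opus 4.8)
The plan is to show that, in each of the three cases, $Y$ has three exceptional fibers of the form $\frac{p}{q},\frac{p}{q},\frac{p}{p-q}$, and then to peel off a complementary pair. I would first establish the following reduction. Suppose $Y$ has, at three distinct indices, fibers $\frac{p}{q},\frac{p}{q},\frac{p}{p-q}$, let $\{x,y\}$ be the complementary pair consisting of the $\frac{p}{p-q}$-fiber together with one of the two $\frac{p}{q}$-fibers, and set $Y'=F\bigl(e-1;\{\tfrac{p_i}{q_i}:i\neq x,y\}\bigr)$. Then $\varepsilon(Y')=\varepsilon(Y)=\tfrac1{\lcm(p_1,\dots,p_k)}>0$, the remaining $\frac{p}{q}$-fiber still lies in $Y'$, and expanding $Y'$ along it recovers $Y$; so $Y$ is an expansion of $Y'$, whence $\tor H_1(Y')$ is again a direct double by Lemma~\ref{lem:expansion_homo}. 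To see $Y'$ is partitionable I build $P_1',P_2'$ directly from $P_1,P_2$: in each $P_i$, either $\{x,y\}$ is a class (delete it) or $x,y$ lie in distinct classes $A\ni x$, $B\ni y$ — they cannot share a class, since a complementary pair can neither lie inside nor exhaust a single class — and we replace $A,B$ by $(A\setminus\{x\})\cup(B\setminus\{y\})$. Each such surgery drops the class count by exactly one, and since $\frac{q_x}{p_x}+\frac{q_y}{p_y}=1$ the resulting $e-1$ classes still have a single non-complementary class. That it carries the value $1-\tfrac1{\lcm}$ uses $\lcm(\{p_i:i\neq x,y\})=\lcm(p_1,\dots,p_k)$, which holds because $\tor H_1(Y)$ is a direct double: as in the proof of Lemma~\ref{lemma:homology_is_double2}, at every prime the top $p$-adic valuation among $p_1,\dots,p_k$ is attained either exactly once or at least three times, and since $p_x=p_y$, removing those two indices cannot destroy every maximiser. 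Finally, condition~\eqref{cond:comp} for $(P_1',P_2')$ follows from that for $(P_1,P_2)$ by a routine lift: given a violating pair of sub-collections of $P_1',P_2'$, split any merged class back into its two pieces, reinstate any deleted class, and if necessary adjoin $\{x,y\}$ on the other side to obtain a violation for $(P_1,P_2)$.

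It remains to produce the triple $\frac{p}{q},\frac{p}{q},\frac{p}{p-q}$. In case~\eqref{enum:singleton}, the size-one classes of $P_1,P_2$ are their non-complementary classes (a singleton cannot be complementary) and are distinct by~\eqref{cond:comp}; writing $L=\lcm(p_1,\dots,p_k)$, a non-complementary singleton $\{a\}$ forces $\frac{p_a}{q_a}=\frac{L}{L-1}$. If $\{a\}\in P_1$ and $\{b\}\in P_2$ are these classes, the $P_1$-class $C$ of $b$ is complementary, and $\sum_{j\in C}\frac{q_j}{p_j}=1$ with $\frac{q_b}{p_b}=\frac{L-1}{L}$ force $C=\{b,c\}$ with $\frac{p_c}{q_c}=\frac{L}{1}$; as $a\notin C$, the fibers at $a,b,c$ are $\frac{L}{L-1},\frac{L}{L-1},L$, as required. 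In case~\eqref{enum:comppairexp}, consider the graph $G$ on $\{1,\dots,k\}$ whose edges are the $m_1+m_2$ complementary pairs coming from $P_1$ and $P_2$. If $G$ were a matching then every endpoint of a $P_2$-complementary-pair would lie in a non-pair class of $P_1$, so summing $\frac{q_j}{p_j}$ over those endpoints gives
\[
m_2\;\le\;\Bigl(1-\tfrac1L\Bigr)+(e-1-m_1)\;=\;e-m_1-\tfrac1L\;<\;e-m_1,
\]
contradicting $m_1+m_2\ge e$. Hence $G$ has a vertex $a$ of degree at least two; since the complementary pairs within a single partition are disjoint, $a$ lies in a $P_1$-pair $\{a,b\}$ and a $P_2$-pair $\{a,c\}$ with $b\neq c$, and $p_a=p_b=p_c$, $q_b=q_c=p_a-q_a$ then give fibers $\frac{p_a}{q_a},\frac{p_a}{p_a-q_a},\frac{p_a}{p_a-q_a}$ at $a,b,c$.

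Case~\eqref{enum:2on5ineq} I would reduce to the preceding two by counting class sizes. Each $P_i$ has one non-complementary class (size $\geq1$), $m_i$ complementary pairs, and $e-1-m_i$ complementary classes of size $\geq3$, so $k\geq 1+2m_i+3(e-1-m_i)$, i.e.\ $m_i\geq 3e-2-k$; hence $m_1+m_2\geq 6e-4-2k$, which is $\geq e-1$ precisely because $e\geq\frac{2k+3}{5}$. If $m_1+m_2\geq e$ we are in case~\eqref{enum:comppairexp}. Otherwise $m_1+m_2=e-1$ and all of these inequalities are equalities, so $m_i=3e-2-k$ for both $i$; equality in $k\geq 1+2m_i+3(e-1-m_i)$ then forces the non-complementary class of each $P_i$ to be a singleton, i.e.\ we are in case~\eqref{enum:singleton}.

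I expect the crux to be the matching argument in case~\eqref{enum:comppairexp} — identifying the right quantity ($\sum q_j/p_j$ over the $P_2$-pair endpoints, bounded by the total $q/p$-mass of the non-pair classes of $P_1$) that makes $m_1+m_2\geq e$ incompatible with $G$ being a matching. The $\lcm$-stability and the partition surgery are short, while the verification of condition~\eqref{cond:comp} for $Y'$ is conceptually routine but the most bookkeeping-heavy step.
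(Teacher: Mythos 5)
Your overall strategy is the same as the paper's: locate three fibers $\frac{p}{q},\frac{p}{q},\frac{p}{p-q}$ forced by the partitions, peel off a complementary pair to exhibit $Y$ as an expansion of $Y'$, and transfer the partitions. Your counting arguments in cases (i) and (iii) are correct and essentially identical to the paper's (the paper gets the contradiction in (i) more directly from $\sum_i q_i/p_i<e$, and in (iii) assumes one non-complementary class is a non-singleton to reach $m_1+m_2\ge e$ outright, but these are cosmetic differences), and your $\lcm$-stability argument is a valid alternative to simply re-invoking Lemma~\ref{lemma:homology_is_double2} for $Y'$.

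The one genuine gap is in the condition-\eqref{cond:comp} step of your reduction. As stated, your reduction allows $\{x,y\}$ to be a class of \emph{neither} partition, so that both $P_1'$ and $P_2'$ contain a merged class $M_i=(A_i\setminus\{x\})\cup(B_i\setminus\{y\})$. In that configuration the "routine lift" fails: if $S_1\subsetneq P_1'$ and $S_2\subseteq P_2'$ have common union $H$ with $M_1\in S_1$ but $M_2\notin S_2$, then splitting $M_1$ back into $A_1,B_1$ yields a union $H\cup\{x,y\}$ on the $P_1$ side, but to cover $x$ and $y$ on the $P_2$ side you must adjoin $A_2$ and $B_2$, which drags in $M_2$ — and $M_2$ is disjoint from $H$ and nonempty (if $A_2=\{x\}$ and $B_2=\{y\}$ were both classes, one would be a complementary singleton, which is impossible). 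So no violation of \eqref{cond:comp} for $(P_1,P_2)$ is produced. The fix is to always choose $\{x,y\}$ to be a class of one of the two partitions, so that only one partition acquires a merged class; then your two sub-cases ($M\in S_2$ or not) do work. This choice is available in all your cases, but you do not make it: in case (i) both candidates $\{a,b\}\in P_1$ and $\{a,c\}\in P_2$ qualify, whereas in case (ii) only $\{b,c\}$ (the $P_1$-class of the $P_2$-singleton) is guaranteed to be a class, while your other permitted choice $\{a,c\}$ need not be a class of either partition. The paper avoids this issue by always deleting a class of $P_1$. With that one sentence added, your proof goes through.
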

\begin{proof} By Lemma~\ref{lem:expansion_homo} the property that $\tor H_1$ is a direct double is not changed by expansions. Thus, in order to show that $Y'$ is partitionable it suffices to come up with partitions satisfying the three remaining conditions in Definition~\ref{def:partitionable}.
  
  Suppose first that \eqref{enum:comppairexp} holds. We claim that there are complementary pairs $\{a,b\} \in P_1$ and $\{b,c\} \in P_2$ with $a,b,c$ distinct. Suppose otherwise, then $\sum_{i=1}^k \frac{q_i}{p_i} \ge m_1 + m_2 \ge e$ since each complementary pair contributes one and there are $m_1 + m_2$ disjoint complementary pairs, but this contradicts Definition~\ref{def:partitionable} which implies that $\sum_{i=1}^k \frac{q_i}{p_i} < e$.
  
  By permuting the fractions $\frac{p_1}{q_1},\ldots,\frac{p_k}{q_k}$, we may assume that $b=k$, $a = k-1$, $c=k-2$. Since $\{k-1,k\}$ and $\{k-2,k\}$ are complementary, we have that $\frac{p_{k-2}}{q_{k-2}}=\frac{p_{k-1}}{q_{k-1}} = \frac{p_k}{p_k-q_k}$. Thus $Y$ is an expansion of $Y' = F(e-1; \frac{p_1}{q_1}, \ldots, \frac{p_{k-2}}{q_{k-2}})$. Let $P_1' = P_1 \setminus \{\{k-1,k\}\}$ and let $P_2'$ be obtained from $P_2 \setminus \{\{k-2,k\}\}$ by replacing $k-1$ with $k-2$ in the class $C$ containing $k-1$, and call this new class $C'$.
  We claim that $P_1'$ and $P_2'$ satisfy the conditions in Definition~\ref{def:partitionable}, showing that $Y'$ is partitionable. These conditions follow from the corresponding conditions for $P_1$ and $P_2$. Conditions \eqref{cond:noncomp} and \eqref{cond:ineq} follow immediately. To see condition \eqref{cond:comp} let $S_1 \subsetneq P_1'$ and $S_2 \subset P_2'$ be non-empty with the union of classes in $S_1$ equal to the union of classes in $S_2$. We denote their common union by $H \subset \{1,\ldots,k-2\}$.
 If $k-2 \not\in H$ then this would contradict condition \eqref{cond:comp} for $P_1, P_2$ since $S_1 \subset P_1$ and $S_2 \subset P_2$. Similarly, if $k-2 \in H$ then $S_1 \cup \{\{k-1,k\}\}$ and $(S_2 \cup \{C\}) \setminus \{C'\}$ would contradict condition \eqref{cond:comp} for $P_1, P_2$. This proves the conclusion if \eqref{enum:comppairexp} holds.

  Now suppose that \eqref{enum:singleton} holds. If $k = 3$ then $P_1$ and $P_2$ each contain a complementary class of size two and \eqref{enum:comppairexp} holds. Thus we can assume that $k \ge 4$ and by permuting the fractions we may assume that $\{k\} \in P_1$ and $\{k-2\} \in P_2$. In particular these are the non-complementary classes so $\frac{p_k}{q_k} = \frac{p_{k-2}}{q_{k-2}} = m/(m-1)$, where $m = \lcm(p_1,\ldots,p_k)$. Let $C \in P_2$ be the complementary class containing $k$, and let $i \in C$ with $i \neq k$. Since $C$ is complementary $\frac{m-1}{m} + \frac{q_i}{p_i} \le 1$ with equality only if $C$ has size two. Rearranging this gives $\frac{q_i}{p_i} \le \frac{1}{m}$. However, $\frac{q_i}{p_i} \ge \frac{1}{m}$ since $m = \lcm(p_1,\ldots,p_k) \ge p_i$. Thus we must have equality and so $|C| = 2$. Similarly the complementary class in $P_1$ containing $k-2$ has size two. Since $k > 3$, this implies that we can assume that $P_1$ and $P_2$ take the form
  $$P_1 = \{\ldots,\{\ldots,k-3\}, \{k-2\}, \{k-1,k\}\},$$
  $$P_2 = \{\ldots,\{\ldots,k-1\}, \{k-3,k-2\}, \{k\}\}.$$
  Then $Y$ is an expansion of $Y' = F(e-1; \frac{p_1}{q_1}, \ldots, \frac{p_{k-2}}{q_{k-2}})$ with partitions $P_1' = P_1\setminus \{\{k-1,k\}\}$ and $P_2'$ obtained from $P_2 \setminus \{\{k\}\}$ by replacing the class $C$ containing $k-1$ by $C' := C\setminus \{\{k-1\}\}$. We check the conditions of Definition~\ref{def:partitionable}. First \eqref{cond:noncomp} and \eqref{cond:ineq} are immediate, noting that $C' \in P_2'$ is the non-complementary class. To verify \eqref{cond:comp}, let $S_1 \subsetneq P_1'$ and $S_2 \subset P_2'$ be non-empty with the union of classes in $S_1$ equal to the union of classes in $S_2$. If $S_2$ does not contain $C'$ then $S_1 \subset P_1$, $S_2 \subset P_2$ contradicting \eqref{cond:comp} for $P_1, P_2$. If $S_2$ contains $C'$ then $S_1 \cup \{\{k-1,k\}\}$ and $(S_2 \cup \{\{k\},\{k-3,k-2\},C\})\setminus \{C'\}$ would contradict \eqref{cond:comp} for $P_1, P_2$. This completes the proof if \eqref{enum:singleton} holds.
  
  Now suppose that \eqref{enum:2on5ineq} holds, so $e \ge \frac{2k + 3}{5}$. If \eqref{enum:singleton} holds then we are done, so we may assume that the non-complementary class of $P_2$ has size at least two. We now show that \eqref{enum:comppairexp} holds. Let $m_i$ be the number of complementary pairs in $P_i$ for $i\in\{1,2\}$. Thus there are $e-m_i-1$ complementary classes in $P_i$ of size at least $3$, for $i\in\{1,2\}$. Hence,
   \begin{align*}
       k &\ge 1 + 2m_1 + 3(e-m_1-1),\mbox{ and} \\
       k &\ge 2 + 2m_2 + 3(e-m_2-1).
   \end{align*}
  
  Adding these inequalities give $2k \ge 6e - (m_1+m_2) - 3$. Rearranging gives
  $$m_1 + m_2 \ge 6e - 2k - 3 \ge e + (5e - 2k) - 3 \ge e,$$
  since $e \ge \frac{2k + 3}{5}$. This completes the proof.
\end{proof}
Now we are ready to analyze the $e=\frac{k}{2}$ case.
\sphereembthm*

\begin{proof} We will prove that if $Y$ embeds then it takes the desired form. We leave the proof that the family in \eqref{classif:case2} smoothly embeds to the next section, see Proposition~\ref{prop:s4_embeddings2}.

 Suppose that $Y=F(e;\frac{p_1}{q_1}, \dots, \frac{p_k}{q_k})$ with $k=2e$ is partitionable. Since the property that $k=2e$ is preserved under expansions, we can assume that $Y$ is obtained by a (possibly empty) sequence of expansions from a partitionable space that is minimal in the sense that it is not obtained by expansion from any other partitionable space. Assume that $Y$ is such a minimal space. By Lemma~\ref{lemma:2on5expansion}\eqref{enum:2on5ineq} minimality implies that $e\leq \frac{2k+2}{5} =\frac{4e+2}{5}$. This shows that $e\leq 2$.

If $e=1$, then $Y=F(1;\frac{p}{q}, \frac{r}{s})$ for some $\frac{p}{q}, \frac{r}{s}$ such that $\frac{q}{p}+ \frac{s}{r}=1-\frac{1}{\lcm(p,r)}$. However Lemma~\ref{lemma:homology_is_double2} implies that $p$ and $r$ are coprime so $\lcm(p,r)=pr$.

If $e=2$, then $Y=F(2;\frac{p_1}{q_1}, \dots, \frac{p_4}{q_4})$. We consider the possible partitions, $P_1=\{C_1,C_2\}$ and $P_2=\{D_1,D_2\}$ of such a $Y$. We assume that $C_1$ and $D_1$ are the complementary classes and $C_2$ and $D_2$ are the non-complementary classes. By Lemma~\ref{lemma:2on5expansion} the minimality of $Y$ shows that we cannot have $|C_2|=|D_2|=1$ or $|C_1|=|C_2|=2$. Thus we can assume that $|C_1|=3$, $|C_2|=1$, $|D_1|=2$ and $|D_2|=2$.
Suppose that $C_2=\{1\}$. This implies that $\frac{q_1}{p_1}=1-\frac{1}{\lcm(p_1,\dots, p_4)}$. We may assume that $\{1,2\}$ is a class in $P_2$. Since $\frac{p_2}{q_2}\leq \lcm(p_1,\dots, p_4)$, we have that $\frac{q_1}{p_1}+\frac{q_2}{p_2}\geq 1$. Thus $D_1=\{1,2\}$ is the complementary class and $\frac{p_2}{q_2}=\lcm(p_1,\dots, p_4)$. By Lemma~\ref{lemma:homology_is_double2}, we have $\gcd(p_1,\dots, p_4)=1$. Since $p_1=p_2=\lcm(p_1,\dots, p_4)$, it follows that $p_3$ and $p_4$ must be coprime. Since the complementary class $C_1$ is $C_1=\{2,3,4\}$, it follows that $\frac{q_3}{p_3}+\frac{q_4}{p_4}+\frac{1}{\lcm(p_1,\dots, p_4)}=1$. This implies that $\lcm(p_1,\dots, p_4)=p_3 p_4$. Thus by taking $\frac{p_3}{q_3}=\frac{p}{q}$ and $\frac{p_4}{q_4}=\frac{r}{s}$ we see that $Y$ takes the form $Y=F(2;\frac{p}{q},\frac{r}{s}, \frac{pr}{pr-1}, pr)$, where $\frac{q}{p} + \frac{s}{r}+\frac{1}{pr}=1$.

Thus if $Y$ is partitionable and $e=\frac{k}{2}$, then $Y$ is obtained by a sequence of expansions from either $F(1;\frac{p}{q}, \frac{r}{s})$ or $F(2;\frac{p}{q},\frac{r}{s}, \frac{pr}{pr-1}, pr)$, where $\frac{q}{p} + \frac{s}{r}+\frac{1}{pr}=1$. By Theorem~\ref{thm:partitions}, this shows that if $Y$ smoothly embeds in $S^4$, then it is of the form required by the theorem.
\end{proof}
\begin{remark}\label{rem:size_3_class}
We remark that the family \eqref{classif:case3} in Theorem~\ref{thm:classification_e_ge_k2} arises only when one of the partitions has a complementary class indexing fractions of the form $\frac{p}{q}, \frac{r}{s}, pr$. The above proof shows this when $e=2$, and it follows inductively for larger $e$ from the way the partitions for $Y'$ are obtained from $P_1$ and $P_2$ in the proof of Lemma~\ref{lemma:2on5expansion}.
\end{remark}

\section{Constructing embeddings of Seifert fibered spaces}\label{sec:embeddings}
In this section we construct embeddings of the families of Seifert fibered spaces in Theorem \ref{thm:classification_e_ge_(k+1)/2} and Theorem \ref{thm:classification_e_ge_k2}\eqref{classif:case2}. We also recall what is known in the $\varepsilon(Y) = 0$ case and make some observations which give some new embeddings.

\lemaddfibers*
\begin{proof}
Let $Y=F(e;\frac{p_1}{q_1}, \dots, \frac{p_k}{q_k})$ and $Y'=F(e;\frac{p_1}{q_1}, \dots, \frac{p_k}{q_k}, -\frac{p_k}{q_k},\frac{p_k}{q_k})$ a space obtained by expansion from $Y$.
  We will explicitly find a subset of $Y\times[0,1]$ which is homeomorphic to $Y'$. Let $N_1\subset Y$ be a Seifert fibered neighbourhood of the exceptional fiber corresponding to $p_k/q_k$, that is, a set homeomorphic to $S^1 \times D^2$ whose boundary is a union of regular fibres. Consider the set $M=N_1 \times [\frac{1}{4}, \frac{3}{4}]$. The boundary $\partial M$ is homeomorphic to $S^1 \times S^2$ and it naturally inherits a Seifert fibred structure of the form $\partial M =S^2(0; -\frac{p_k}{q_k}, \frac{p_k}{q_k})$. On $N_1 \times \{\frac{1}{4}\}$ and $N_1 \times \{\frac{3}{4}\}$ this structure is a translate of the one on $N_1$, giving the two exceptional fibres, and is the obvious product structure on $\partial N_1 \times [\frac{1}{4}, \frac{3}{4}]$. Now let $N_2 \subseteq N_1$ be a Seifert fibred neighbourhood of a regular fiber. 
  We take $X$ to be the subset \[ X=(Y\setminus \inter N_2) \times \{0\} \cup \partial N_2 \times [0, \frac{1}{4}] \cup (\partial M \setminus \inter N_2 \times \{\frac{1}{4})\}. \]
  As a manifold, $X$ is obtained by taking $Y$ and $M$, deleting open fibred neighbourhoods of regular fibers in both and gluing the two resulting manifolds along their boundaries so that the boundary fibers match up. From this description $X$ is clearly homeomorphic to $Y'$. Thus by smoothing the corners of $X$ we can obtain a smooth embedding of $Y'$ into $Y\times [0,1]$.
\end{proof}
\begin{remark}
Although all our applications are for Seifert fibered spaces over orientable surfaces, both the definition of expansion and Lemma~\ref{lem:add_fibers} work perfectly well over non-orientable surfaces.
\end{remark}
The following proposition is due to Crisp-Hillman \cite[Lemma 3.2]{MR1620508}.

\begin{prop}\label{prop:genus_bump}Let $Y_g = F_g(e; \frac{p_1}{q_1}, \ldots, \frac{p_k}{q_k})$ where $F_g$ is an orientable genus $g \ge 0$ surface. If $Y_g$ smoothly embeds in $S^4$, then $Y_{g+1}$ smoothly embeds in $S^4$.
\end{prop}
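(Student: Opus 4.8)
The plan is to reduce, exactly as in the proof of Lemma~\ref{lem:add_fibers}, to the assertion that $Y_{g+1}$ smoothly embeds in $Y_g\times[0,1]$. This suffices: since $Y_g$ is a closed orientable $3$-manifold smoothly embedded in $S^4$, its normal line bundle $\nu$ satisfies $w_1(\nu)=w_1(TS^4)|_{Y_g}-w_1(TY_g)=0$, so $\nu$ is trivial and a tubular neighborhood of $Y_g$ in $S^4$ is diffeomorphic to $Y_g\times(-1,1)\supset Y_g\times[0,1]$.

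The structural input is the standard fact that $Y_{g+1}$ is obtained from $Y_g$ by a fiber sum with the $3$-torus $T^3=T^2(0)$ along regular fibers; equivalently, one removes from $Y_g$ two disjoint fibered solid tori $N_1,N_2$, each a neighborhood of a regular fiber sitting over disks $D_1,D_2$ in the base $F_g$, and reglues $\partial N_1$ to $\partial N_2$ by a fiber-preserving, orientation-reversing homeomorphism. Since $T^3$ carries no exceptional fibers and has vanishing Euler number, and since a fiber sum connect-sums the base surfaces and adds Euler numbers, the result is precisely $F_{g+1}(e;\tfrac{p_1}{q_1},\dots,\tfrac{p_k}{q_k})=Y_{g+1}$.

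To realize this regluing inside $Y_g\times[0,1]$, choose an embedded arc $\delta$ in $F_g$ joining $\partial D_1$ to $\partial D_2$ and missing the images of the exceptional fibers, and let $R\supset D_1\cup\delta\cup D_2$ be a regular neighborhood; $R$ is a disk over which the Seifert fibration is trivial, so $\widetilde R:=\pi^{-1}(R)\cong D^2\times S^1$ contains $N_1,N_2$ as $D_1\times S^1$ and $D_2\times S^1$. Inside $\widetilde R\times[0,1]\cong D^3\times S^1$, take a properly embedded annulus $\mathcal A\subset D^3$ with $\partial\mathcal A=\partial D_1\sqcup\partial D_2\subset\partial D^3$ whose interior is disjoint from the slice $D^2\times\{0\}$ — obtained by pushing the annulus visible in $\partial D^3$ between these two circles slightly off that slice — and set $P:=\mathcal A\times S^1\cong T^2\times[0,1]$. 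Then
\[
X:=\bigl(Y_g\setminus\inter(N_1\cup N_2)\bigr)\times\{0\}\ \cup\ P
\]
is a closed $3$-manifold which, after rounding corners, is smoothly embedded in $Y_g\times[0,1]$: it is $Y_g$ with two regular-fiber neighborhoods excised and their boundary tori joined by the tube $P$, hence Seifert fibered over $F_g$ with a handle attached, and with exceptional data and Euler number unchanged. Thus $X\cong Y_{g+1}$, giving $Y_{g+1}\hookrightarrow Y_g\times[0,1]\hookrightarrow S^4$.

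The one point requiring care — and the main obstacle — is checking that the identification of $\partial N_1$ with $\partial N_2$ induced by $P$ is the orientation-reversing one, so that the new base surface is the orientable $F_{g+1}$ rather than a non-orientable surface; equivalently, that $\mathcal A$ is chosen with the correct framing on its boundary circles, and that the fiber and section directions of the glued tori match the conventions of the fiber sum. This verification is routine but essential; the remainder is a direct adaptation of the construction in Lemma~\ref{lem:add_fibers} and of Crisp--Hillman \cite[Lemma 3.2]{MR1620508}.
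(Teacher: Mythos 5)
Your argument is correct, but it takes a genuinely different route from the paper's. The paper proves $Y_{g+1}\hookrightarrow Y_g\times[0,1]$ by Kirby calculus, following Donald's thesis: one attaches a $1$-handle, a $0$-framed $2$-handle and a $3$-handle to $Y_g\times[0,1]$ near a meridian of the central $e$-framed curve, arranged so that the dotted circle and one of the $0$-framed unknots form a Whitehead pattern; handle slides and cancellations show the cobordism is still a product, while the boundaries of the new handles together with the old surgery diagram exhibit the genus-raising pair of $0$-framed curves from Figure~\ref{fig:sfs_as_surgery}. You instead build the embedded copy of $Y_{g+1}$ directly, excising two regular-fiber neighbourhoods from $Y_g\times\{0\}$ and joining the resulting boundary tori by the tube $P=\mathcal A\times S^1$ pushed into $Y_g\times(0,1]$, in the style of the paper's proof of Lemma~\ref{lem:add_fibers} (your reduction to $Y_g\times[0,1]$ via triviality of the normal bundle is also fine). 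Your approach has the advantage of exhibiting the Seifert structure on the embedded manifold explicitly and of generalising to other vertical gluings, at the cost of the orientation and framing verification you defer at the end; the Kirby-calculus proof hides the Seifert structure but makes the identification of the resulting manifold a purely diagrammatic check. The verification you defer does go through, and a complete write-up should include it, since without it the construction could a priori produce a non-orientable base or a different central weight. Concretely: writing $P_0=R\setminus\inter(D_1\cup D_2)$, the sphere $\mathcal A\cup D_1\cup D_2$ is a push-in of $\partial D^3$ rel $D_1\cup D_2$, so the region between $\mathcal A$ and the annulus $\partial D^3\setminus\inter(D_1\cup D_2)$ is a collar of that annulus, i.e.\ a solid torus whose boundary torus is $\mathcal A\cup P_0$ together with the disk $\bigl(\partial R\times[0,1]\bigr)\cup\bigl(R\times\{1\}\bigr)$; hence $\mathcal A\cup P_0$ is an orientable once-punctured torus and the new base surface is $F_{g+1}$. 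Moreover the product structure on $(\mathcal A\cup P_0)\times S^1$ restricts over $\partial R\times S^1$ to that of $\pi^{-1}(R)\cong R\times S^1$, so a horizontal section over $F_g\setminus \inter R$ extends horizontally over the new punctured torus; the local invariants at the exceptional fibers and the Euler number are therefore unchanged, and the classification of Seifert fibered spaces identifies your $X$ with $Y_{g+1}$.
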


\begin{proof} We follow the approach due to Donald \cite[Lemma 2.23]{DonaldPhDThesis}. We prove that $Y_{g+1}$ smoothly embeds in $Y_g \times [0,1]$ via Kirby calculus. Start with a surgery presentation for $Y_g$ as in Figure~\ref{fig:sfs_as_surgery}. Take a relative handle decomposition of $Y_g \times [0,1]$ by attaching handles around the meridian of the curve representing the central vertex (the $e$ framed curve) as shown in Figure~\ref{fig:genus_bump}. To see the embedding of $Y_{g+1}$ in this manifold observe that the dotted circle and one of the $0$-framed unknots form a Whitehead double, so their boundary along with the surgery presentation for $Y_g$ provide the embedding into $Y_g \times [0,1]$. To see that the Kirby diagram is $Y_g \times [0,1]$, observe that $0$-framed handle in the Whitehead double can be unlinked from the dotted curve by sliding over the meridional $0$-framed unknot. This curve can then be cancelled with the $3$-handle, leaving the $1$-handle and $2$-handle which form a cancelling pair.

  \begin{figure}[h]
  \begin{overpic}[width=0.3\textwidth]{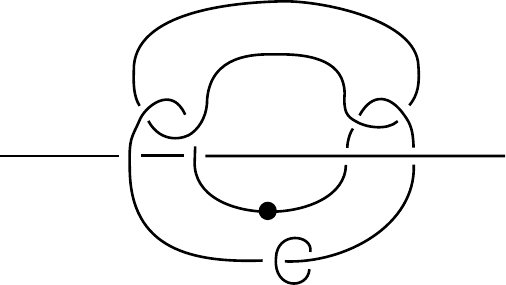}
    \put (78,51) {$0$}
    \put (49,-4) {$0$}
    \put (0.5,17) {$e$}
    \put (90,45) {$\cup$ 3-handle}
  \end{overpic}
  \caption{Increasing the genus}
  \label{fig:genus_bump}
\end{figure}
\end{proof}
Together these allow us to find the embeddings required for Theorem~\ref{thm:classification_e_ge_(k+1)/2} and Theorem~\ref{thm:classification_e_ge_k2}.

\begin{prop}\label{prop:s4_embeddings2} Let $Y$ be a Seifert fibered space over orientable base surface $F$, with $k > 2$ exceptional fibers, in either of the following two families:
  \begin{enumerate}[(a)]
  \item\label{enum:emb_family_1} $F\left(\frac{k+1}{2}; \frac{a}{a-1}, a, \ldots, \frac{a}{a-1} \right) = F(0; -a, a, \ldots, -a),$ where $a > 1$ is an integer, or
  \item\label{enum:emb_family_2} $F\left(\frac{k}{2}; \frac{p}{q}, \frac{p}{p-q}, \cdots, \frac{p}{q}, \frac{r}{s}, \frac{r}{r-s}, \cdots, \frac{r}{s}\right) = F\left(0; \frac{p}{q}, -\frac{p}{q}, \ldots, \frac{p}{q}, \frac{r}{s}, -\frac{r}{s}, \ldots, \frac{r}{s}\right)$ where $\frac{p}{q}, \frac{r}{s} > 1$ and $\frac{q}{p} + \frac{s}{r} = 1 - \frac{1}{pr}.$
  \end{enumerate}
  Then $Y$ smoothly embeds in $S^4$.
\end{prop}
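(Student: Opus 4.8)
The plan is to reduce everything to constructing a single embedding for a base-case Seifert fibered space over $S^2$, and then bootstrap to the general case using the two tools already developed: Lemma~\ref{lem:add_fibers} (expansion preserves embeddability) and Proposition~\ref{prop:genus_bump} (raising the genus preserves embeddability). The key observation, which the paper has already flagged in the introduction, is that in each family the Seifert invariants are written in the alternative normalisation $F(0; \ldots)$ exhibiting the space as obtained from a small space by a sequence of expansions: in family~\eqref{enum:emb_family_1} each pair $-a, a$ in $S^2(0; -a, a, \ldots, -a)$ is an expansion, so the space is an iterated expansion of $S^2(1; \frac{a}{a-1}) \cong S^3$; in family~\eqref{enum:emb_family_2} each pair $\frac{p}{q}, -\frac{p}{q}$ (resp. $\frac{r}{s}, -\frac{r}{s}$) is an expansion, so the space is an iterated expansion of $S^2(1; \frac{p}{q}, \frac{r}{s})$, which is homeomorphic to $S^3$ precisely because $\frac{q}{p} + \frac{s}{r} = 1 - \frac{1}{pr}$ (a lens-space/$S^3$ recognition computation: $\varepsilon = 0$ is forced and the resulting $S^1 \times S^2$-style surgery collapses, or more simply one checks the surgery diagram is a connected sum of unknots).

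First I would spell out the $S^2$-base case. For family~\eqref{enum:emb_family_1} with $F = S^2$: observe $S^2(1; \frac{a}{a-1})$ is $S^3$ (a single integer surgery coefficient after Rolfsen twist), hence embeds in $S^4$; then apply Lemma~\ref{lem:add_fibers} repeatedly, each expansion appending a pair $\frac{a}{a-1}, a$ (equivalently $-a, a$ in the unnormalised form), to conclude $S^2(\tfrac{k+1}{2}; \frac{a}{a-1}, a, \ldots, \frac{a}{a-1})$ embeds in $S^4$ for every odd $k \geq 3$. For family~\eqref{enum:emb_family_2} with $F = S^2$: check that $S^2(1; \frac{p}{q}, \frac{r}{s})$ with $\frac{q}{p} + \frac{s}{r} = 1 - \frac{1}{pr}$ is homeomorphic to $S^3$ — here $\varepsilon = 1 - \frac{q}{p} - \frac{s}{r} = \frac{1}{pr} > 0$, and one verifies via continued fractions (or directly) that the star-shaped plumbing graph has determinant $\pm 1$ and bounds a manifold with boundary $S^3$; alternatively, the surgery description $S^2(0; \frac{p}{q}, -\frac{p}{q} \cdot \text{(shifted)})$-style manipulation shows it is $S^3$. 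Granting this, repeated application of Lemma~\ref{lem:add_fibers}, appending pairs $\frac{p}{p-q}, \frac{p}{q}$ and $\frac{r}{r-s}, \frac{r}{s}$, gives the embedding of the whole family over $S^2$.

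Finally I would handle the higher-genus case $g \geq 1$ by induction on $g$ using Proposition~\ref{prop:genus_bump}: since $Y_0$ (the genus-$0$ member with the same Seifert invariants) smoothly embeds in $S^4$ by the previous paragraph, $Y_1$ embeds, hence $Y_2$, and so on, so $Y_g$ embeds in $S^4$ for all $g \geq 0$.

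The step I expect to be the main obstacle is the verification that $S^2(1; \frac{p}{q}, \frac{r}{s})$ is homeomorphic to $S^3$ when $\frac{q}{p} + \frac{s}{r} = 1 - \frac{1}{pr}$ — i.e., that the arithmetic condition is exactly the one making the two exceptional fibers "cancel". This is a continued-fraction identity: writing $\frac{p}{q} = [a_1, \ldots, a_m]^-$ and $\frac{r}{s} = [b_1, \ldots, b_n]^-$, the star-shaped graph with central weight $1$ must be shown to be blow-down equivalent to the empty graph (equivalently, $|H_1| = pr \cdot \varepsilon = pr \cdot \frac{1}{pr} = 1$, and then that the space is actually $S^3$ and not a fake homology sphere — which follows because it is a Seifert space with at most two exceptional fibers, hence a lens space or $S^3$, and its order-one first homology forces $S^3$). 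I would present this as a short lemma citing the standard classification of small Seifert fibered spaces, or give the explicit blow-down. Everything else is a routine assembly of Lemma~\ref{lem:add_fibers} and Proposition~\ref{prop:genus_bump}.
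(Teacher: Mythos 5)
Your proposal is correct and follows essentially the same route as the paper: both reduce to the observation that $S^2(1;\frac{a}{a-1})$ and $S^2(1;\frac{p}{q},\frac{r}{s})$ with $\frac{q}{p}+\frac{s}{r}=1-\frac{1}{pr}$ are Seifert fibered structures on $S^3$, then build the embeddings by iterated application of Lemma~\ref{lem:add_fibers} and Proposition~\ref{prop:genus_bump}. Your extra justification that the two-fiber space is $S^3$ (a lens space with $|H_1|=pr\cdot\varepsilon=1$) is a correct filling-in of a step the paper states without proof.
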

\begin{proof}
Observe that $S^3$ admits Seifert fibered structures of the form $S^2(1;\frac{a}{a-1})$ and $S^2(1;\frac{p}{q}, \frac{r}{s})$, where $\frac{q}{p} + \frac{s}{r} = 1 - \frac{1}{pr}$. Since $S^3$ smoothly embeds in $S^4$ and each of the families is obtained from one of these structures on $S^3$ by a sequence of expansions and possibly increasing the genus of the base surface, Lemma~\ref{lem:add_fibers} and Proposition~\ref{prop:genus_bump} allow us to build the necessary embeddings. 
\end{proof}

\begin{remark}\label{rmk:embs_in_lit}Some of the Seifert fibered spaces in Proposition~\ref{prop:s4_embeddings2} were already known to embed in $S^4$. Crisp-Hillman \cite[Section 3a]{MR1620508} showed that the manifolds in \eqref{enum:emb_family_1} embed in $S^4$. Donald \cite{MR3271270} showed that for $k = 3,4$, the manifolds in family \eqref{enum:emb_family_1} and a subfamily of those in \eqref{enum:emb_family_2} embed in $S^4$ as the double branched cover of doubly slice links.
\end{remark}

We now recall what is known about and make some brief observations on smoothly embedding Seifert fibered spaces $Y$ over an orientable base surface with $\varepsilon(Y) = 0$.

Donald \cite[Theorem~1.3]{MR3271270} used Donaldson's theorem to prove that in order for $Y$ to smoothly embed the Seifert invariants must occur in complementary pairs. More precisely, he shows the following.

\begin{thm}\label{thm:eps_eq_0_obstruction} Let $Y$ be a Seifert fibered space over a closed orientable surface $F$ with $\varepsilon(Y) = 0$. If $Y$ smoothly embeds in $S^4$ then $Y$ is of the form
  $$F\left(0; \frac{p_1}{q_1}, -\frac{p_1}{q_1}, \ldots, \frac{p_k}{q_k}, -\frac{p_k}{q_k}\right) = F\left(k; \frac{p_1}{q_1}, \frac{p_1}{p_1 - q_1}, \ldots, \frac{p_k}{q_k}, \frac{p_k}{p_k - q_k}\right),$$
where $k \ge 0$ and $\frac{p_i}{q_i} > 1$ for all $i \in \{1,\ldots,k\}$.
\end{thm}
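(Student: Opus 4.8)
The goal is to prove that if $Y$ with $\varepsilon(Y) = 0$ smoothly embeds in $S^4$, then its Seifert invariants occur in complementary pairs, i.e.\ $Y = F(0; \frac{p_1}{q_1}, -\frac{p_1}{q_1}, \ldots, \frac{p_k}{q_k}, -\frac{p_k}{q_k})$. Since this is attributed to Donald \cite[Theorem~1.3]{MR3271270}, the plan is to recall the structure of his argument, which is the $\varepsilon = 0$ analogue of the $\varepsilon > 0$ analysis carried out in Section~\ref{sec:obstruction}.

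\textbf{Setup.} First I would pass to standard form so that $\frac{p_i}{q_i} > 1$ for all $i$ and $\varepsilon(Y) = e - \sum_{i=1}^k \frac{q_i}{p_i} = 0$. As recorded in Section~\ref{sec:background}, $Y$ then bounds the positive \emph{semi}-definite plumbing $X = X_{\Gamma,g}$ on the star-shaped graph $\Gamma$ of Figure~\ref{fig:plumbing}; here $b_2^+(X) = n - 1$ and $X$ has a one-dimensional radical in $H_2(X;\R)$ spanned by a rational combination of the vertex classes. A smooth embedding decomposes $S^4 = U_1 \cup_Y -U_2$ as in Proposition~\ref{prop:S4_splitting}, and the glued manifolds $W_i = X \cup_Y U_i$ are closed and positive \emph{semi}-definite by Proposition~\ref{thm:def_gluing_thm}, using that the relevant homological and signature conditions hold (here one must be slightly careful: $Y$ is a rational homology $S^1 \times S^2$, so $b_2(Y) = 1$ and $b_1(Y) = 1$). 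After capping off or quotienting the radical — equivalently, after passing to $H_2/\text{radical}$ — Donaldson's theorem applies to show the resulting positive definite form is diagonal. Tracing this through gives, exactly as in Theorem~\ref{thm:embsurjectivity}, lattice embeddings $\iota_i$ of the nondegenerate part of $(\Z^n, Q_\Gamma)$ into a standard diagonal lattice with the augmented surjectivity property.

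\textbf{Extracting the complementary pairing.} The key point is to apply the inequality of Theorem~\ref{thm:emb_ineq_eq_case} to the image of the central vertex $\nu$, exactly as in Lemma~\ref{lem:embedding_structure}. Writing $\nu = e_1 + \cdots + e_e$ (after an automorphism) and letting $C_j$ be the set of arms whose leading vertex pairs nontrivially with $e_j$, Theorem~\ref{thm:emb_ineq_eq_case} gives $\sum_{i \in C_j} \frac{q_i}{p_i} \le 1$ for each $j$, while $\sum_{j=1}^e \sum_{i \in C_j} \frac{q_i}{p_i} \ge \sum_{i=1}^k \frac{q_i}{p_i} = e$ (each arm is covered at least once, with equality when the $C_j$ partition). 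Hence every class is complementary, $\sum_{i \in C_j} \frac{q_i}{p_i} = 1$ for all $j$, and the $C_j$ genuinely partition $\{1,\ldots,k\}$ into $e$ complementary classes. Running the same argument for $\iota_2$ gives a second such partition, and the augmented surjectivity of $(A_1 \mid A_2)$ feeds into the argument of Lemma~\ref{thm:compclasscond}: no nonempty union of classes of $P_1$ equals a union of classes of $P_2$. The standard combinatorial consequence (which is where Hantzsche's direct-double condition on $H_1(Y) \cong \Z^{2g} \oplus (\text{torsion})$ is used, together with the compatibility constraint) is that every complementary class has size exactly $2$ and the two partitions pair the arms up into the \emph{same} pairs $\{i, i'\}$, with $\frac{q_i}{p_i} + \frac{q_{i'}}{p_{i'}} = 1$, i.e.\ $\frac{p_{i'}}{q_{i'}} = \frac{p_i}{p_i - q_i}$. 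This is precisely the asserted form.

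\textbf{Main obstacle.} The genuinely delicate step is handling the semi-definite (rather than definite) situation correctly: one must verify that Proposition~\ref{thm:def_gluing_thm} applies with $b_2(Y) = 1$, that the radical of $Q_\Gamma$ behaves well under gluing, and that Donaldson diagonalization can legitimately be invoked after quotienting out the (necessarily split, by a $\Z \oplus \Z$ in $H_2$ of the glued manifold) hyperbolic or radical summand — this is the content of Donald's Theorem~1.3 and the reason the $\varepsilon = 0$ case is treated separately from Theorem~\ref{thm:partitions}. The combinatorial endgame forcing all complementary classes to have size $2$ and the two partitions to coincide is also not entirely formal: one needs the non-overlap condition of Lemma~\ref{thm:compclasscond} together with the constraint $\sum \frac{q_i}{p_i} = e$ with each class summing to exactly $1$, and an induction or extremal argument on the number of arms. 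Since the statement is quoted verbatim from \cite[Theorem~1.3]{MR3271270}, in the paper it suffices to cite Donald; the sketch above indicates how it fits the framework developed here.
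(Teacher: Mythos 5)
The paper does not actually prove this theorem: it is quoted from Donald \cite[Theorem~1.3]{MR3271270}, with the remark that it also follows from \cite[Proof of Theorem~4]{latticeineq}. Your proposal ends by agreeing that a citation suffices, and at that level you match the paper. The first half of your sketch is also sound in outline: in the $\varepsilon=0$ case the counting argument
\[
e=\sum_{i=1}^k \frac{q_i}{p_i}\;\le\;\sum_{j}\sum_{i\in C_j}\frac{q_i}{p_i}\;\le\;n\;\le\;\norm{\nu}=e
\]
does force $\nu=e_1+\dots+e_e$ and forces every class of the induced partition to sum to exactly $1$.

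However, the step you call the ``standard combinatorial consequence'' --- that every class has size exactly $2$, the two partitions coincide, and hence the Seifert invariants pair up complementarily --- is a genuine gap, and it is exactly where the content of Donald's theorem lies. A class summing to $1$ need not pair up: for $S^2(1;2,4,4)$ (which has $\varepsilon=0$) the single class $\{\tfrac12,\tfrac14,\tfrac14\}$ sums to $1$, and the semi-definite plumbing lattice admits a morphism to $(\Z^4,\mbox{Id})$ with all the structure your sketch extracts: central vertex $\mapsto e_1$, leading vertices $\mapsto -e_1+e_2$, $-e_1-e_2+e_3+e_4$, $-e_1-e_2-e_3-e_4$. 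So the partition structure alone cannot yield the conclusion; one can even arrange examples such as $S^2(2;2,4,4,2,4,4)$ where $\tor H_1$ is a direct double and two partitions with no common proper unions exist, yet the invariants do not pair up. Ruling these out requires either the finer inductive analysis of how the chains inside a single complementary class must interlock in the embedding (this is what Donald, and \cite[Theorem~4]{latticeineq}, actually do) or some further input beyond Hantzsche's condition. Relatedly, Lemma~\ref{thm:compclasscond} and Theorem~\ref{thm:embsurjectivity} cannot be invoked verbatim when $\varepsilon=0$: every class is then complementary, $\coker Q_\Gamma$ acquires a free summand, and the surjectivity statement must be reformulated. None of this affects the paper, which only cites the result, but as a proof sketch yours is missing the decisive step.
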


We remark that a proof of Theorem~\ref{thm:eps_eq_0_obstruction} also follows from \cite[Proof of Theorem~4]{latticeineq}. It is still not known precisely which Seifert fibered spaces $Y$ of the form given in Theorem~\ref{thm:eps_eq_0_obstruction} smoothly embed in $S^4$. Crisp-Hillman \cite[Remark following Lemma 3.1]{MR1620508} showed that if $p_i$ is odd for all $i\in\{1,\ldots,k\}$ then $Y$ smoothly embeds. Donald \cite{MR3271270} showed that $S^2(0;a,-a,b,-b)$, where $a,b\in \Z$ are non-zero, embeds if $a$ is even and $b$ is odd. If $a$ and $b$ are both even and $a \neq b$, then he used Furuta's $10/8$ theorem to show that the Seifert fibered space does not embed. 
It turns out that embedding Seifert fibered spaces with $\varepsilon=0$ is closely related to embedding Seifert fibered spaces over $D^2$. We will make use of the following easy observation. 
\begin{lemma}\label{lem:disk_subspace}
Let $Y=F(e;\frac{p_1}{q_1}, \dots, \frac{p_k}{q_k})$, then for any subset $\{i_1,\dots, i_l\}\subseteq \{1,\dots, k\}$, $Y$ contains a submanifold homeomorphic to $D^2(\frac{p_{i_1}}{q_{i_1}}, \dots, \frac{p_{i_l}}{q_{i_l}})$.
\end{lemma}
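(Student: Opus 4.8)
The plan is to identify an explicit submanifold of $Y$ that carries the desired Seifert fibered structure over $D^2$, using the surgery description of $Y$ from Figure~\ref{fig:sfs_as_surgery}. Recall that $Y$ is obtained by surgery on a framed link consisting of a central $e$-framed unknot, the $g$ pairs of $0$-framed meridian/longitude components (when $F$ has genus $g$), and $k$ fibre components with surgery coefficients $\frac{p_1}{q_1}, \ldots, \frac{p_k}{q_k}$, each a meridian of the central unknot. The Seifert fibered space $D^2(\frac{p_{i_1}}{q_{i_1}}, \ldots, \frac{p_{i_l}}{q_{i_l}})$ over the disk is, by definition, the result of the analogous surgery with the central unknot given $0$ framing, no genus components, and only the fibre components indexed by $\{i_1, \ldots, i_l\}$; equivalently it is $S^1 \times D^2$ with $l$ exceptional fibres drilled in, or the mapping torus picture restricted to a sub-disk of the base.

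The key step is to realize this sub-Seifert-fibered-space as a codimension-$0$ submanifold. Working from the base orbifold: $Y$ fibres (in the Seifert sense) over a genus $g$ orbifold with $k$ cone points; choose an embedded disk $D$ in the underlying surface of the base containing exactly the cone points indexed by $i_1, \ldots, i_l$ and avoiding all others as well as the genus handles. The preimage of $D$ under the Seifert fibration is then a codimension-$0$ submanifold $M \subseteq Y$ which is an $S^1$-fibered space over $D^2$ with $l$ exceptional fibres of the prescribed multiplicities; that is, $M$ is Seifert fibered of the form $D^2(\frac{p_{i_1}}{q_{i_1}}, \ldots, \frac{p_{i_l}}{q_{i_l}})$ by the standard classification of Seifert pieces over a disk (see \cite{MR518415}). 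Concretely in the Kirby picture, $M$ is the boundary connect-sum-type region cut out by a $2$-sphere separating the chosen fibre components (together with a sub-disk's worth of the central unknot, reframed to $0$ since the disk carries no Euler number) from the rest of the diagram; pushing the separating sphere around verifies that the framings and linkings match those defining $D^2(\frac{p_{i_1}}{q_{i_1}}, \ldots, \frac{p_{i_l}}{q_{i_l}})$.

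I expect the statement to be essentially routine once the right viewpoint is fixed, so the only real care needed is bookkeeping: making sure the framing on the central curve becomes $0$ when restricting to a sub-disk (since the generalized Euler number is an additive quantity spread over the base, and a sub-disk not containing the ``Euler number vertex'' contributes nothing), and checking that the remaining exceptional fibres and genus components can genuinely be pushed outside the chosen disk neighbourhood in the base orbifold. The main obstacle, such as it is, is purely one of setting up notation cleanly enough that the identification of the submanifold with the standard surgery diagram for $D^2(\frac{p_{i_1}}{q_{i_1}}, \ldots, \frac{p_{i_l}}{q_{i_l}})$ is manifest; no genuinely new idea is required beyond the standard structure theory of Seifert fibered spaces.
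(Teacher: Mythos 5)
Your proof is correct and follows the same approach as the paper: the paper's proof likewise takes the preimage under the Seifert fibration of a disk in the base orbifold containing exactly the chosen cone points. The additional Kirby-calculus bookkeeping you include is not needed but does no harm.
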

\begin{proof}
Consider the projection of $Y$ onto its base orbifold $\widehat{F}$. Choose a disk in $\widehat{F}$ containing the cone points corresponding to the exceptional fibers given by the fractions $\frac{p_{i_1}}{q_{i_1}}, \dots, \frac{p_{i_l}}{q_{i_l}}$ in its interior. The pre-image of this disk in $Y$ is the desired submanifold.
\end{proof}
This allows us to characterize when a Seifert fibered space with $\varepsilon=0$ embeds in $S^4$ in terms of the existence of an embedding for a Seifert fibered space over $D^2$. This characterization shows that existence of an embedding is independent of the genus of the base surface. This is in contrast to the situation for spaces with $\varepsilon \neq 0$, where it is unknown how important the genus of the base surface is to the existence of an embedding into $S^4$.
\begin{prop}\label{prop:disk_to_eps=0}
The Seifert fibered space $Y=F(0;\frac{p_1}{q_1},-\frac{p_1}{q_1}, \dots, \frac{p_k}{q_k},-\frac{p_k}{q_k})$ over orientable base surface $F$ embeds smoothly in $S^4$ if and only if the Seifert fibered space $\wt{Y}=D^2(\frac{p_1}{q_1}, \dots, \frac{p_k}{q_k})$ smoothly embeds in $S^4$.
\end{prop}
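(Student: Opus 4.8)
The plan is to reduce both implications to the observation that, when $F$ has genus $0$, the manifold $Y$ is the double of $\wt{Y}$ along its torus boundary, and then to use Lemma~\ref{lem:disk_subspace} and Proposition~\ref{prop:genus_bump} to handle the genus. First I would record the following standard fact: gluing two copies of $\wt{Y}=D^2(\frac{p_1}{q_1},\dots,\frac{p_k}{q_k})$ along their boundary tori via the identity map produces $S^2(0;\frac{p_1}{q_1},-\frac{p_1}{q_1},\dots,\frac{p_k}{q_k},-\frac{p_k}{q_k})$. One way to see this is that the double inherits a Seifert fibration over $D^2\cup_\partial D^2 = S^2$ carrying the $k$ cone points of each half (those of the reflected copy having invariants $-\frac{p_i}{q_i}$), while the fibre-preserving involution swapping the two halves is orientation reversing and hence forces the generalised Euler invariant to vanish; alternatively it can be checked directly from surgery diagrams.

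For the ``if'' direction, assume $\wt{Y}$ embeds smoothly in $S^4$. Because $\wt{Y}$ is compact and orientable and $S^4$ is orientable, the normal line bundle of $\wt{Y}$ in $S^4$ is trivial, so $\wt{Y}$ has a product tubular neighbourhood $\wt{Y}\times[-1,1]\subset S^4$. Smoothing the corners of its boundary exhibits the double of $\wt{Y}$ as a smooth submanifold of $S^4$; by the fact above this is $S^2(0;\frac{p_1}{q_1},-\frac{p_1}{q_1},\dots,\frac{p_k}{q_k},-\frac{p_k}{q_k})$, so this space embeds smoothly in $S^4$. For a base surface of genus $g\geq 1$, apply Proposition~\ref{prop:genus_bump} $g$ times.

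For the ``only if'' direction, assume $Y=F(0;\frac{p_1}{q_1},-\frac{p_1}{q_1},\dots,\frac{p_k}{q_k},-\frac{p_k}{q_k})$ embeds smoothly in $S^4$. Apply Lemma~\ref{lem:disk_subspace} to the subset of indices labelling the exceptional fibres with invariants $\frac{p_1}{q_1},\dots,\frac{p_k}{q_k}$ (say the $k$ odd-numbered fibres): its proof produces, as the preimage of an embedded disk in the base orbifold under the (smooth) Seifert fibration, a smooth submanifold of $Y$ diffeomorphic to $D^2(\frac{p_1}{q_1},\dots,\frac{p_k}{q_k})=\wt{Y}$. Composing with the given embedding $Y\hookrightarrow S^4$ yields a smooth embedding of $\wt{Y}$ in $S^4$.

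The only genuinely delicate point is the identification of the double of $\wt{Y}$ with $S^2(0;\frac{p_1}{q_1},-\frac{p_1}{q_1},\dots)$; once this bookkeeping of Seifert invariants is carried out, both implications are formal, using only triviality of normal bundles, Lemma~\ref{lem:disk_subspace}, and Proposition~\ref{prop:genus_bump}.
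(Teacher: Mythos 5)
Your proposal is correct and follows essentially the same route as the paper: Lemma~\ref{lem:disk_subspace} gives the ``only if'' direction, and the ``if'' direction takes the boundary of a product tubular neighbourhood of $\wt{Y}$, identifies it with the double $S^2(0;\frac{p_1}{q_1},-\frac{p_1}{q_1},\dots,\frac{p_k}{q_k},-\frac{p_k}{q_k})$, and invokes Proposition~\ref{prop:genus_bump} to raise the genus. The extra detail you supply on why the doubled Seifert invariants are $\pm\frac{p_i}{q_i}$ with vanishing Euler number is a welcome elaboration of a step the paper leaves to the reader.
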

\begin{proof}
By Lemma~\ref{lem:disk_subspace}, $Y$ contains $\wt{Y}$ as a submanifold, so an embedding of $Y$ gives an embedding of $\wt{Y}$. This proves the ``only if'' direction. In the opposite direction notice that the manifold $Y'=\wt{Y} \cup_\partial -\wt{Y}$ we obtain by doubling $\wt{Y}$ along its boundary is homeomorphic to $S^2(0;\frac{p_1}{q_1},-\frac{p_1}{q_1}, \dots, \frac{p_k}{q_k},-\frac{p_k}{q_k})$. If $\wt{Y}$ embeds in $S^4$ then it has a tubular neighbourhood $\wt{Y}\times [0,1] \subseteq S^4$. The boundary of this tubular neighbour is homeomorphic to $Y'\cong S^2(0;\frac{p_1}{q_1},-\frac{p_1}{q_1}, \dots, \frac{p_k}{q_k},-\frac{p_k}{q_k})$. By applying Proposition~\ref{prop:genus_bump} to raise the genus of the base surface if necessary, this shows that $Y$ embeds smoothly in $S^4$.  
\end{proof}

We also extend the result of Crisp-Hillman described above.
\begin{prop} Let $Y = S^2(0; \frac{p_1}{q_1}, -\frac{p_1}{q_1}, \ldots, \frac{p_k}{q_k}, -\frac{p_k}{q_k})$ where $p_i$ is even for at most one $i$. Then $Y$ smoothly embeds in $S^4$.
\end{prop}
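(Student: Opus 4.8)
The plan is to pass to the associated Seifert fibered space over the disk and to realize it as a double branched cover sitting inside a standardly embedded $S^4$. By Proposition~\ref{prop:disk_to_eps=0}, $Y$ smoothly embeds in $S^4$ if and only if $\wt Y = D^2(\tfrac{p_1}{q_1}, \ldots, \tfrac{p_k}{q_k})$ does, so it suffices to embed $\wt Y$ smoothly in $S^4$. After relabelling the exceptional fibers we may assume that $p_1$ is the (at most one) even multiplicity and that $p_2, \ldots, p_k$ are odd.

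First I would recall that $\wt Y$ is the double branched cover $\Sigma_2(B^3, \tau)$ of the $3$-ball over the Montesinos tangle $\tau$ formed by laying the rational tangles $T(\tfrac{q_1}{p_1}), \ldots, T(\tfrac{q_k}{p_k})$ side by side: the branched cover over the $i$th rational tangle contributes the exceptional fiber $\tfrac{p_i}{q_i}$, and gluing the resulting solid tori along the branched covers of the vertical dividing disks recovers the Seifert fibration of $\wt Y$ over the disk. The decisive, and purely functorial, point is then the following: if there is a smoothly unknotted $2$-sphere $S \subset S^4$ and a smoothly embedded $3$-ball $B \subset S^4$ with $(B, B\cap S) \cong (B^3, \tau)$, then taking double branched covers gives $\wt Y = \Sigma_2(B, \tau) \subset \Sigma_2(S^4, S) = S^4$, since the double branched cover of $S^4$ over an unknotted $2$-sphere is again $S^4$.

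So the real work is to construct the pair $(S, B)$, and I expect this to be the main obstacle. Starting from a trivial configuration, in which $B$ meets $S$ in the trivial two-string tangle (so that $\Sigma_2(B, B\cap S)$ is a solid torus), one builds up $\tau$ by introducing the rational tangles $T(\tfrac{q_i}{p_i})$ one at a time through finger moves and ribbon moves of $S$ that alter $B \cap S$ but leave $S$ smoothly unknotted in $S^4$. Each odd parameter is inserted by a symmetric pair of ribbon moves, as in Crisp--Hillman's treatment of the all-odd case \cite[Remark following Lemma~3.1]{MR1620508}; the new point is that one even parameter can also be accommodated, extending Donald's construction for $S^2(0; a, -a, b, -b)$ with $a$ even and $b$ odd \cite{MR3271270}. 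The restriction to at most one even $p_i$ is exactly what makes this step go through, and it cannot be removed entirely, since for $a \neq b$ both even Donald's $10/8$ obstruction rules out $S^2(0; a, -a, b, -b)$. (Equivalently, one may phrase the construction as showing that the Montesinos link $M(0; \tfrac{q_1}{p_1}, -\tfrac{q_1}{p_1}, \ldots, \tfrac{q_k}{p_k}, -\tfrac{q_k}{p_k})$, whose double branched cover is $Y$, is smoothly doubly slice; the band moves needed for this are developed in Section~\ref{sec:dbly_slice}.)
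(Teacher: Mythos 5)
Your first step --- reducing via Proposition~\ref{prop:disk_to_eps=0} to embedding $D^2(\frac{p_1}{q_1},\ldots,\frac{p_k}{q_k})$ --- is exactly how the paper begins, and your branched-cover formalism ($\Sigma_2(B,\tau)\subset \Sigma_2(S^4,S)=S^4$ for an unknotted $S$) is sound in principle. The problem is that the entire mathematical content of the proposition is concentrated in the step you defer: constructing an unknotted $2$-sphere $S\subset S^4$ and a $3$-ball $B$ with $(B,B\cap S)\cong (B^3,\tau)$ for the Montesinos tangle $\tau$. You acknowledge this is ``the real work'' and then replace it with an appeal to unspecified finger and ribbon moves, asserting that ``one even parameter can also be accommodated.'' That assertion is precisely what needs proof, and nothing in the paper supplies it: the doubly slice constructions of Section~\ref{sec:dbly_slice} cover only the families $S^2(0;-a,a,\ldots,-a)$ and $S^2(0;\frac{p}{q},-\frac{p}{q},\ldots,\frac{r}{s},-\frac{r}{s},\ldots)$ with $\frac{s}{r}+\frac{q}{p}=1-\frac{1}{pr}$, not an arbitrary collection $\frac{p_1}{q_1},\ldots,\frac{p_k}{q_k}$ with at most one even $p_i$. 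Your approach would in effect prove that the associated Montesinos link is smoothly doubly slice, which is a strictly stronger statement than the proposition and is not established (nor claimed) in the paper. So as written there is a genuine gap: the heart of the argument is missing.

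The paper avoids this construction entirely with a different idea. It forms a \emph{closed} Seifert fibered space $Y'$ over $S^2$ with the same unsigned Seifert invariants, choosing the central weight $e\in\{0,1\}$ so that $|H_1(Y')|$ is odd --- this is exactly where the hypothesis ``at most one $p_i$ even'' enters. Then $Y'$ is the double branched cover of a Montesinos \emph{knot} $K$, and Zeeman's twist-spinning theorem says the complement of the $2$-twist spin of $K$ fibers over $S^1$ with fiber $Y'\setminus\{pt\}$; in particular $Y'\setminus\{pt\}$ embeds smoothly in $S^4$ with no construction required. Since $Y'\setminus\{pt\}$ contains $D^2(\frac{p_1}{q_1},\ldots,\frac{p_k}{q_k})$ by Lemma~\ref{lem:disk_subspace}, Proposition~\ref{prop:disk_to_eps=0} finishes the proof. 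If you want to salvage your approach, the cleanest fix is to adopt this twist-spinning argument in place of the hands-on tangle construction; note that the parity hypothesis is not about making ribbon moves work but about guaranteeing that the branch locus is a knot rather than a link, which is what Zeeman's theorem needs.
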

\begin{proof} If precisely one of the $p_i$ is even, then let $Y' = S^2(0; \frac{p_1}{q_1}, \ldots, \frac{p_k}{q_k})$. If all the $p_i$ are odd, then define $Y'$ by
\[
Y'=
\begin{cases}
S^2(0; \frac{p_1}{q_1}, \ldots, \frac{p_k}{q_k}) &\text{if $q_1+\dots+ q_k\equiv 1 \bmod 2$}\\
S^2(1; \frac{p_1}{q_1}, \ldots, \frac{p_k}{q_k}) &\text{if $q_1+\dots+ q_k\equiv 0 \bmod 2$}
\end{cases}
\]
These are chosen to ensure that $|H_1(Y')|$ is odd. Therefore $Y'$ is the double branched cover of a Montesinos knot $K$, and Zeeman's twist-spinning theorem \cite{MR0195085} implies that $Y'\setminus \{pt\}$ smoothly embeds in $S^4$ as a fiber of the complement of the $2$-twist spin of $K$. However Lemma~\ref{lem:disk_subspace} shows that $Y'\setminus \{pt\}$ contains a submanifold homeomorphic to $D^2(\frac{p_1}{q_1}, \ldots, \frac{p_k}{q_k})$. Therefore $Y$ embeds in $S^4$ by Proposition~\ref{prop:disk_to_eps=0}.
\end{proof}
Further variations on these ideas are also possible.
\begin{eg} There is a smooth embedding of $S^2(0;4,-4,\frac{12}{5}, -\frac{12}{5})$ into $S^4$. In \cite[Example~2.14]{MR3271270}, Donald showed that $S^2(1;4,4,\frac{12}{5})$ embeds smoothly in $S^4$. This contains a $D^2(4, \frac{12}{5})$ submanifold, giving an embedding of $S^2(0;4,-4,\frac{12}{5}, -\frac{12}{5})$.
\end{eg}

\section{The Neumann-Siebenmann invariant}\label{sec:mubar}
In this section, we apply the $\mubar$ invariant to the question of when a Seifert fibered space can embed smoothly into $S^4$. The main result of this section is Proposition~\ref{prop:even_conditions}, which allows us to add further conditions to partitions arising from Theorem~\ref{thm:partitions} when there is an exceptional fiber of even multiplicity. This allows us to prove Theorem~\ref{thm:lower_bound} and Proposition~\ref{prop:k2_even_cond}. Throughout this section let $Y=S^2(e;\frac{p_1}{q_1}, \dots,\frac{p_k}{q_k})$ be a Seifert fibered space with $\varepsilon(Y)>0$ and $\frac{p_i}{q_i}>1$ for all $i$. Let $\Gamma$ be the canonical plumbing graph corresponding to $Y$ with vertex set $V$ and $X$ the positive definite manifold obtained by plumbing according to $\Gamma$.

We say that a subset $C\subseteq V$ is {\em characteristic} if $x=\sum_{v\in C} v$ is characteristic when considered as a vector in the intersection lattice $(\Z^{|\Gamma|}, Q_{\Gamma})$. Recall that a vector $x$ in an integer lattice is characteristic if
\[
x\cdot z \equiv z\cdot z \bmod{2}
\]
for all $z$ in the lattice. It is well known that there is a bijective correspondence between characteristic subsets of $\Gamma$ and $\spin(Y)$ \cite[Proposition 5.7.11]{MR1707327}\footnote{This correspondence is much more general than we are using here: it applies whenever we have a 3-manifold with a given surgery presentation. It is usually described in terms of characteristic sublinks of a surgery diagram.}.

The following definition of the $\mubar$ invariant is due to Neumann \cite{MR585657}. Siebenmann also gave an equivalent definition in \cite{MR585660}.

\begin{mydef}
Given a spin structure $\spincs$ on $Y$, the Neumann-Siebenmann invariant $\mubar(Y,\spincs)$ is defined as
\[
\mubar(Y,\spincs)=|\Gamma| - \norm{w},
\]
where $w=\sum_{v \in C} v$ and $C$ is the characteristic subset corresponding to $\spincs$ and $|\Gamma|=|V|$ is the number of vertices in $\Gamma$.
\end{mydef}
\begin{remark} Some comments on this definition are in order: 
\begin{enumerate}
\item We have chosen to define $\mubar$ in terms of the positive definite plumbing. There is a more general definition that allows $\mubar$ to be calculated from any plumbing cobounding $Y$.

\item It is not hard to see that any characteristic subset of $C \subset V$ must consist of isolated vertices,\footnote{The characteristic condition implies that any vertex in a characteristic set must have an even number of neighbours in the set. Since $\Gamma$ is a tree this forces the subset to be isolated.} that is, no pair of adjacent vertices are both in $C$.
   So we can equivalently define
\[
\mubar(Y,\spincs)=|\Gamma| - \sum_{v \in C} \norm{v}.
\]
\end{enumerate}
\end{remark}

It is known that for Seifert fibered spaces over $S^2$, $\mubar$ is a spin rational homology cobordism invariant \cite{MR2178795} and that $\mubar(Y,\spincs)=0$ whenever $(Y,\spincs)$ is the boundary of a spin rational homology ball.

 In order to apply $\mubar$ effectively we need to understand which characteristic subsets correspond to spin structures which extend over a given cobounding spin rational homology ball. We can do this by studying lattice embeddings.

\begin{prop}\label{prop:extending_condition}
Suppose that $Y$ bounds a smooth spin rational homology $4$-ball $W$ with $H^3(W;\Z) = 0$. The inclusion map $X \xhookrightarrow{} X \cup -W$ induces a map on second homology, which we identify with $\iota : (\Z^{|\Gamma|}, Q_\Gamma) \rightarrow (\Z^{|\Gamma|}, \mbox{Id})$. Let $e_1,\dots, e_{|\Gamma|}$ be an orthonormal basis for $(\Z^{|\Gamma|}, \mbox{Id})$. Let $\mathfrak{s}$ be a spin structure on $Y$ with corresponding characteristic subset $C \subset V$. Then $\mathfrak{s}$ extends over $W$ if and only if $\sum_{v\in C} \iota(v)$ is characteristic in $\Z^{|\Gamma|}$, that is
\[\sum_{v\in C} \iota(v)\cdot e_i\equiv 1 \bmod 2\]
for all basis elements $e_i$.
\end{prop}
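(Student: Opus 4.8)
The plan is to compare two descriptions of when a spin structure on $Y$ extends over the rational homology ball $W$: one via characteristic covectors on $X \cup -W$, the other via the lattice embedding $\iota$. First I would recall the standard fact that, for a closed oriented $4$-manifold $Z$ with a surgery/handle description, spin structures on $Z$ correspond to characteristic vectors in $H_2(Z)/\mathrm{tor}$ reduced mod $2$, and similarly for a $4$-manifold with boundary the spin structures restricting to a fixed one on the boundary are controlled by characteristic covectors (the same proof underlying \cite[Proposition 5.7.11]{MR1707327}). The glued manifold $Z = X \cup_Y -W$ is positive definite by Proposition~\ref{thm:def_gluing_thm} (its intersection form is diagonalizable by Donaldson once one checks, as in the proof of Theorem~\ref{thm:embsurjectivity}, that $b_2(Z) = b_2(X) = |\Gamma|$), so $H_2(Z)/\mathrm{tor} \cong (\Z^{|\Gamma|}, \mathrm{Id})$ and the inclusion $X \hookrightarrow Z$ induces the stated map $\iota$.

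The key steps, in order, are as follows. (1) A spin structure $\spincs$ on $Y$ extends over $W$ if and only if it extends over $-W$ (orientation reversal does not affect the obstruction, which lives in $H^2(W;\Z_2)$ and is detected by $H_2(W;\Z_2) \to H_2(\partial W;\Z_2)$). (2) Since $H^3(W;\Z)=0$ and $W$ is a rational homology ball, the Mayer--Vietoris/long exact sequences (exactly as used in Proposition~\ref{prop:S4_splitting}) show that every spin structure on $Z$ restricts to $X$ as the unique spin structure compatible with the plumbing's characteristic class, so extendability of $\spincs$ over $-W$ is equivalent to the existence of a spin structure on $Z$ restricting to $\spincs$ on $Y$ and to the given one on $X$. (3) A spin structure on the positive definite $Z$ corresponds to a characteristic covector $w_Z \in \Z^{|\Gamma|}$; restricting to $X$ means $w_Z \circ \iota$ is the characteristic covector of the plumbing, i.e.\ $w_Z \cdot \iota(v) \equiv \norm{v} \bmod 2$ for all $v \in V$. (4) Now use that the characteristic subset $C \subset V$ corresponding to $\spincs$ is defined by $x_C := \sum_{v \in C} v$ being characteristic in $(\Z^{|\Gamma|}, Q_\Gamma)$; since $\iota$ preserves pairings, $\iota(x_C) \cdot \iota(v) = x_C \cdot v \equiv \norm{v} \bmod 2$ for all $v$, so $\iota(x_C)$ is a covector on $\im \iota$ with the same mod $2$ values as the plumbing's characteristic covector. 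The covector $e_1 + \cdots + e_{|\Gamma|}$ is the distinguished characteristic covector of $(\Z^{|\Gamma|}, \mathrm{Id})$, so $w_Z$ extending the plumbing's characteristic class exists with $w_Z|_{\im\iota}$ agreeing with $\iota(x_C)$ precisely when $\iota(x_C)$ itself differs from a characteristic covector of the diagonal lattice by an element of $(\im\iota)^{\perp} \cdot$(evenness); unwinding, this happens iff $\iota(x_C) \cdot e_i \equiv 1 \bmod 2$ for all $i$, i.e.\ iff $\sum_{v\in C}\iota(v)$ is itself characteristic in $\Z^{|\Gamma|}$.

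The main obstacle I expect is step (4): carefully disentangling "$\iota(x_C)$ agrees mod $2$ with \emph{some} characteristic covector of the diagonal lattice on the subspace $\im\iota$'' from "$\iota(x_C)$ \emph{is} a characteristic covector of the whole diagonal lattice.'' The point is that the set of characteristic covectors of $(\Z^{|\Gamma|},\mathrm{Id})$ is a single coset of $2\Z^{|\Gamma|}$ (namely $e_1+\cdots+e_{|\Gamma|} + 2\Z^{|\Gamma|}$), and since $\iota$ has finite cokernel, $\im\iota \otimes \Z_2 \to \Z_2^{|\Gamma|}$ need \emph{not} be injective, so one must argue that the spin structure on $Z$ restricting correctly to $X$ can be chosen so that it also restricts to the desired $\spincs$ on $Y$ exactly when $\iota(x_C)$ lies in that coset. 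This is where the hypothesis $H^3(W;\Z)=0$ and the rational-homology-ball condition are used to ensure $H^2(Z;\Z_2) \to H^2(X;\Z_2)\oplus H^2(W;\Z_2)$ and $H^2(X;\Z_2)\oplus H^2(W;\Z_2) \to H^2(Y;\Z_2)$ behave as in Proposition~\ref{prop:S4_splitting}, pinning down the correspondence on the nose rather than up to an ambiguity.
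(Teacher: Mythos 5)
Your setup---forming $Z = X \cup_Y -W$, using $H^3(W;\Z)=0$ to get $H_1(Z)=0$, and diagonalizing via Donaldson so that the inclusion gives $\iota$---matches the paper's. But the heart of the proposition, namely why ``$\spincs$ extends over $W$'' is equivalent to ``$\iota(x_C)$ is characteristic for the \emph{full} diagonal lattice,'' is not actually proved in your step (4): you correctly observe that $\iota(x_C)$ automatically pairs the right way with everything in $\im \iota$ (since $\iota$ preserves pairings and $x_C$ is characteristic for $Q_\Gamma$), you correctly identify that the residual question is whether it is characteristic for all of $(\Z^{|\Gamma|},\mbox{Id})$, and then you assert the answer. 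The clause ``this happens iff $\iota(x_C)\cdot e_i\equiv 1 \bmod 2$ for all $i$'' is the conclusion restated, and the appeal to $H^3(W;\Z)=0$ ``pinning down the correspondence'' is a gesture, not an argument. There is also a problem with steps (2)--(3) as literally written: $Z$ is a closed positive definite $4$-manifold whose intersection form is diagonal of rank $|\Gamma|\geq 1$, so $Z$ admits no spin structure at all, and spin structures do not correspond to characteristic covectors (that is the correspondence for ${\rm spin}^c$ structures). So ``a spin structure on $Z$ restricting to $\spincs$ on $Y$ and to the given one on $X$'' is not an available intermediate object.

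The missing ingredient is the obstruction-theoretic step the paper uses. Represent $x_C=\sum_{v\in C} v$ by an embedded closed oriented surface $F\subset X$; by the characteristic-sublink correspondence, $F$ is precisely the obstruction to extending $\spincs$ over $X$, i.e.\ $\spincs$ extends to a spin structure on $X\setminus F$ that does not extend across $F$. If $\spincs$ also extends over $W$, gluing the two spin structures along $Y$ yields a spin structure on $Z\setminus F$ not extending across $F$, which forces $[F]\bmod 2 = PD(w_2(Z))$; by the Wu formula $PD(w_2(Z))$ is the mod $2$ reduction of any characteristic element of $(\Z^{|\Gamma|},\mbox{Id})$, i.e.\ of $e_1+\dots+e_{|\Gamma|}$, giving $\iota(x_C)\cdot e_i\equiv 1 \bmod 2$ for all $i$. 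Conversely, if $\iota(x_C)$ is characteristic then $w_2(Z\setminus F)=0$, so $Z\setminus F$ carries a spin structure, which restricts to $\spincs$ on $Y$ by the same correspondence and to the desired extension on $W\subset Z$. Some version of this surface argument (or an equivalent cochain-level computation of the $w_2$-obstruction) is needed to close your step (4); without it the equivalence is only asserted.
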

\begin{proof} Let $Z = X \cup -W$. Since $H^3(W; \Z) = 0$ and $H_1(X; \Z) = 0$, the Mayer-Vietoris sequence and Poincar{\'e}-Lefschetz duality imply that $H_1(Z; \Z) = 0$, and thus $H_2(Z; \Z)$ is torsion free. Hence, $H_2(Z; \Z) \cong \Z^{|\Gamma|}$. Since $Z$ is positive definite, Donaldson's theorem implies that $(H_2(Z; \Z), Q_Z) \cong (\Z^{|\Gamma|}, \mbox{Id})$. 

  Let $F \subset X$ be a closed connected oriented surface, such that $[F] \in H_2(X; \Z)$ represents $\sum_{v\in C} v \in (\Z^{|\Gamma|}, Q_\Gamma) \cong H_2(X; \Z)$. Then $F$ is the obstruction to extending $\mathfrak{s}$ over $X$, that is, $\mathfrak{s}$ extends to a spin structure $\mathfrak{s}_X$ on $X\backslash F$ which does not extend across $F$.

  Suppose that $\mathfrak{s}$ extends to a spin structure $\mathfrak{s}_W$ on $W$. Then gluing the spin structures $\mathfrak{s}_W$ and $\mathfrak{s}_X$ along $Y$ gives a spin structure $\mathfrak{s}_Z$ on $Z\backslash F$ which does not extend across $F$. Thus, the mod $2$ reduction of $[F] \in H_2(Z; \Z)$ is Poincar\'{e} dual to the second Stiefel-Whitney class $w_2(Z) \in H^2(Z;\Z_2)$. However, the Wu formula states that $PD(w_2(Z)) \in H_2(Z; \Z_2)$ is the unique element satisfying $PD(w_2(Z)) \cdot x = x \cdot x$ for all $x \in H_2(Z; \Z_2)$. Thus, we see that $PD(w_2(Z))$ is the mod $2$ reduction of a characteristic element of $H_2(Z;\Z)$. This implies that $\sum_{v\in C} \iota(v)\cdot e_i \equiv 1 \bmod 2$, as required.

  Conversely, suppose that $\sum_{v\in C} \iota(v)\cdot e_i \equiv 1 \bmod 2$ for all $e_i$. This shows that $\sum_{v\in C} \iota(v)$ reduced mod 2 is Poincar\'{e} dual to $w_2(Z)$. Then $Z\backslash F$ admits a spin structure $\mathfrak{s}_Z$. The bijection between characteristic sublinks and spin structures on $Y$ then shows that $\mathfrak{s}_Z$ restricts to $\mathfrak{s}$ on $Y$. Restricting $\mathfrak{s}_Z$ to $W \subset Z$ then shows that $\mathfrak{s}$ extends to a spin structure on $W$.
\end{proof}


This allows us to obtain further restrictions on the image of the characteristic subsets corresponding to spin structures that extend over a homology ball.

\begin{prop}\label{prop:mubar_embed_conditions}
Suppose that $Y$ bounds a spin rational homology ball $W$ with $H^3(W;\Z) = 0$. Let $\iota : (\Z^{|\Gamma|}, Q_\Gamma) \rightarrow (\Z^{|\Gamma|}, \mbox{Id})$ be the lattice embedding induced by the inclusion $X \xhookrightarrow{} X \cup -W$. For any choice of orthonormal basis $\{e_i\}$, the following are true:
\begin{enumerate}
\item\label{it:unique_ei} Let $C$ be a characteristic subset corresponding to a spin structure which extends over $W$. Then for all $v \in C$, we have $|\iota(v)\cdot e_i|\leq 1$ for all $e_i$ and for each $e_i$ there is precisely one $v \in C$ with $|\iota(v)\cdot e_i|=1$.
\item\label{it:common_ei}
  For any $m \in \{1,\ldots,|\Gamma|\}$, there are at most two distinct vertices with the property that the image of each vertex under $\iota$ pairs non-trivially with $e_m$ and each vertex belongs to a characteristic subset corresponding to a spin structure that extends over $W$.
\end{enumerate} 
\end{prop}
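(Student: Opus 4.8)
The plan is to derive both parts from Proposition~\ref{prop:extending_condition} together with the vanishing of $\mubar$ on boundaries of spin rational homology balls.

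First I would prove \eqref{it:unique_ei}. Fix the orthonormal basis $\{e_i\}$ and let $C$ be a characteristic subset whose spin structure $\spincs$ extends over $W$. Then $(Y,\spincs)$ bounds the spin rational homology ball $W$, so $\mubar(Y,\spincs)=0$, and the plumbing formula $\mubar(Y,\spincs)=|\Gamma|-\sum_{v\in C}\norm{v}$ gives $\sum_{v\in C}\norm{v}=|\Gamma|$. Since $\iota$ preserves pairings, $\norm{v}=\sum_i(\iota(v)\cdot e_i)^2$, so
\[ \sum_{i=1}^{|\Gamma|}\sum_{v\in C}(\iota(v)\cdot e_i)^2=|\Gamma|. \]
On the other hand Proposition~\ref{prop:extending_condition} says $\sum_{v\in C}\iota(v)\cdot e_i$ is odd for every $i$, hence for each $i$ we get $\sum_{v\in C}(\iota(v)\cdot e_i)^2\ge\bigl|\sum_{v\in C}\iota(v)\cdot e_i\bigr|\ge 1$ using $n^2\ge |n|$ for integers. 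Comparing with the displayed identity forces each inner sum to equal $1$; since the summands are non-negative integers, for each $e_i$ exactly one $v\in C$ has $\iota(v)\cdot e_i\ne 0$, that $v$ satisfies $|\iota(v)\cdot e_i|=1$, and $\iota(v)\cdot e_i=0$ for all other $v\in C$. This is \eqref{it:unique_ei}; note it also shows $\sum_{v\in C}\iota(v)$ is a $\pm1$-vector whenever $C$ extends over $W$.

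For \eqref{it:common_ei} I would argue by contradiction. Suppose $v_1,v_2,v_3$ are distinct vertices, each with $\iota(v_a)\cdot e_m\ne 0$, and each lying in a characteristic subset $C_a$ whose spin structure extends over $W$. Applying \eqref{it:unique_ei} to each $C_a$, $v_a$ is the unique vertex of $C_a$ pairing non-trivially with $e_m$; hence the $C_a$ are pairwise distinct and $v_b\notin C_a$ for $a\ne b$. The key step is to consider the symmetric difference $C:=C_1\triangle C_2\triangle C_3$. Writing $C=C_1\triangle(C_2\triangle C_3)$ and using that the difference of two characteristic subsets pairs evenly with the whole lattice, $C$ is again a characteristic subset (in particular a set of isolated vertices); and reducing $\sum_{v\in C}\iota(v)\equiv\sum_{a=1}^{3}\sum_{v\in C_a}\iota(v)\bmod 2$ and invoking the $\pm1$-vector observation from \eqref{it:unique_ei} gives $\sum_{v\in C}\iota(v)\equiv(1,\dots,1)\bmod 2$, so by Proposition~\ref{prop:extending_condition} the spin structure of $C$ also extends over $W$. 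But each $v_a$ lies in exactly one of $C_1,C_2,C_3$, so $v_1,v_2,v_3\in C$ are three distinct vertices of $C$ pairing non-trivially with $e_m$, contradicting \eqref{it:unique_ei} applied to $C$.

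The routine ingredients are the integer inequality $n^2\ge|n|$ in \eqref{it:unique_ei} and the standard facts that characteristic subsets of a plumbing tree form a coset of the subgroup of homologically even classes and consist of isolated vertices. The crux — the step I expect to take the most thought to spot — is the passage in \eqref{it:common_ei} to $C_1\triangle C_2\triangle C_3$ and the verification that it is itself an \emph{extending} characteristic subset containing all three offending vertices, since this is exactly what reduces \eqref{it:common_ei} to the uniqueness established in \eqref{it:unique_ei}.
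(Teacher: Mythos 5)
Your proof is correct and follows essentially the same route as the paper: part \eqref{it:unique_ei} via $\mubar=0$ combined with the parity condition of Proposition~\ref{prop:extending_condition} and the pigeonhole count $\sum_i\sum_{v\in C}(\iota(v)\cdot e_i)^2=|\Gamma|$, and part \eqref{it:common_ei} via the triple symmetric difference $C_1\triangle C_2\triangle C_3$, which is exactly the paper's auxiliary subset $C_4$ (vertices lying in precisely one or three of the $C_a$). No gaps.
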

\begin{proof}
We will abuse notation by identifying each vertex of $\Gamma$ with its image under $\iota$. If the spin structure corresponding to $C$ extends over $W$, then the corresponding $\mubar$ invariant vanishes. This implies that
\[
\sum_{v\in C}v = \sum_{v\in C} \sum_{i=1}^{|\Gamma|} (v\cdot e_i)^2 = |\Gamma|.
\]
By Proposition~\ref{prop:extending_condition}, we have $\sum_{v\in C}e_i\cdot v$ is odd for all $i$. Thus there is at least one vertex in $C$ satisfying $v\cdot e_i \neq 0$. However by the above equation we see that there is at most one such $v$ and it satisfies $|v\cdot e_i|=1$. This verifies \eqref{it:unique_ei}.

Now suppose that we have characteristic subsets $C_1, C_2$ and $C_3$ corresponding to spin structures that extend over $W$. Suppose that $v_1, v_2$ and $v_3$ are distinct vertices satisfying $v_i \cdot e_m \neq 0$ and $v_i\in C_i$ for $i\in\{1,2,3\}$. It follows from \eqref{it:unique_ei} that $v_i \in C_j$ if and only if $i=j$. Now define $C_4$ to be the set of vertices such that $v$ is in $C_4$ if and only it is contained in precisely one or three of $C_1, C_2$ or $C_3$. We have that $v_1,v_2$ and $v_3$ are all in $C_4$. It is easy to verify that not only is $C_4$ a characteristic subset, but that for any unit basis vector $e_i$, we have
\[\sum_{v\in C_4} v\cdot e_i \equiv \sum_{v\in C_1} v\cdot e_i +\sum_{v\in C_2} v\cdot e_i +\sum_{v\in C_3} v\cdot e_i \equiv 1\bmod 2.
\]
So by Proposition~\ref{prop:extending_condition} we see that $C_4$ also corresponds to a spin structure that extends over $W$. Thus by \eqref{it:unique_ei} we see that at most one of $v_1 \cdot e_m,v_2 \cdot e_m$ and $v_3 \cdot e_m$ can be non-zero, a contradiction. This proves \eqref{it:common_ei}.
\end{proof}

We now need to understand the characteristic subsets of $\Gamma$. When $p_i$ is even for at least one $i$, these are determined by choosing characteristic subsets on the linear chains corresponding to the fibers of $Y$. Thus we need to understand the characteristic subsets on linear chains first.
\begin{lemma}\label{lem:char_sets_linear_case}
Let $\Delta$ be the linear chain corresponding to $p/q=[a_1, \dots, a_l]^-$, where $a_j\geq 2$ for all $j$.
\begin{enumerate}
\item If $p$ is odd, then $\Delta$ has a unique characteristic subset.
\item If $p$ is even, then $\Delta$ has two characteristic subsets, where one contains the first vertex and the other does not.
\end{enumerate}
\end{lemma}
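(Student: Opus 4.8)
The plan is to induct on the length $l$ of the chain, analyzing how a characteristic subset is constrained vertex-by-vertex. Recall that a characteristic subset $C$ of a tree must consist of isolated vertices (as noted in the earlier remark), and the defining condition $x \cdot z \equiv z \cdot z \bmod 2$ for $x = \sum_{v \in C} v$ only needs to be checked on the basis vectors $z = v_i$. For the $i$th vertex of the linear chain, which has weight $a_i$ and is adjacent to vertices $i-1$ and $i+1$ (when these exist), this condition reads: (number of neighbours of $v_i$ lying in $C$) plus ($a_i$ times the indicator of $v_i \in C$) is congruent to $a_i \bmod 2$. Since $C$ is isolated, $v_i \in C$ forces $v_{i-1}, v_{i+1} \notin C$, so the local condition at an interior vertex $v_i \notin C$ says $[v_{i-1} \in C] + [v_{i+1} \in C] \equiv a_i \bmod 2$, while at $v_i \in C$ it is automatically satisfied (the neighbour count is $0$ and $a_i + a_i \equiv 0$). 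This gives a recursion: once we decide whether $v_1 \in C$, the membership of every subsequent vertex is forced.

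First I would set up this forward recursion precisely. Starting from $v_1$: the condition at $v_1$ (weight $a_1$, one neighbour $v_2$) is $[v_1 \notin C]\cdot[v_2 \in C] \equiv a_1 \bmod 2$ — so if $v_1 \in C$ there is no constraint from this vertex and $v_2 \notin C$ automatically; if $v_1 \notin C$ then $[v_2 \in C] \equiv a_1 \bmod 2$ is forced. Propagating along the chain, each choice of "$v_1 \in C$ or not" determines a unique candidate subset, and the only remaining condition to verify is the one at the last vertex $v_l$ (weight $a_l$, one neighbour $v_{l-1}$): $[v_l \notin C]\cdot[v_{l-1}\in C] \equiv a_l \bmod 2$. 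So there are at most two characteristic subsets, and the number of valid ones is governed by which of the two recursions survives the terminal condition. The cleanest way to package the bookkeeping is to relate the parity data to the continued fraction. I would observe that the two candidate subsets (from the two starting choices) differ in a controlled way, and that the terminal condition holds for the candidate with $v_1 \in C$ if and only if it holds for exactly one of the two precisely when $p$ is even.

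To make the connection to the parity of $p = \det Q_{p/q}$ rigorous, I would use a determinant/continued-fraction computation: the parity of $p$ is computed by the recursion $p_j = a_j p_{j-1} - p_{j-2}$ (with $p_0 = 1$, $p_{-1} = 0$, so $p = p_l$), reduced mod $2$, which is exactly the same two-term linear recursion mod $2$ that controls the propagation of $C$-membership along the chain. Matching these two recursions shows: the "$v_1 \notin C$" candidate satisfies the terminal condition iff $p$ is odd, and the "$v_1 \in C$" candidate satisfies it iff (a companion quantity, essentially $q$ or $p - p_{l-1}$) has the right parity — and a short case analysis, using $\gcd(p,q)=1$, shows this companion parity condition holds precisely when $p$ is even. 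Hence: $p$ odd gives exactly one characteristic subset (the one with $v_1 \notin C$), and $p$ even gives exactly two, one containing $v_1$ and one not. This also matches the known count that the number of spin structures is $|H^2(Y';\Z_2)|$, which for the boundary lens-type space is $1$ or $2$ according to the parity of $p$, providing an independent sanity check.

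The main obstacle I anticipate is not any single step but the bookkeeping: correctly pinning down the parity recursion for $C$-membership and verifying that it is literally the same mod-$2$ recursion as the continued-fraction recursion for $p$, together with handling the boundary cases $l = 1$ (where the chain is a single vertex of weight $a_1 = p$, and the statement is immediate: $C = \emptyset$ works iff $p$ is even... wait, $C=\emptyset$ gives $x\cdot v_1 = 0 \equiv a_1$ iff $a_1$ even, and $C=\{v_1\}$ gives $x\cdot v_1 = a_1 \equiv a_1$ always — so for $l=1$, $\{v_1\}$ is always characteristic and $\emptyset$ is characteristic iff $p$ even, matching both cases) and short chains cleanly. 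Once the recursion identification is in place the rest is a routine finite check.
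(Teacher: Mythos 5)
Your setup is fine: the characteristic condition on a linear chain is the mod-$2$ linear system $Mw\equiv (a_1,\dots,a_l)^T$ for the tridiagonal matrix $M$, forward substitution shows each choice of $w_1\in\{0,1\}$ forces a unique candidate, and the only remaining check is the terminal condition at $v_l$. This is essentially the same mod-$2$ linear algebra the paper uses (the paper gets the count of solutions for free from the bijection between characteristic subsets of $\Delta$ and spin structures on $L(p,q)$, and only uses forward substitution to show a nonzero kernel vector of $M$ must have nonzero first coordinate).

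However, the key parity claim in your third paragraph is wrong, and the conclusion you draw from it does not follow. You assert that the $v_1\notin C$ candidate survives iff $p$ is odd and the $v_1\in C$ candidate survives iff $p$ is even; if both of these were true you would get \emph{exactly one} characteristic subset in every case, never two, so the "hence $p$ even gives exactly two" step is a non sequitur. Moreover both assertions are false. Take $p/q=5/3=[2,3]^-$: the candidate with $w_1=1$ forces $w_2=0$ and satisfies the terminal condition, while the candidate with $w_1=0$ forces $w_2=0$ and fails it, so the unique characteristic subset \emph{contains} the first vertex even though $p=5$ is odd. (Your own $l=1$ check already exhibits this: for $p$ odd the surviving candidate is $\{v_1\}$, not $\emptyset$.) Which candidate survives when $p$ is odd in fact depends on the parity of $q$ (cf.\ Remark~\ref{rem:odd_leading}). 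The correct bookkeeping is: the solution set of $Mw\equiv d$ is a coset of $\ker M\bmod 2$; forward substitution shows $\dim\ker M\leq 1$ with any nonzero kernel vector having first coordinate $1$; the determinant recursion gives $\det M=p$, so the kernel is nontrivial iff $p$ is even; and a solution always exists (e.g.\ because $v^TMv\equiv v^Td$ for all $v$, so $d$ is orthogonal to $\ker M$, or by citing the spin-structure count as the paper does). Assembling these gives one subset for $p$ odd and two differing in the first coordinate for $p$ even. Your proposal is missing the existence step entirely and replaces the coset argument with parity criteria that are incorrect.
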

\begin{proof}
The characteristic subsets on $\Delta$ are in bijection with spin structures on the lens space $L(p,q)$. Thus there is precisely one if $p$ is odd and precisely two if $p$ is even. Now suppose that $p$ is even and we will justify the statement concerning the leading vertex. Consider the matrix
\[
M=\begin{pmatrix}
a_1 & -1 &  \\
-1  & \ddots & -1 \\
  & -1 & a_l
\end{pmatrix} \bmod{2}.
\]
We can think of a characteristic subset of $\Delta$ as a vector $w\in \Z_2^l$ such that 
\[Mw \equiv \begin{pmatrix}
a_1 \\
\vdots \\
a_l
\end{pmatrix}
\bmod{2}.
\]
Thus if $w$ and $w'$ are the vectors in $\Z_2^l$ corresponding to the two distinct characteristic subsets, then the vector $w-w'$ is a non-zero element of $\ker M$ mod two.
However, if 
$v=\begin{pmatrix}
v_1 \\
\vdots \\
v_l
\end{pmatrix}$ is a non-zero element of the kernel of $M$ mod two, then $v_1$ is non-zero. Otherwise, suppose that 
$v_1=\dots =v_{k-1}=0$ and $v_k\neq 0$ for some $k\leq l$, this would imply that the $(k-1)$-st row of $Mv$ is non-zero. Thus precisely one of the two characteristic subsets contains the first vertex.
\end{proof}
\begin{remark}\label{rem:odd_leading}
Although we will not need this fact, one can show that if $p$ is odd, then the unique characteristic subset on $\Delta$ contains the leading vertex if and only if $q$ is odd.
\end{remark}

This allows us to construct the characteristic subsets on $\Gamma$ when at least one $p_i$ is even.

\begin{lemma}\label{lem:char_set_construction}
Suppose that $p_i$ is even for at least one $i$. Then no characteristic subset of $\Gamma$ contains the central vertex and any characteristic subset on $\Gamma$ is uniquely determined by the set of the vertices adjacent to the central vertex it contains. In fact, it suffices to determine which of the leading vertices on arms corresponding to even $p_i$ it contains.
\end{lemma}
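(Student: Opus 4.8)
The plan is to analyze characteristic subsets of $\Gamma$ working vertex by vertex along each arm, using the tree structure of $\Gamma$ and the fact that characteristic subsets consist of isolated vertices. First I would recall (as noted in the remarks) that the characteristic condition $x \cdot z \equiv z \cdot z \bmod 2$ applied to a basis vertex $v$ forces $\sum_{u \in C} (v \cdot u) \equiv v \cdot v \bmod 2$; since $\Gamma$ is a tree, $v\cdot v = \mathrm{w}(v)$ and the off-diagonal contribution counts the neighbours of $v$ lying in $C$. So $v \in C$ iff (number of neighbours of $v$ in $C$) $\equiv \mathrm{w}(v) \bmod 2$, and whether $v\in C$ is determined by $\mathrm{w}(v)$ together with the intersection of $C$ with the neighbours of $v$.

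The key step is to propagate this relation inward along each arm starting from its tip (the leaf vertex farthest from the central vertex). A leaf vertex $v$ has exactly one neighbour, so whether $v \in C$ is determined by $\mathrm{w}(v)$ and whether that single neighbour is in $C$. Working inward, each non-central vertex $v$ on an arm has its "inner" neighbour (toward the centre) and "outer" neighbours (on the arm, away from the centre, or leaves). By an induction along the arm from the tip, the membership of every non-leading vertex of an arm in $C$ is determined by $\mathrm{w}(v)$ and whether the leading vertex of that arm (the one adjacent to the central vertex) is in $C$ — indeed one shows that once the status of the leading vertex is fixed, the characteristic relations at the interior vertices uniquely determine, from the tip inward or from the leading vertex outward, the status of all remaining vertices on that arm. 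This is exactly the content of Lemma~\ref{lem:char_sets_linear_case} applied arm-by-arm: cutting $\Gamma$ at the central vertex, each arm is a linear chain $\Delta_i$ representing $p_i/q_i$, and a characteristic subset of $\Gamma$ restricted to an arm (ignoring the central vertex) must satisfy the chain characteristic equations except possibly with a modified weight at the leading vertex depending on whether the central vertex is in $C$.

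Then I would deal with the central vertex itself. Its weight is $e$. The characteristic relation at the central vertex reads: (number of leading vertices of arms in $C$) $\equiv e + [\text{central vertex} \in C]\cdot(\text{parity adjustment}) \bmod 2$; more precisely the central vertex is in $C$ iff the number of its neighbours (the leading vertices of the $k$ arms) lying in $C$ has parity $\not\equiv \mathrm{w}(\text{central}) $... I'd instead argue directly: suppose the central vertex $\nu \in C$. Then by the characteristic condition applied at $\nu$'s neighbours, each leading vertex $v_1^i$ has $\nu$ as a neighbour in $C$, which by Lemma~\ref{lem:char_sets_linear_case}(1) together with the forced chain relations on the odd arms leads to a contradiction on at least one even arm — here is where the hypothesis "$p_i$ even for at least one $i$" enters: on an arm with $p_i$ even there are two characteristic subsets of the chain (one with the leading vertex, one without), so the leading vertex on such an arm is a genuine free choice, whereas on odd arms the chain characteristic subset is unique. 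Counting mod $2$, putting $\nu$ in $C$ flips the parity constraint at every leading vertex simultaneously and this is incompatible with the rigidity on the odd arms; so $\nu \notin C$. Finally, once $\nu \notin C$, each arm's restriction of $C$ is just a characteristic subset of the chain $\Delta_i$, so by Lemma~\ref{lem:char_sets_linear_case} it is unique if $p_i$ is odd and is one of two options — distinguished by whether it contains the leading vertex — if $p_i$ is even; hence $C$ is determined by the set of leading vertices of even arms it contains, which is the claim.

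The main obstacle I expect is the parity bookkeeping at the central vertex: making precise that fixing the status of $\nu$ and the statuses of the leading vertices of the even arms actually determines a genuine characteristic subset (existence), and that no consistent choice includes $\nu$ (the contradiction). Both should follow cleanly from Lemma~\ref{lem:char_sets_linear_case} and the isolated-vertex property, but one must be careful that the modified weight at the leading vertex, when $\nu\in C$, can force the leading vertex out of $C$ on an odd arm in a way that is then inconsistent — I would phrase this via the mod-$2$ linear system $Q_\Gamma w \equiv \mathrm{diag}(Q_\Gamma) \bmod 2$ and analyze its solution set, noting $\ker(Q_\Gamma \bmod 2)$ has dimension equal to the number of even $p_i$ minus one (consistent with Lemma~\ref{lem:order_mod2_cohom}), and that no kernel element is supported on the central vertex.
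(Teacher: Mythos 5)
Your strategy is genuinely different from the paper's. The paper does not argue locally at all: it explicitly \emph{constructs} $2^{N-1}$ characteristic subsets (the unique chain-characteristic subset on each odd arm, plus a parity-constrained choice of which even arms contribute their leading vertex), observes that none of these contains the central vertex and that they are distinguished exactly by their even-arm leading vertices, and then concludes these are \emph{all} characteristic subsets because Lemma~\ref{lem:order_mod2_cohom} says there are only $2^{N-1}$ spin structures. Your proposal instead analyses the affine system $Q_\Gamma w\equiv \operatorname{diag}(Q_\Gamma)\bmod 2$ directly, arm by arm. That is a legitimate alternative (and does not need the global spin-structure count), but it puts all the weight on the consistency analysis at the central vertex, which is exactly where your sketch is shakiest.

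Two concrete problems need fixing before your version closes. First, the opening claim that ``$v\in C$ iff the number of neighbours of $v$ in $C$ is $\equiv \mathrm{w}(v)\bmod 2$'' is false: the relation at $v$ reads $\mathrm{w}(v)\,w_v+\sum_{u\sim v}w_u\equiv \mathrm{w}(v)\bmod 2$, so when $\mathrm{w}(v)$ is even it does not involve $w_v$ at all, and membership of $v$ is not locally determined by its neighbours. In particular the ``propagate from the tip inward'' scheme fails; the propagation that works is outward from the centre, since the equation at the $j$th vertex of an arm determines $w_{j+1}$ from $w_{j-1}$ and $w_j$, with the equation at the last vertex serving as the consistency condition. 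Second, the contradiction when $\nu\in C$ lives on the \emph{even} arms, not in any ``rigidity of the odd arms'': putting $w_\nu=1$ turns the system on each arm into $Mw\equiv d+e_1$, which on an odd arm is always (uniquely) solvable since $\det M\equiv p_i$ is odd, whereas on an even arm it is inconsistent because $e_1\notin\im M$ --- the nonzero element of $\ker M$ has nonzero first coordinate, which is precisely what the proof of Lemma~\ref{lem:char_sets_linear_case} establishes, and $\im M=(\ker M)^{\perp}$ as $M$ is symmetric over $\Z_2$. This is the only place the hypothesis $N\geq 1$ enters. With these repairs (most cleanly via the mod-$2$ linear system you mention at the end) your argument does go through; as written, the central-vertex step does not.
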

\begin{proof}
We prove this by constructing all characteristic subsets. Suppose that $N\geq 1$ of the $p_i$ are even. By Lemma~\ref{lem:order_mod2_cohom}, $Y$ admits $|H^1(Y;\Z_2)|=2^{N-1}$ spin structures. We may construct a characteristic subset $C$ as follows. For each arm of $\Gamma$ corresponding to $p_i/q_i$ with $p_i$ odd include the vertices corresponding to the unique characteristic subset on that linear chain. Suppose that $\alpha$ of these chains include the leading vertex. Now choose a subset $S$ of the arms corresponding to even $p_i$ such that $|S|\equiv \alpha +e \bmod 2$. For each arm in $S$ choose the characteristic subset containing its leading vertex. For all other arms choose the characteristic subset on the linear chain not containing the leading vertex. This defines a characteristic subset since it is characteristic on the arms by construction and does not contain the central vertex. Moreover, it is chosen so that it contains $|S| + \alpha \equiv e \bmod 2$ vertices adjacent to the central vertex. Notice however that of the set of $N$ arms corresponding to even $p_i$, there are $2^{N-1}$ even subsets and $2^{N-1}$ odd subsets. Thus we can construct all the characteristic subsets this way irrespective of the parity of $\alpha$.
\end{proof}

We can now add further conditions to the partitions in Theorem~\ref{thm:partitions}. The following proposition, although sufficient for our applications, is certainly not the most general statement that can be proven. For example, using Remark~\ref{rem:odd_leading}, one could also add further conditions relating to the parity of the $q_i$.
\begin{prop}\label{prop:even_conditions}
Let $Y = S^2(e; \frac{p_1}{q_1}, \ldots, \frac{p_k}{q_k})$ be a Seifert fibered space with $\varepsilon(Y) > 0$, $\frac{p_i}{q_i}>1$ for all $i$ and $p_j$ even for at least one $j$. Suppose that $Y$ smoothly embeds in $S^4$ and let $P$ be one of the partitions of $\{1,\dots, k\}$ given by Theorem~\ref{thm:partitions}. Then the following further conditions apply to $P$.
\begin{enumerate}
\item There is precisely one class containing an odd number of $i$ for which $p_i$ is even and there are one or three such $i$.
\item In all other classes there are zero or two values of $i$ such that $p_i$ is even.
\end{enumerate}
Moreover suppose that $C=\{1,\dots, l\}$ is a complementary class such that $p_{1}$ and $p_{2}$ are even and $p_i$ is odd for all $3 \le i \le l$, then
\begin{equation}\label{eq:even_fiber_bound}
\lceil p_1/q_1 \rceil \leq 1 + \sum_{i=2}^l (p_i-1).
\end{equation} 
\end{prop}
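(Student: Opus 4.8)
The plan is to apply the lattice-theoretic machinery of Proposition~\ref{prop:mubar_embed_conditions} to the embedding-induced lattice maps $\iota_1,\iota_2$ coming from the decomposition $S^4 = U_1\cup_Y -U_2$. By Proposition~\ref{prop:S4_splitting} and Lemma~\ref{lem:spin_splitting}, each $U_i$ is a spin rational homology ball with $H^3(U_i;\Z)=0$, so Proposition~\ref{prop:mubar_embed_conditions} applies with $W=U_i$. Fix the partition $P$ associated to $\iota:=\iota_i$ via Lemma~\ref{lem:embedding_structure}; its classes $C_1,\dots,C_e$ record which leading vertices pair non-trivially with the respective unit vectors $e_1,\dots,e_e$ (the central vertex being $e_1+\cdots+e_e$). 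First I would show that for each arm indexed by $j$ with $p_j$ \emph{even}, there is a spin structure on $Y$ whose characteristic subset contains the leading vertex $v_1^j$ and which extends over $U_i$. This comes from Lemma~\ref{lem:char_set_construction}: starting from the recipe there, one builds a characteristic subset containing $v_1^j$ (and making the total number of leading vertices adjacent to the central vertex have the correct parity $e\bmod 2$), and one then checks via Proposition~\ref{prop:extending_condition} that this subset corresponds to a spin structure extending over $U_i$ — the key point being that $\mubar(Y,\mathfrak s)=0$ is forced since $\mathfrak s$ must extend over \emph{some} spin rational homology ball, and the characteristic subsets that extend over $U_1\sqcup U_2$ exhaust all of $\mathrm{Spin}(Y)$ by the Mayer--Vietoris computation in Lemma~\ref{lem:spin_splitting}.

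Granting that, parts (1) and (2) follow from Proposition~\ref{prop:mubar_embed_conditions}\eqref{it:unique_ei} together with the structure of the partition. Each leading vertex $v_1^j$ with $p_j$ even lies in a characteristic subset extending over $U_i$, hence by \eqref{it:unique_ei} satisfies $|v_1^j\cdot e_m|\le 1$ for all $m$, and for each $e_m$ exactly one vertex of that characteristic subset pairs $\pm1$ with it. Since $v_1^j$ is in the class $C_{m(j)}$ where $m(j)$ is determined by $\nu\cdot v_1^j=-1$ (so $v_1^j\cdot e_{m(j)}=-1$ by the equality case of Theorem~\ref{thm:emb_ineq_eq_case}), a parity count of how many even-$p_j$ leading vertices lie in each class, combined with the requirement that each characteristic subset extending over $U_i$ contains exactly one vertex pairing with each $e_m$, forces: exactly one class $C$ contains an odd number of such $j$, with one or three of them, and all other classes contain zero or two. (Three rather than five or more is the content of \eqref{it:common_ei}: at most two distinct vertices pairing with a given $e_m$ can each belong to a characteristic subset extending over $U_i$, and the odd-class's excess forces an auxiliary $e_m$ against which three such leading vertices would pair — contradiction unless the count is exactly three.)

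For the inequality \eqref{eq:even_fiber_bound}: take the complementary class $C=\{1,\dots,l\}$ with $p_1,p_2$ even, $p_3,\dots,p_l$ odd. Since $C$ is complementary, $\sum_{i\in C}q_i/p_i=1$, so after relabelling the unit vectors the leading vertices $v_1^1,\dots,v_1^l$ pair $-1$ with, collectively, the unit vectors indexed by (a set we may take to be) $\{e_1,\dots,e_l\}$ in a way dictated by Lemma~\ref{lem:embedding_structure}(2), and by Theorem~\ref{thm:emb_ineq_eq_case} (equality case) each such pairing is $\pm1$. The idea is to look at a characteristic subset $C^{\star}$ extending over $U_i$ that contains $v_1^1$: by Proposition~\ref{prop:mubar_embed_conditions}\eqref{it:unique_ei}, $v_1^1$ has all coordinates in $\{0,\pm1\}$, so $\norm{v_1^1}=\lceil p_1/q_1\rceil$ equals the number of unit vectors it pairs non-trivially with. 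Each such unit vector $e_m$ must, inside $C^{\star}$, be balanced by exactly one other vertex of $C^{\star}$ pairing $\pm1$ with $e_m$ (so that $\sum_{v\in C^{\star}}v\cdot e_m$ is odd, by Proposition~\ref{prop:extending_condition}); one unit vector is ``used'' by the central-vertex relation, but the remaining $\norm{v_1^1}-1$ must be absorbed by the leading vertices $v_1^2,\dots,v_1^l$ of the other arms in $C$ (the non-leading vertices and vertices outside $C$ being orthogonal to these $e_m$ by Lemma~\ref{lem:embedding_structure}). Since each $v_1^i$ with $p_i$ odd ($i\ge3$) contributes at most $\norm{v_1^i}-1=\lceil p_i/q_i\rceil-1\le p_i-1$ available unit vectors beyond the one it must share with the central vertex — using $\lceil p_i/q_i\rceil \le p_i$ — and $v_1^2$ is similarly bounded, summing gives $\lceil p_1/q_1\rceil-1 \le \sum_{i=2}^l(p_i-1)$, which is \eqref{eq:even_fiber_bound}.

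\textbf{Main obstacle.} The delicate step is the bookkeeping in the last paragraph: carefully justifying that the unit vectors against which $v_1^1$ pairs non-trivially are exactly the ones that must be ``covered'' by the other leading vertices in $C$, with the correct count of one shared vector per arm. This requires pinning down precisely, via Lemma~\ref{lem:embedding_structure}(2) and the claim inside its proof, which vertices can pair with which $e_m$ — in particular that no non-leading vertex and no vertex from another class interferes — and then organizing the parity constraint from Proposition~\ref{prop:extending_condition} into the inequality. Verifying that a suitable characteristic subset $C^{\star}$ containing $v_1^1$ and extending over $U_i$ actually exists (the first paragraph's claim) is the other point requiring care, and is where Lemma~\ref{lem:char_set_construction} and the exhaustion of $\mathrm{Spin}(Y)$ by $U_1,U_2$-extending spin structures are essential.
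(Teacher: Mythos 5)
Your overall strategy---apply the spin-extension machinery of Propositions~\ref{prop:extending_condition} and~\ref{prop:mubar_embed_conditions} with $W=U_i$ and then count---is the paper's, but two of your key steps fail. Your opening claim, that for \emph{every} $j$ with $p_j$ even there is a characteristic subset containing $v_1^j$ corresponding to a spin structure extending over $U_i$, is false and would prove too much: combined with Proposition~\ref{prop:mubar_embed_conditions}\eqref{it:common_ei} it would force every class to contain at most two even arms, whereas the statement you are proving explicitly allows one class with three. Its justification is also wrong: if $N$ is the number of even $p_j$, the spin structures extending over $U_1$ and over $U_2$ form two cosets of size $2^{(N-1)/2}$ inside the $2^{N-1}$-element set $\spin(Y)$, so their union does not exhaust $\spin(Y)$ once $N\geq 3$ (and "$\mathfrak{s}$ must extend over some spin rational homology ball'' is not something you get for free). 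The actual argument for (1) and (2) is a two-sided count you never perform: Lemma~\ref{lem:spin_splitting} says exactly $2^{(N-1)/2}$ spin structures extend over $U_1$, while Proposition~\ref{prop:mubar_embed_conditions} (each extending characteristic subset contains exactly one leading vertex per class, at most two candidate arms per class, and all characteristic subsets agree on arms of odd multiplicity) bounds the number of extending spin structures above by $2^m$, where $m$ is the number of classes containing at least two even arms; the inequality $2m\geq N-1=\sum_i n_i-1$, together with $N$ odd, then yields the $\{0,2\}$ versus $\{1,3\}$ dichotomy.

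The bookkeeping for \eqref{eq:even_fiber_bound} also rests on two incorrect assertions. By Proposition~\ref{prop:mubar_embed_conditions}\eqref{it:unique_ei}, $v_1^1$ is the \emph{unique} vertex of $C^{\star}$ pairing non-trivially with each $e_m$ it hits, so there is no ``other vertex of $C^{\star}$ balancing it,'' and the parity condition of Proposition~\ref{prop:extending_condition} gives you nothing further here. Moreover Lemma~\ref{lem:embedding_structure}(2) only controls the unit vectors $e_1,\dots,e_e$ hit by the central vertex; it says nothing about which vertices may pair with the remaining $\lceil p_1/q_1\rceil-1$ unit vectors hit by $v_1^1$. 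The mechanism that forces each such $e_m$ to be met by another chain of the class is different: if $e_m$ were orthogonal to all other chains in the class, replacing $v_1^1$ by $v_1^1-(v_1^1\cdot e_m)e_m$ would produce an embedding of linear chains violating Theorem~\ref{thm:emb_ineq_eq_case}. And the absorption is by \emph{arbitrary} vertices of those chains---the chain for $p_i/q_i$ meets at most $p_i$ distinct basis vectors, one of which is shared with the central vertex, giving the $p_i-1$ terms---not only by their leading vertices. Your version, which charges everything to $v_1^2,\dots,v_1^l$, asserts a stronger inequality without justification.
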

\begin{proof}
Recall that these partitions are constructed by taking the splitting $S^4=U_1 \cup_Y -U_2$ and $\iota_1 : (\Z^{|\Gamma|}, Q_\Gamma) \rightarrow (\Z^{|\Gamma|}, \mbox{Id})$ be the lattice embedding induced by the inclusion $X \xhookrightarrow{} X \cup -U_i$ for $i=1$ or $2$. Without loss of generality, we will work with $\iota=\iota_1$. We will abuse notation and identify each vertex of $\Gamma$ with its image under $\iota$. As shown in Lemma~\ref{lem:embedding_structure} we may assume that the central vertex is given by $\nu=e_1+\dots+e_e$ and for $i=1,\dots, e$ the class $C_i$ is taken to be the subset of $\{1,\dots, k\}$ such that the first vertex of the linear chain corresponding to $p_i/q_i$ pairs non-trivially with $e_i$.

Suppose that $Y$ has $N\geq 1$ exceptional fibers of even order, so that $\dim H^1(Y;\Z_2)=N-1$ by Lemma~\ref{lem:order_mod2_cohom}. Let $n_i$ be the number of fibers of even order in each class of the partition. Let $C$ be a characteristic set corresponding to a spin structure which extends over the ball $U_1$. By Proposition~\ref{prop:S4_splitting}\eqref{item:h3=0} we have $H^3(U_1;\Z) = 0$, so Proposition~\ref{prop:mubar_embed_conditions}\eqref{it:unique_ei} applies, implying that for each class there is precisely one arm from each class whose leading vertex is in $C$. Moreover Proposition~\ref{prop:mubar_embed_conditions}\eqref{it:common_ei} shows that for each class in the partition there are at most two choices for the arm whose leading vertex can appear in any such $C$. However since characteristic subsets all coincide on arms corresponding to odd $p_i$, we see that two choices for the leading vertex from arms in a class $C_i$ can only be realized if $n_i\geq 2$. Thus, if there are $m$ values of $n_i$ such that $n_i\geq 2$, then at most $2^{m}$ spin structures extend over $U_1$. However by Lemma~\ref{lem:spin_splitting}, we know that $2^{(N-1)/2}$ spin structures extend over $U_1$. This shows that
\[
2m \geq N-1 =n_1 + \dots + n_e -1.
\]
This shows that with exactly one exception $n_i\in \{0,2\}$ and for this exception we must have $n_i\in \{1,3\}$, which completes the count of even $p_i$ in each class.

Now we establish \eqref{eq:even_fiber_bound}. Suppose that we have the class $C_1=\{1,\dots, l\}$ is complementary with $p_1$ and $p_2$ even and all other $p_i$ is this class odd, that is $n_1=2$. The argument in the previous paragraph shows that the leading vertices of both the arms corresponding to $p_1/q_1$ and $p_2/q_2$ must appear in characteristic subsets corresponding to spin structures that extend over $U_1$. In particular if $v$ is the leading vertex of the arm corresponding to $p_1/q_1$, then $v$ satisfies $|v\cdot e_i|\leq 1$ for all $i$ by Proposition~\ref{prop:mubar_embed_conditions}\eqref{it:common_ei} and $\norm{v}=\lceil p_1/q_1 \rceil$ by definition. So to bound $\norm{v}$ above it suffices to bound above the number of basis elements $e_i$ for which  $|v\cdot e_i|\neq 0$. To do this notice that if $|v\cdot e_i|\neq 0$, then $w\cdot e_i \neq 0$ for some other vertex $w$ appearing in one of the other chains in the class $C_1$. Otherwise we could consider the vector $v'=v-(v\cdot e_i) e_i$ to obtain an embedding of linear chains with corresponding fractions $\lceil p_1/q_1 \rceil-1, p_2/q_2, \dots, p_l/q_l$. Since $\lceil p_1/q_1 \rceil-1 < p_1/q_1$, this would contradict Theorem~\ref{thm:emb_ineq_eq_case}. However, by inducting on the length of the continued fraction, one can see that an embedding of the linear chain corresponding to $r/s$ can use at most $r$ distinct orthonormal basis vectors. Thus we see that
\[
\lceil p_1/q_1 \rceil =\norm{v}\leq 1 + \sum_{i=2}^l (p_i-1),
\]
where $p_i-1$ terms come from observing that by definition all the linear chains in $C_1$ have at least one common basis element with which they pair non-trivially. This is the required upper bound. 
\end{proof}
We now have the tools to establish our lower bound on $e$.
\thmlowerbound*
\begin{proof}
First note that if $Y$ has no exceptional fibers of even order and $Y$ embeds in $S^4$, then $H^1(Y;\Z_2)=0$. So we may suppose that $Y$ has at least one exceptional fiber of even order. Proposition~\ref{prop:even_conditions} shows that there can be at most $2e+1=3+2(e-1)$ such fibers. Thus by Lemma~\ref{lem:order_mod2_cohom} we have $H^1(Y;\Z_2)\leq 2e$ in this case too.
\end{proof}

\begin{remark} Donald showed that $S^2(1; 4, 4, \frac{12}{5})$ smoothly embeds in $S^4$ \cite[Example~2.14]{MR3271270}. This Seifert fibered space and its expansions show that the bound in Theorem~\ref{thm:lower_bound} is sharp.
\end{remark}

We conclude with the following lemma which justifies Proposition~\ref{prop:k2_even_cond}. To see this, note that the Seifert fibered spaces in Theorem~\ref{thm:classification_e_ge_k2}\eqref{classif:case2} only arise when applying Theorem~\ref{thm:partitions} when there is a partition containing a complementary class of the form $\{\frac{p}{q}, \frac{r}{s}, rp\}$ (cf. Remark~\ref{rem:size_3_class}). The following lemma shows that $rp$ must be odd.
\begin{lemma}
If $Y = S^2(e; \frac{p_1}{q_1}, \ldots, \frac{p_k}{q_k})$ embeds smoothly into $S^4$, then neither of the partitions given by Theorem~\ref{thm:partitions} can contain a complementary class of the form $\{\frac{p}{q}, \frac{r}{s}, rp\}$ with $rp$ even.
\end{lemma}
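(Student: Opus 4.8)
The plan is to argue by contradiction, combining a parity constraint coming from complementarity of the class with the inequality~\eqref{eq:even_fiber_bound} of Proposition~\ref{prop:even_conditions}. So suppose that $Y$ embeds smoothly in $S^4$ and that one of the two partitions $P$ produced by Theorem~\ref{thm:partitions} contains a complementary class $C$ whose exceptional fibers have Seifert invariants $\frac{p}{q}$, $\frac{r}{s}$ and $pr$, where $\gcd(p,q)=\gcd(r,s)=1$, $\frac{p}{q},\frac{r}{s}>1$ (so that $p,r\ge 2$), and $pr$ is even. The goal is to derive a contradiction.

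First I would record what complementarity gives: $\frac{q}{p}+\frac{s}{r}+\frac{1}{pr}=1$, which on clearing denominators reads $qr+sp+1=pr$. Since $pr$ is even, at least one of $p$ and $r$ is even, and by the symmetry interchanging the roles of $\frac{p}{q}$ and $\frac{r}{s}$ we may assume $p$ is even. Then the right-hand side of $qr=pr-sp-1$ is odd, so $qr$ is odd, and hence $r$ (and $q$) is odd. Consequently, among the three multiplicities $p$, $r$, $pr$ of the fibers in $C$, exactly two are even — namely $p$ and $pr$ — while $r$ is odd. In particular $Y$ has an exceptional fiber of even multiplicity, so Proposition~\ref{prop:even_conditions} applies to $P$.

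Finally I would apply~\eqref{eq:even_fiber_bound} after relabelling the fibers of $C$ so that $\frac{p_1}{q_1}=\frac{pr}{1}$, $\frac{p_2}{q_2}=\frac{p}{q}$ and $\frac{p_3}{q_3}=\frac{r}{s}$; then $C=\{1,2,3\}$ is a complementary class with $p_1=pr$ and $p_2=p$ even and $p_3=r$ odd, and $q_1=1$, so~\eqref{eq:even_fiber_bound} yields
\[
pr=\lceil p_1/q_1\rceil\le 1+(p_2-1)+(p_3-1)=p+r-1,
\]
i.e. $(p-1)(r-1)\le 0$, contradicting $p,r\ge 2$. The one step needing care is this final application: one must check that the hypotheses of~\eqref{eq:even_fiber_bound} are met, which is precisely what the parity computation supplies — that $C$ contains exactly two fibers of even multiplicity and one of odd multiplicity — and one must apply the bound to the fiber of multiplicity $pr$ rather than to the fiber of multiplicity $p$, since the latter choice produces only the vacuous estimate $\lceil p/q\rceil\le pr+r-1$. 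Everything else is routine arithmetic.
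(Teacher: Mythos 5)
Your proof is correct and follows essentially the same route as the paper: use complementarity to get $qr+sp+1=pr$, deduce that exactly one of $p,r$ is even (the paper phrases this via coprimality of $p$ and $r$, you via a direct parity computation — same conclusion), and then apply the bound \eqref{eq:even_fiber_bound} to the fiber of multiplicity $pr$ to obtain $pr\le p+r-1$, which is impossible for $p,r\ge 2$. Your remark that the bound must be applied to the $pr$ fiber rather than the $p$ fiber is a correct and worthwhile observation about how \eqref{eq:even_fiber_bound} is being invoked.
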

\begin{proof}

Suppose that we had such a class. Since the class is complementary, we have $\frac{s}{r}+\frac{q}{p}+\frac{1}{rp}=1$. This implies that $p$ and $r$ are coprime so precisely one of $r$ or $p$ is even. Thus \eqref{eq:even_fiber_bound} from Proposition~\ref{prop:even_conditions} applies to show that $rp\leq r+p-1$.
This is easily seen to be impossible as $r,p>1$.
\end{proof}

\section{Doubly slice Montesinos links}\label{sec:dbly_slice}
In this section we turn our attention to doubly slice links. We prove that the Seifert fibered spaces over $S^2$ in Theorem~\ref{thm:classification_e_ge_(k+1)/2} and Theorem~\ref{thm:classification_e_ge_k2}\eqref{classif:case2} are double branched covers of Montesinos links. We also prove Theorem \ref{thm:oddpretzels} which provides a classification of the smoothly doubly slice odd pretzel knots up to mutation. Finally, we prove Proposition \ref{prop:qa_doubly_slice} showing that no non-trivial quasi-alternating Montesinos link is doubly slice.

\begin{prop}\label{prop:s4_embeddings} Let $Y$ be a Seifert fibered space over $S^2$, with $k > 2$ exceptional fibers, in either of the following two families:
  \begin{enumerate}[(a)]
  \item\label{enum:dbly_slice_1} $S^2\left(\frac{k+1}{2}; \frac{a}{a-1}, a, \ldots, \frac{a}{a-1} \right) = S^2(0; -a, a, \ldots, -a),$ where $a > 1$ is an integer, or
  \item\label{enum:dbly_slice_2} $S^2\left(\frac{k}{2}; \frac{p}{q}, \frac{p}{p-q}, \cdots, \frac{p}{q}, \frac{r}{s}, \frac{r}{r-s}, \cdots, \frac{r}{s}\right) = S^2\left(0; \frac{p}{q}, -\frac{p}{q}, \ldots, \frac{p}{q}, \frac{r}{s}, -\frac{r}{s}, \ldots, \frac{r}{s}\right)$ where $\frac{p}{q}, \frac{r}{s} > 1$ and $\frac{s}{r} + \frac{q}{p} = 1 - \frac{1}{pr}.$
  \end{enumerate}
  Then $Y$ is the double branched cover of a smoothly doubly slice Montesinos link. 
\end{prop}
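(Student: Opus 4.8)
The plan is to identify $Y$ as the double branched cover $\Sigma_2(S^3,L)$ of an explicit Montesinos link $L$, and then to promote the embeddings constructed in Proposition~\ref{prop:s4_embeddings2} to \emph{equivariant} embeddings with respect to the covering involution; double sliceness of $L$ follows formally by passing to the quotient of $S^4$.

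First I would recall the standard fact that $S^2(e;\frac{p_1}{q_1},\dots,\frac{p_k}{q_k})$ is the double branched cover of the Montesinos link $M(e;\frac{q_1}{p_1},\dots,\frac{q_k}{p_k})$, whose covering involution $\tau$ preserves the Seifert fibration. Its fixed point set is a union of fibers, and on a fibered solid-torus neighbourhood $N\cong S^1\times D^2$ of a (possibly exceptional) fiber $\tau$ acts with quotient a $3$-ball and fixed set a properly embedded pair of arcs — the rational tangle recording $\frac{q_i}{p_i}$, or a trivial tangle when the fiber is regular. In particular each $Y$ in families \eqref{enum:dbly_slice_1} and \eqref{enum:dbly_slice_2} has the form $\Sigma_2(S^3,L)$ for such an $L$. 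As in the proof of Proposition~\ref{prop:s4_embeddings2}, these $Y$ are obtained by a sequence of expansions starting from $S^2(1;\frac{a}{a-1})\cong S^3$ in case \eqref{enum:dbly_slice_1} and from $S^2(1;\frac{p}{q},\frac{r}{s})\cong S^3$ (here using $\frac{q}{p}+\frac{s}{r}=1-\frac{1}{pr}$) in case \eqref{enum:dbly_slice_2}; the Montesinos link attached to each of these base spaces has determinant $|H_1|=1$, hence is the unknot, which is trivially doubly slice. So it suffices to show that if $Y=\Sigma_2(S^3,L)$ with $L$ a doubly slice Montesinos link and $Y'$ is obtained from $Y$ by a single expansion, then $Y'=\Sigma_2(S^3,L')$ with $L'$ again a doubly slice Montesinos link.

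To prove this I would begin with the witness of double sliceness: an unknotted $2$-sphere $\Sigma=D_+\cup_L D_-\subset S^4=B^4_+\cup_{S^3}B^4_-$ with $\Sigma\cap S^3=L$ and $D_\pm$ boundary-parallel (unknotted) slice disks, so that $W_\pm:=\Sigma_2(B^4_\pm,D_\pm)$ satisfy $Y=\partial W_+=-\partial W_-$ and $S^4=\Sigma_2(S^4,\Sigma)$, with the covering involution $\tau$ extending over $W_\pm$. The construction in Lemma~\ref{lem:add_fibers} embedding $Y'$ in $Y\times[0,1]$ uses only fibered neighbourhoods of fibers and products $N\times[0,1]$ of such, and can therefore be carried out $\tau$-equivariantly. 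Pushing $Y'$ slightly into $W_+$ and taking the quotient by $\tau$ then realizes the expanded link $L'$ in $S^3$ — concretely, replacing a trivial tangle of $L$ by one of the form $T\cdot\overline{T}$, where $T$ is the rational tangle of the fiber being doubled (this matches Definition~\ref{def:expansion}) — and simultaneously exhibits the extended involution on $W_+\cup(Y'\times[0,1])$; its quotient is $B^4_+$ with a piece attached whose branch set is obtained from $D_+$ by attaching the quotient of $N\times[0,1]$, which is a \emph{standard} piece $(B^3,\text{trivial arcs})\times[0,1]$ and hence a boundary-parallel pair of disks. Performing the mirror operation on the $W_-$ side — legitimate since $Y'$ separates $S^4$ once $Y$ does — and regluing produces a $2$-sphere $\Sigma'\subset S^4$ with $\Sigma'\cap S^3=L'$, whose two complementary halves $D'_\pm$ are unknotted disks, so $\Sigma'$ is unknotted and $L'$ is doubly slice. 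Iterating from the unknot proves the proposition.

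The hard part will be the last point: verifying that the resulting $2$-sphere is genuinely unknotted, not merely a fixed $2$-sphere of an involution on $S^4$ (which in general need not be unknotted). This reduces to showing that each equivariant expansion modifies $(B^4_\pm,D_\pm)$ by gluing on a standard $\bigl(B^3,\text{trivial arcs}\bigr)\times[0,1]$ piece, which is precisely where one uses that the pieces attached in Lemma~\ref{lem:add_fibers} are products of fibered neighbourhoods with an interval. Once the equivariance of the Lemma~\ref{lem:add_fibers} construction and this product structure are in hand, I expect the only remaining care to be in the bookkeeping identifying the tangle replacement $L\rightsquigarrow L'$ with the expansion of Definition~\ref{def:expansion}; the rest is formal.
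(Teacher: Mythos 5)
Your overall strategy --- making the expansion construction of Lemma~\ref{lem:add_fibers} equivariant with respect to the Montesinos involution and inducting from the unknot --- is genuinely different from the paper's proof. The paper instead invokes Donald's band-move criterion for double sliceness (Theorem~\ref{thm:slice_criterion}) together with Montesinos' theorem (Theorem~\ref{thm:montesinos}) to realise equivariant $2$-handle attachments as band moves on the Montesinos link, and then verifies the criterion's nested unlink conditions by explicit Kirby calculus; no induction on expansions is used. Your set-up (equivariant fibered neighbourhoods $N_2\subset N_1$, identification of the quotient of the expansion region with a tangle replacement of the form $T\cdot\overline{T}$, and the base cases $S^2(1;\frac{a}{a-1})\cong S^3$ and $S^2(1;\frac{p}{q},\frac{r}{s})\cong S^3$) is reasonable and matches the heuristic picture behind Proposition~\ref{prop:s4_embeddings2}.

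The gap is exactly where you locate the ``hard part,'' and the reduction you propose does not close it. You conclude that $\Sigma'$ is unknotted because each half $D'_\pm$ is obtained by gluing a standard $(B^3,\text{trivial arcs})\times[0,1]$ piece onto $D_\pm$, ``so $\Sigma'$ is unknotted.'' That inference is invalid: unknottedness of a $2$-sphere in $S^4$ is not determined by the diffeomorphism types of the two pairs $(B^4_\pm,D'_\pm)$ taken separately, but by how they are glued along the equator. Indeed, for any slice-but-not-doubly-slice knot $K$, doubling a slice disk produces a \emph{knotted} $2$-sphere both of whose halves are copies of one fixed ``nice'' disk pair; controlling the gluing is precisely the difference between slice and doubly slice, and it is exactly what the intermediate unlink conditions in Theorem~\ref{thm:slice_criterion} are designed to certify. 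A second, more concrete problem is that the expansion of Lemma~\ref{lem:add_fibers} is a one-sided construction: $Y'$ lies in a collar on one side of $Y$, so one half gains the quotient of $N_2\times[0,\frac{1}{4}]\cup N_1\times[\frac{1}{4},\frac{3}{4}]$ while the other half \emph{loses} that region; your ``mirror operation on the $W_-$ side'' does not correspond to anything in the construction, so the symmetric picture underlying your unknottedness claim is not accurate. If you want to rescue the approach, the observation you need is different: the branch $2$-sphere never changes at all --- only the cross-sectioning $3$-sphere moves, since $Y'/\tau$ bounds the explicit smooth $4$-ball obtained by stacking the quotients of $N_2\times[0,\frac{1}{4}]$ and $N_1\times[\frac{1}{4},\frac{3}{4}]$ onto one of the original $4$-ball halves, and is therefore ambiently isotopic to the standard equator, with $L'$ its intersection with the original unknotted sphere. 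That repair looks viable but requires its own careful justification and is not the argument you gave.
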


As discussed in Remark~\ref{rmk:embs_in_lit}, some special cases of Proposition~\ref{prop:s4_embeddings} were previously known by work of Donald \cite{MR3271270}. We will use the following doubly slice criterion of his \cite[Corollary 2.5]{MR3271270} to prove Proposition~\ref{prop:s4_embeddings}.

\begin{thm}\label{thm:slice_criterion} Suppose $L$ is a link in $S^3$ and there are two sets of band moves $\{A_i\}_{1\le i \le k}$ and $\{B_j\}_{1 \le j \le l}$ such that performing the moves:
  \begin{itemize}
  \item $\{A_i\}_{1\le i \le k} \cup \{B_j\}_{1 \le j \le l}$ gives the unknot,
  \item $\{A_i\}_{1 \le i \le k} \cup \{B_j\}_{1 \le j \le l-n}$ gives an $(n+1)$-component unlink for all $n \in \{1,2,\ldots, l\}$,
  \item $\{A_i\}_{1 \le i \le k-n} \cup \{B_j\}_{1 \le j \le l}$ gives an $(n+1)$-components unlink for all $n \in \{1,2,\ldots, k\}$.
  \end{itemize}
    Then $L$ is smoothly doubly slice.
\end{thm}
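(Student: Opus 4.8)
The plan is to build the unknotted sphere directly from the two families of bands and then prove unknottedness by re-slicing $S^4$. Write $S^4 = B_+ \cup_{S^3} B_-$ for the standard decomposition into two $4$-balls glued along the equatorial $S^3 \supset L$, and fix a height function $h\colon S^4 \to [-1,1]$ with $h^{-1}(0) = S^3$, $h^{-1}(\pm 1)$ the poles, and $h^{-1}(t)\cong S^3$ for $t\in(-1,1)$. After an isotopy we may assume $\{A_i\}$ and $\{B_j\}$ are pairwise disjoint embedded bands attached to $L$. Build $\Sigma_+\subset B_+$ from the movie: start with $L$, perform $A_1,\dots,A_k$ at increasing heights to reach the $(l+1)$-component unlink $\{A_i\}\cup\{B_j\}_{\emptyset}$, then cap it off with $l+1$ small disjoint trivial disks near the north pole. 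Symmetrically build $\Sigma_-\subset B_-$ by performing $B_1,\dots,B_l$ to reach the $(k+1)$-component unlink and capping with $k+1$ trivial disks near the south pole. Let $\Sigma := \Sigma_+\cup_L\Sigma_-\subset S^4$; it is a closed surface with $\Sigma\cap S^3 = L$ and, counting one index-$0$, one index-$1$ and one index-$2$ contribution per cap disk, band, and cap disk respectively, $\chi(\Sigma) = (l+1-k) + (k+1-l) = 2$.

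Next I would re-slice. Choose $t_+\in(0,1)$ above every $A$-band level and $t_-\in(-1,0)$ below every $B$-band level, so $h^{-1}(t_+)$ meets $\Sigma$ in the $(l+1)$-unlink and $h^{-1}(t_-)$ in the $(k+1)$-unlink. Cutting $S^4$ along these two $3$-spheres exhibits it as the two caps $h^{-1}([t_+,1])$, $h^{-1}([-1,t_-])$ (both $4$-balls) glued to the collar $h^{-1}([t_-,t_+])\cong S^3\times I$, and presents the pair as
\[
\bigl(B^4,\ (l+1)\text{ trivial disks}\bigr)\ \cup\ \bigl(S^3\times I,\ P\bigr)\ \cup\ \bigl(B^4,\ (k+1)\text{ trivial disks}\bigr),
\]
where $P$ is the cobordism running from the $(l+1)$-unlink at the top down through $L$ to the $(k+1)$-unlink at the bottom, assembled from the reversed $A$-bands followed by the $B$-bands. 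It therefore suffices to show $P$ is isotopic rel boundary to the standard planar cobordism between those unlinks: then $\Sigma$ is isotopic to the standard closed surface obtained by capping that cobordism, which is the unknotted $S^2$, so $L = \Sigma\cap S^3$ is a cross-section of an unknotted $2$-sphere, i.e.\ smoothly doubly slice.

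The heart of the argument, and the only place all of the unlink hypotheses enter, is a reordering of $P$. Since the $k+l$ bands constituting $P$ are pairwise disjoint, changing the heights at which they occur yields a cobordism isotopic to $P$ rel boundary; I would reorder so that, starting from the $(k+1)$-unlink $\{B_j\}$ at the bottom, one performs $A_1,\dots,A_k$ first and only then the reversed $B_l,\dots,B_1$. By the third and second displayed hypotheses the successive links are exactly $\{A_i\}_{i\le a}\cup\{B_j\}$ for $a=0,1,\dots,k$, which are the $(k+1)$-, $k$-, $\dots$, $1$-component unlinks, followed by $\{A_i\}\cup\{B_j\}_{j\le b}$ for $b=l-1,\dots,0$, which are the $2$-, $\dots$, $(l+1)$-component unlinks. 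Thus every stage of this reordered movie of $P$ is an unlink. A band-move cobordism in $S^3\times I$ all of whose stages are unlinks is isotopic rel boundary to the standard planar cobordism (an unlink bounds a disjoint union of disks unique up to isotopy, and one straightens the movie one band at a time, using that a band realising a fusion or fission between components of an unlink can be taken standard). Capping this standard cobordism off at both ends with the trivial disk systems produces a connected genus-$0$ closed surface standardly embedded in $S^4$; hence $\Sigma$ is a single unknotted $S^2$, as required.

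I expect the main obstacle to be precisely that last lemma — that a cobordism between unlinks passing only through unlinks is standard rel boundary. Morally it is the surface-cobordism analogue of ``unlinks are unique up to isotopy'', but making it a genuine rel-boundary isotopy while keeping track of all $k+l$ bands and of the trivial caps at both ends requires the careful bookkeeping of movie calculus for surfaces in $4$-space; I would either invoke the corresponding statement from Donald's framework or prove it directly by downward induction on the number of bands.
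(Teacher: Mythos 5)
First, a point of orientation: the paper does not prove this statement --- it is quoted from Donald \cite[Corollary 2.5]{MR3271270} --- so the comparison is really with his argument rather than with anything in this paper. The parts of your write-up that construct the surface are correct and are the right starting point: $\Sigma_+\cup_L\Sigma_-$ has Euler characteristic $2$; reordering the (pairwise disjoint) saddles so that all $A$-saddles lie below all reversed $B$-saddles produces a movie whose stills are, by the three hypotheses, exactly the $(k+1)$-, $k$-, $\ldots$, $1$-, $2$-, $\ldots$, $(l+1)$-component unlinks; hence the surface is connected and is a sphere meeting the equator in $L$.

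The gap is where you anticipate it, but it is more serious than your sketch acknowledges. The statement that ``a band realising a fusion between components of an unlink [whose result is again an unlink] can be taken standard'' is not elementary bookkeeping: in the two-component case it is precisely Scharlemann's theorem that a band sum of knots yielding the unknot is a connected sum of unknots along a trivial band (Invent.\ Math.\ 79 (1985)), proved with sutured manifold theory, and for unlinks with spectator components one needs in addition a splitting statement for the band. This input cannot be waved through, and without it the argument collapses: cancelling a minimum against a fusion saddle whenever the minimum's descending disk is trivial --- which is what ``straightening the movie one band at a time'' amounts to --- would, if automatic, show that every ribbon disk for the unknot is standard, which is not known and is closely tied to the Generalized Property R circle of problems. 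It is exactly the (deep) standardness of each band that licenses each cancellation. Two further issues: (i) your lemma as stated, rel-boundary standardness of the cobordism $P$, is both stronger than needed and false --- a concordance from the unknot to itself tracing a noncontractible loop in the space of unknots has every cross-section an unknot yet is not isotopic rel boundary to the product, although its capping is still unknotted; what you need is only that the capped-off closed surface bounds a $3$-ball. (ii) The induction is not actually set up: straightening one band is an ambient isotopy that drags the remaining bands along, so you must say what the inductive statement is (straighten the lowest band to standard position by Scharlemann's theorem, retract the now-visibly-trivial finger to cancel the corresponding birth--saddle pair, and check that the residual configuration of one fewer band still satisfies the hypotheses). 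As written, with the essential theorem asserted as obvious and the induction left as a gesture, the proof is incomplete; the correct completion is exactly the Scharlemann-based straightening-and-cancellation argument, which is presumably what Donald's proof supplies and should in any case be cited rather than rederived as a triviality.
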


The collection of band moves that we will use can be quite complicated when viewed in $(S^3, L)$. Instead, these band moves can be more naturally viewed as corresponding to certain $2$-handle attachments in the double branched cover of $(S^3, L)$. The following theorem of Montesinos will allow us to make this correspondence.

\begin{thm}[Theorem 3 of \cite{MR511423}]\label{thm:montesinos} Consider a handle representation $W^4 = H^0 \cup n H^2$ of a $4$-manifold with boundary given by attaching $n$ $2$-handles to the $4$-ball. If the $n$ $2$-handles are attached along a strongly invertible link in $S^3$, then $W$ is a $2$-fold cyclic covering space of $D^4$ branched over a $2$-manifold.
\end{thm}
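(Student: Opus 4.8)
The plan is to promote the symmetry of the attaching link to a symmetry of $W$ and to recognise the quotient as $D^4$. Write $L = K_1\cup\cdots\cup K_n\subset S^3=\partial D^4$ for the strongly invertible link along which the $2$-handles $h_1,\dots,h_n$ are attached; strong invertibility means there is an orientation-preserving involution $\tau\colon S^3\to S^3$ whose fixed-point set is an unknotted circle $A$ (the axis), such that each $K_i$ is $\tau$-invariant and $\tau$ restricts to an orientation-reversing involution of $K_i$, so that $A$ meets $K_i$ transversely in two points. The first step is the model case: since $(S^3,A)$ is the standard unknotted pair, $\tau$ extends to a smooth orientation-preserving involution $\bar\tau$ of $D^4$ whose fixed-point set is a properly embedded trivial disk $D_A$ with $\partial D_A=A$, acting near $D_A$ by rotation through $\pi$ in the normal directions, and $D^4/\bar\tau$ is again a $4$-ball, realising $D^4$ as the $2$-fold cyclic cover of $D^4$ branched over the trivially embedded disk $\bar D_A$.

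The second step is to extend $\bar\tau$ equivariantly across the handles. Each $K_i$ has a $\tau$-invariant tubular neighbourhood $\nu(K_i)\cong S^1\times D^2$ on which $\tau$ is a product of a reflection of $S^1$ (with fixed points the two points of $A\cap K_i$) and a reflection of $D^2$, with fixed set the two short arcs of $A$ near $K_i$. One needs the framing defining $h_i$ to be realised by a $\tau$-invariant pushoff of $K_i$; since a simple closed curve of any slope on $\partial\nu(K_i)$ can be isotoped to be $\tau$-invariant, this costs nothing. Attaching $h_i=D^2\times D^2$ along this equivariant framing and extending $\bar\tau$ over $h_i$ as a product of reflections of the core and cocore $D^2$-factors, we obtain a smooth $\bar\tau$-invariant structure on $W$. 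Its fixed-point set $F$ is a properly embedded surface, obtained from $D_A$ by attaching one band along the core of each $h_i$ (joining the two arcs of $A$ that meet $K_i$), and $\bar\tau$ acts near $F$ by rotation through $\pi$ in the normal directions.

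The third step is to identify $W/\bar\tau$. Away from $F$ the action of $\bar\tau$ is free, and along $F$ it is a normal rotation through $\pi$, so $W/\bar\tau$ is a smooth $4$-manifold and $W\to W/\bar\tau$ is a $2$-fold cyclic covering branched over the $2$-manifold $\widehat F$ given by the image of $F$. By the model case $D^4/\bar\tau\cong D^4$; likewise each $h_i\cong D^4$, with the restricted involution, has fixed set an unknotted properly embedded disk, so $h_i/\bar\tau\cong D^4$. These balls are glued along the quotients $\nu(K_i)/\bar\tau$ of the attaching regions, each of which is a $3$-ball in the boundary $S^3$; since attaching a $4$-ball to a $4$-manifold along a $3$-ball in its boundary does not change the diffeomorphism type, induction on $n$ gives $W/\bar\tau\cong D^4$. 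Thus $W$ is a $2$-fold cyclic covering space of $D^4$ branched over the surface $\widehat F$, as required. The main obstacle is the middle step: arranging all of the framings to be simultaneously $\tau$-equivariant and checking that the extended involution really is a smooth involution with the asserted fixed-point surface; the rest follows formally from the local structure of branched covers.
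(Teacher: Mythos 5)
Your proof is correct: this is the standard argument for Montesinos's theorem, which the paper itself only cites (as Theorem 3 of \cite{MR511423}) and does not prove. The equivariant extension of the axis involution over $D^4$ and over each $2$-handle (after making the framings $\tau$-invariant), with fixed surface the spanning disk of the axis together with one band per handle and with quotient $D^4$ obtained by gluing $4$-balls along $3$-balls, is exactly the construction whose output the paper describes immediately after the theorem statement (the rectangular disc with twisted bands, pushed into $D^4$), so your argument is consistent with everything the paper uses.
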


Montesinos \cite{MR511423} describes how to obtain the branched surface in $D^4$ from the attaching link and involution. We now describe this construction in the case of interest to us. This is also described in \cite{MR2833583}, where Lecuona used similar ideas to show certain Montesinos knots are ribbon.

Suppose that the $2$-handles in Theorem~\ref{thm:montesinos} are attached along a framed link $L \subset S^3$, where the strong involution is a rotation by $\pi$ about an axis in $S^3$. Suppose furthermore that each component of $L$ is an unknot which is given by a trivial arc above and below the rotation axis, see left of Figure~\ref{fig:sfs_branched_surface}. The branch surface in Theorem~\ref{thm:montesinos} has a simple description as follows. Replace each arc below the rotation axis with a twisted band following the arc, with twisting such that the signed number of crossings in the band is equal to the framing of the link component containing the arc, see Figure~\ref{fig:sfs_branched_surface}. These bands are attached to a rectangular disc with an edge lying on the axis of rotation. The bands and rectangular disc form a surface in $S^3$. Pushing this surface into $D^4$ gives the branch surface in Theorem~\ref{thm:montesinos}.

Observe that if $L = L' \cup \{K\}$ as framed links then the branched surface $S$ for $L$ is obtained from the branched surface $S'$ for $L'$ by a band attachment. In particular, the link $\partial S$ is obtained from $\partial S'$ by a band or ribbon move. If $L$ is the integer surgery presentation of a Seifert fibered space $Y$ over $S^2$ coming from the plumbing graph, then the boundary of the branch surface $S$ is a Montesinos link.

\begin{eg}\label{eg:montesinos}
  \begin{figure}[h]
  \begin{overpic}[width=\textwidth]{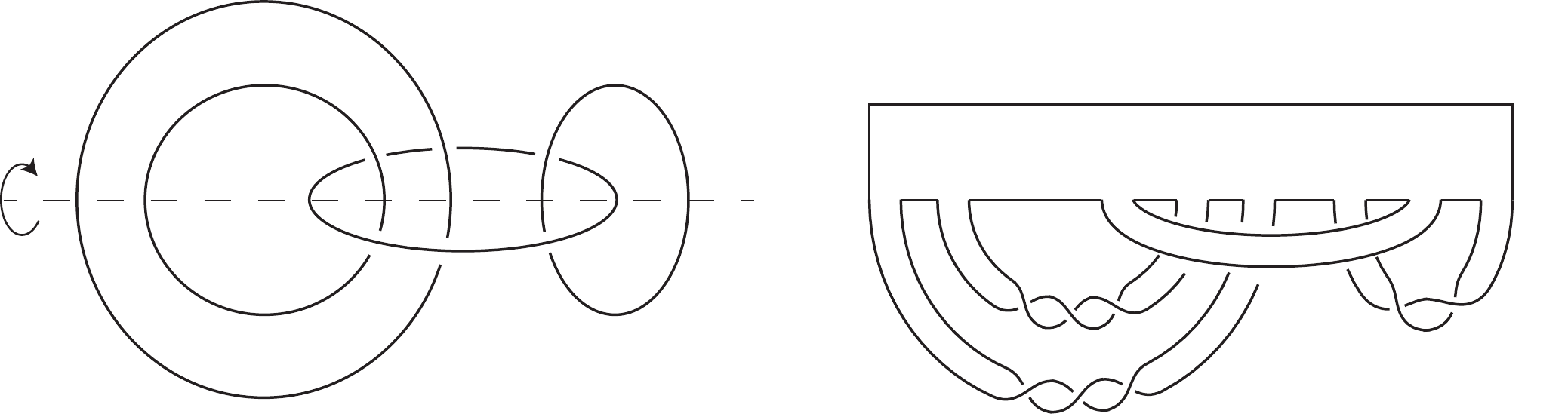}
    \put (4.5,3) {$-3$}
    \put (15,4) {$3$}
    \put (30,8) {$0$}
    \put (38,4) {$2$}
    \put (0.5,17) {$\pi$}
  \end{overpic}
  \caption{Left: Kirby diagram of a $4$-manifold with boundary $S^2(0; 3, -3, 2)$. Right: corresponding branch surface with boundary a Montesinos knot.}
  \label{fig:sfs_branched_surface}
\end{figure}
Consider the Seifert fibered space $Y = S^2(0; 3, -3, 2)$ with surgery presentation and strong involution as in Figure~\ref{fig:sfs_branched_surface}. Interpreting the surgery presentation as a Kirby diagram for the plumbing $4$-manifold $X$, we see that $X$ is the double branched cover of $(D^4, S)$, where $S$ is the surface in the right of Figure~\ref{fig:sfs_branched_surface} pushed into the $4$-ball. The knot $\partial S \subset S^3$ is the Montesinos knot with double branched cover $Y$.

Attaching an additional $2$-handle to $X$ which respects the strong involution, as shown in bold in the left of Figure~\ref{fig:sfs_branched_surface_attachment}, gives a $4$-manifold $X'$ which is the double branched cover of the surface $S'$ in the right of Figure~\ref{fig:sfs_branched_surface_attachment}. We see that $S'$ is obtained from $S$ by attaching a $2$-dimensional $1$-handle. Hence, the link $\partial S'$ is obtained from $\partial S'$ by a band, or ribbon move. One can check that $\partial X' = S^2 \times S^1$. Since the $2$-component unlink is the only link in $S^3$ with double branched cover $S^2 \times S^1$ \cite{MR584691}, we get that $\partial S'$ is the $2$-component unlink (one can also see this directly) and the Montesinos knot $\partial S$ is ribbon.

\begin{figure}[h]
  \begin{overpic}[width=\textwidth]{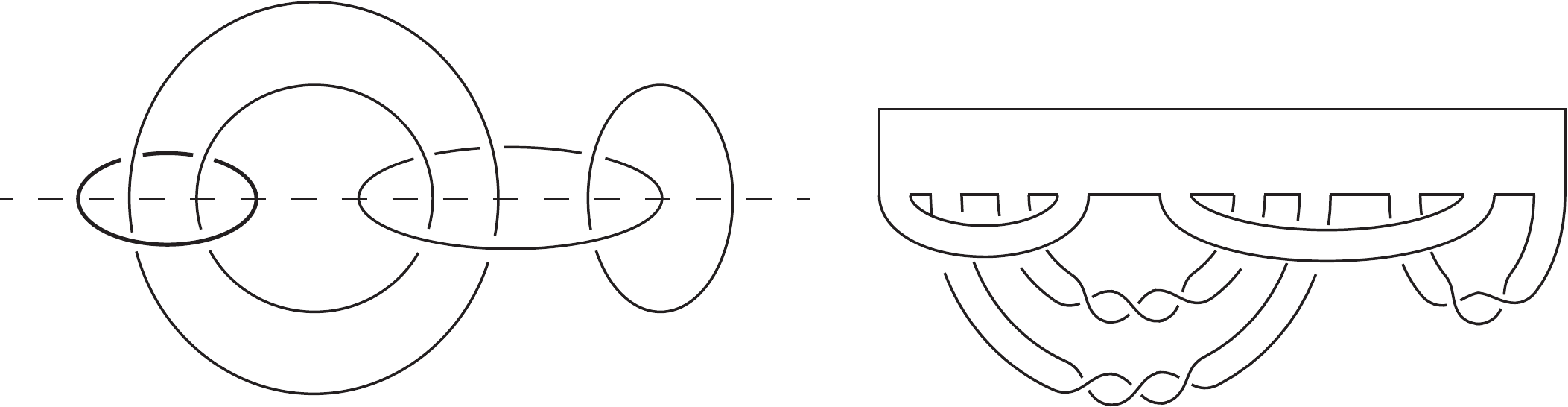}
    \put (5,9.5) {$0$}
    \put (7,3) {$-3$}
    \put (18,4) {$3$}
    \put (34,7.5) {$0$}
    \put (40,4) {$2$}
  \end{overpic}
  \caption{Left: The Kirby diagram with the extra $2$-handle. Right: The corresponding band in the link.}
  \label{fig:sfs_branched_surface_attachment}
\end{figure}
\end{eg}

We are now ready to prove Proposition~\ref{prop:s4_embeddings}.

\begin{proof}[Proof of Proposition~\ref{prop:s4_embeddings}\eqref{enum:dbly_slice_1}]
  Let $Y = S^2(0; -a, a, \ldots, -a)$ with $k$ fibers, where $k \ge 1$ is odd and $a > 2$ is an integer. If $k = 1$ then $Y$ is $S^3$ which is the double branched cover of the unknot which is trivially doubly slice. Assume that $k > 1$. Then $Y$ is the boundary of the $4$-manifold $X$ given by attaching $2$-handles to the $4$-ball, as shown in Figure~\ref{fig:doubly_slice_family1_involution} for $k=5$ (ignoring for now the $2$-handles with labels $A_1, A_2, B_1$ and $B_2$). The $2$-handles are attached along a strongly invertible link in Figure~\ref{fig:doubly_slice_family1_involution}, where the involution is given by a $\pi$ rotation about the dotted axis. Thus, Theorem~\ref{thm:montesinos} implies that $X$ is the double branched cover of $D^4$ over a properly embedded surface $S$ where $L = \partial S \subset S^3$ is the Montesinos link with double branched cover $\Sigma(L) = Y$. 

In Figure~\ref{fig:doubly_slice_family1_involution}, there are $2m := k-1$ ($k=5$ shown) additional $0$-framed $4$-dimensional $2$-handles, shown in bold, which are attached equivariantly with respect to the strong involution. By the discussion above Example \ref{eg:montesinos}, there are $2m := k-1$ disjoint bands $A_1, A_2, \ldots, A_m, B_1, B_2, \ldots, B_m$ defining band moves on $L$ such that doing any subset $S$ of these band moves changes $L \mapsto L'$ in such a way that $\Sigma(L') = \partial X_S$, where $X_S$ is the $4$-manifold given by attaching the correspondingly labeled subset of $0$-framed $2$-handles to $X$, as in Figure~\ref{fig:doubly_slice_family1_involution}, or by an isotopy, as in Figure~\ref{fig:doubly_slice_family1_surgery}.

\begin{figure}[h]
  \begin{overpic}[height=150pt]{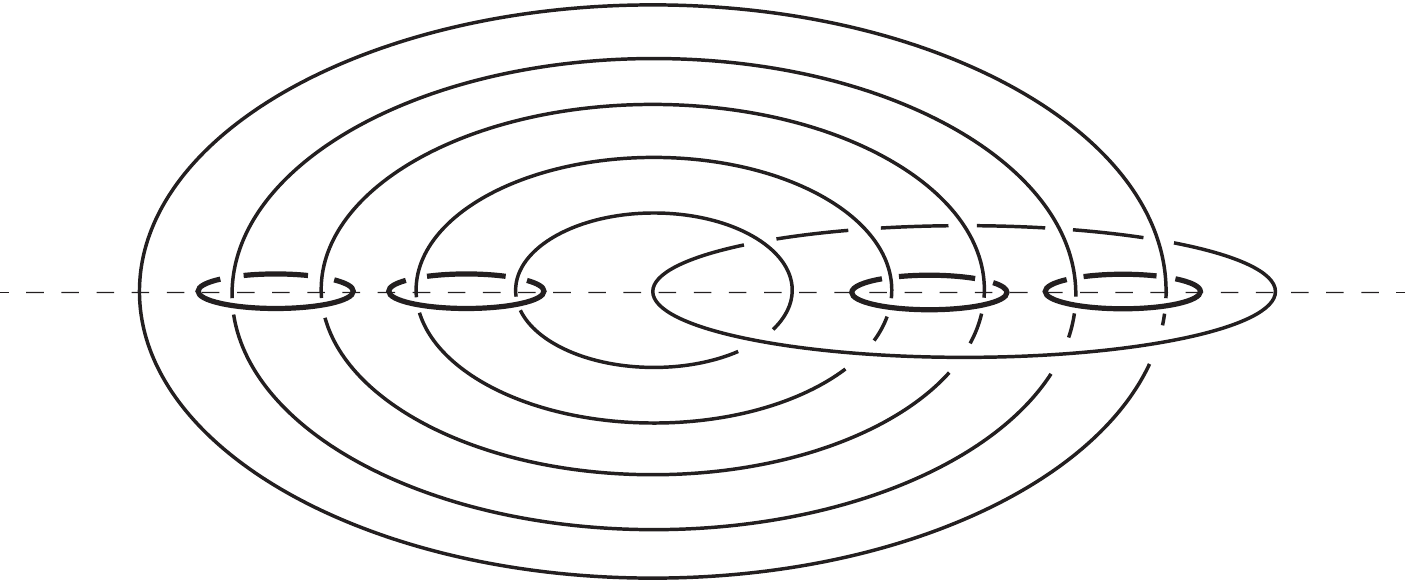}
    \put (10,31) {$-a$}
    \put (20,30) {$a$}
    \put (24, 29) {$-a$}
    \put (33,27.6) {$a$}
    \put (39, 26) {$-a$}
    \put (18.5, 16) {$A_2$}
    \put (33, 16) {$A_1$}
    \put (58, 17.2) {$B_1$}
    \put (72, 17.5) {\smaller $B_2$}
    \put (86, 15) {$0$}
  \end{overpic}
  \caption{Ignoring the curves in bold, $Y = S^2(0; -a, a, \ldots, -a)$ is the doubly branched cover of the link $L \subset S^3$ given by quotienting out by the involution given by rotating about the dotted axis. The case where $Y$ has $5$ exceptional fibers is shown.}
  \label{fig:doubly_slice_family1_involution}
\end{figure}
  
  \begin{figure}[h]
  \begin{overpic}[height=100pt]{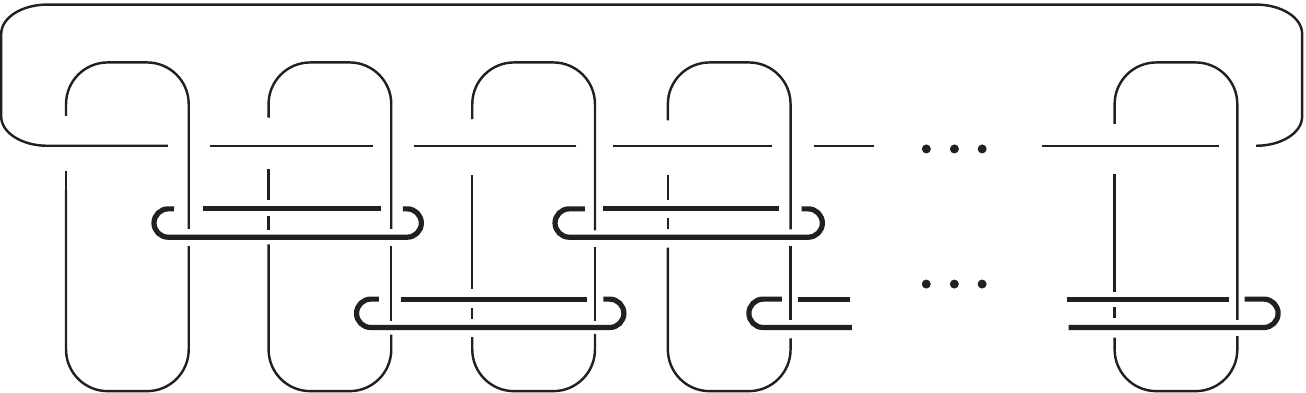}
    \put (1, 25) {$0$}
    \put (8, 23) {$-a$}
    \put (24, 23) {$a$}
    \put (38, 23) {$-a$}
    \put (55, 23) {$a$}
    \put (88, 23) {$-a$}
    \put (8, 12) {$A_1$}
    \put (38, 12) {$A_2$}
    \put (23, 4) {$B_m$}
    \put (52, 3) {$B_{m-1}$}
    \put (80, 2) {$B_1$}
  \end{overpic}
  \caption{Ignoring $2$-handles in bold, this is a Kirby diagram of $4$-manifold with boundary $S^2(0; -a, a, -a, \ldots, -a)$ containing $k = 2m+1$ fibers.}
  \label{fig:doubly_slice_family1_surgery}
\end{figure}

We now show that the two sets of bands $\{A_i\}_{1\le i \le m}$ and $\{B_i\}_{1\le j \le m}$ satisfy the doubly slice hypotheses of Theorem~\ref{thm:slice_criterion}, thereby showing that $L$ is doubly slice. First, let $S_n = \{A_i\}_{1 \le i \le m} \cup \{B_j\}_{1 \le j \le m-n}$, where $n \in \{0,1,2,\ldots, m\}$. We can realise $X_{S_n}$ as a union of linear plumbings, by handlesliding the central $0$-framed $2$-handle over each of the handles labeled $A_1, \ldots, A_m$ as shown in Figure~\ref{fig:doubly_slice_family1_slide}.

  We claim that $\partial X_{S_n}$ is a connected sum of $n$ copies of $S^1\times S^2$. Assuming the claim, by \cite{MR584691}, the $(n+1)$-component unlink is the unique link in $S^3$ with double branched cover $\#_{n} (S^1 \times S^2)$. This implies that performing band moves $S_n$ results in the $(n+1)$-component unlink, for all $n \in \{1,2,\ldots,m \}$. To show that $\partial X_{S_n} = \#_n (S^1 \times S^2)$, note that $\partial X_{S_n}$ consists of $n+1$ disjoint linear chains of unknots, where $n$ of these chains have length $3$ with components having framings (in linear order) $-a, 0, a$ giving an $S^1 \times S^2$ summand. Similarly, the remaining chain has framings $0, -a, 0, a, \ldots, -a, 0, a$ which represents $S^3$. Thus, $\partial X_{S_n} = \#_n (S^1 \times S^2)$.

  By symmetry we may interchange the roles of the $\{A_i\}$ and $\{B_i\}$ bands in the argument given above, which shows that the remaining hypothesis of Theorem~\ref{thm:slice_criterion} is satisfied, where band moves are performed on $S'_n = \{A_i\}_{1 \le i \le m-n} \cup \{B_j\}_{1 \le j \le m}$, for $n \in \{1,2,\ldots, m\}$.

\begin{figure}[h]
  \begin{overpic}[height=240pt]{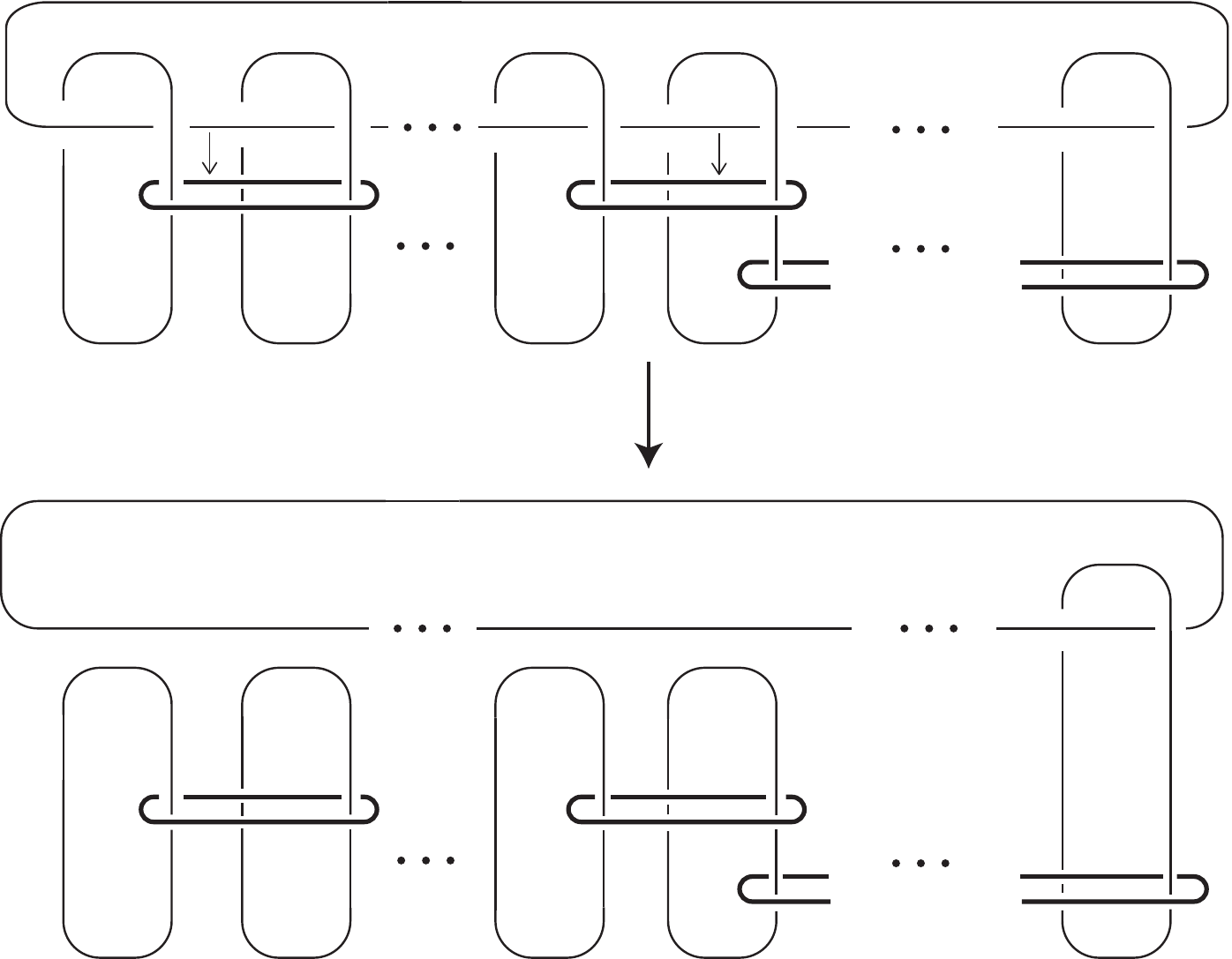}
    \put (-3, 72) {$0$}
    \put (7, 71) {$-a$}
    \put (23, 71) {$a$}
    \put (42, 71) {$-a$}
    \put (57, 71) {$a$}
    \put (88, 71) {$-a$}
    \put (7, 61) {$A_1$}
    \put (66, 62) {$A_{n+1}$}
    \put (65, 51.5) {\smaller $B_{m-n}$}
    \put (79, 53) {$B_1$}
    
    \put (-3, 31) {$0$}
    \put (7, 21) {$-a$}
    \put (23, 21) {$a$}
    \put (42, 21) {$-a$}
    \put (57, 21) {$a$}
    \put (89, 20) {$-a$}
    \put (7, 11) {$A_1$}
    \put (66, 12) {$A_{n+1}$}
    \put (65, 1.5) {\smaller $B_{m-n}$}
    \put (80, 1) {$B_1$}
    \put (54, 44) {$m$ handle slides}
  \end{overpic}
  \caption{The $4$-manifold $X_{S_n}$. Handleslide the $0$-framed central $2$-handle over each of the handles labelled $A_1,\ldots, A_m$.}
  \label{fig:doubly_slice_family1_slide}
\end{figure}
\end{proof}

\begin{proof}[Proof of Proposition~\ref{prop:s4_embeddings}\eqref{enum:dbly_slice_2}]
  Let $Y = S^2(0; \frac{p}{q}, -\frac{p}{q}, \ldots, \frac{p}{q}, \frac{r}{s}, -\frac{r}{s}, \ldots, \frac{r}{s})$ with $k$ fibers, where $k$ is even and $\frac{s}{r} + \frac{q}{p} = 1 - \frac{1}{pr}$. When $k = 2$, we have that $Y$ is a lens space with trivial first homology, so $Y =S^3$ and $Y$ is the doubly branched cover of the unknot, which is doubly slice. Assume that $k > 2$ and let $\ell$ be the number of fibers of the form $\pm \frac{p}{q}$ and $b = n-\ell$ be the number of fibers of the form $\pm \frac{r}{s}$. Observe that $\ell$ and $b$ are both odd. Let $[a_1, a_2, \ldots, a_g]^-$ (resp. $[b_1, b_2, \ldots, b_h]^-$)  be the continued fraction expansion for $\frac{p}{q}$ (resp. $\frac{r}{s}$).

We follow the same strategy as in the proof of Proposition~\ref{prop:s4_embeddings}\eqref{enum:dbly_slice_1} above to show that $Y$ is the double branched cover of a doubly slice link. The Seifert fibered space $Y$ is the boundary of a star-shaped plumbing $4$-manifold $X$ as shown in Figure~\ref{fig:doubly_slice_family2_surgery} (ignoring the $2$-handles in bold). By Theorem~\ref{thm:montesinos}, $X$ is the double branched cover of $(D^4, S)$ where $S$ is a surface. Then $Y = \partial X$ is the double branched cover of $S^3$ branched over the Montesinos link $L = \partial S$. There are bands $A_1,\ldots, A_m, B_1, \ldots, B_m$ which may be attached to $L$, where $m = \frac{k}{2} - 1$, such that performing a subset $S$ of these band moves changes $L \mapsto L'$ such that $\Sigma(L') = \partial X_S$, where $X_S$ is the $4$-manifold obtained by attaching the $0$-framed $2$-handles with labels in $S$ to $X$ in Figure~\ref{fig:doubly_slice_family2_surgery}. Figure~\ref{fig:doubly_slice_family2_involution}, obtained by an isotopy of the link in Figure~\ref{fig:doubly_slice_family2_surgery}, shows that the $2$-handles may be attached equivariantly with respect to the involution.

We check the hypotheses of Theorem~\ref{thm:slice_criterion}. First let $S_n = \{A_i\}_{1 \le i \le m} \cup \{B_j\}_{1 \le j \le m-n}$, where $n \in \{0,1,2,\ldots, m\}$. We can realise $X_{S_n}$ as a plumbing of a union of linear chains, by handle sliding the central $0$-framed handle over each of the handles labeled $A_1, \ldots, A_m$ in Figure~\ref{fig:doubly_slice_family2_surgery}. This union of linear chains consists of:

\begin{enumerate}
\item\label{enum:linear_chain1}  $n$ linear chains of one of two forms, either with framings $-a_1, -a_2, \ldots, -a_g, 0, a_g, \ldots, a_1$ or with framings $b_1, b_2, \ldots, b_h, 0, -b_h, \ldots, b_1$, and
\item\label{enum:linear_chain2} a linear chain with framings
  \begin{eqnarray*}
    a_g, \ldots, a_1, 0, -a_1, \ldots, -a_g, \ldots, 0, a_g, \ldots, a_1, 0, b_1, \ldots, b_h, 0,\\
    \ldots, -b_h, \ldots, -b_1, 0, b_1 \ldots, b_h.
  \end{eqnarray*}
\end{enumerate}

Each linear chain in \eqref{enum:linear_chain1} contributes an $S^1 \times S^2$ summand to $\partial X_{S_n}$, and the linear chain in \eqref{enum:linear_chain2} contributes an $S^3$ summand to $\partial X_{S_n}$. In order to see this, we repeatedly use the fact that a subchain with framings $r, 0, -r$ where $r \in \Z$, can be replaced by a single $0$ framed component. This fact follows by handlesliding the $r$ framed component over the $-r$ framed component, then cancelling the $-r$ framed component and its $0$ framed meridian. Repeatedly applying this fact, in case \eqref{enum:linear_chain2}, we will be left with a linear chain $a_g, \ldots, a_1, 0, b_1, \ldots, b_h$ representing the Seifert fibered space $S^2(0; \frac{p}{q}, \frac{r}{s})$ which is homeomorphic $S^3$ since the condition $\frac{s}{r} + \frac{q}{p} = 1 - \frac{1}{pr}$ implies that it is a lens space with trivial first homology. This verifies that $\partial X_{S_n} = \#_n (S^1 \times S^2)$.

\begin{figure}[h]
  \begin{overpic}[width=\textwidth]{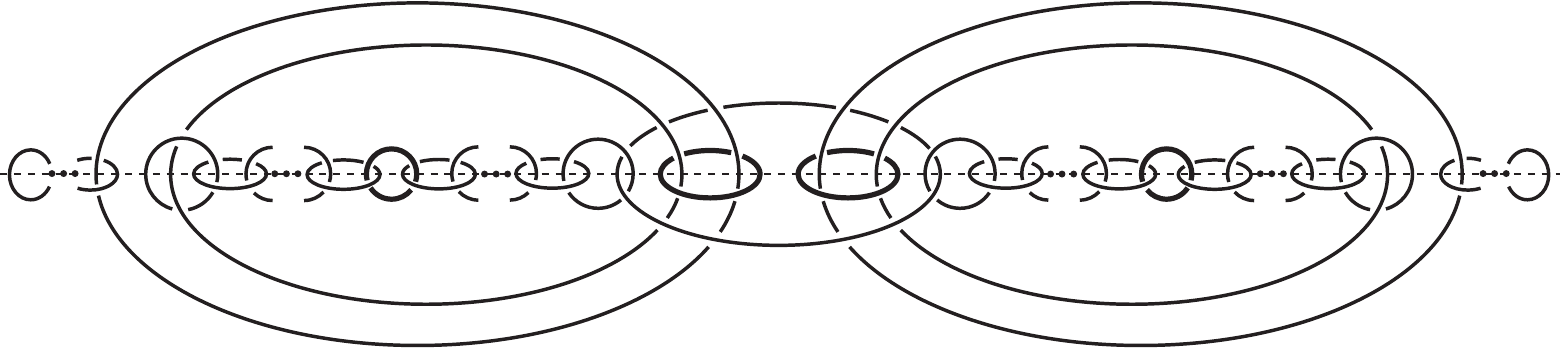}
    \put (37, 7.5) {\smaller $a_1$}
    \put (27, 8) {\smaller $a_g$}
    \put (23.8, 13.5) {\smaller $B_1$}
    \put (19.5, 8) {\smaller $-a_g$}
    \put (25, 4) {\smaller $-a_1$}
    \put (42.8, 18.5) {\smaller $a_1$}
    \put (49.3, 4.5) {\smaller $0$}
    \put (43.5, 8) {\tiny $A_1$}
    \put (53.5, 8) {\tiny $A_2$}
    \put (53, 18.5) {\smaller $-b_1$}
    \put (72, 4) {\smaller $b_1$}
    \put (59, 7) {\smaller $-b_1$}
    \put (69, 8) {\smaller $-b_h$}
    \put (73, 13.5) {\smaller $B_2$}
    \put (77, 8) {\smaller $b_h$}
    \put (96, 7.3) {\smaller $-b_h$}
    \put (1, 7.3) {\smaller $a_g$}
  \end{overpic}
  \caption{Kirby diagram for $X_{S'_n}$. Ignoring the components in bold gives a Kirby diagram for $X$ with boundary $Y$. For simplicity only the case with $k = 6$ and $\ell = 3$ is shown. The strong involution is rotation by $\pi$ about the dotted axis.}
  \label{fig:doubly_slice_family2_involution}
\end{figure}

\begin{figure}[h]
  \begin{overpic}[width=0.95\textwidth]{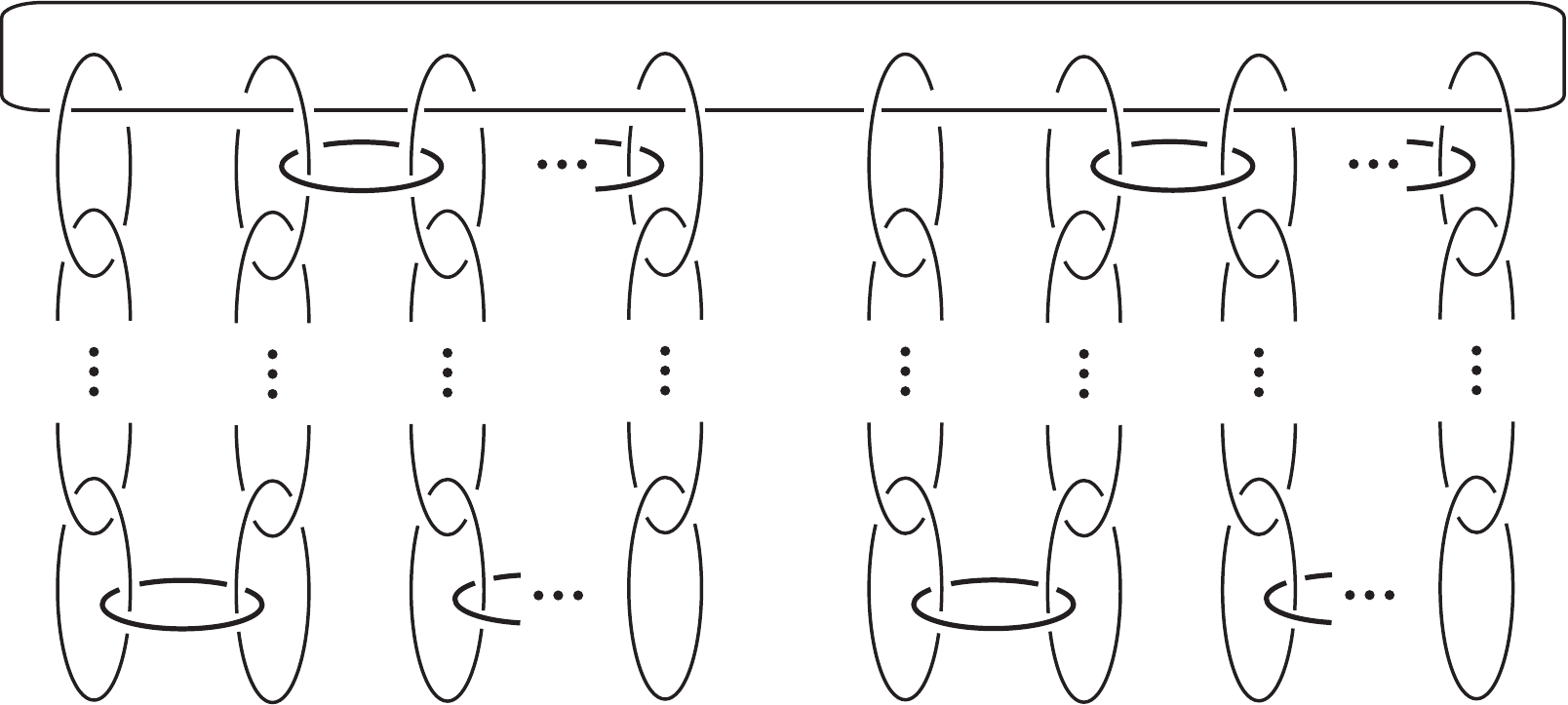}
    \put (-2, 41) {$0$}
    \put (0, 34) {$a_1$}
    \put (0, 25) {$a_2$}
    \put (-2, 16) {$a_{g-1}$}
    \put (0, 7) {$a_{g}$}

    \put (10, 34) {$-a_1$}
    \put (10, 25) {$-a_2$}
    \put (10, 18.3) {$-a_{g-1}$}
    \put (10, 10) {$-a_{g}$}

    \put (10,2) {$B_1$}
    \put (21.5, 30) {$A_1$}
    \put (32, 2) {$B_2$}
    \put (35, 31) {$A_{\frac{\ell-1}{2}}$}
    \put (61, 2) {$B_{\frac{\ell+1}{2}}$}
    \put (72, 30) {$A_{\frac{\ell+1}{2}}$}
    \put (84, 2) {$B_{\frac{\ell+3}{2}}$}
    \put (88, 30) {$A_{m}$}

    \put (31.4, 35.5) {$a_1$}
    \put (31.4, 25) {$a_2$}
    \put (31.4, 16) {$a_{g-1}$}
    \put (31.4, 10) {$a_{g}$}

    \put (45.5, 34) {$a_1$}
    \put (45.5, 25) {$a_2$}
    \put (45.5, 18) {$a_{g-1}$}
    \put (45.5, 7) {$a_{g}$}

    \put (53, 34) {$b_1$}
    \put (53, 25) {$b_2$}
    \put (51, 14) {$b_{h-1}$}
    \put (53, 7) {$b_{h}$}

    \put (62, 34) {$-b_1$}
    \put (62, 25) {$-b_2$}
    \put (62, 18) {$-b_{h-1}$}
    \put (62, 10) {$-b_{h}$}

    \put (83, 32) {$b_1$}
    \put (83, 25) {$b_2$}
    \put (83, 16) {$b_{h-1}$}
    \put (83, 10) {$b_{h}$}

    \put (97, 34) {$-b_1$}
    \put (97, 25) {$-b_2$}
    \put (97, 16) {$-b_{h-1}$}
    \put (97, 7) {$-b_{h}$}
  \end{overpic}
  \caption{Kirby diagram for $X_{S'n}$. Ignoring the components in bold gives a Kirby diagram for $X$ with boundary $Y$.}
  \label{fig:doubly_slice_family2_surgery}
\end{figure}

Now let $S'_n = \{A_i\}_{1 \le i \le m-n} \cup \{B_j\}_{1 \le j \le m}$, for $n \in \{1,2,\ldots, m\}$. Each $2$-handle attached to $X$ corresponding to a band of the form $B_j$ links two unknotted components with framings $-a_g$ and $a_g$. We use the same fact as above, that is, handlesliding the $a_g$ framed component over the $-a_g$ framed component leads to the $B_j$ labelled $2$-handle linking the $a_g$ framed component as a meridian, and hence we can cancel these two components without changing $\partial X_{S'_n}$. We see a $0$-framed unknot linking components with framings $-a_{g-1}$ and $-a_{g-1}$ and we can again handleslide the $a_{g-1}$ component over the $-a_{g-1}$ and remove the $-a_{g-1}$ framed components and its $0$-framed meridian. Repeating this procedure leads to the surgery presentation for $\partial X_{S'_n}$ shown in Figure~\ref{fig:doubly_slice_family2_step2}.

Next, we handleslide the $0$-framed central curve in Figure~\ref{fig:doubly_slice_family2_step2} over the $m$ $0$-framed components as indicated by the arrows (note that the handleslides here are thought of merely as a move on surgery presentations for $\partial X_{S'_n}$). This gives a presentation for $\partial X_{S'_n}$ from which, by an analogous computation to the previous case, one can check that $\partial X_{S'_n} = \#_n (S^1 \times S^2)$.

\begin{figure}[h]
  \begin{overpic}[width=0.95\textwidth]{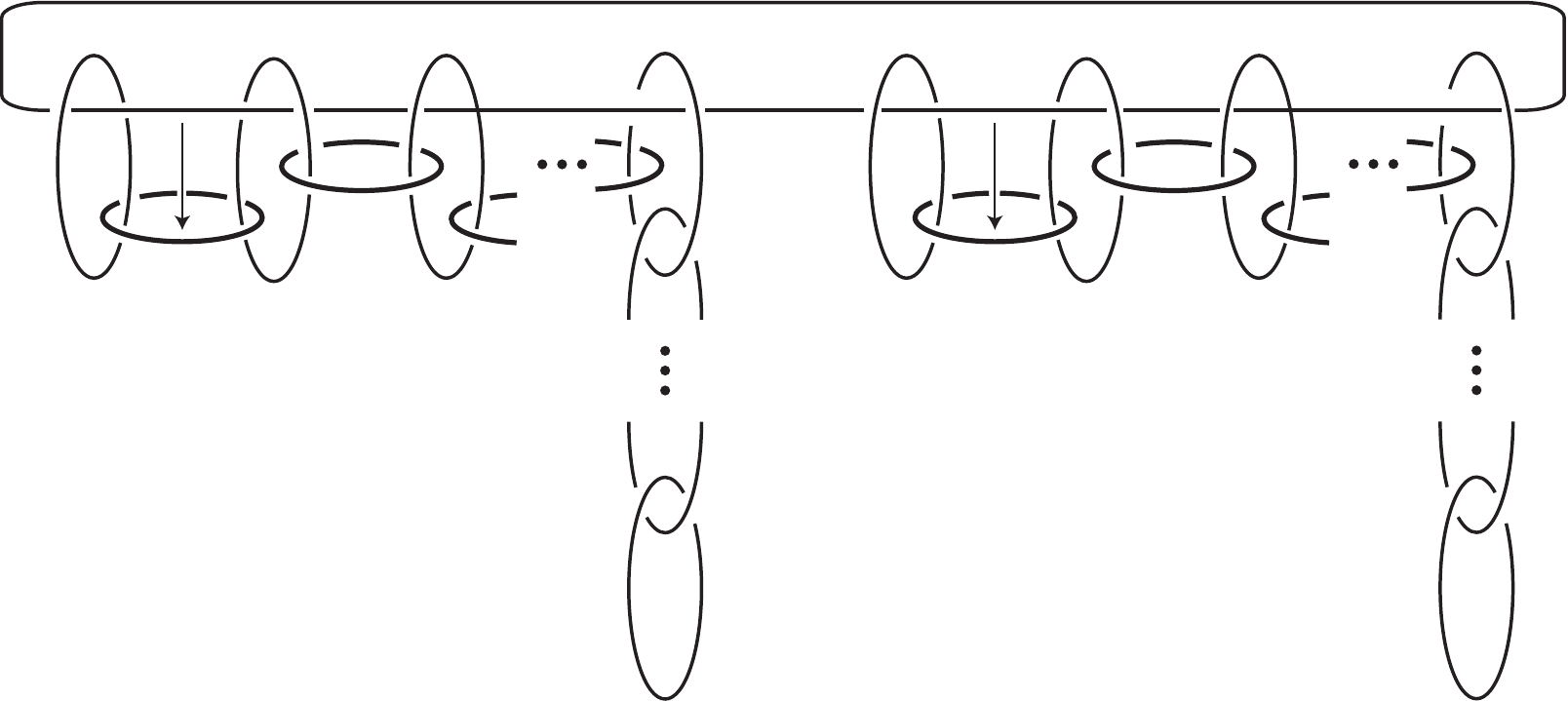}
    \put (-2, 41) {$0$}
    \put (5, 25) {$a_1$}

    \put (15, 25) {$-a_1$}

    \put (10,27) {$0$}
    \put (21.5, 30) {$A_1$}
    \put (32, 27) {$0$}
    \put (35, 31) {$A_{\frac{\ell-1}{2}}$}
    \put (61, 27) {$0$}
    \put (72, 30) {$A_{\frac{\ell+1}{2}}$}
    \put (84, 27) {$0$}
    \put (88, 30) {$A_{m}$}

    \put (27, 25) {$a_1$}

    \put (45, 34) {$a_1$}
    \put (45, 25) {$a_2$}
    \put (45, 16) {$a_{g-1}$}
    \put (45, 6) {$a_{g}$}

    \put (57, 24) {$b_1$}

    \put (67, 24) {$-b_1$}

    \put (79, 24) {$b_1$}

    \put (97, 34) {$-b_1$}
    \put (97, 25) {$-b_2$}
    \put (97, 16) {$-b_{h-1}$}
    \put (97, 7) {$-b_{h}$}
  \end{overpic}
  \caption{Surgery presentation for $X_{S'_m}$. In the next step, we handleslide the central $0$-framed component over each of the $m$ $0$-framed components indicated by the arrows. The general case $X_{S'_n}$, $1 \le n \le m$, is analogous.}
  \label{fig:doubly_slice_family2_step2}
\end{figure}
\end{proof}

This construction along with the obstructions from earlier in the paper allows us to prove the following theorem which classifies the smoothly doubly slice odd pretzel knots up to mutation. For $3$ or $4$-strand odd pretzel knots this was proved by Donald \cite[Theorem 1.5]{MR3271270}.
\newline

\begin{figure}[h]
  \begin{overpic}[width=0.9\textwidth]{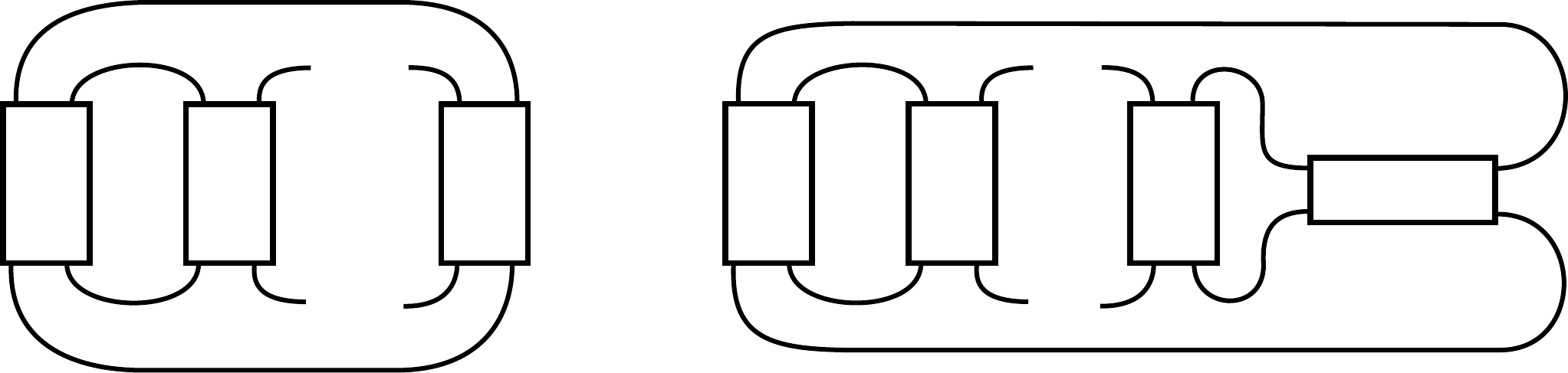}
  	\put (-5,20) {(a)}
    \put (2,12) {$c_1$}
    \put (13,12) {$c_2$}
    \put (30,12) {$c_m$}
    \put (22,12) {$\dots$}
    \put (40,20) {(b)}
    \put (48,12) {$c_1'$}
    \put (60,12) {$c_2'$}
    \put (66,12) {$\dots$}
    \put (73,12) {$c_{m'}'$}
    \put (88,11) {$e$}
  \end{overpic}
  \caption{Two diagrams for pretzel knots, where the labelled boxes are used to denote twist regions with the corresponding number of crossings. In the right hand side, we may assume $|c_i'|>1$ for all $i$.}
  \label{fig:pretzel}
\end{figure}

\thmoddpretzels*
\begin{proof}
  The implication \eqref{it:pretzel_list}$\Rightarrow$\eqref{it:pretzel_mutant} follows from the proof of Proposition~\ref{prop:s4_embeddings}. In order to see this, following Example~\ref{eg:montesinos}, one can check that the doubly slice pretzel knot corresponding to quotienting out Figure~\ref{fig:doubly_slice_family1_involution} by the strong involution indicated is precisely $P(-a,a,\ldots,-a)$. The implication \eqref{it:pretzel_mutant}$\Rightarrow$\eqref{it:pretzel_embed} is well-known. The content of this proof is in the implication \eqref{it:pretzel_embed}$\Rightarrow$\eqref{it:pretzel_list}, which we prove now.

Consider a pretzel knot $K=P(c_1, \dots, c_k)$ as depicted in Figure~\ref{fig:pretzel}(a), where the $c_i$ are all odd. Notice that if $|c_i|=1$, for some $i$, then the corresponding twist region is just a single crossing. By performing flypes and Reidemeister~II moves if necessary we can assume that these crossings are in a single twist region as in Figure~\ref{fig:pretzel}(b). That is, we can assume $K$ takes the form
\[
K=P(c_1',\dots, c_{m'}',\underbrace{\varepsilon, \dots, \varepsilon}_{|e|}),
\]
where $|c_i'|>1$ for all $i$ and $e=\varepsilon |e|$ is an integer. For such a $K$ double branched cover $\Sigma(K)$ takes the form
\[
\Sigma(K)= S^2(e;a_1,\dots, a_n,-b_1,\dots, -b_m).
\]
Assume, by reflecting $K$ if necessary, that $\varepsilon(\Sigma(K))>0$. So writing $\Sigma(K)$ in standard form we obtain,
\[
\Sigma(K)= S^2(m+e;a_1,\dots, a_n,\frac{b_1}{b_1-1},\dots, \frac{b_m}{b_m-1}).
\]
Now assume that $\Sigma(K)$ embeds smoothly in $S^4$. First consider a partition as given by Theorem~\ref{thm:partitions}. Note that since $\frac{b_i-1}{b_i}>\frac{1}{2}$ for all $i$, each class in the partition contains at most one of the fibers corresponding to $\frac{b_i}{b_i-1}$. This shows that there are at least $m$ such classes, implying that $e\geq 0$.

Now consider the condition that $\mubar(\Sigma(K))=0$. Consider the standard positive definite plumbing for $\Sigma(K)$. Since
$\frac{b_i}{b_i-1}$ has continued fraction
\[
\frac{b_i}{b_i-1}=[\underbrace{2, \dots, 2}_{b_i-1}]^-,
\]
 each of the arms corresponding to $\frac{b_i}{b_i-1}$ has $b_i-1$ vertices. Thus the plumbing has $1-m+n+\sum_{i=1}^m b_i$ vertices. Now it is easily checked that the (unique) characteristic subset on this plumbing is obtained by taking the central vertex along with $\frac{b_i-1}{2}$ vertices of norm two from each of the arms corresponding to $\frac{b_i}{b_i-1}$. Thus the sum of norms in the characteristic subset is $e+m+\sum_{i=1}^m (b_i-1)=e+\sum_{i=1}^m b_i$.
Thus we have
\[
\mubar(\Sigma(K))=n-m+1-e=0.
\]
Thus $e=n-m+1\geq 0$. However notice that $\Sigma(K)$ has $n+m$ exceptional fibers. Thus by Theorem~\ref{thm:classification_e_ge_(k+1)/2} we have $m+e\leq \frac{n+m+1}{2}$. Altogether this shows
\[
0\leq e \leq \frac{n-m+1}{2}=\frac{e}{2},
\]
which implies that $e=0$. Thus $\Sigma(K)$ has $n+m=2m-1$ exceptional fibers. Thus Theorem~\ref{thm:classification_e_ge_(k+1)/2} implies that $b_i=a_j>1$ for all $i$ and $j$. Thus $K$ is of the desired form.
\end{proof}
Finally we prove our results on doubly slice quasi-alternating Montesinos links.

\qaltMontesinos*
\begin{proof} Let $K$ be a quasi-alternating Montesinos link. The double branched covers of quasi-alternating Montesinos links have been classified \cite{issa2017quasialternating}. After possibly reflecting $K$, we can assume that
\[
\Sigma(K)=S^2(e;\frac{p_1}{q_1}, \dots, \frac{p_k}{q_k}),
\]
where $\varepsilon(\Sigma(K))>0$ and $\frac{p_i}{q_i}>1$ and either
\begin{enumerate}
\item $e\geq k$ or
\item $e=k-1$ and $\frac{q_{k-1}}{p_{k-1}} + \frac{q_{k}}{p_{k}}<1$
\end{enumerate}
holds.
However notice that in the first case we have a partition
\[\mathcal{P}=\{\{1\}, \dots, \{k\}\}\]
violating Lemma~\ref{lemma:homology_is_double2}, and in the second case we have a partition 
\[\mathcal{P}=\{\{1\}, \dots, \{k-2\},\{k-1,k\}\}\]
violating Lemma~\ref{lemma:homology_is_double2}. Thus in neither case can $H_1(\Sigma(K))$ split as a direct double. This shows that $\Sigma(K)$ cannot embed topologically locally flatly in $S^4$ and hence that $K$ is not topologically doubly slice.
\end{proof}
\phantomsection
\bibliography{references}{}
\bibliographystyle{alpha}

\end{document}